\newenvironment{altenumerate}
   {\begin{list}
      {\textup{(\theenumi)} }
      {\usecounter{enumi}
       \setlength{\labelwidth}{0pt}
       \setlength{\labelsep}{2pt}
       \setlength{\leftmargin}{0pt}
       \setlength{\itemsep}{\the\smallskipamount}
       \renewcommand{\theenumi}{\roman{enumi}}
      }}
   {\end{list}}
\newtheorem{lem}{Lemma}[section]
\newtheorem{definition}[lem]{Definition}
\newtheorem{cor}[lem]{Corollary}
\newtheorem{thm}[lem]{Theorem}
\newtheorem{prop}[lem]{Proposition}
\theoremstyle{remark}
\newtheorem{rem}[lem]{Remark}
\newtheorem{example}[lem]{Example}
\DeclareMathOperator{\Hom}{Hom}
\DeclareMathOperator{\Ext}{Ext}
\DeclareMathOperator{\Tor}{Tor}
\DeclareMathOperator{\im}{im}
\DeclareMathOperator{\coker}{coker}
\DeclareMathOperator{\diag}{diag}
\DeclareMathOperator{\Spa}{Spa}
\DeclareMathOperator{\Spec}{Spec}
\DeclareMathOperator{\Fil}{Fil}
\DeclareMathOperator{\gr}{gr}
\DeclareMathOperator{\Gal}{Gal}
\def\an{\mathrm{an}}
\def\et{\mathrm{\acute{e}t}}
\def\fet{\mathrm{f\acute{e}t}}
\def\pro{\mathrm{pro}}
\def\proet{\mathrm{pro\acute{e}t}}
\def\proetqc{\mathrm{pro\acute{e}tqc}}
\def\profet{\mathrm{prof\acute{e}t}}
\def\fsets{\mathrm{fsets}}
\def\pfsets{\mathrm{pfsets}}
\def\cont{\mathrm{cont}}
\def\pt{\mathrm{pt}}
\def\inf{\mathrm{inf}}
\def\dR{\mathrm{dR}}
\def\id{\mathrm{id}}
\newcommand{\N}{\mathbb{N}}
\newcommand{\Z}{\mathbb{Z}}
\newcommand{\F}{\mathbb{F}}
\newcommand{\Q}{\mathbb{Q}}
\newcommand{\R}{\mathbb{R}}
\newcommand{\A}{\mathbb{A}}
\newcommand{\B}{\mathbb{B}}
\newcommand{\OO}{\mathcal{O}}
\newcommand{\mm}{\mathfrak{m}}
\newcommand{\GL}{\mathrm{GL}}
\newcommand{\PGL}{\mathrm{PGL}}
\begin{document}
\title[$p$-adic Hodge theory]{$p$-adic Hodge theory for rigid-analytic varieties}
\author{Peter Scholze}
\begin{abstract}
We give proofs of de Rham comparison isomorphisms for rigid-analytic varieties, with coefficients and in families. This relies on the theory of perfectoid spaces. Another new ingredient is the pro-\'{e}tale site, which makes all constructions completely functorial.
\end{abstract}

\date{\today}
\maketitle
\tableofcontents
\pagebreak

\section{Introduction}

This paper starts to investigate to what extent $p$-adic comparison theorems stay true for rigid-analytic varieties. Up to now, such comparison isomorphisms were mostly studied for schemes over $p$-adic fields, but we intend to show here that the whole theory extends naturally to rigid-analytic varieties over $p$-adic fields. This is of course in analogy with classical Hodge theory, which most naturally is formulated in terms of complex-analytic spaces.

Several difficulties have to be overcome to make this work. The first is that finiteness of $p$-adic \'{e}tale cohomology is not known for rigid-analytic varieties over $p$-adic fields. In fact, it is false if one does not make a restriction to the proper case. However, our first theorem is that for proper smooth rigid-analytic varieties, finiteness of $p$-adic \'{e}tale cohomology holds.

\begin{thm}\label{Thm1} Let $K$ be a complete algebraically closed extension of $\Q_p$, let $X/K$ be a proper smooth rigid-analytic variety, and let $\mathbb{L}$ be an $\mathbb{F}_p$-local system on $X_\et$. Then $H^i(X_\et,\mathbb{L})$ is a finite-dimensional $\mathbb{F}_p$-vector space for all $i\geq 0$, which vanishes for $i>2\dim X$.
\end{thm}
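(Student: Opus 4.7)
The plan is to compute $H^i(X_\et, \mathbb{L})$ via the pro-étale site $X_\proet$, where one has access to a basis of affinoid perfectoid objects, and to exploit the almost acyclicity of $\OO_X^+/p$ on such affinoids. The central device is the Artin--Schreier exact sequence
$$0 \to \mathbb{F}_p \to \OO_X^{+}/p \xrightarrow{F-1} \OO_X^{+}/p \to 0$$
on $X_\proet$ (at least in the almost sense), where $F$ is the absolute Frobenius and the middle and right terms are treated as sheaves of almost $\OO_K/p$-modules. Twisting by $\mathbb{L}$ and taking the long exact sequence, the problem reduces to showing that $H^i(X_\proet, \mathbb{L} \otimes_{\mathbb{F}_p} (\OO_X^+/p)^a)$ is an almost finitely generated $\OO_K/p$-module vanishing for $i > 2\dim X$: Frobenius on $\OO_K/p$ is an isomorphism up to almost mathematics, so $F - 1$ has $\mathbb{F}_p$-finite kernel and cokernel on any such module.

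The key local input is that $H^i(U_\proet, \OO_X^+/p)$ almost vanishes for $i > 0$ on any affinoid perfectoid $U$ above $X$, with $H^0$ being the ``perfectoid functions mod $p$.'' Combined with almost purity, if $\mathbb{L}$ trivializes on a finite étale cover $Y \to U$ with $U$ affinoid perfectoid, then the restriction of $\mathbb{L} \otimes \OO_X^+/p$ to $U$ is an almost direct summand of the pushforward of $\OO_Y^+/p$, giving controlled $\OO_K/p$-module structure on sections.

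To globalize, I would cover the proper smooth $X$ by finitely many affinoid opens $V_1, \dots, V_r$ that étale-trivialize $\mathbb{L}$, refine each to an affinoid perfectoid $\tilde V_j$ via a pro-étale finite-étale tower, and compute $R\Gamma(X_\proet, \mathbb{L} \otimes \OO_X^+/p)$ by the Čech complex for the cover $\{\tilde V_j\}$. Properness makes $r$ finite, so each Čech term is an almost finitely generated $\OO_K/p$-module by the local step. A Koszul-type argument modelled on perfectoid tori, together with the cohomological dimension bound $\dim X = d$ for coherent-type cohomology on each patch, then yields vanishing in degrees $> 2d$.

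The hard part will be the globalization: the perfectoid refinements $\tilde V_j \to V_j$ are infinite pro-finite-étale covers, so the Čech complex mixes almost mathematics with pro-systems, and one must argue carefully that almost finite generation is preserved. This rests on a rigid-analytic version of Faltings' almost purity, and on a well-chosen perfectoid cover adapted to $\mathbb{L}$, in order to convert the local almost acyclicity of $\OO_X^+/p$ into a global almost finiteness statement despite the non-Noetherian nature of $\OO_K/p$.
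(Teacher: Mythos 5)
Your high-level intuition is in the right direction, but the proposal has two genuine gaps, both at points you flag as ``hard parts'' but treat as bookkeeping rather than as requiring new ideas.

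\textbf{The Artin--Schreier step needs the tilt.} You claim that ``Frobenius on $\OO_K/p$ is an isomorphism up to almost mathematics, so $F-1$ has $\mathbb{F}_p$-finite kernel and cokernel on any such module.'' This is false. Frobenius on $\OO_K/p$ is $x\mapsto x^p$, whose kernel is the ideal generated by $p^{1/p}$ modulo $p$ --- a huge ideal that is certainly not almost zero. Frobenius is a surjection with large kernel, not an almost-isomorphism, so semilinearity with respect to it gives you no control over $\ker(F-1)$ and $\coker(F-1)$ on an abstract almost finitely generated $\OO_K/p$-module. The paper handles this by introducing the tilted sheaf $\hat{\OO}_{X^\flat}^+ = \varprojlim_\Phi \OO_X^+/p$: this is a sheaf of \emph{perfect} $\OO_{K^\flat}$-algebras in characteristic $p$, where Frobenius is an honest automorphism, $K^\flat$ is algebraically closed, and the Artin--Schreier sequence is used on $\hat{\OO}_{X^\flat}$ rather than on $\OO_X^+/p$. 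One further needs Lemma \ref{KeyAlmostLemma}, which exploits the Frobenius structure across a tower $\{M_k\}$ of almost finitely generated torsion modules to pin down $M_k^a \cong (\OO_{K^\flat}^a/\pi^k)^r$ and ultimately $H^i(X,\mathbb{L}\otimes\hat{\OO}_{X^\flat})\cong (K^\flat)^r$; that lemma is proved by the classification of almost finitely generated $\OO$-modules and hinges on $K^\flat$ being algebraically closed of characteristic $p$. Without passing to the tilt, the Frobenius analysis does not close.

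\textbf{The \v{C}ech terms are not almost finitely generated, and the Kiehl trick is essential.} You write that after refining each $V_j$ to an affinoid perfectoid $\tilde V_j$, ``each \v{C}ech term is an almost finitely generated $\OO_K/p$-module by the local step.'' But $(\OO_X^+/p)(\tilde V_j)$ is (almost) $R_j^+/p$ for a perfectoid Banach algebra $(R_j,R_j^+)$, which is an enormous ring, in no sense finitely generated over $\OO_K$. Almost finiteness is \emph{not} a termwise property of the \v{C}ech complex. What the paper actually shows (Lemma \ref{LocalComputation}) is that the local cohomology groups vanish above degree $\dim X$ and, for a strict inclusion of rational subsets $V'\subsetneq V$, the \emph{image} of the restriction map $H^i(V,\mathbb{L}\otimes\OO^+/p)\to H^i(V',\mathbb{L}\otimes\OO^+/p)$ is almost finitely generated, via an explicit Koszul computation for the $\Z_p^n$-torus Galois tower and the fact that $\OO_{V}\to\OO_{V'}$ is completely continuous. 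Finiteness of the global cohomology then comes from a Cartan--Serre/Kiehl-style argument (Lemma \ref{ChoiceOfCovers} and Lemma \ref{SpecSeqFiniteness}): one chooses $N$ nested finite covers of $X$ by shrinking rationals and traces almost finite generation through maps of spectral sequences. This shrinking-cover mechanism is the actual engine producing finiteness; a single perfectoid cover and its \v{C}ech complex will not do it.

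The vanishing in degrees $>2\dim X$ is obtained as you suggest (Galois cohomological dimension $d$ for the $\Z_p^d$-tower plus analytic cohomological dimension $\leq d$), and the local almost acyclicity of $\OO_X^+/p$ on affinoid perfectoids via almost purity is indeed the correct local input; those parts of your plan match the paper.
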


The properness assumption is crucial here; the smoothness assumption is probably unnecessary, and an artefact of the proof. We note that it would be interesting to prove Poincar\'{e} duality in this setup.

Let us first explain our proof of this theorem. We build upon Faltings's theory of almost \'{e}tale extensions, amplified by the theory of perfectoid spaces. One important difficulty in $p$-adic Hodge theory as compared to classical Hodge theory is that the local structure of rigid-analytic varieties is very complicated; small open subsets still have a large \'{e}tale fundamental group. We introduce the pro-\'{e}tale site $X_\proet$ whose open subsets are roughly of the form $V\rightarrow U\rightarrow X$, where $U\rightarrow X$ is some \'{e}tale morphism, and $V\rightarrow U$ is an inverse limit of finite \'{e}tale maps. Then the local structure of $X$ in the pro-\'{e}tale topology is simpler, namely, it is locally perfectoid. This amounts to extracting lots of $p$-power roots of units in the tower $V\rightarrow U$. We note that the idea to extract many $p$-power roots is common to all known proofs of comparison theorems in $p$-adic Hodge theory.

The following result gives further justification to the definition of pro-\'etale site.

\begin{thm}\label{ThmKpi1} Let $X$ be a connected affinoid rigid-analytic variety over $K$. Then $X$ is a $K(\pi,1)$ for $p$-torsion coefficients, i.e. for all $p$-torsion local systems $\mathbb{L}$ on $X$, the natural map
\[
H^i_\cont(\pi_1(X,x),\mathbb{L}_x)\to H^i(X_\et,\mathbb{L})
\]
is an isomorphism. Here, $x\in X(K)$ is a base point, and $\pi_1(X,x)$ denotes the profinite \'etale fundamental group.
\end{thm}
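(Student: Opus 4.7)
The plan is to reformulate the $K(\pi,1)$ property as an \emph{effaceability} statement for \'etale cohomology classes, and then establish that statement by exhibiting a pro-finite-\'etale affinoid perfectoid cover of $X$ and appealing to Artin--Schreier after tilting to characteristic $p$. Concretely, by running the Hochschild--Serre spectral sequence over the filtered system of pointed connected finite \'etale covers $(Y,y)\to(X,x)$ and passing to the colimit, one sees that the natural map is an isomorphism for all $p$-torsion local systems if and only if, for every such $\mathbb{L}$, every $i\geq 1$, and every $\alpha\in H^i(X_\et,\mathbb{L})$, there exists a finite \'etale surjection $f\colon Y\to X$ with $f^*\alpha=0$.

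To set up the reduction, I would observe that any $p$-torsion local system admits a finite filtration whose graded pieces are $\mathbb{F}_p$-local systems, each trivialized by some finite \'etale cover. A standard d\'evissage (allowing oneself to pass to a further finite \'etale cover at each stage) then reduces the effaceability statement to the case $\mathbb{L}=\underline{\mathbb{F}_p}$. The case $i=1$ is immediate: a class in $H^1(X_\et,\mathbb{F}_p)$ corresponds to an $\mathbb{F}_p$-torsor, which is itself realized by a finite \'etale cover $Y\to X$ on which the class becomes tautologically trivial.

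The case $i\geq 2$ is the geometric heart of the argument. Working locally and (if needed) passing to a finite \'etale cover to produce the requisite structure, choose an \'etale map $X\to \mathbb{T}^n=\Spa K\langle T_1^{\pm 1},\ldots,T_n^{\pm 1}\rangle$ and form
\[
\widetilde{X} \;=\; X \times_{\mathbb{T}^n} \widetilde{\mathbb{T}}^n, \qquad \widetilde{\mathbb{T}}^n \;=\; \Spa K\langle T_1^{\pm 1/p^\infty},\ldots,T_n^{\pm 1/p^\infty}\rangle,
\]
which is affinoid perfectoid and which exhibits $\widetilde{X}\to X$ as the inverse limit of the finite \'etale tower $X_n=X\times_{\mathbb{T}^n}\mathbb{T}^n$ along the $p^n$-isogenies. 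Passing to the tilt $\widetilde{X}^\flat$ and using the Artin--Schreier short exact sequence
\[
0 \;\to\; \underline{\mathbb{F}_p} \;\to\; \mathcal{O}_{\widetilde{X}^\flat} \;\xrightarrow{F-\id}\; \mathcal{O}_{\widetilde{X}^\flat} \;\to\; 0
\]
on the \'etale site, together with the (almost) acyclicity $H^i(\widetilde{X}^\flat_\et,\mathcal{O})=0$ for $i\geq 1$ characteristic of affinoid perfectoid spaces and the tilting equivalence of \'etale sites, one reads off $H^i(\widetilde{X}_\et,\mathbb{F}_p)=0$ for $i\geq 2$. By continuity of \'etale cohomology of qcqs spaces along the affine transition maps of the tower, $H^i(\widetilde{X}_\et,\mathbb{F}_p)=\colim_n H^i(X_{n,\et},\mathbb{F}_p)$, so the image of $\alpha$ dies at some finite stage $X_n$, which is the desired cover.

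The hard part will be producing the \'etale toric parametrization in sufficient generality: an arbitrary connected affinoid $X$ need not admit a global \'etale map to $\mathbb{T}^n$, so one must argue locally and then glue, or first pass to a finite \'etale cover where such a parametrization exists on an \'etale-local basis. The companion technical point---that the pro-finite-\'etale inverse limit defining $\widetilde{X}$ commutes with \'etale cohomology of $\mathbb{F}_p$-sheaves---is the feature of the pro-\'etale site that the paper is designed to exploit, and it is what makes the descent from $\widetilde{X}$ to some finite $X_n$ in the final step legitimate.
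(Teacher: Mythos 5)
Your overall architecture — reformulating as effaceability, reducing to $\mathbb{L}=\underline{\mathbb{F}_p}$, producing a pro-finite-\'etale perfectoid cover, tilting, and applying Artin--Schreier together with continuity of \'etale cohomology along the tower — is exactly the architecture of the paper's proof, and the $i\geq 2$ mechanism and the $i=1$ torsor observation are both fine. However, the step you flag as ``the hard part'' is in fact a genuine gap, and it is not of the ``local-to-global gluing'' variety you suggest. The theorem is stated for an arbitrary connected affinoid $X$, with no smoothness hypothesis; the paper explicitly emphasizes that no nonsingularity assumption is needed. Your proposed perfectoid cover $\widetilde X = X\times_{\mathbb{T}^n}\widetilde{\mathbb{T}}^n$ presupposes an \'etale map $X\to\mathbb{T}^n$, which requires smoothness of $X$, and even for smooth affinoids such a map exists only Zariski-locally, not globally and not after finite \'etale covers; passing to a finite \'etale cover does not create a global toric chart. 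Cech-gluing of the local covers $\widetilde U_\alpha$ does not produce a single pro-finite-\'etale cover of $X$, so the continuity argument that lets you descend vanishing from $\widetilde X$ to some finite stage $X_n$ has no analogue in the glued picture.

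The paper's construction sidesteps this entirely by taking the \emph{universal} pro-finite-\'etale cover: let $A_\infty$ be a direct limit of faithfully flat finite \'etale $A$-algebras with no nontrivial idempotents, chosen so that every faithfully flat finite \'etale $A_\infty$-algebra has a section, and then $p$-adically complete to get $(\hat A_\infty,\hat A_\infty^+)$. That this completion is perfectoid is a nontrivial point; the paper derives it from Colmez's ``sympathetic algebra'' construction (adjoining $p$-th roots of $1$-units) together with almost purity, and this construction applies to arbitrary affinoids with no smoothness hypothesis. Once $X_\infty=\Spa(\hat A_\infty,\hat A_\infty^+)$ is known to be affinoid perfectoid with $X_\infty\sim\varprojlim X_i$, the continuity statement from \cite{ScholzePerfectoidSpaces1} Corollary 7.18 gives $H^j(X_{\infty,\et},\mathbb{F}_p)=\varinjlim H^j(X_{i,\et},\mathbb{F}_p)$, and the Artin--Schreier argument on the tilt goes through; moreover the paper's choice of cover also handles $j=1$ uniformly, since $\hat A_\infty^\flat$ has no nontrivial finite \'etale covers, forcing surjectivity of $F-\id$ on $H^0$. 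To repair your proof you should replace the toric perfectoidization by the universal cover (or by an iterated adjunction of $p$-th roots of $1$-units \`a la Colmez), after which your argument for $i\geq 2$ and your torsor argument for $i=1$ both apply verbatim.
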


We note that we assume only that $X$ is affinoid;  no smallness or nonsingularity hypothesis is necessary for this result. This theorem implies that $X$ is 'locally contractible' in the pro-\'etale site, at least for $p$-torsion local systems.

Now, on affinoid perfectoid subsets $U$, one knows that $H^i(U_\et,\OO_X^+/p)$ is almost zero for $i>0$, where $\OO_X^+\subset \OO_X$ is the subsheaf of functions of absolute value $\leq 1$ everywhere. This should be seen as the basic finiteness result, and is related to Faltings's almost purity theorem. Starting from this and a suitable cover of $X$ by affinoid perfectoid subsets in $X_\proet$, one can deduce that $H^i(X_\et,\OO_X^+/p)$ is almost finitely generated over $\OO_K$. At this point, one uses that $X$ is proper, and in fact the proof of this finiteness result is inspired by the proof of finiteness of coherent cohomology of proper rigid-analytic varieties, as given by Kiehl, \cite{KiehlFiniteness}. Then one deduces finiteness results for the $\mathbb{F}_p$-cohomology by using a variant of the Artin-Schreier sequence
\[
0\rightarrow \mathbb{F}_p\rightarrow \OO_X^+/p\rightarrow \OO_X^+/p\rightarrow 0\ .
\]
In order to make this argument precise, one needs to analyze more closely the category of almost finitely generated $\OO_K$-modules, which we do in Section \ref{AlmostMath}, formalizing the proof of \S3,  Theorem 8, of Faltings's paper \cite{FaltingsAlmostEtale}. In fact, the proof shows at the same time the following result, which is closely related to \S3, Theorem 8, of \cite{FaltingsAlmostEtale}.

\begin{thm}\label{Thm2} In the situation of Theorem \ref{Thm1}, there is an almost isomorphism of $\OO_K$-modules for all $i\geq 0$,
\[
H^i(X_\et,\mathbb{L})\otimes \OO_K/p \rightarrow H^i(X_\et,\mathbb{L}\otimes \OO_X^+/p)\ .
\]
More generally, assume that $f:X\rightarrow Y$ is a proper smooth morphism of rigid-analytic varieties over $K$, and $\mathbb{L}$ is an $\mathbb{F}_p$-local system on $X_\et$. Then there is an almost isomorphism for all $i\geq 0$,
\[
(R^if_{\et\ast} \mathbb{L})\otimes \OO_Y^+/p\rightarrow R^if_{\et\ast}(\mathbb{L}\otimes \OO_X^+/p)\ .
\]
\end{thm}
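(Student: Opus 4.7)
The plan is to deduce this from a twisted Artin--Schreier sequence combined with the almost finite generation results on $H^i(X_\et,\OO_X^+/p)$ alluded to in the introduction, using the structure theory of almost finitely generated $\OO_K$-modules developed in Section~\ref{AlmostMath}.

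First, for the absolute case, the Frobenius $\varphi$ on $\OO_X^+/p$ fits into a short exact sequence of pro-\'etale sheaves
\[
0\to \F_p\to \OO_X^+/p \xrightarrow{\varphi-1} \OO_X^+/p\to 0;
\]
surjectivity of $\varphi-1$ is the crucial place where the pro-\'etale topology enters, since one can solve $T^p-T=a$ after passing to a pro-\'etale cover along which $X$ admits perfectoid models. Tensoring with the $\F_p$-local system $\mathbb{L}$ gives the analogous pro-\'etale short exact sequence
\[
0\to \mathbb{L}\to \mathbb{L}\otimes\OO_X^+/p \xrightarrow{\varphi-1} \mathbb{L}\otimes\OO_X^+/p\to 0,
\]
and hence a long exact sequence in pro-\'etale cohomology comparing the cohomology of $\mathbb{L}$ to that of $\mathbb{L}\otimes\OO_X^+/p$.

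The second, and central, ingredient is a purely algebraic statement: for an almost finitely generated $\OO_K/p$-module $M$ equipped with a Frobenius-semi-linear endomorphism $\varphi$, the operator $\varphi-1$ is almost surjective and the natural map
\[
M^{\varphi=1}\otimes_{\F_p}\OO_K/p\to M
\]
is an almost isomorphism with $M^{\varphi=1}$ almost finite-dimensional over $\F_p$. This is the substance of Section~\ref{AlmostMath}, following the pattern of \S3, Theorem~8 of \cite{FaltingsAlmostEtale}. Applying it to $M=H^i(X_\et,\mathbb{L}\otimes\OO_X^+/p)$ breaks the long exact sequence into almost short exact pieces
\[
0\to H^i(X_\et,\mathbb{L})\to M\xrightarrow{\varphi-1} M\to 0,
\]
giving simultaneously the finiteness statement of Theorem~\ref{Thm1} and the desired identification $H^i(X_\et,\mathbb{L})\otimes\OO_K/p\cong M$ in the almost category.

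For the relative statement, I would run exactly the same argument sheafwise on $Y_\et$: apply $Rf_{\et\ast}$ to the twisted Artin--Schreier sequence, use that the higher direct image sheaves $R^if_{\et\ast}(\mathbb{L}\otimes\OO_X^+/p)$ are locally on $Y_\et$ almost finitely generated over $\OO_Y^+/p$---a fact to be extracted from the absolute case together with proper base change for the \'etale cohomology of rigid spaces---and deduce the almost isomorphism of $\OO_Y^+/p$-sheaves by checking on stalks at geometric points of $Y_\et$. The main obstacle will be the almost linear-algebra input: establishing, on almost finitely generated $\OO_K/p$-modules, the analog of the classical fact that Frobenius-minus-identity on a $\varphi$-module over an algebraically closed field of characteristic $p$ is surjective with kernel descending the module to $\F_p$. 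Over a non-field base such as $\OO_K/p$ this becomes subtle, and the passage to the almost world with careful control of the error terms is precisely what Section~\ref{AlmostMath} is designed to accomplish.
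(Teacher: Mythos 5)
Your proposal correctly identifies two of the three pillars of the argument -- almost finite generation of $H^i(X_\et,\mathbb{L}\otimes\OO_X^+/p)$ over $\OO_K$, and an Artin--Schreier long exact sequence -- but the ``central ingredient'' you invoke is both false as stated and not what Section~\ref{AlmostMath} provides, and this is where the argument breaks.

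You want the following purely algebraic fact: an almost finitely generated $\OO_K/p$-module $M$ with a Frobenius-semilinear endomorphism $\varphi$ has $\varphi-1$ almost surjective, $M^{\varphi=1}$ almost finite over $\F_p$, and $M^{\varphi=1}\otimes_{\F_p}\OO_K/p\to M$ an almost isomorphism. This fails already for cyclic modules: take $M=\OO_K/p^{1/2}$ with $\varphi(x)=x^p$. Then $\ker(\varphi-1)=\F_p$ and $\varphi-1$ is surjective (solve $x^p-x=b$ in $\OO_K$ using that $K$ is algebraically closed), but $\F_p\otimes_{\F_p}\OO_K/p\to\OO_K/p^{1/2}$ has kernel $p^{1/2}\OO_K/p\OO_K$, which is not almost zero. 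The problem is structural: $\OO_K/p$ is not a field and not perfect (Frobenius kills $p^{1/p}$), so knowing a single module $M$ with a semilinear endomorphism tells you almost nothing about its elementary divisors. Applying such a statement to $M=H^i(X_\et,\mathbb{L}\otimes\OO_X^+/p)$ cannot conclude $M\approx(\OO_K/p)^r$.

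What Section~\ref{AlmostMath} actually proves, and what the paper uses, is Lemma~\ref{KeyAlmostLemma}, which takes as input a whole tower $M_k$ of $\OO/\pi^k$-modules together with maps $p_k,q_k$ and, crucially, Frobenius \emph{isomorphisms} $\varphi_k\colon M_k\otimes_{\OO/\pi^k,\varphi}\OO/\pi^{pk}\cong M_{pk}$. The counting argument $\gamma_{M_k}\leq k\gamma_{M_1}$ combined with $\gamma_{M_{pk}}=p\,\gamma_{M_k}$ is what forces $\gamma_{M_k}=k\gamma_{M_1}$ and ultimately $M_k^a\cong(\OO^a/\pi^k)^r$; there is no way to extract this from $M_1$ alone. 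To produce this tower, one must introduce the tilted integral structure sheaf $\hat{\OO}_{X^\flat}^+=\varprojlim_\Phi\OO_X^+/p$, which is a sheaf of \emph{perfect} $\OO_{K^\flat}$-algebras with $\hat{\OO}_{X^\flat}^+/\pi=\OO_X^+/p$, and set $M_k=H^i(X_\proet,\mathbb{L}\otimes\hat{\OO}_{X^\flat}^+/\pi^k)$; perfectness of $\hat{\OO}_{X^\flat}^+$ is what makes the $\varphi_k$ isomorphisms. One then passes to the inverse limit to get $H^i(X_\proet,\mathbb{L}\otimes\hat{\OO}_{X^\flat})\cong(K^\flat)^r$ with its semilinear Frobenius, and only at this point -- over the algebraically closed field $K^\flat$ -- runs Artin--Schreier, using the sequence $0\to\mathbb{L}\to\mathbb{L}\otimes\hat{\OO}_{X^\flat}\to\mathbb{L}\otimes\hat{\OO}_{X^\flat}\to 0$ rather than your sequence over $\OO_X^+/p$. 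So the tilt is not an optional technicality: it is the mechanism that converts the almost finiteness over the imperfect ring $\OO_K/p$ into honest finiteness over the perfect field $K^\flat$, and your proposal is missing it. Your treatment of the relative case (reduce to stalks via proper base change) is on the right track and matches Corollary~\ref{RelPrimComp}, modulo the same gap in the absolute case.
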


\begin{rem} The relative case was already considered in an appendix to \cite{FaltingsAlmostEtale}: Under the assumption that $X$, $Y$ and $f$ are algebraic and have suitable integral models, this is \S6, Theorem 6, of \cite{FaltingsAlmostEtale}. In our approach, it is a direct corollary of the absolute version.
\end{rem}

In a sense, this can be regarded as a primitive version of a comparison theorem. Although it should be possible to deduce (log-)crystalline comparison theorems from it, we do only the de Rham case here. For this, we introduce sheaves on $X_\proet$, which we call period sheaves, as their values on pro-\'{e}tale covers of $X$ give period rings. Among them is the sheaf $\B_\dR^+$, which is the relative version of Fontaine's ring $B_\dR^+$. Let $\mathbb{L}$ be lisse $\Z_p$-sheaf on $X$. In our setup, we can define it as a locally free $\hat{\mathbb{Z}}_p$-module on $X_\proet$, where $\hat{\mathbb{Z}}_p = \varprojlim \Z/p^n\Z$ as sheaves on $X_\proet$. Then $\mathbb{L}$ gives rise to a $\B_\dR^+$-local system $\mathbb{M} = \mathbb{L}\otimes_{\hat{\mathbb{Z}}_p} \B_\dR^+$ on $X_\proet$, and it is a formal consequence of Theorem \ref{Thm2} that
\begin{equation}\label{NotProvedByBeilinson}
H^i(X_\et,\mathbb{L})\otimes_{\Z_p} B_\dR^+\cong H^i(X_\proet,\mathbb{M})\ .
\end{equation}

We want to compare this to de Rham cohomology. For this, we first relate filtered modules with integrable connection to $\B_\dR^+$-local systems.

\begin{thm}\label{Thm3} Let $X$ be a smooth rigid-analytic variety over $k$, where $k$ is a complete discretely valued nonarchimedean extension of $\Q_p$ with perfect residue field. Then there is a fully faithful functor from the category of filtered $\OO_X$-modules with an integrable connection satisfying Griffiths transversality, to the category of $\B_\dR^+$-local systems.
\end{thm}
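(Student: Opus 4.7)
The approach would follow the classical Fontaine blueprint, sheafified on $X_\proet$. The main new object is a structural refinement $\OO\B_\dR^+$ of $\B_\dR^+$: a filtered sheaf of $\B_\dR^+$-algebras on $X_\proet$ carrying an $\OO_X$-linear integrable connection
\[
\nabla:\OO\B_\dR^+\longrightarrow\OO\B_\dR^+\otimes_{\OO_X}\Omega_X^1,
\]
acting trivially on the subsheaf $\B_\dR^+\subset\OO\B_\dR^+$. After choosing \'etale local coordinates $t_1,\dots,t_d$ on a small affinoid and passing to a suitable perfectoid cover, one expects a local presentation $\OO\B_\dR^+\cong\B_\dR^+[[u_1,\dots,u_d]]$ with $u_i=t_i\otimes 1-1\otimes[t_i^\flat]$ and $\nabla u_i=-dt_i$, the filtration being the one for which $\ker\theta$ and the $u_i$ lie in $\Fil^1$. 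From this power-series description the filtered Poincar\'e lemma
\[
0\to \B_\dR^+\to \OO\B_\dR^+\xrightarrow{\nabla}\OO\B_\dR^+\otimes_{\OO_X}\Omega_X^1\xrightarrow{\nabla}\OO\B_\dR^+\otimes_{\OO_X}\Omega_X^2\to\cdots
\]
is a termwise computation, and this is the core technical input.

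Given $\OO\B_\dR^+$, the functor is
\[
(\mathcal E,\nabla_\mathcal E,\Fil^\bullet)\longmapsto \mathbb M:=\Fil^0\bigl((\mathcal E\otimes_{\OO_X}\OO\B_\dR^+)^{\nabla_{\mathrm{tot}}=0}\bigr),
\]
where $\nabla_{\mathrm{tot}}=\nabla_\mathcal E\otimes 1+1\otimes\nabla$ and the tensor product carries the convolution filtration; Griffiths transversality is exactly what ensures $\nabla_{\mathrm{tot}}$ is compatible with the filtration, so that $\Fil^0$ is preserved. That $\mathbb M$ is locally free over $\B_\dR^+$ of the same rank as $\mathcal E$ is verified locally: trivialize $\mathcal E$ and exploit the power-series structure of $\OO\B_\dR^+$, so that horizontal sections of $\mathcal E\otimes\OO\B_\dR^+$ are determined by their Taylor expansion at $u_i=0$, yielding an explicit basis of $\mathbb M$.

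Full faithfulness would follow from a quasi-inverse on the essential image. Given $\mathbb M$ in the image, form $\mathbb M\otimes_{\B_\dR^+}\OO\B_\dR^+$ with connection and filtration inherited from the second factor; the Poincar\'e lemma, applied in the coefficient direction, yields a canonical filtered horizontal isomorphism
\[
(\mathcal E\otimes_{\OO_X}\OO\B_\dR^+)^{\nabla_{\mathrm{tot}}=0}\otimes_{\B_\dR^+}\OO\B_\dR^+\;\cong\;\mathcal E\otimes_{\OO_X}\OO\B_\dR^+,
\]
from which $(\mathcal E,\nabla_\mathcal E,\Fil^\bullet)$ is recovered by pushing forward along $\nu:X_\proet\to X_\et$ and using the identification of $\gr^0\OO\B_\dR^+$ with (a completion of) $\OO_X$, the filtration and connection coming along automatically. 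The principal obstacle is the construction and analysis of $\OO\B_\dR^+$ itself: giving it a clean definition as a genuine sheaf on $X_\proet$ (not only on perfectoid covers), proving the local power-series presentation above, and establishing the filtered Poincar\'e lemma. Once these technical pieces are in place, the statement is a formal consequence.
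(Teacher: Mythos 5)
Your architecture matches the paper's: introduce the structural de Rham sheaf $\OO\B_\dR^+$ with connection and filtration, prove a local power-series description over $\tilde X$ (the paper's Proposition \ref{DescrBdR}), deduce the filtered Poincar\'e lemma (Corollary \ref{DeepPoincareLemma}), and use these to pass between filtered $\OO_X$-modules and $\B_\dR^+$-local systems. You also correctly isolate the technical burden (constructing $\OO\B_\dR^+$ as an honest sheaf, the power-series presentation, and the Poincar\'e lemma).

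There is, however, a concrete error in the central formula that breaks full faithfulness: you take $\Fil^0$ inside $\mathcal{E}\otimes_{\OO_X}\OO\B_\dR^+$, whereas one must use the $t$-inverted sheaf $\OO\B_\dR$. With $\OO\B_\dR^+$, whose filtration is trivial in negative degrees, the convolution filtration on $\mathcal{E}\otimes\OO\B_\dR^+$ forgets the jumps of $\Fil^\bullet\mathcal{E}$ in strictly positive degrees. Concretely, take $\mathcal{E}=\OO_X$ with trivial connection and filtration jumping at any $n\geq 0$; then $\Fil^0(\mathcal{E}\otimes\OO\B_\dR^+)=\OO\B_\dR^+$ regardless of $n$, so your $\mathbb{M}$ is always the trivial $\B_\dR^+$-local system, whereas these filtered modules are pairwise nonisomorphic. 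The paper's functor is $\mathbb{M}=\Fil^0(\mathcal{E}\otimes_{\OO_X}\OO\B_\dR)^{\nabla=0}$, and it is precisely the negative-index filtration steps of $\OO\B_\dR$ (i.e.\ $t^{-1}$, $t^{-2}$, etc.) that turn a positive shift of $\Fil^\bullet\mathcal{E}$ into a Tate twist of $\mathbb{M}$. The same substitution is needed in your reconstruction step: the paper recovers $\mathcal{E}_\et$ as $\nu_\ast(\mathbb{M}\otimes_{\B_\dR^+}\OO\B_\dR)$, using $\nu_\ast\OO\B_\dR\cong\OO_{X_\et}$ (Corollary \ref{CompDeepStructureSheaf}), not $\nu_\ast$ of the uninverted sheaf.

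Two further points. First, $\nabla u_i=dt_i$, not $-dt_i$: the connection acts only on the first tensor factor $t_i\otimes 1$. Second, your argument for full faithfulness is too thin. The paper proves it by (a) the reconstruction formula $\mathcal{E}_\et\cong\nu_\ast(\mathbb{M}\otimes\OO\B_\dR)$, (b) an induction on the length $m-n$ of the filtration (after twisting so $m=0$), using a big commutative diagram whose exactness is precisely the filtered Poincar\'e lemma applied to the graded pieces, and (c) an $\Ext^1$-vanishing computation (via $R^i\nu_\ast\hat{\OO}_X(k)$) to obtain uniqueness of the extension at each inductive step. Your sketch (``the filtration and connection coming along automatically'') glosses over exactly these points, and in particular over the uniqueness statement that makes the functor well-defined and fully faithful rather than merely constructing \emph{some} $\mathbb{M}$.
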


The proof makes use of the period rings introduced in Brinon's book \cite{BrinonRepresentations}, and relies on some of the computations of Galois cohomology groups done there. We say that a lisse $\Z_p$-sheaf $\mathbb{L}$ is de Rham if the associated $\B_\dR^+$-local system $\mathbb{M}$ lies in the essential image of this functor.

Let us remark at this point that the form of this correspondence indicates that the Rapoport -- Zink conjecture on existence of local systems on period domains, cf. \cite[]{RapoportZinkPeriodDomains}, is wrong if the cocharacter $\mu$ is not minuscule. Indeed, in that case they are asking for a crystalline, and thus de Rham, local system on (an open subspace of) the period domain, whose associated filtered module with integrable connection does not satisfy Griffiths transversality. However, the $p$-adic Hodge theory formalism does not allow for an extension of Theorem \ref{Thm3} beyond the situations where Griffiths transversality is satisfied.

We have the following comparison result.

\begin{thm}\label{Thm4} Let $k$ be a discretely valued complete nonarchimedean extension of $\Q_p$ with perfect residue field $\kappa$, and algebraic closure $\bar{k}$, and let $X$ be a proper smooth rigid-analytic variety over $k$. For any lisse $\Z_p$-sheaf $\mathbb{L}$ on $X$ with associated $\B_\dR^+$-local system $\mathbb{M}$, we have a $\Gal(\bar{k}/k)$-equivariant isomorphism
\[
H^i(X_{\bar{k}},\mathbb{L})\otimes_{\Z_p} B_\dR^+\cong H^i(X_{\bar{k}},\mathbb{M})\ .
\]
If $\mathbb{L}$ is de Rham, with associated filtered module with integrable connection $(\mathcal{E},\nabla,\Fil^\bullet)$, then the Hodge-de Rham spectral sequence
\[
H^{i-j,j}_{\mathrm{Hodge}}(X,\mathcal{E})\Rightarrow H^i_\dR(X,\mathcal{E})
\]
degenerates. Moreover, $H^i(X_{\bar{k}},\mathbb{L})$ is a de Rham representation of $\Gal(\bar{k}/k)$ with associated filtered $k$-vector space $H^i_\dR(X,\mathcal{E})$. In particular, there is also a $\Gal(\bar{k}/k)$-equivariant isomorphism
\[
H^i(X_{\bar{k}},\mathbb{L})\otimes_{\Z_p} \hat{\bar{k}}\cong \bigoplus_j H^{i-j,j}_{\mathrm{Hodge}}(X,\mathcal{E})\otimes_k \hat{\bar{k}}(-j)\ .
\]
\end{thm}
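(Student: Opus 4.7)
The plan is to prove the four assertions of Theorem~\ref{Thm4} in sequence; the first (comparison with $B_\dR^+$) is essentially a consequence of Theorem~\ref{Thm2}, and the remaining three follow from a filtered refinement once the de Rham structure is brought in.

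For the first isomorphism, valid for any lisse $\Z_p$-sheaf, I would follow the strategy already flagged in the introduction as equation~(\ref{NotProvedByBeilinson}). The idea is to filter $\B_\dR^+$ by powers of $t$ so that each graded piece is a Tate twist $\hat{\OO}_X(j)$, pass from $\F_p$-coefficients (where Theorem~\ref{Thm2} directly applies) to $\Z/p^n$-coefficients by devissage in the Artin--Schreier sequence, and then take an inverse limit (the requisite Mittag--Leffler property being supplied by the finiteness in Theorem~\ref{Thm1}). Inverting $p$ removes the ``almost'' in each identification, and taking $\varprojlim_n$ of the comparison modulo $t^n$ delivers the comparison with $B_\dR^+$.

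Now suppose $\mathbb{L}$ is de Rham, with associated $(\mathcal{E},\nabla,\Fil^\bullet)$ from Theorem~\ref{Thm3}. I would introduce on $X_\proet$ an enriched period sheaf $\OO\B_\dR^+$ equipped with an $\OO_X$-linear structure and a canonical flat connection extending $d\colon \OO_X \to \Omega^1_X$, and prove a Poincar\'e lemma asserting that $\B_\dR^+$ is quasi-isomorphic to the de Rham complex $\OO\B_\dR^+ \otimes_{\OO_X} \Omega^\bullet_X$. The functor of Theorem~\ref{Thm3} is built so that $\mathbb{M}$ is precisely the horizontal sections of $\mathcal{E} \otimes_{\OO_X} \OO\B_\dR^+$, so $\mathbb{M}$ is resolved by $\mathcal{E} \otimes_{\OO_X} \OO\B_\dR^+ \otimes_{\OO_X} \Omega^\bullet_X$. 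The next step is a local computation of $R\nu_\ast \OO\B_\dR^+$ on small affinoids (where $\nu\colon X_\proet \to X_\et$), using Theorem~\ref{ThmKpi1} to reduce to continuous Galois cohomology of a $p$-adic Lie group and invoking Brinon's \cite{BrinonRepresentations} analysis of the relevant period rings; the expected outcome is $R\nu_\ast \OO\B_\dR^+ \simeq \OO_X \hat{\otimes}_k B_\dR^+$, compatibly with filtrations. Feeding this into the Poincar\'e resolution will yield a filtered, Galois-equivariant isomorphism $H^i(X_{\bar{k}},\mathbb{M}) \cong H^i_\dR(X,\mathcal{E}) \otimes_k B_\dR^+$, which combined with the first paragraph produces the filtered comparison $H^i(X_{\bar{k}},\mathbb{L}) \otimes_{\Z_p} B_\dR^+ \cong H^i_\dR(X,\mathcal{E}) \otimes_k B_\dR^+$.

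All remaining claims then follow formally. Taking $\Gal(\bar{k}/k)$-invariants and using $B_\dR^{\Gal(\bar{k}/k)} = k$ shows that $H^i(X_{\bar{k}},\mathbb{L})$ is de Rham with associated filtered $k$-vector space $H^i_\dR(X,\mathcal{E})$. Passing to the associated graded of the $B_\dR^+$-filtration yields the asserted Hodge--Tate decomposition $H^i(X_{\bar{k}},\mathbb{L})\otimes_{\Z_p}\hat{\bar{k}} \cong \bigoplus_j H^{i-j,j}_{\mathrm{Hodge}}(X,\mathcal{E})\otimes_k \hat{\bar{k}}(-j)$. Finally, the Hodge--de Rham inequality $\dim_k H^i_\dR(X,\mathcal{E}) \leq \sum_j \dim_k H^{i-j,j}_{\mathrm{Hodge}}(X,\mathcal{E})$ must be an equality, because both sides equal the $B_\dR^+$-rank of $H^i(X_{\bar{k}},\mathbb{L})\otimes_{\Z_p} B_\dR^+$ (the left via $H^i_\dR \otimes_k B_\dR^+$, the right via the graded Hodge--Tate decomposition), and this forces degeneration at $E_1$. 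The main technical obstacle will be constructing $\OO\B_\dR^+$ and computing $R\nu_\ast \OO\B_\dR^+$ in the filtered sense; given that calculation, the rest is essentially a formal consequence of Theorems~\ref{Thm1}--\ref{Thm3}.
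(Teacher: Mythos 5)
Your outline follows the paper's actual route (primitive $\B_\dR^+$-comparison via integral d\'evissage, Poincar\'e lemma, local computation of period-sheaf cohomology, rank-count for degeneration), but there is a genuine error in the de Rham step. The filtered, Galois-equivariant isomorphisms you assert, $H^i(X_{\bar{k}},\mathbb{M}) \cong H^i_\dR(X,\mathcal{E})\otimes_k B_\dR^+$ and then $H^i(X_{\bar{k}},\mathbb{L})\otimes_{\Z_p} B_\dR^+\cong H^i_\dR(X,\mathcal{E})\otimes_k B_\dR^+$, are false in general: the two sides are different $B_\dR^+$-lattices inside the same $B_\dR$-vector space, and the discrepancy between them is exactly the Hodge filtration. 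The filtered equivariant isomorphism only holds after $\otimes_{B_\dR^+} B_\dR$, which is what the paper's Theorem~\ref{DeRhamComparison} proves. For instance, with $\mathbb{L}=\hat{\Z}_p(1)$ on a point, $H^0\otimes B_\dR^+ = B_\dR^+(1)$, whereas $H^0_\dR\otimes B_\dR^+ = t^{-1}B_\dR^+(1)$ inside $B_\dR(1)$. The same conflation appears upstream: $\mathbb{M}$ is not the horizontal sections of $\mathcal{E}\otimes_{\OO_X}\OO\B_\dR^+$ but rather $\mathbb{M}=\Fil^0(\mathcal{E}\otimes_{\OO_X}\OO\B_\dR)^{\nabla=0}$ (the filtration on $\mathcal{E}$ may have negative indices), so the Poincar\'e resolution must involve $\OO\B_\dR$ and be tracked in the filtered derived category. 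Taking associated gradeds of your proposed $B_\dR^+$-isomorphism would give $H^i\otimes\hat{\bar{k}}\cong (H^i_\dR/\Fil^1)\otimes\hat{\bar{k}}$ rather than the Hodge--Tate decomposition, and the claim that $H^i_\dR$ is the associated filtered $k$-vector space would also come out wrong.

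Two secondary points. The formula $R\nu_\ast\OO\B_\dR^+\simeq\OO_X\hat{\otimes}_k B_\dR^+$ on $X_\et$ cannot hold: Proposition~\ref{CohomOOBdR}(ii) shows $R^1\nu_\ast\gr^0\OO\B_\dR$ is nonzero (the $\log\chi$-class from $\Gamma_k$-cohomology); the paper instead verifies the comparison gradedwise over $X_{\bar{k}}$, where it reduces to coherent base change. And the d\'evissage from $\mathbb{F}_p$-coefficients to $\Z/p^n$-coefficients in the first comparison uses $0\to\Z/p\to\Z/p^n\to\Z/p^{n-1}\to 0$ (not the Artin--Schreier sequence) and needs to be run integrally; the paper stays with $\A_\inf$ modulo $[\pi]^m$, inverting $p$ only at the end, after which the almost-ideal becomes invertible in $\B_\inf/(\ker\theta)^k$. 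Fixing the $B_\dR^+$/$B_\dR$ confusion is the essential correction, and with it your plan coincides with the paper's proof.
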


\begin{rem} We define the Hodge cohomology as the hypercohomology of the associated gradeds of the de Rham complex of $\mathcal{E}$, with the filtration induced from $\Fil^\bullet$.
\end{rem}

In particular, we get the following corollary, which answers a question of Tate, \cite{TatePDivGroups}, Remark on p.180.

\begin{cor} For any proper smooth rigid-analytic variety $X$ over $k$, the Hodge-de Rham spectral sequence
\[
H^i(X,\Omega_X^j)\Rightarrow H^{i+j}_\dR(X)
\]
degenerates, there is a Hodge-Tate decomposition
\[
H^i(X_{\bar{k},\et},\Q_p)\otimes_{\Q_p} \hat{\bar{k}}\cong \bigoplus_{j=0}^i H^{i-j}(X,\Omega_X^j)\otimes_k \hat{\bar{k}}(-j)\ ,
\]
and the $p$-adic \'{e}tale cohomology $H^i(X_\et,\Q_p)$ is de Rham, with associated filtered $k$-vector space $H^i_\dR(X)$.
\end{cor}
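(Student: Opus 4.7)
The plan is to apply Theorem~\ref{Thm4} to the constant lisse $\Z_p$-sheaf $\mathbb{L}=\Z_p$ on $X$. All three assertions of the corollary will fall out once one verifies that $\Z_p$ is de Rham with associated filtered module with integrable connection the trivial triple $(\OO_X,d,\Fil^\bullet_\mathrm{triv})$, where $\Fil^p\OO_X=\OO_X$ for $p\le 0$ and $\Fil^p\OO_X=0$ for $p\ge 1$. Griffiths transversality for this triple is automatic: $\nabla(\Fil^0)=d(\OO_X)\subset \OO_X\otimes\Omega^1_X=\Fil^{-1}\otimes\Omega^1_X$.

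The key step is thus to check that under the fully faithful functor of Theorem~\ref{Thm3}, the triple $(\OO_X,d,\Fil^\bullet_\mathrm{triv})$ is carried to the trivial $\B_\dR^+$-local system $\B_\dR^+=\Z_p\otimes_{\hat{\mathbb{Z}}_p}\B_\dR^+$. This should be essentially tautological from the construction of the functor, which is built out of period sheaves on $X_\proet$: the ``horizontal sections'' of the trivial $\B_\dR^+$-local system together with the standard filtration on $\B_\dR^+$ recover $(\OO_X,d,\Fil^\bullet_\mathrm{triv})$, so the functor must send the trivial triple to $\B_\dR^+$. Hence $\Z_p$ is de Rham with the claimed associated data.

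Granting this, I would compute that the filtration induced by Griffiths transversality on the de Rham complex $\OO_X\otimes\Omega_X^\bullet=\Omega_X^\bullet$ is exactly the stupid filtration $\sigma^{\ge p}$, whose $j$-th associated graded piece is $\Omega_X^j[-j]$. Thus the paper's Hodge cohomology specializes to $H^{i-j,j}_\mathrm{Hodge}(X,\OO_X)=H^{i-j}(X,\Omega_X^j)$, and the spectral sequence of Theorem~\ref{Thm4} becomes the classical Hodge-de Rham spectral sequence $H^q(X,\Omega_X^p)\Rightarrow H^{p+q}_\dR(X)$, whose degeneration is asserted there. The Hodge-Tate decomposition, together with the de Rham property of $H^i(X_\et,\Q_p)=H^i(X_\et,\Z_p)\otimes_{\Z_p}\Q_p$ with filtered $k$-structure $H^i_\dR(X)$, then follow as direct specializations of the remaining conclusions of Theorem~\ref{Thm4}.

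The main point to verify is therefore the identification of $(\OO_X,d,\Fil^\bullet_\mathrm{triv})$ with the trivial $\B_\dR^+$-local system under the functor of Theorem~\ref{Thm3}. Although morally trivial, this requires unwinding the construction of the functor via the period sheaves $\B_\dR^+$ and $\OO\B_\dR^+$ (or their local analogues), and tracing through how horizontal sections together with the Hodge filtration of period zero recover the structure sheaf with its tautological connection and filtration. Everything else is formal.
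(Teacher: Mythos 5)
Your proposal is correct and is exactly the specialization the paper intends: the corollary is Theorem~\ref{Thm4} applied to $\mathbb{L}=\Z_p$, once one notes that $\Z_p$ is de Rham with associated filtered module $(\OO_X,d,\Fil^\bullet_\mathrm{triv})$, which is immediate from the construction (taking $\mathcal{M}_0=\OO\B_\dR^+$, $\mathbb{M}_0=(\OO\B_\dR^+)^{\nabla=0}=\B_\dR^+$ by the Poincar\'e lemma, and with $n=m=0$ the construction outputs $\mathbb{M}=\Fil^0\B_\dR=\B_\dR^+$). The identification $H^{i-j,j}_{\mathrm{Hodge}}(X,\OO_X)=H^{i-j}(X,\Omega_X^j)$ you carry out is also exactly what the paper records in the remark following Theorem~\ref{DeRhamComparison}.
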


Interestingly, no 'K\"ahler' assumption is necessary for this result in the $p$-adic case as compared to classical Hodge theory. In particular, one gets degeneration for all proper smooth varieties over fields of characteristic $0$ without using Chow's lemma.

Examples of non-algebraic proper smooth rigid-analytic varieties can be constructed by starting from a proper smooth variety in characteristic $p$, and taking a formal, non-algebraizable, lift to characteristic $0$. This can be done for example for abelian varieties or K3 surfaces. More generally, there is the theory of abeloid varieties, which are 'non-algebraic abelian rigid-analytic varieties', roughly, cf. \cite{LuetkebohmertAbeloid}. Theorem \ref{Thm4} also has the following consequence, which was conjectured by Schneider, cf. \cite{SchneiderLocalSystems}, p.633.

\begin{cor} Let $k$ be a finite extension of $\Q_p$, let $X=\Omega^n_k$ be Drinfeld's upper half-space, which is the complement of all $k$-rational hyperplanes in $\mathbb{P}^{n-1}_k$, and let $\Gamma\subset \PGL_n(k)$ be a discrete cocompact subgroup acting without fixed points on $\Omega^n_k$. One gets the quotient $X_\Gamma = X/\Gamma$, which is a proper smooth rigid-analytic variety over $k$. Let $M$ be a representation of $\Gamma$ on a finite-dimensional $k$-vector space, such that $M$ admits a $\Gamma$-invariant $\OO_k$-lattice. It gives rise to a local system $\mathcal{M}_\Gamma$ of $k$-vector spaces on $X_\Gamma$. Then the twisted Hodge-de Rham spectral sequence
\[
H^i(X_\Gamma,\Omega_{X_\Gamma}^j\otimes \mathcal{M}_\Gamma)\Rightarrow H^{i+j}_\dR(X_\Gamma,\OO_{X_\Gamma}\otimes \mathcal{M}_\Gamma)
\]
degenerates.
\end{cor}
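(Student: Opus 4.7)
The strategy is to reduce to Theorem~\ref{Thm4} by realizing $\mathcal{M}_\Gamma$ (with Hodge filtration concentrated in degree zero) as the filtered module with integrable connection associated to a de Rham lisse $\Z_p$-sheaf on $X_\Gamma$. To construct such a sheaf, I would start from the $\Gamma$-stable lattice $M_0\subset M$, regard it as a free $\Z_p$-module via the finite inclusion $\Z_p\hookrightarrow\OO_k$, and observe that the $\Gamma$-action lands in the compact group $\GL(M_0)(\OO_k)$ and hence extends continuously to a representation of the profinite completion $\hat\Gamma$. Since $\pi\colon X\to X_\Gamma$ is an étale $\Gamma$-torsor (realized pro-étale by the inverse limit along finite-index subgroups of $\Gamma$), descent produces a lisse $\Z_p$-sheaf $\mathbb{L}$ on $X_\Gamma$ with $\pi^\ast\mathbb{L}=\underline{M_0}$ and $\mathbb{L}\otimes_{\Z_p}k=\mathcal{M}_\Gamma$.

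Next, I would identify the associated filtered module with integrable connection of Theorem~\ref{Thm3}. Pulled back to $X$, the $\B_\dR^+$-local system $\mathbb{M}=\mathbb{L}\otimes_{\hat{\Z}_p}\B_\dR^+$ is constant with fiber $M\otimes_k\B_\dR^+$, and under the functor of Theorem~\ref{Thm3} this corresponds to $(\OO_X\otimes_k M,\,1\otimes d)$ equipped with the trivial filtration $\Fil^0=\OO_X\otimes_k M$, $\Fil^1=0$. Functoriality and $\Gamma$-equivariance then make this descend to $X_\Gamma$ as $(\mathcal{E},\nabla,\Fil^\bullet)=(\OO_{X_\Gamma}\otimes_k\mathcal{M}_\Gamma,\,1\otimes d,\,\text{trivial filtration})$, so $\mathbb{L}$ is de Rham with these data. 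Applying Theorem~\ref{Thm4} now gives degeneration of
\[
H^{i-j,j}_{\mathrm{Hodge}}(X_\Gamma,\mathcal{E})\Rightarrow H^i_\dR(X_\Gamma,\mathcal{E}).
\]
Since $\Fil^0=\mathcal{E}$ and $\Fil^1=0$, the induced filtration on the de Rham complex of $\mathcal{E}$ is the stupid filtration, so $H^{i-j,j}_{\mathrm{Hodge}}(X_\Gamma,\mathcal{E})=H^{i-j}(X_\Gamma,\Omega_{X_\Gamma}^j\otimes_k\mathcal{M}_\Gamma)$; reindexing recovers the spectral sequence of the corollary.

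The main obstacle is the second step: showing that the filtration on $\mathcal{E}$ produced by Theorem~\ref{Thm3} really is the trivial one. The underlying $\OO_{X_\Gamma}$-module and connection are essentially forced by the construction of $\mathbb{L}$, but identifying the filtration requires knowing that constant $\B_\dR^+$-local systems on $X$ (or on affinoid pieces thereof) correspond under the Theorem~\ref{Thm3} functor to trivially filtered modules, and that this identification is $\Gamma$-equivariant so as to descend to $X_\Gamma$. This is plausible from naturality, but constitutes the one piece of genuine work beyond pure functoriality of the constructions.
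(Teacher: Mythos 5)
Your overall strategy---produce a lisse $\Z_p$-sheaf $\mathbb{L}$ on $X_\Gamma$ by descending the constant sheaf $\underline{M_0}$ along the $\Gamma$-torsor $X\to X_\Gamma$, show it is de Rham with trivially filtered associated module, and invoke Theorem~\ref{Thm4}---is the right one, and is almost certainly what the paper intends (no proof is given there). The obstacle you flag at the end, identifying the filtration as trivial, is actually the easy part: a constant $\B_\dR^+$-local system is associated (in the sense of Theorem~\ref{Thm3}) to the constant module with trivial filtration, since $\Fil^0(\OO_X\otimes_{\Q_p}M_0\otimes_{\OO_X}\OO\B_\dR)^{\nabla=0}=M_0\otimes_{\Z_p}\B_\dR^+$ when the connection is $1\otimes d$ and the filtration is concentrated in degree $0$.

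The genuine gap is a rank mismatch stemming from conflating the $\Q_p$- and $k$-linear structures. Writing $d=\dim_k M$ and $N=d[k:\Q_p]$, the lattice $M_0$ has $\Z_p$-rank $N$, so $\mathbb{L}$ has $\Z_p$-rank $N$, and the associated filtered module produced by Theorem~\ref{Thm4} is $\mathcal{E}^{\mathrm{big}}=\OO_{X_\Gamma}\otimes_{\Q_p}(\mathbb{L}\otimes_{\Z_p}\Q_p)$, a locally free $\OO_{X_\Gamma}$-module of rank $N$. Your assertions $\mathbb{L}\otimes_{\Z_p}k=\mathcal{M}_\Gamma$ and $\mathbb{M}$ constant with fiber $M\otimes_k\B_\dR^+$ are off by the factor $[k:\Q_p]$; the Theorem~\ref{Thm4} data is not $\OO_{X_\Gamma}\otimes_k\mathcal{M}_\Gamma$ (which has rank $d$). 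To conclude, you need one more step: the $\OO_k$-action on $M_0$ makes $\mathcal{E}^{\mathrm{big}}$ a module over $\OO_{X_\Gamma}\otimes_{\Q_p}k$, which splits as a product of $\OO_{X_\Gamma}$-algebras since $X_\Gamma$ is already a $k$-space; the idempotent cutting out the diagonal factor is horizontal and filtration-preserving, and its image is precisely $\OO_{X_\Gamma}\otimes_k\mathcal{M}_\Gamma$ with the trivial data. The degeneration from Theorem~\ref{Thm4} for $\mathcal{E}^{\mathrm{big}}$ therefore splits as a direct sum of spectral sequences, one summand of which is the twisted Hodge--de Rham spectral sequence for $\mathcal{M}_\Gamma$, and degeneration of a direct sum forces degeneration of each summand. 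Without this decomposition step your reduction does not literally produce the spectral sequence in the statement.
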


The proof of Theorem \ref{Thm4} follows the ideas of Andreatta and Iovita, \cite{AndreattaIovita}, in the crystalline case. One uses a version of the Poincar\'{e} lemma, which says here that one has an exact sequence of sheaves over $X_\proet$,
\[
0\rightarrow \B_\dR^+\rightarrow \OO\B_\dR^+\buildrel\nabla\over\rightarrow \OO\B_\dR^+\otimes_{\OO_X}\Omega_X^1\buildrel\nabla\over\rightarrow\ldots\ ,
\]
where we use slightly nonstandard notation. In \cite{BrinonRepresentations} and \cite{AndreattaIovita}, $\B_\dR^+$ would be called $\B_\dR^{\nabla +}$, and $\OO\B_\dR^+$ would be called $\B_\dR^+$. This choice of notation is used because many sources do not consider sheaves like $\OO\B_\dR^+$, and agree with our notation in writing $\B_\dR^+$ for the sheaf that is sometimes called $\B_\dR^{\nabla +}$. We hope that the reader will find the notation not too confusing.

Given this Poincar\'{e} lemma, it only remains to calculate the cohomology of $\OO\B_\dR^+$, which turns out to be given by coherent cohomology through some explicit calculation. This finishes the proof of Theorem \ref{Thm4}. We note that this proof is direct: All desired isomorphisms are proved by a direct argument, and not by producing a map between two cohomology theories and then proving that it has to be an isomorphism by abstract arguments. In fact, such arguments would not be available for us, as results like Poincar\'{e} duality are not known for the $p$-adic \'{e}tale cohomology of rigid-analytic varieties over $p$-adic fields. It also turns out that our methods are flexible enough to handle the relative case, and our results imply directly the corresponding results for proper smooth algebraic varieties, by suitable GAGA results. This gives for example the following result.

\begin{thm}\label{Thm5} Let $k$ be a discretely valued complete nonarchimedean extension of $\Q_p$ with perfect residue field $\kappa$, and let $f:X\rightarrow Y$ be a proper smooth morphism of smooth rigid-analytic varieties over $k$. Let $\mathbb{L}$ be a lisse $\Z_p$-sheaf on $X$ which is de Rham, with associated filtered module with integrable connection $(\mathcal{E},\nabla,\Fil^\bullet)$. Assume that $R^if_{\proet\ast} \mathbb{L}$ is a lisse $\Z_p$-sheaf on $Y$; this holds true, for example, if the situation comes as the analytification of algebraic objects.

Then $R^if_{\proet\ast} \mathbb{L}$ is de Rham, with associated filtered module with integrable connection given by $R^if_{\dR\ast} (\mathcal{E},\nabla,\Fil^\bullet)$.
\end{thm}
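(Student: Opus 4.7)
The plan is to run the argument of Theorem~\ref{Thm4} relative to $Y$ rather than over a point. Since $R^if_{\proet\ast}\mathbb{L}$ is assumed to be lisse, it corresponds to a $\B_{\dR,Y}^+$-local system $\mathbb{N}$ on $Y_\proet$, and the task is to identify $\mathbb{N}$ with the $\B_{\dR,Y}^+$-local system attached by the functor of Theorem~\ref{Thm3} to $R^if_{\dR\ast}(\mathcal{E},\nabla,\Fil^\bullet)$.

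The first step is to resolve the $\B_\dR^+$-local system $\mathbb{M}=\mathbb{L}\otimes_{\hat\Z_p}\B_\dR^+$ on $X_\proet$ by means of the Poincar\'e lemma complex $\OO\B_\dR^+\otimes_{\OO_X}\Omega_X^\bullet$, with its natural connection. Because $\mathbb{L}$ is de Rham with associated filtered module $(\mathcal{E},\nabla,\Fil^\bullet)$, this resolution identifies with the complex $\mathcal{E}\otimes_{\OO_X}\OO\B_\dR^+\otimes_{\OO_X}\Omega_X^\bullet$ endowed with the tensor-product connection. Applying $Rf_{\proet\ast}$ to $\mathbb{M}$, a relative form of \eqref{NotProvedByBeilinson} (itself a direct consequence of the relative version of Theorem~\ref{Thm2}) yields $Rf_{\proet\ast}\mathbb{L}\otimes_{\hat\Z_p}\B_{\dR,Y}^+$, whose $i$-th cohomology sheaf is $\mathbb{N}$.

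The second step is to push the resolution forward: one must compute $Rf_{\proet\ast}(\mathcal{E}\otimes\OO\B_\dR^+\otimes\Omega_X^\bullet)$. This is the relative analogue of the key calculation carried out in the proof of Theorem~\ref{Thm4}. Factoring through $X_\et$ and using proper smoothness of $f$, the expectation is that $Rf_{\proet\ast}\OO\B_\dR^+$ reduces to $\OO\B_{\dR,Y}^+$, after absorbing the contribution of $\Omega_{X/Y}^\bullet$ into the relative de Rham complex. A projection-formula style identification should then give $R f_{\dR\ast}(\mathcal{E},\nabla)\otimes_{\OO_Y}\OO\B_{\dR,Y}^+$. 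Taking $H^i$ and matching the filtration on $R^if_{\dR\ast}\mathcal{E}$ coming from Griffiths transversality with the filtration on $\mathbb{N}$ induced from $\B_{\dR,Y}^+$ then completes the identification.

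The main obstacle is the relative period-sheaf computation in the second step: one has to check rigorously that the higher pro-\'etale pushforward of $\OO\B_\dR^+$ along a proper smooth morphism behaves as expected, reducing to $\OO\B_{\dR,Y}^+$ twisted by relative differentials. This requires a careful local analysis on affinoid perfectoid covers of $Y$, running the Galois-cohomology calculation of Brinon in the relative setting; once this is in place, compatibility of filtrations and of the connections should follow formally from how the corresponding structures on $\OO\B_\dR^+$ propagate along $f$, and the theorem then reduces to the absolute case fibered over $Y$.
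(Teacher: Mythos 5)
Your proposal follows essentially the same route as the paper's proof (Theorem~\ref{CompRelative}): the relative primitive comparison (Corollary~\ref{RelPrimComp}) gives $R^if_{\proet\ast}\mathbb{M}\cong R^if_{\proet\ast}\mathbb{L}\otimes_{\hat{\Z}_p}\B_{\dR,Y}^+$, the relative Poincar\'e lemma (Proposition~\ref{RelPoincareLemma}) trades $\mathbb{M}\otimes_{f^\ast\B_{\dR,Y}^+}f^\ast\OO\B_{\dR,Y}$ for the complex $DR(\mathcal{E})\otimes_{\OO_X}\OO\B_{\dR,X}$, and the technical point you single out --- that the pushforward of this period-sheaf complex identifies with $Rf_{\dR\ast}(\mathcal{E})\otimes_{\OO_Y}\OO\B_{\dR,Y}$ --- is exactly the content of Lemma~\ref{RelCompCoherent}, proved via the affinoid-perfectoid/Galois-cohomology analysis you anticipate. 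The only imprecision is the phrase that $Rf_{\proet\ast}\OO\B_\dR^+$ ``reduces to $\OO\B_{\dR,Y}^+$'': the paper never computes $Rf_{\proet\ast}\OO\B_\dR^+$ in isolation, but rather the pushforward of the full relative de Rham complex tensored with the period sheaf, which is what the projection-formula-style step actually needs.
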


We note that we make use of the full strength of the theory of perfectoid spaces, \cite{ScholzePerfectoidSpaces1}. Apart from this, our argument is rather elementary and self-contained, making use of little more than basic rigid-analytic geometry, which we reformulate in terms of adic spaces, and basic almost mathematics. In particular, we work entirely on the generic fibre. This eliminates in particular any assumptions on the reduction type of our variety, and we do not need any version of de Jong's alterations, neither do we need log structures. The introduction of the pro-\'{e}tale site makes all constructions functorial, and it also eliminates the need to talk about formal projective or formal inductive systems of sheaves, as was done e.g. in \cite{FaltingsAlmostEtale}, \cite{AndreattaIovita}: All period sheaves are honest sheaves on the pro-\'{e}tale site.

Recently, a different proof of the de Rham comparison theorem for algebraic varieties was given by Beilinson, \cite{Beilinson}. Apart from the idea of extracting many $p$-power roots to kill certain cohomology groups, we see no direct relation between the two approaches. We note that adapting Beilinson's approach to the rigid-analytic case seems to require at least Equation \eqref{NotProvedByBeilinson} as input: Modulo some details, the sheaf $\B_\dR^+$ will appear as the sheaf of constants of the derived de Rham complex, not just the constant sheaf $B_\dR^+$ as in Beilinson's case. This happens because there are no bounded algebraic functions on $p$-adic schemes, but of course there are such functions on affinoid subsets. Also, it looks difficult to get results with coefficients using Beilinson's approach, as the formulation of the condition for a lisse $\Z_p$-sheaf to be de Rham seems to be inherently a rigid-analytic condition.

Let us make some remarks about the content of the different sections. Some useful statements are collected in Section \ref{Miscellany}, in particular concerning comparison with the algebraic theory. In Section \ref{AlmostMath}, we prove a classification result for almost finitely generated $\OO_K$-modules, for nonarchimedean fields $K$ whose valuation is nondiscrete. In Section \ref{ProEtaleSection}, we introduce the pro-\'{e}tale site and establish its basic properties. The most important features are that inverse limits of sheaves are often well-behaved on this site, i.e. higher inverse limits vanish, and that it gives a natural interpretation of continuous group cohomology, which may be of independent interest. Moreover, going from the \'{e}tale to the pro-\'{e}tale site does not change the cohomology. In Section \ref{StructureSheafSection}, we introduce structure sheaves on the pro-\'{e}tale site and prove that they are well-behaved on a basis for the pro-\'{e}tale topology, namely on the affinoid perfectoid subsets. This relies on the full strength of the theory of perfectoid spaces. In Section \ref{FinitenessSection}, we use this description to prove Theorem \ref{Thm1} and Theorem \ref{Thm2} as indicated earlier. In Section \ref{PeriodSheavesSection}, we introduce some period sheaves on the pro-\'{e}tale topology, and describe them explicitly. Given the results of Section \ref{StructureSheafSection}, this is rather elementary and explicit. In Section \ref{DeRhamSection}, we use these period sheaves to prove Theorem \ref{Thm3}, and parts of Theorem \ref{Thm4}. Finally, in Section \ref{ApplicationsSection}, we finish the proofs of Theorem \ref{Thm4} and Theorem \ref{Thm5}.

{\bf Acknowledgments.} Some of these results were announced in March 2011 at a conference at the IAS in Princeton, and the author wants to thank the organizers for the invitation to speak there. He would also like to thank Arthur Ogus and Martin Olsson for the invitation to speak about these results at Berkeley in September 2011. The author would like to thank Lorenzo Ramero for discussions related to the results of Section \ref{AlmostMath}, which were inspired by reading Section 9.3 of \cite{GabberRamero2}, and overlap with it to some extent. Also, he would like to thank Kiran Kedlaya for discussions related to Theorem \ref{Thm3}, in particular for proposing the alternate characterization of $\mathbb{M}$ in Proposition \ref{AlternateCharacterization}. The generalization of these results to general proper smooth rigid-analytic varieties was prompted by a question of Davesh Maulik, whom the author wishes to thank. Further, he wants to thank Ahmed Abbes, Pierre Colmez, Jean-Marc Fontaine, Ofer Gabber, Eugen Hellmann, Adrian Iovita, Wieslawa Niziol, Michael Rapoport and Timo Richarz for helpful discussions. These results were the basis both for an ARGOS seminar in Bonn in the summer of 2011, and a lecture course in Bonn in the summer of 2012, and the author thanks the participants for working through this manuscript. This work was done while the author was a Clay Research Fellow.

\section{Almost finitely generated $\OO$-modules}\label{AlmostMath}

Let $K$ be a nonarchimedean field, i.e. a topological field whose topology is induced by a nonarchimedean norm $|\cdot|: K\rightarrow \R_{\geq 0}$. We assume that the value group $\Gamma = |K^\times|\subset \R_{>0}$ is dense. Let $\OO\subset K$ be the ring of integers, and fix $\pi\in \OO$ some topologically nilpotent element, i.e. $|\pi|<1$. Using the logarithm with base $|\pi|$, we identify $\R_{>0}$ with $\R$; this induces a valuation map $v: K\rightarrow \R\cup \{\infty\}$ sending $\pi$ to $1$. We write $\log \Gamma\subset \R$ for the induced subgroup. For any $r\in \log \Gamma$ we fix some element, formally written as $\pi^r\in K$, such that $|\pi^r|=|\pi|^r$.

In this setting, the maximal ideal $\mm$ of $\OO$ is generated by all $\pi^\epsilon$, $\epsilon>0$, and satisfies $\mm^2=\mm$. We consider the category of almost $\OO$-modules with respect to the ideal $\mm$, i.e. an $\OO$-module $M$ is called almost zero if $\mm M=0$:

\begin{definition} The category of $\OO^a$-modules, or almost $\OO$-modules, is the quotient of the category of $\OO$-modules modulo the category of almost zero modules.
\end{definition}

We denote by $M\mapsto M^a$ the functor from $\OO$-modules to $\OO^a$-modules.

\begin{definition} Let $M$ and $N$ be two $\OO$-modules. For any $\epsilon>0$, $\epsilon\in \log \Gamma$, we say that $M\approx_\epsilon N$ if there are maps $f_\epsilon: M\rightarrow N$, $g_\epsilon: N\rightarrow M$ such that $f_\epsilon g_\epsilon = g_\epsilon f_\epsilon = \pi^\epsilon$. Moreover, if $M\approx_\epsilon N$ for all $\epsilon>0$, we write $M\approx N$.
\end{definition}

Note that the relations $\approx_\epsilon$ and $\approx$ are symmetric, and transitive in the following sense: If $M\approx_\epsilon N$ and $N\approx_\delta L$, then $M\approx_{\epsilon+\delta} L$. In particular, $\approx$ is transitive in the usual sense. Also, note that $M$ is almost zero if and only if $M\approx 0$. In general, for two $\OO$-modules $M$, $N$, if $M^a\cong N^a$ as $\OO^a$-modules, then $M\approx N$, but the converse is not true. In this section, we will concentrate on the equivalence classes of the relation $\approx$ instead of isomorphism classes of $\OO^a$-modules, which is slightly nonstandard in almost mathematics. For this reason, we will mostly work with honest $\OO$-modules instead of $\OO^a$-modules, as the use of the latter will often not clarify the situation.

\begin{definition} Let $M$ be an $\OO$-module. Then $M$ is called almost finitely generated (resp. almost finitely presented) if for all $\epsilon>0$, $\epsilon\in \log\Gamma$, there exists some finitely generated (resp. finitely presented) $\OO$-module $N_\epsilon$ such that $M\approx_\epsilon N_\epsilon$.
\end{definition}

The property of being almost finitely generated (resp. presented) depends only on the $\OO^a$-module $M^a$, so that we may also talk about an $\OO^a$-module being almost finitely generated (resp. presented).

\begin{example}\label{ExAlmFinGen} \begin{altenumerate}
\item[{\rm (i)}] Recall that any finitely generated ideal of $\OO$ is principal, so that $\OO$ is coherent, i.e. any finitely generated submodule of a finitely presented module is again finitely presented. Now let $r\in \R$, $r\geq 0$ and consider the ideal
\[
I_r = \bigcup_{\epsilon\in \log\Gamma, \epsilon>r} \pi^\epsilon \OO\subset \OO\ .
\]
Then the inclusions $\OO\cong \pi^\epsilon\OO\subset I_r$ for $\epsilon>r$ show that $\OO\approx I_r$. However, one can check that $I_r^a$ is not isomorphic to $\OO^a$ as $\OO^a$-modules if $r\not\in \log\Gamma$. Note that all nonprincipal ideals of $\OO$ are of the form $I_r$, in particular all nonzero ideals $I\subset \OO$ satisfy $I\approx \OO$, and hence are almost finitely presented.
\item[{\rm (ii)}] Let $\gamma_1,\gamma_2,\ldots\in \R_{\geq 0}$, such that $\gamma_i\rightarrow 0$ for $i\rightarrow \infty$. Then
\[
\OO/I_{\gamma_1}\oplus \OO/I_{\gamma_2}\oplus\ldots
\]
is almost finitely presented.
\end{altenumerate}
\end{example}

The main theorem of this section is the following.

\begin{thm} Let $M$ be any almost finitely generated $\OO$-module. Then there exists a unique series $\gamma_1\geq\gamma_2\geq\ldots\geq 0$ of real numbers such that $\gamma_i\rightarrow 0$ for $i\rightarrow \infty$, and a unique integer $r\geq 0$, such that
\[
M\approx \OO^r\oplus \OO/I_{\gamma_1}\oplus \OO/I_{\gamma_2}\oplus\ldots \ .
\]
\end{thm}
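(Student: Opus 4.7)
The plan is to reduce the structure theorem to two parts — extracting the free rank of $M$, then classifying the torsion remainder — and to define an intrinsic counting function on the torsion part that yields both uniqueness and existence of the $\gamma_i$.

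First, to extract the rank: since $M \approx_\epsilon N_\epsilon$ with $N_\epsilon$ finitely generated and $\pi^\epsilon \in K^\times$, the space $M \otimes_\OO K$ is a retract of the finite-dimensional $K$-vector space $N_\epsilon \otimes K$, so $r := \dim_K(M \otimes_\OO K)$ is a well-defined nonnegative integer. I would then choose $e_1, \ldots, e_r \in M$ whose images form a $K$-basis of $M \otimes K$, giving a map $\OO^r \to M$; using almost finite generation, this map is almost split-injective with almost-torsion cokernel, and one obtains $M \approx \OO^r \oplus T$ for some torsion almost finitely generated $\OO$-module $T$.

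The heart of the proof is the classification of $T$. The key intrinsic invariant is the counting function
\[
n_T(\alpha) := \dim_k \bigl( (\pi^\alpha T) \otimes_\OO k \bigr), \qquad \alpha \in \log\Gamma_{>0},
\]
where $k = \OO/\mm$. A direct computation on the model module $T_0 = \bigoplus_j \OO/I_{\gamma_j}$ with $\gamma_j \to 0$ yields $n_{T_0}(\alpha) = \#\{j : \gamma_j \geq \alpha\}$, which is finite for every $\alpha > 0$ and determines the multiset $\{\gamma_j\}$ by density of $\log\Gamma$ in $\R$. For a general almost finitely generated torsion $T$, I would first verify that $n_T(\alpha) < \infty$ for every $\alpha > 0$ (using almost finite generation) and that $T \approx_\epsilon T'$ forces $n_T(\alpha + \epsilon) \leq n_{T'}(\alpha) \leq n_T(\alpha - \epsilon)$; this pins down $n_T$ up to its jump points and therefore yields uniqueness of $\{\gamma_i\}$ once existence is established. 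For existence I would iteratively peel off the top cyclic summand: set $\gamma_1 := \sup\{\alpha : n_T(\alpha) \geq 1\}$ (finite by the preceding bound), construct an almost-summand $\OO/I_{\gamma_1} \hookrightarrow T$ by choosing a finitely presented approximation $N_\epsilon \approx_\epsilon T$, applying the elementary divisor theorem for finitely presented modules over the valuation ring $\OO$ (each such module is a finite direct sum of cyclics $\OO/a\OO$), and lifting its top cyclic factor back to $T$ up to $\approx$. Iterating on the almost-complement produces a decreasing sequence $\gamma_1 \geq \gamma_2 \geq \ldots$, and the constraint $n_T(\alpha) < \infty$ for every $\alpha > 0$ forces $\gamma_i \to 0$.

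The principal technical obstacle is controlling the passage to the limit in this peeling step: the Smith-normal-form decompositions of the approximations $N_\epsilon$ shift with $\epsilon$, and one needs a stability lemma asserting that their multisets of invariants $\{a_{\epsilon, j}\}$ converge (in the counting-function sense) to $\{\gamma_i\}$ as $\epsilon \to 0$, in a way strong enough to produce honest almost-morphisms $\OO/I_{\gamma_i} \to T$ realizing the peeling rather than merely matching numerical data. Once that stability is proved, the iterative construction terminates after countably many steps in the desired almost isomorphism, and uniqueness of $(r, \{\gamma_i\})$ follows formally from the intrinsic invariants $r = \dim_K M \otimes K$ and $n_T$.
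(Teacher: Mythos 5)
Your extraction of the free rank and your choice of intrinsic invariant are sound: $r = \dim_K(M\otimes K)$ and the counting function $n_T(\alpha) = \dim_\kappa((\pi^\alpha T)\otimes_\OO \kappa)$ are exactly what the paper uses, the latter appearing (for finitely presented $T$) as the jump-counting characterization of the elementary divisors $\gamma_{T,i}$. Your observation that $T\approx_\epsilon T'$ forces $n_T(\alpha+\epsilon)\le n_{T'}(\alpha)\le n_T(\alpha-\epsilon)$ is likewise correct and gives uniqueness.

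The gap is in existence, and you flag it yourself without resolving it. The iterative peeling scheme has two problems. First, it only uses the \emph{forward} direction of the comparison lemma (close in $\approx_\epsilon$ implies close elementary divisors); but what existence actually requires is the \emph{converse}: two finitely presented torsion modules whose elementary divisor sequences differ by at most $\epsilon$ in $\ell^\infty$ are $\approx_\epsilon$ to each other. Without that, ``lifting the top cyclic factor back to $T$ up to $\approx$'' is not a construction — it is a restatement of what needs to be proved. Second, even granting one peel, iterating countably many times accumulates errors: if $T\approx_{\epsilon_i}\OO/I_{\gamma_1}\oplus\cdots\oplus\OO/I_{\gamma_i}\oplus T_i$, the relations at different $i$ do not combine into a single $\approx_\epsilon$ between $T$ and $\bigoplus_{i\ge 1}\OO/I_{\gamma_i}$, because $\approx$ is not closed under such infinite compositions and $\gamma_i$ only tends to $0$ rather than terminating.

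The paper sidesteps peeling entirely. Having proved the two-sided comparison $M\approx_\epsilon M' \Leftrightarrow \|\gamma_M-\gamma_{M'}\|\le\epsilon$ for finitely presented torsion modules, it defines $\gamma_T\in\ell^\infty_\ge(\N)_0$ for a general $T$ as the Cauchy limit of $\gamma_{T_\epsilon}$ over finitely presented approximations $T_\epsilon\approx_\epsilon T$, writes down the explicit model $N=\bigoplus_i\OO/I_{\gamma_{T,i}}$, observes $\gamma_N=\gamma_T$, and then compares $T$ to $N$ in one step: for each $\epsilon$, pick finitely presented $T_\epsilon\approx_\epsilon T$, $N_\epsilon\approx_\epsilon N$, deduce $\|\gamma_{T_\epsilon}-\gamma_{N_\epsilon}\|\le 2\epsilon$, invoke the converse direction to get $T_\epsilon\approx_{2\epsilon}N_\epsilon$, hence $T\approx_{4\epsilon}N$, and let $\epsilon\to 0$. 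To repair your argument you would need to prove this converse comparison lemma for finitely presented modules (it follows from $\gamma_{M,i}\ge\gamma_{M',i}-\epsilon$ implying a surjection $M\to\pi^\epsilon M'$ in the Smith normal form) and replace the iterative peeling with this one-shot comparison against the explicit direct sum.
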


First, we note that $\OO$ is 'almost noetherian'.

\begin{prop} Every almost finitely generated $\OO$-module is almost finitely presented.
\end{prop}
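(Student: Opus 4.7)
My plan is to reduce the proposition to the following key lemma: for every submodule $K \subset \OO^n$ and every $\epsilon > 0$ in $\log\Gamma$, there exists a finitely generated submodule $K_0 \subset K$ with $\pi^\epsilon K \subset K_0$. Granting this, suppose $M$ is almost finitely generated and fix $\epsilon > 0$. I would choose a small $\delta \in \log\Gamma$ (eventually $\delta < \epsilon/3$, adjusted into $\log\Gamma$ by density) and, by hypothesis, a finitely generated $\OO$-module $N$ with $M \approx_\delta N$. Present $N = \OO^n / K$ and apply the key lemma to produce a finitely generated $K_0 \subset K$ with $\pi^{\epsilon - 2\delta} K \subset K_0$. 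Then $L := \OO^n/K_0$ is finitely presented, and the surjection $p : L \to N$ has kernel $K/K_0$ killed by $\pi^{\epsilon - 2\delta}$. A section-up-to-$\pi^{\epsilon-2\delta}$ is constructed by lifting any $n \in N$ to some $l \in L$ and sending $n \mapsto \pi^{\epsilon - 2\delta} l$; this is well-defined since the ambiguity in the lift lies in the annihilated kernel. This yields $L \approx_{\epsilon-2\delta} N$, and combined with $M \approx_\delta N$ via transitivity of $\approx_\bullet$, we get $M \approx_{\epsilon-\delta} L$, hence $M \approx_\epsilon L$, as desired.

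For the key lemma itself I would argue by induction on $n$. In the base case $n=1$, the submodule $K \subset \OO$ is an ideal; if $K$ is finitely generated we take $K_0 = K$, otherwise the infimum $r := \inf_{y \in K\setminus\{0\}} v(y) \in \R_{\geq 0}$ cannot be attained (else $K$ would be principal), so by density of $\log\Gamma$ we can pick $x \in K$ with $r < v(x) < r + \epsilon$, and then for any $y \in K$ we have $v(\pi^\epsilon y) \geq \epsilon + r > v(x)$, giving $\pi^\epsilon y \in x\OO$. For the inductive step, project $K$ to the first coordinate to obtain $K_1 \subset \OO$ and intersect with $0 \oplus \OO^{n-1}$ to get $K' \subset \OO^{n-1}$. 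Applying the $n=1$ case to $K_1$ with budget $\epsilon/2$ yields $x_1 \OO \subset K_1$ which I lift to some $\tilde x_1 \in K$; applying the inductive hypothesis to $K'$ with budget $\epsilon/2$ yields a finitely generated $K'_0 \subset K'$. Then $K_0 := \tilde x_1 \OO + K'_0 \subset K$ is finitely generated, and for $y \in K$ one writes $\pi^{\epsilon/2} y = a \tilde x_1 + y''$ with $a \in \OO$ arranged so that $y''$ has vanishing first coordinate, i.e.\ $y'' \in K'$; then $\pi^{\epsilon/2} y'' \in K'_0$, and combining gives $\pi^\epsilon y \in K_0$.

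The main obstacle is the inductive step of the key lemma: naively approximating the projection $K_1$ does not automatically give a good approximation on the orthogonal direction $K'$, so one must split the $\epsilon$-budget in two, spend the first half producing a lift $\tilde x_1 \in K$ that kills the first coordinate of $\pi^{\epsilon/2} y$, and spend the second half via induction on the resulting element of $K'$. The role of dense value group is also essential, both to make $\pi^{\epsilon/2}$ exist and to realize near-infimal valuations inside an arbitrary non-principal ideal; without density the base case would already fail, reflecting the well-known fact that almost mathematics behaves very differently over a discrete valuation ring.
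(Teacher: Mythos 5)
Your proof is correct, and it takes a genuinely different route from the paper. The paper's argument leans on the abstract machinery of \cite{GabberRamero}, Lemma 2.3.18, devissaging to cyclic modules $\OO/I$: one first observes (Example \ref{ExAlmFinGen}(i)) that every ideal $I\subset\OO$ is almost finitely generated, deduces via part (iv) of that lemma that $\OO/I$ is almost finitely presented, then propagates this to arbitrary finitely generated modules via a filtration with cyclic quotients and part (ii), and finally to almost finitely generated modules by approximation and transitivity of $\approx_\epsilon$. You instead bypass the exact-sequence calculus entirely and prove the direct structural statement: every submodule $K\subset\OO^n$ contains a finitely generated $K_0$ with $\pi^\epsilon K\subset K_0$. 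The base case $n=1$ is essentially the same observation the paper records as Example \ref{ExAlmFinGen}(i) (nonprincipal ideals nearly attain their infimum of valuations, so one can choose a near-generator), but your inductive step -- project to the first coordinate to approximate there, lift a generator into $K$ to kill the first coordinate, and spend the remaining half of the budget on $K\cap(0\oplus\OO^{n-1})$ by induction -- replaces the appeal to coherence of $\OO$ and the four-part Gabber--Ramero lemma with a concrete construction. What your approach buys is self-containedness and an explicit approximating presentation; what the paper's approach buys is reusability of the abstract lemma (which is invoked again immediately afterwards, e.g.\ to conclude that subquotients of almost finitely generated modules are almost finitely generated, a fact not obviously recoverable from your key lemma alone). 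Both hinge on the same two ingredients: density of the value group so that $\pi^{\epsilon/2}$ exists, and the near-principality of ideals of $\OO$.
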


\begin{proof} First, recall the following abstract result.

\begin{prop}[\cite{GabberRamero}, Lemma 2.3.18] Let $0\rightarrow M^\prime\rightarrow M\rightarrow M^{\prime\prime}\rightarrow 0$ be an exact sequence of $\OO$-modules.
\begin{altenumerate}
\item[{\rm (i)}] If $M$ is almost finitely generated, then $M^{\prime\prime}$ is almost finitely generated.
\item[{\rm (ii)}] If $M^\prime$ and $M^{\prime\prime}$ are almost finitely generated (resp. presented), then $M$ is almost finitely generated (resp. presented).
\item[{\rm (iii)}] If $M$ is almost finitely generated and $M^{\prime\prime}$ is almost finitely presented, then $M^\prime$ is almost finitely generated.
\item[{\rm (iv)}] If $M$ is almost finitely presented and $M^\prime$ is almost finitely generated, then $M^{\prime\prime}$ is almost finitely presented.
\end{altenumerate}
\end{prop}

Now let $M$ be an almost finitely generated $\OO$-module; we want to show that it is almost finitely presented. We start with the case that $M$ is generated by one element, $M=\OO/I$ for some ideal $I\subset \OO$. By Example \ref{ExAlmFinGen} (i), $I$ is almost finitely generated, giving the claim by part (iv).

Now assume that $M$ is finitely generated, and let $0=M_0\subset M_1\subset\ldots\subset M_k=M$ be a filtration such that all $M_i/M_{i-1}$ are generated by one element. Then by the previous result, all $M_i/M_{i-1}$ are almost finitely presented, and hence $M$ is almost finitely presented by part (ii).

Finally, take $M$ any almost finitely generated $\OO$-module. Let $\epsilon>0$, $\epsilon\in \log\Gamma$, and choose $N_\epsilon$ finitely generated, $M\approx_\epsilon N_\epsilon$. Then $N_\epsilon$ is almost finitely presented, so there exists some finitely presented $L_\epsilon$ such that $N_\epsilon\approx_\epsilon L_\epsilon$. Then $M\approx_{2\epsilon} L_\epsilon$, and letting $\epsilon\rightarrow 0$, we get the result.
\end{proof}

We note that it follows that any subquotient of an almost finitely generated $\OO$-module is almost finitely generated, so that in particular the category of almost finitely generated $\OO$-modules is abelian.

The following proposition reduces the classification problem to the case of torsion modules.

\begin{prop}\begin{altenumerate}
\item[{\rm (i)}] Let $M$ be a finitely generated torsion-free $\OO$-module. Then $M$ is free of finite rank.
\item[{\rm (ii)}] Let $M$ be an almost finitely generated torsion-free $\OO$-module. Then $M\approx \OO^r$ for a unique integer $r\geq 0$.
\item[{\rm (iii)}] Let $M$ be an almost finitely generated $\OO$-module. Then there exists a unique integer $r\geq 0$ and an almost finitely generated torsion $\OO$-module $N$ such that $M\approx \OO^r\oplus N$.
\end{altenumerate}
\end{prop}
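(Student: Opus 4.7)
The plan is to handle parts (i), (ii), (iii) in order, each building on the previous.

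For (i), since $\OO$ is a valuation ring, I would argue by induction on the minimal size $n$ of a generating set of $M$. Given a minimal generating set $m_1,\dots,m_n$, consider any relation $\sum a_i m_i = 0$. The finitely generated ideal $(a_1,\dots,a_n)\subset\OO$ is principal (valuation-ring property), say generated by $a_j$, so $a_i=b_i a_j$ with $b_j=1$. Torsion-freeness then cancels $a_j$ to give $\sum b_i m_i = 0$, expressing $m_j$ in terms of the other $m_i$ and contradicting minimality unless all $a_i=0$. Hence $M\cong\OO^n$ is free.

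For (ii), the essential trick is to upgrade the approximating modules $N_\epsilon$ to free ones. Start with $N_\epsilon$ finitely generated and maps $f_\epsilon:M\to N_\epsilon$, $g_\epsilon:N_\epsilon\to M$ with $g_\epsilon f_\epsilon=f_\epsilon g_\epsilon=\pi^\epsilon$. Since $M$ is torsion-free, $g_\epsilon$ factors through $\tilde N_\epsilon := N_\epsilon/(N_\epsilon)_{\mathrm{tors}}$, and composing $f_\epsilon$ with the projection $N_\epsilon\twoheadrightarrow \tilde N_\epsilon$ yields $\tilde f_\epsilon:M\to \tilde N_\epsilon$; a short check (using that the torsion of $N_\epsilon$ maps to zero in $M$) shows the two compositions are still $\pi^\epsilon$. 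Now $\tilde N_\epsilon$ is finitely generated and torsion-free, hence free by (i). Its rank equals $\dim_K \tilde N_\epsilon\otimes_\OO K=\dim_K M\otimes_\OO K=:r$, because tensoring with $K$ makes $\pi^\epsilon$ a unit, so the rank is independent of $\epsilon$ and finite. Thus $\tilde N_\epsilon\cong\OO^r$, giving $M\approx_\epsilon\OO^r$ for every $\epsilon>0$, i.e.\ $M\approx\OO^r$. Uniqueness of $r$ is immediate from $r=\dim_K M\otimes_\OO K$.

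For (iii), consider the short exact sequence $0\to M_{\mathrm{tors}}\to M\to M/M_{\mathrm{tors}}\to 0$. The quotient is torsion-free and almost finitely generated (as a quotient of $M$), so by (ii), $M/M_{\mathrm{tors}}\approx\OO^r$ for a unique $r$; in particular it is almost finitely presented, whence the abstract lemma quoted above forces $M_{\mathrm{tors}}$ to be almost finitely generated. The key remaining step is to split the sequence up to $\approx$. For each $\epsilon$, fix almost-inverses $g_\epsilon:\OO^r\to M/M_{\mathrm{tors}}$ and $f_\epsilon:M/M_{\mathrm{tors}}\to\OO^r$ with compositions $\pi^\epsilon$. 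Using that $\OO^r$ is projective, lift $g_\epsilon$ along $p:M\twoheadrightarrow M/M_{\mathrm{tors}}$ to $\tilde g_\epsilon:\OO^r\to M$, and set $s_\epsilon := \tilde g_\epsilon f_\epsilon:M/M_{\mathrm{tors}}\to M$, so that $p\circ s_\epsilon = \pi^\epsilon$. Then define
\[
F_\epsilon:M\to M_{\mathrm{tors}}\oplus M/M_{\mathrm{tors}},\quad m\mapsto\bigl(\pi^\epsilon m - s_\epsilon(p(m)),\,p(m)\bigr),
\]
\[
G_\epsilon:M_{\mathrm{tors}}\oplus M/M_{\mathrm{tors}}\to M,\quad (t,\bar m)\mapsto t + s_\epsilon(\bar m).
\]
A direct computation using $ps_\epsilon=\pi^\epsilon$ shows $G_\epsilon F_\epsilon=\pi^\epsilon$ and $F_\epsilon G_\epsilon=\pi^\epsilon$; combining with the already-established $M/M_{\mathrm{tors}}\approx\OO^r$ via the obvious diagonal $\pi^\epsilon\oplus f_\epsilon$ and $\pi^\epsilon\oplus g_\epsilon$, transitivity of $\approx$ yields $M\approx M_{\mathrm{tors}}\oplus\OO^r$ with $N:=M_{\mathrm{tors}}$. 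Uniqueness of $r$ is again forced by $r=\dim_K M\otimes_\OO K$. The main obstacle I foresee is the careful bookkeeping in (ii) and (iii): the torsion-killing maneuver requires checking both compositions survive the quotient, and the splitting maps must be normalized precisely so that the factor of $\pi^\epsilon$ lands cleanly in both directions rather than picking up unwanted $(1-\pi^\epsilon)$-corrections.
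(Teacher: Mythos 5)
Your proof is correct and rests on the same overall strategy as the paper's---reduce (ii) to (i) by replacing the approximating modules with free ones, then reduce (iii) to (ii) via the torsion filtration---but the implementation differs in two places. In (ii), the paper observes that one may take the approximating module $N_\epsilon$ to be a finitely generated \emph{submodule} of $M$ (the image of $g_\epsilon$), which is automatically torsion-free; you instead pass to $N_\epsilon/(N_\epsilon)_{\mathrm{tors}}$, which works for the same reason and costs nothing. In (iii), the paper forms the pullback $M_\epsilon = M\times_{M/N}\OO^r$, an extension of $\OO^r$ by $N$ which splits by projectivity, and shows $M\approx_\epsilon M_\epsilon$; your explicit maps $F_\epsilon,G_\epsilon$ realizing $M\approx_\epsilon N\oplus M/N$ are the same idea written out concretely, and they are correct as stated. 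There is one small slip in the very last transitivity step: the pair $\pi^\epsilon\oplus f_\epsilon$ and $\pi^\epsilon\oplus g_\epsilon$ has compositions $\pi^{2\epsilon}\oplus\pi^\epsilon$, which is not multiplication by a single power of $\pi$, so it does not directly exhibit a relation $\approx_\delta$. The fix is to scale only one side, e.g. use $\pi^\epsilon\cdot\mathrm{id}_N\oplus f_\epsilon$ together with $\mathrm{id}_N\oplus g_\epsilon$, whose compositions both equal $\pi^\epsilon$; transitivity then gives $M\approx_{2\epsilon} N\oplus\OO^r$, and $\epsilon\to 0$ finishes.
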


\begin{proof} \begin{altenumerate}
\item[{\rm (i)}] Let $\kappa$ be the residue field of $\OO$. Lifting a basis of the finite-dimensional $\kappa$-vector space $M\otimes \kappa$, we get a surjection $\OO^r\rightarrow M$. Assume that the kernel is nontrivial, and let $f\in \OO^r$ be in the kernel. Write $f=\pi^\gamma g$ for some $\gamma>0$, $\gamma\in \log\Gamma$, such that $g$ has nontrivial image in $\kappa^r$. This is possible, as greatest common divisors of finitely elements of $\OO$ exist. Now $g$ has nontrivial image in $M$, but $\pi^\gamma g$ becomes $0$ in $M$, which means that $M$ has torsion, which is a contradiction.
\item[{\rm (ii)}] By definition, for any $\epsilon>0$, $\epsilon\in \log\Gamma$, there is some finitely generated submodule $N_\epsilon\subset M$ such that $M\approx_\epsilon N_\epsilon$. But then $N_\epsilon$ is finitely generated and torsion-free, hence free of finite rank. The rank is determined as the dimension $r$ of the $K$-vector space $M\otimes K$. Hence $M\approx_\epsilon N_\epsilon\cong \OO^r$ for all $\epsilon>0$, giving the claim.
\item[{\rm (iii)}] Let $N\subset M$ be the torsion submodule, which is almost finitely generated by our previous results. Let $M^\prime = M/N$, which is almost finitely generated and torsion-free, hence $M^\prime\approx \OO^r$, where $r$ is the dimension of $M\otimes K$. For any $\epsilon>0$, $\epsilon\in \log\Gamma$, there is some $M_\epsilon$ such that $M\approx_\epsilon M_\epsilon$ and $M_\epsilon$ is an extension of $\OO^r$ by $N$. As $\OO^r$ is projective, $M_\epsilon\cong \OO^r\oplus N$, hence $M\approx_\epsilon \OO^r\oplus N$. Letting $\epsilon\rightarrow 0$, we get the result.
\end{altenumerate}
\end{proof}

Next, we discuss elementary divisors for finitely presented torsion $\OO$-modules, and then for almost finitely generated torsion $\OO$-modules.

\begin{definition} Let $\ell^\infty(\N)_0$ be the space of sequences $\gamma = (\gamma_1,\gamma_2,\ldots)$, $\gamma_i\in \R$ such that $\gamma_i\rightarrow 0$ for $i\rightarrow \infty$. We endow it with the $\ell^\infty$-norm $||\gamma|| = \max(|\gamma_i|)$. Let $\ell^\infty_{\geq}(\N)_0\subset \ell^\infty(\N)_0$ be the subspace of sequences for which $\gamma_1\geq \gamma_2\geq \ldots \geq 0$.

On $\ell^\infty_{\geq}(\N)_0$, introduce the majorization order $\geq$ by saying that $\gamma\geq \gamma^\prime$ if and only if for all $i\geq 1$,
\[
\gamma_1+\ldots+\gamma_i\geq \gamma_1^\prime+\ldots+\gamma_i^\prime\ .
\]
\end{definition}

Note that $\ell^\infty(\N)_0$ is complete for the norm $||\cdot||$, and that $\ell^\infty_\geq(\N)_0\subset \ell^\infty(\N)_0$ is a closed subspace.

\begin{prop}\begin{altenumerate}
\item[{\rm (i)}] Let $M$ be a finitely presented torsion $\OO$-module. Then there exist unique $\gamma_{M,1}\geq \gamma_{M,2}\geq \ldots\geq \gamma_{M,k} > 0$, $\gamma_{M,i}\in \log \Gamma$, such that
\[
M\cong \OO/\pi^{\gamma_{M,1}}\oplus \OO/\pi^{\gamma_{M,2}}\oplus\ldots\oplus\OO/\pi^{\gamma_{M,k}}\ .
\]
Write $\gamma_M = (\gamma_{M,1},\ldots,\gamma_{M,k},0,\ldots)\in \ell^\infty_\geq(\N)_0$, and set $\lambda(M) = \gamma_{M,1} + \ldots + \gamma_{M,k}$, called the length of $M$.
\item[{\rm (ii)}] Let $M$, $M^\prime$ be finitely presented torsion $\OO$-modules. If $M^\prime$ is a subquotient of $M$, then $\gamma_{M^\prime,i}\leq \gamma_{M,i}$ for all $i\geq 1$.
\item[{\rm (iii)}] Let $0\rightarrow M^\prime\rightarrow M\rightarrow M^{\prime\prime}\rightarrow 0$ be an exact sequence of finitely presented torsion $\OO$-modules. Then $\lambda(M) = \lambda(M^\prime) + \lambda(M^{\prime\prime})$ and $\gamma_M\leq \gamma_{M^\prime} + \gamma_{M^{\prime\prime}}$, using the majorization order.
\item[{\rm (iv)}] Let $M$, $M^\prime$ be finitely presented torsion $\OO$-modules. Then $M\approx_\epsilon M^\prime$ if and only if $||\gamma_M - \gamma_{M^\prime}||\leq \epsilon$.
\end{altenumerate}
\end{prop}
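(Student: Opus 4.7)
The plan is to handle (i)--(iv) in sequence, using the valuation-ring (hence Bezout) structure of $\OO$ together with Fitting-ideal and exterior-power characterizations of the elementary divisors. For (i), existence follows from Smith normal form: since $\OO$ is Bezout, any square presentation matrix of the finitely presented torsion module $M$ can be diagonalized by row and column operations into $\diag(\pi^{\gamma_1},\ldots,\pi^{\gamma_k})$, and reordering with $\gamma_1\geq\ldots\geq\gamma_k>0$ yields the claim. Uniqueness follows from the fact that $v(\mathrm{Fitt}_i(M))=\gamma_{i+1}+\ldots+\gamma_k$ recovers each $\gamma_j$ as an intrinsic invariant.

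For (ii), I would use the identity $\gamma_{M,i}=v(\mathrm{ann}(\Lambda^i_\OO M))$, immediate from (i) via $\bigotimes_{j\in S}\OO/\pi^{\gamma_j}=\OO/\pi^{\min_{j\in S}\gamma_j}$. A surjection $M\twoheadrightarrow M'$ induces $\Lambda^iM\twoheadrightarrow \Lambda^iM'$, so $\mathrm{ann}(\Lambda^iM)\subset \mathrm{ann}(\Lambda^iM')$, giving $\gamma_{M',i}\leq \gamma_{M,i}$. For a submodule $N\subset M$, I would dualize via $M\mapsto \Hom_\OO(M,K/\OO)$, which preserves elementary divisors (since $\Hom_\OO(\OO/\pi^\gamma,K/\OO)\cong\OO/\pi^\gamma$) and exchanges sub- with quotient-modules, reducing to the quotient case. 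For (iii), length additivity is standard, while for the majorization I rely on the characterization
\[
\sum_{j=1}^i \gamma_{M,j}=\max\{\lambda(Q)\ :\ Q\text{ a finitely presented quotient of }M\text{ having at most }i\text{ generators}\}
\]
apparent from (i) and (ii). Given $0\to M'\to M\to M''\to 0$ and an optimal $Q=M/N$ for $M$, consider the induced short exact sequence $0\to Q'\to Q\to Q''\to 0$, where $Q'=M'/(M'\cap N)$ is a quotient of $M'$ and $Q''$ is a quotient of $M''$. Both $Q'$ and $Q''$ have at most $i$ generators: $Q''$ as a quotient of $Q$, and $Q'\subset Q$ via the Smith-normal-form fact that over the valuation ring $\OO$, a finitely generated submodule of a finitely generated torsion module admits no more generators than the ambient module. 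Length additivity then yields $\sum_{j\leq i}\gamma_{M,j}=\lambda(Q)=\lambda(Q')+\lambda(Q'')\leq \sum_{j\leq i}\gamma_{M',j}+\sum_{j\leq i}\gamma_{M'',j}$, as required.

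For (iv), the direction $\|\gamma_M-\gamma_{M'}\|\leq\epsilon\Rightarrow M\approx_\epsilon M'$ is a summand-by-summand construction on the decompositions from (i). For the converse, given $f:M\to M'$, $g:M'\to M$ with $fg=gf=\pi^\epsilon$, I would use two auxiliary lemmas: first, if $K\subset M[\pi^\epsilon]$ then $\gamma_{M/K,i}\geq \gamma_{M,i}-\epsilon$, proved by factoring $\pi^\epsilon:M\to M$ through $M/K$, which exhibits $\pi^\epsilon M$ (with $\gamma_{\pi^\epsilon M,i}=\max(\gamma_{M,i}-\epsilon,0)$) as a quotient of $M/K$, then applying (ii); and second, if $N\subset M$ with $M/N$ of exponent $\leq \pi^\epsilon$ then $\gamma_{N,i}\geq \gamma_{M,i}-\epsilon$, since $\pi^\epsilon M\subset N$ and (ii) applies. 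Since $\ker f\subset M[\pi^\epsilon]$ and $M'/f(M)$ has exponent $\leq \pi^\epsilon$, applying the two lemmas to $f(M)\cong M/\ker f\hookrightarrow M'$ together with (ii) yields $|\gamma_{M,i}-\gamma_{M',i}|\leq \epsilon$ for every $i$.

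The main obstacle I anticipate is the generator-counting input to (iii), which is routine over a PID but requires some care in the non-discretely-valued setting here; every other step reduces cleanly to the classification in (i).
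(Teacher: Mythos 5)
Your proof is correct, and in a few places it takes a genuinely different (and in my view somewhat cleaner) path than the one in the paper. In (i) you pin down uniqueness via Fitting ideals, whereas the paper reads off $\gamma_{M,i}$ from the jumps of $\gamma\mapsto\dim_\kappa((\pi^\gamma M)\otimes_\OO\kappa)$; both are fine. In (ii) your identity $\gamma_{M,i}=v(\mathrm{ann}(\Lambda^i_\OO M))$ makes the quotient case immediate and conceptual, while the paper argues more by hand, reducing modulo $\pi^{\gamma_{M',i}}$ and splitting off an $(\OO/\pi^\gamma)$-free direct summand; both then treat submodules by the same $\Hom_\OO(-,K/\OO)$ duality. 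In (iii) your characterization of $\sum_{j\le i}\gamma_{M,j}$ as a maximum of $\lambda(Q)$ over $\le i$-generated quotients is the exact dual of the paper's device, which fixes the specific \emph{submodule} $M_i=\bigoplus_{j\le i}\OO/\pi^{\gamma_{M,j}}\subset M$ and intersects/projects it onto $M'$, $M''$; both reduce to (ii) plus additivity of $\lambda$. In (iv) you extract both inequalities from the single map $f$ (via $M/\ker f\cong f(M)\subset M'$), while the paper uses one chain $\pi^\epsilon M'\subset L\subset M'$ and then invokes symmetry; again both work.

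One small remark on the step you flag as the main obstacle: the fact that a finitely generated submodule $Q'\subset Q$ of a finitely presented torsion module with at most $i$ elementary divisors again needs at most $i$ generators is not an extra input requiring care in the non-Noetherian setting — it is an immediate consequence of part (ii) together with coherence of $\OO$. Coherence gives that $Q'$ is finitely presented, and then (ii) gives $\gamma_{Q',j}\le\gamma_{Q,j}=0$ for $j>i$. So you can drop the appeal to a separate "Smith-normal-form fact" and simply cite (ii); the paper's argument at the analogous point does exactly this.
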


\begin{proof}\begin{altenumerate}
\item[{\rm (i)}] Choose a short exact sequence $0\rightarrow N\rightarrow \OO^k\rightarrow M\rightarrow 0$. Then $N$ is finitely generated (as $M$ is finitely presented) and torsion-free, hence free of finite rank. As $M$ is torsion, the rank of $N$ is $k$, hence $N\cong \OO^k$. It follows that there is some matrix $A\in M_k(\OO)\cap \GL_k(K)$ such that $M=\coker A$. But the Cartan decomposition
\[
M_k(\OO)\cap \GL_k(K) =\bigcup \GL_k(\OO) \diag(\pi^{\gamma_1},\ldots,\pi^{\gamma_k})\GL_k(\OO)
\]
holds true over $K$ with the usual proof: One defines $\pi^{\gamma_k}$ as the greatest common divisor of all entries of $A$, moves this entry to the lower-right corner, eliminates the lowest row and right-most column, and then proceeds by induction on $k$. We may thus replace $A$ by a diagonal matrix with entries $\pi^{\gamma_1},\ldots,\pi^{\gamma_k}$, and then the result is clear. For uniqueness, note that the $\gamma_{M,i}$ are the jumps of the function mapping $\gamma\geq 0$, $\gamma\in \log\Gamma$, to
\[
\dim_{\kappa} ((\pi^\gamma M)\otimes_\OO \kappa)\ .
\]
\item[{\rm (ii)}] It is enough to deal with the case of submodules and quotients. Using the duality $M\mapsto \Hom_\OO(M,K/\OO)$, one reduces the case of submodules to the case of quotients. Hence assume $M^\prime$ is a quotient of $M$ and $\gamma_{M^\prime,i}>\gamma_{M,i}$ for some $i$. Set $\gamma=\gamma_{M^\prime,i}$ and replace $M$ and $M^\prime$ by $M/\pi^\gamma$, resp. $M^\prime/\pi^\gamma$. Then
\[
\gamma_{M,k}=\gamma_{M^\prime,k} = \gamma_{M^\prime,i} = \gamma
\]
for $k<i$, but $\gamma_{M,i}<\gamma$. It follows that $M^\prime$ admits a direct summand $L=(\OO/\pi^\gamma)^k$. The surjection $M\rightarrow L$ of $\OO/\pi^\gamma$-modules splits, hence $L$ is a direct summand of $M$. But then $\gamma_{M,i}\geq \gamma_{L,i} = \gamma$, contradiction.
\item[{\rm (iii)}] The additivity of $\lambda$ follows easily from the multiplicativity of the $0$-th Fitting ideal, but one can phrase the proof more elementary as follows. It is easy to construct a commutative exact diagram
\[\xymatrix{
& 0\ar[d] & 0\ar[d] & 0\ar[d] &\\
0\ar[r] & \OO^{k^\prime}\ar[d]^{A^\prime}\ar[r] & \OO^k\ar[d]^A\ar[r] & \OO^{k^{\prime\prime}}\ar[d]^{A^{\prime\prime}}\ar[r] & 0\\
0\ar[r] & \OO^{k^\prime}\ar[d]\ar[r] & \OO^k\ar[d]\ar[r] & \OO^{k^{\prime\prime}}\ar[d]\ar[r] & 0\\
0\ar[r] & M^\prime\ar[d]\ar[r] & M\ar[d]\ar[r] & M^{\prime\prime}\ar[d]\ar[r] & 0\\
& 0 & 0 & 0 &\\
}\]
Here, $k=k^\prime+k^{\prime\prime}$, the maps $\OO^{k^\prime}\rightarrow \OO^k$ are the inclusion of the first $k^\prime$ coordinates, and the maps $\OO^k\rightarrow \OO^{k^{\prime\prime}}$ are the projection to the last $k^{\prime\prime}$ coordinates. In that case, $\lambda(M^\prime) = v(\det A^\prime)$, and similarly for $M$ and $M^{\prime\prime}$, and $A$ is a block-upper triangular matrix with $A^\prime$ and $A^{\prime\prime}$ on the diagonal, so that $\det A = \det A^\prime \det A^{\prime\prime}$, giving the result.

To show that $\gamma_M\leq \gamma_{M^\prime} + \gamma_{M^{\prime\prime}}$, choose some integer $i\geq 1$, and let $M_i\subset M$ be the direct sum
\[
M_i = \OO/\pi^{\gamma_{M,1}}\oplus\ldots\oplus \OO/\pi^{\gamma_{M,i}}\ .
\]
Note that $\lambda(M_i) = \gamma_{M,1} + \ldots + \gamma_{M,i}$. Define $M_i^\prime = M_i\cap M^\prime$ and $M_i^{\prime\prime}$ as the image of $M_i$ in $M^{\prime\prime}$. We get an exact sequence
\[
0\rightarrow M_i^\prime\rightarrow M_i\rightarrow M_i^{\prime\prime}\rightarrow 0\ .
\]
From part (ii), it follows that $\gamma_{M_i^\prime,j}=0$ for $j>i$ by comparison to $M_i$, and $\gamma_{M_i^\prime,j}\leq \gamma_{M^\prime,j}$ for all $j$. In particular,
\[
\lambda(M_i^\prime)\leq \gamma_{M^\prime,1}+\ldots+\gamma_{M^\prime,i}\ .
\]
Similarly,
\[
\lambda(M_i^{\prime\prime})\leq \gamma_{M^{\prime\prime},1}+\ldots+\gamma_{M^{\prime\prime},i}\ .
\]
Now the desired inequality follows from additivity of $\lambda$.
\item[{\rm (iv)}] If $M\approx_\epsilon M^\prime$, there is a quotient $L$ of $M$ such that $\pi^\epsilon M^\prime\subset L\subset M^\prime$. By coherence of $\OO$, $L$ is finitely presented. Then
\[
\gamma_{M,i}\geq \gamma_{L,i}\geq \gamma_{\pi^\epsilon M^\prime,i} = \max(\gamma_{M^\prime,i} - \epsilon,0)\geq \gamma_{M^\prime,i} - \epsilon\ .
\]
By symmetry, we get $||\gamma_M - \gamma_{M^\prime}||\leq \epsilon$. The other direction is obvious.
\end{altenumerate}
\end{proof}

Now we can go to the limit.

\begin{prop}\begin{altenumerate}
\item[{\rm (i)}] There exists a unique map sending any almost finitely generated torsion $\OO$-module $M$ to an element $\gamma_M\in \ell^\infty_\geq(\N)_0$ that extends the definition for finitely presented $M$, and such that whenever $M\approx_\epsilon M^\prime$, then $||\gamma_M - \gamma_{M^{\prime}}||\leq \epsilon$. Set
\[
\lambda(M) = \sum_{i=1}^\infty \gamma_{M,i}\in \R_{\geq 0}\cup \{\infty\}\ .
\]
\item[{\rm (ii)}] Any almost finitely generated torsion $\OO$-module $M$ with $\gamma_M=(\gamma_1,\gamma_2,\ldots)$ satisfies
\[
M\approx \OO/I_{\gamma_1}\oplus \OO/I_{\gamma_2}\oplus\ldots\ .
\]
In particular, $\gamma_M=0$ if and only if $M$ is almost zero.
\item[{\rm (iii)}] If $M$, $M^\prime$ are almost finitely generated torsion $\OO$-modules such that $M^\prime$ is a subquotient of $M$, then $\gamma_{M^\prime,i}\leq \gamma_{M,i}$ for all $i\geq 1$.
\item[{\rm (iv)}] If $0\rightarrow M^\prime\rightarrow M\rightarrow M^{\prime\prime}\rightarrow 0$ is an exact sequence of almost finitely generated torsion $\OO$-modules, then $\gamma_M\leq \gamma_{M^\prime}+\gamma_{M^{\prime\prime}}$ in the majorization order, and $\lambda(M) = \lambda(M^\prime) + \lambda(M^{\prime\prime})$.
\item[{\rm (v)}] If in (iv), $\gamma_{M^\prime} = \gamma_M$, then $M^{\prime\prime}$ is almost zero. Similarly, if $\gamma_M = \gamma_{M^{\prime\prime}}$, then $M^\prime$ is almost zero.
\end{altenumerate}
\end{prop}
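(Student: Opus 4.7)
The plan is to prove the five parts in order, bootstrapping from the finitely presented case of the preceding proposition via an approximation-and-limit argument. For (i), since $M$ is almost finitely presented, I pick for each $\epsilon > 0$ in $\log \Gamma$ a finitely presented torsion $N_\epsilon$ with $M \approx_\epsilon N_\epsilon$; transitivity of $\approx$ and part (iv) of the preceding proposition give $\|\gamma_{N_\epsilon} - \gamma_{N_{\epsilon'}}\| \leq \epsilon + \epsilon'$, so these form a Cauchy net in the complete space $\ell^\infty_\geq(\N)_0$, with a unique limit $\gamma_M$ independent of the choice of $N_\epsilon$; the continuity estimate carries over to the limit. For (ii), I write each $N_\epsilon \cong \bigoplus_j \OO/\pi^{\gamma_{N_\epsilon, j}}$ and compare summand-by-summand to $\bigoplus_i \OO/I_{\gamma_{M,i}}$, using the basic estimate $\OO/\pi^\alpha \approx_{|\alpha-\beta|} \OO/I_\beta$ (via the obvious quotient and multiplication by $\pi^{|\alpha-\beta|}$) together with the observation that the tails of both sums are annihilated by an arbitrarily small power of $\pi$ since the entries go to zero; this yields $M \approx \bigoplus_i \OO/I_{\gamma_{M,i}}$, and the final equivalence is immediate.

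For (iii), given a submodule $M' \subset M$, I fix $N_\epsilon$ approximating $M$ via maps $f: M \to N_\epsilon$, $g: N_\epsilon \to M$ with $fg = gf = \pi^\epsilon$, and a finitely generated $K \subset M'$ with $\pi^\delta M' \subset K$. By coherence of $\OO$, the image $f(K) \subset N_\epsilon$ is finitely presented, and restricting $f, g$ yields an almost isomorphism $M' \approx_{\epsilon+\delta} f(K)$. Part (ii) of the preceding proposition then gives $\gamma_{f(K), i} \leq \gamma_{N_\epsilon, i}$, and letting $\epsilon, \delta \to 0$ gives $\gamma_{M',i} \leq \gamma_{M,i}$; quotients and general subquotients follow analogously. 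For (iv), I construct a compatible approximating short exact sequence $0 \to N'_\delta \to N_\delta \to N''_\delta \to 0$ of finitely presented torsion modules (using the above trick plus coherence of $\OO$ to ensure finite presentation of all three), apply part (iii) of the preceding proposition, and pass to the limit: the majorization inequality is preserved because it only involves finite partial sums, while additivity of $\lambda$ follows when $\lambda(M) < \infty$ by choosing the $N_\delta$ as truncations $\bigoplus_{i \leq k_\delta} \OO/\pi^{\gamma_{M,i}+\delta'}$ of the decomposition from (ii), arranged so that $\lambda(N_\delta) \to \lambda(M)$; and it is automatic in the extended sense when $\lambda(M) = \infty$, since the majorization direction already yields $\infty \leq \lambda(M') + \lambda(M'')$.

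Finally for (v), if $\gamma_{M'} = \gamma_M$ and $\lambda(M) < \infty$, then the $\lambda$-additivity from (iv) forces $\lambda(M'') = 0$, hence $\gamma_{M''} = 0$, hence $M''$ almost zero by (ii). When $\lambda(M) = \infty$, I reduce to the finite case by truncating with $\pi^\epsilon$: the submodule $\pi^\epsilon M$ has $\gamma$-sequence $\max(\gamma_{M,i} - \epsilon, 0)$, which has finite support since $\gamma_{M,i} \to 0$ and hence finite $\lambda$; the hypothesis $\gamma_{M'} = \gamma_M$ passes to $\gamma_{\pi^\epsilon M'} = \gamma_{\pi^\epsilon M}$, so the finite-$\lambda$ case applies to $\pi^\epsilon M' \subset \pi^\epsilon M$ and gives $\pi^\epsilon M/\pi^\epsilon M' \approx 0$, which surjects onto $\pi^\epsilon M''$; letting $\epsilon \to 0$ shows $M'' \approx 0$. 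The symmetric case $\gamma_M = \gamma_{M''}$ is handled identically. The principal obstacle I anticipate is the construction of the approximating short exact sequence of finitely presented modules in (iv), which needs a delicate use of coherence and the finitely-generated/finitely-presented distinction; and making the $\lambda$-additivity and truncation arguments precise in (v) when $\lambda(M)$ is infinite.
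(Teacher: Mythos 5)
Your plan matches the paper's in outline: define $\gamma_M$ as the limit of a Cauchy net of $\gamma$'s of finitely presented approximants, deduce the structure result (ii), prove (iii) and the majorization half of (iv) by direct approximation, and in (v) reduce to finite length by $\pi^\epsilon$-truncation. Parts (i), (ii), (iii), and (v) are essentially sound. Your handling of (v) is a mild variant of the paper's: you treat both directions directly by truncation, whereas the paper first applies duality $M\mapsto\Hom_\OO(M,K/\OO)$ to reduce the case $\gamma_{M'}=\gamma_M$ to the case $\gamma_M=\gamma_{M''}$; the work is comparable.

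There is, however, a genuine gap in the $\lambda$-additivity part of (iv), precisely the step you flag as an obstacle. The difficulty is not merely delicate; your proposed mechanism does not work. The function $\lambda$ is not continuous for the $\ell^\infty$-norm on $\ell^\infty_\geq(\N)_0$ --- it is only lower semicontinuous, being a supremum of finite partial sums --- so knowing $\gamma_{N_\delta}\to\gamma_M$ gives no control from above on $\lambda(N_\delta)$ and no convergence of $\lambda(N'_\delta)$ or $\lambda(N''_\delta)$. Your suggestion of taking $N_\delta$ to be the abstract truncation $\bigoplus_{i\leq k_\delta}\OO/\pi^{\gamma_{M,i}+\delta'}$ is not coherent with the compatible-SES plan: such a module is not a subquotient of $M$ (indeed its elementary divisors exceed those of $M$), so it carries no induced $N'_\delta$, $N''_\delta$, and the finitely presented additivity $\lambda(N_\delta)=\lambda(N'_\delta)+\lambda(N''_\delta)$ has nothing to bite on. Even restricting to honest finitely presented subquotients $N_\delta$ of $M$, so that $\lambda(N_\delta)\leq\lambda(M)$ and $\lambda(N'_\delta)\leq\lambda(M')$, $\lambda(N''_\delta)\leq\lambda(M'')$ by (iii), arranging only $\lambda(N_\delta)\to\lambda(M)$ is circular: you get $\lambda(M)\leq\lambda(M')+\lambda(M'')$, which is the majorization direction you already have, not the reverse. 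What is actually needed, and what the paper supplies, is: for any reals $r'<\lambda(M')$ and $r''<\lambda(M'')$, construct a single finitely presented subquotient $M_1$ of $M$ inducing subquotients $M_1'$, $M_1''$ of $M'$, $M''$ with $\lambda(M_1')>r'$ \emph{and} $\lambda(M_1'')>r''$ simultaneously (by choosing a large finitely generated submodule $N\subset M$ so that $N\cap M'$ and the image of $N$ in $M''$ are both large, then passing to a finitely presented quotient), then trim $M_1'$ to a finitely generated, hence by coherence finitely presented, $M_2'$ still with $\lambda(M_2')>r'$, and conclude $\lambda(M)\geq\lambda(M_1)=\lambda(M_2')+\lambda(M_1/M_2')>r'+r''$. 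This simultaneous lower bound on the subobject and quotient is the missing ingredient; it does not follow from any $\ell^\infty$-closeness of the approximants alone.
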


We call $\gamma_M = (\gamma_{M,1},\gamma_{M,2},\ldots)$ the sequence of elementary divisors of $M$.

\begin{proof}\begin{altenumerate}
\item[{\rm (i)}] In order to define $\gamma_M$, choose some sequence of finitely presented torsion $\OO$-modules $M_\epsilon$ such that $M\approx_\epsilon M_\epsilon$. Then by transitivity of $\approx$ and part (iv) of the previous proposition, $\gamma_{M_\epsilon}$ is a Cauchy sequence in $\ell^\infty(\N)_0$, converging to an element $\gamma_M\in \ell^\infty_\geq(\N)_0$. Clearly, the conditions given force this definition. Moreover, the inequality $||\gamma_M - \gamma_{M^\prime}||\leq \epsilon$ for $M\approx_\epsilon M^\prime$ follows by approximating by finitely presented torsion $\OO$-modules.

\item[{\rm (ii)}] Let $N$ denote the right-hand side. Then $\gamma_N = \gamma_M$. Choose some $\epsilon>0$, $\epsilon\in \log\Gamma$, and finitely presented $N_\epsilon$, $M_\epsilon$ as usual. Then $||\gamma_{N_\epsilon} - \gamma_{M_\epsilon}||\leq 2\epsilon$, hence $N_\epsilon\approx_{2\epsilon} M_\epsilon$. It follows that $N\approx_{4\epsilon} M$, and we get the result as $\epsilon\rightarrow 0$.

\item[{\rm (iii)}] Follows by approximation from the finitely presented case.

\item[{\rm (iv)}] The majorization inequality follows by direct approximation from the finitely presented case. For the additivity of $\lambda$, we argue as follows. We know that $\lambda(M)\leq \lambda(M^\prime)+\lambda(M^{\prime\prime})$, and we have to prove the reverse inequality. Let $r^\prime<\lambda(M^\prime)$, $r^{\prime\prime}<\lambda(M^{\prime\prime})$, $r^{\prime}, r^{\prime\prime}\in \R$ be any real numbers. Then there exists a finitely presented subquotient $M_1$ of $M$ inducing subquotients $M_1^\prime$ and $M_1^{\prime\prime}$ of $M^\prime$ and $M^{\prime\prime}$, sitting in an exact sequence
\[
0\rightarrow M_1^\prime\rightarrow M_1\rightarrow M_1^{\prime\prime}\rightarrow 0\ ,
\]
and such that $\lambda(M_1^\prime)>r^\prime$, $\lambda(M_1^{\prime\prime})>r^{\prime\prime}$. Indeed, first replace $M$ by a large finitely generated submodule, and then take a large finitely presented quotient. Then let $M_2^\prime\subset M_1^\prime$ be a finitely generated submodule with small quotient; we still have $\lambda(M_2^\prime)>r^\prime$. One gets an induced quotient $M_2^{\prime\prime}=M_1/M_2^\prime$, which has $M_1^{\prime\prime}$ as further quotient, so that $\lambda(M_2^{\prime\prime})>r^{\prime\prime}$. Because $\OO$ is coherent, $M_2^\prime$ and $M_2^{\prime\prime}$ are finitely presented, and one finishes the proof by using additivity of $\lambda$ in the finitely presented case.

\item[{\rm (v)}] Using duality $M\mapsto \Hom_\OO(M,K/\OO)$, we reduce to the second case. Note that if $\lambda(M)<\infty$, this is a direct consequence of additivity of $\lambda$, and part (ii). In the general case, apply this reasoning to $\pi^\epsilon M$ and $\pi^\epsilon M^{\prime\prime}$ for any $\epsilon>0$: In general, we deduce from the classification result that
\[
\gamma_{\pi^\epsilon M,i} = \max(\gamma_{M,i}-\epsilon,0)\ ,
\]
hence $\gamma_{\pi^\epsilon M} = \gamma_{\pi^\epsilon M^{\prime\prime}}$. Moreover, only finitely many $\gamma_{\pi^\epsilon M,i}$ are nonzero, hence $\pi^\epsilon M$ has finite length. It follows that $M^\prime\cap \pi^\epsilon M$ is almost zero. But
\[
M^\prime\approx_\epsilon M^\prime\cap \pi^\epsilon M\ ,
\]
so that $||\gamma_{M^\prime}||\leq \epsilon$. As $\epsilon\rightarrow 0$, this gives the result.
\end{altenumerate}
\end{proof}

This finishes the proof of the classification result. We will need the following application of these results.

\begin{lem}\label{KeyAlmostLemma} Assume additionally that $K$ is an algebraically closed field of characteristic $p$. Let $M_k$ be an $\OO/\pi^k$-module for any $k\geq 1$, such that
\begin{altenumerate}
\item[{\rm (i)}] $M_1$ is almost finitely generated.
\item[{\rm (ii)}] There are maps $p_k: M_{k+1}\rightarrow M_k$ and $q_k: M_k\rightarrow M_{k+1}$ such that $p_k q_k : M_k\rightarrow M_k$ is multiplication by $\pi$, and such that
\[
M_1\buildrel {q_k\cdots q_1}\over\rightarrow M_{k+1}\buildrel {p_k}\over\rightarrow M_k
\]
is exact in the middle.
\item[{\rm (iii)}] There are isomorphisms
\[
\varphi_k: M_k\otimes_{\OO/\pi^k,\varphi} \OO/\pi^{pk}\cong M_{pk}
\]
compatible with $p_k$, $q_k$.
\end{altenumerate}

Then there exists some integer $r\geq 0$ and isomorphisms of $\OO^a$-modules
\[
M_k^a\cong (\OO^a/\pi^k)^r
\]
for all $k$, such that $p_k$ is carried to the obvious projection, $q_k$ is carried to the multiplication by $\pi$-morphism, and $\varphi_k$ is carried to the coordinate-wise Frobenius map.
\end{lem}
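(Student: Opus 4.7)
The strategy is to pass to the inverse limit $M := \varprojlim_k M_k$ along the maps $p_k$ and show that $M$ is an almost finitely generated, almost $\pi$-torsion-free $\OO$-module carrying an \'etale Frobenius structure; since $K$ is algebraically closed of characteristic $p$, such modules are trivializable, giving $M \approx \OO^r$ and hence $M_k^a \cong (\OO^a/\pi^k)^r$ after reducing modulo $\pi^k$.

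The first step is to show, by induction on $k$ using the exact sequence
\[
0 \to \im(q_k \cdots q_1) \to M_{k+1} \to \im(p_k) \to 0
\]
coming from condition (ii) together with the Gabber--Ramero proposition, that each $M_k$ is almost finitely generated with $\lambda(M_k) \leq k\lambda(M_1)$. Then one uses the classification theorem to write $M_k^a \cong \bigoplus_i \OO^a/I_{\gamma_{M_k,i}}$; since in characteristic $p$ Frobenius base change turns $\OO^a/I_\alpha$ into $\OO^a/I_{p\alpha}$ (truncated at $\pi^{pk}$), condition (iii) gives $\gamma_{M_{p^n},i} = p^n \gamma_{M_1,i}$, so $\lambda(M_{p^n}) = p^n \lambda(M_1)$. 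Running the length inequality backwards from $k+1 = p^n$ then forces $\lambda(M_k) = k\lambda(M_1)$ for all $k$, making $p_k$ almost surjective and $q_k\cdots q_1$ almost injective, so that
\[
0 \to M_1 \to M_{k+1} \rightarrow M_k \to 0
\]
is almost exact.

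One can then form $M := \varprojlim_k M_k$. Using the almost surjectivity of $p_k$ (giving Mittag--Leffler) one shows $M$ is almost finitely generated with $M/\pi^k M \approx M_k$, and the $\varphi_k$ assemble (via their compatibility with $p_k, q_k$) into an isomorphism $\varphi_M: \varphi^* M \cong M$, making $M$ into an almost \'etale $\varphi$-module over the perfect ring $\OO$. Since $K$ is algebraically closed of characteristic $p$, a Lang--Steinberg type descent trivializes such modules: one first trivializes $M/\pi M$ over the algebraically closed field $K$ (where classical \'etale $\varphi$-modules are trivial), and then lifts the trivializing basis $\pi$-adically using that $\varphi$ contracts $\pi$-adic distances by a factor of $p$, so successive approximations converge. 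This yields $M \approx \OO^r$ for a unique $r \geq 0$, and reducing modulo $\pi^k$ gives $M_k^a \cong (\OO^a/\pi^k)^r$, with $p_k, q_k, \varphi_k$ corresponding to the natural projection, multiplication by $\pi$, and coordinate-wise Frobenius by construction.

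The main obstacle is handling the almost setting carefully during descent: one must rule out the a priori possibility of fractional elementary divisors $\gamma_{M_1,i} \in (0,1)$, which the Frobenius structure on $M$ together with the algebraic closedness and characteristic $p$ of $K$ collectively do---characteristic $p$ makes Frobenius multiply elementary divisors cleanly by $p$ and provides the contractivity needed for $\pi$-adic convergence, while algebraic closedness supplies the solutions to Artin--Schreier equations required at each stage of the descent. Upgrading the equivalence $\approx$ to an honest $\OO^a$-module isomorphism at the end uses that the constructed trivialization is Frobenius-equivariant, which pins down the elementary divisors of $M_k^a$ to be exactly $k$ (rather than only close to $k$), so the summands are genuinely of the form $\OO^a/\pi^k$ rather than $\OO^a/I_k$.
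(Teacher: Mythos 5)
Your overall plan---force $\gamma_{M_k} = k\gamma_{M_1}$ via the Frobenius, pass to the inverse limit $M=\varprojlim M_k$, and trivialize the resulting $\varphi$-module using algebraic closedness of $K$---is exactly the paper's, so the architecture is right; but two steps as written have genuine gaps. First, you collapse the elementary-divisor information to an equality of lengths $\lambda(M_k)=k\lambda(M_1)$ and then try to conclude almost-surjectivity of $p_k$. This breaks down when $\lambda(M_1)=\infty$, which the hypotheses do not exclude: $M_1$ is only assumed almost finitely generated as an $\OO/\pi$-module, and its elementary divisors could decay like $1/i$ with infinite sum, in which case $\lambda(M_{p^n})=p^n\lambda(M_1)$ reads $\infty=\infty$ and pins nothing down. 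The paper avoids this by staying with the full majorization order on the sequences $\gamma_{M_k}$: one has $\gamma_{M_{k+1}}\leq\gamma_{M_1}+\gamma_{M_k}$ and $\gamma_{M_{pk}}=p\gamma_{M_k}$, and iterating the first from $k$ up to $p^n$ and comparing with the second gives $k\gamma_{M_1}\leq\gamma_{M_k}\leq k\gamma_{M_1}$ as sequences; part (v) of the preceding proposition, which is specifically set up to handle the infinite-length case via the $\pi^\epsilon M$ trick, then delivers the almost-exactness of $0\to M_1^a\to M_{k+1}^a\to M_k^a\to 0$.

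Second, the trivialization step is not coherent as written. You propose to ``trivialize $M/\pi M$ over the algebraically closed field $K$,'' but $M/\pi M$ is an $\OO/\pi$-module, not a $K$-vector space, and $\OO/\pi$ is a local ring with nilpotents on which the $p$-th power map is not even injective (it annihilates $\pi^{1/p}$), so there is no \'etale $\varphi$-module structure on $M/\pi M$ to trivialize. The paper instead inverts $\pi$: $M\otimes_\OO K$ is a genuine finite-dimensional $K$-vector space with a semilinear bijective Frobenius, hence isomorphic to $(K^r,\varphi)$ because $K$ is algebraically closed---this is where the classical triviality of \'etale $\mathbb{F}_p$-local systems enters. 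The ``contraction'' you invoke is then realized not by successive approximation modulo powers of $\pi$ but by squeezing the $\varphi$-stable lattice $M'=\im(M\to M\otimes K)\subset K^r$: from $\pi^m\OO^r\subset M'\subset\pi^{-m}\OO^r$, applying $\varphi^{-1}$ repeatedly tightens this to $\pi^{m/p^n}\OO^r\subset M'\subset\pi^{-m/p^n}\OO^r$ for all $n$, so $M^{\prime a}\cong(\OO^a)^r$ compatibly with $\varphi$. Your successive-approximation/Artin--Schreier picture captures the right spirit, but it has to be carried out after inverting $\pi$, not on $M/\pi M$.
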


\begin{proof} We may assume that $\OO$ is complete. From part (ii), we see by induction that $M_k$ is almost finitely generated for all $k$, and that $\gamma_{M_{k+1}}\leq \gamma_1 + \gamma_{M_k}$ for all $k\geq 1$. In particular, $\gamma_{M_k}\leq k\gamma_{M_1}$ for all $k\geq 1$. On the other hand, part (iii) implies that $\gamma_{M_{pk}} = p\gamma_{M_k}$. Taken together, this implies that $\gamma_{M_k} = k\gamma_{M_1}$ for all $k\geq 1$. Let $M_1^\prime\subset M_{k+1}$ be the image of $M_1$, and $M_k^\prime\subset M_k$ the image of $M_{k+1}$. Then we have an exact sequence
\[
0\rightarrow M_1^\prime\rightarrow M_{k+1}\rightarrow M_k^\prime\rightarrow 0\ .
\]
It follows that
\[
(k+1)\gamma_{M_1} = \gamma_{M_{k+1}}\leq \gamma_{M_1^\prime} + \gamma_{M_k^\prime}\leq \gamma_{M_1} + \gamma_{M_k} = (k+1)\gamma_{M_1}\ .
\]
Hence, all inequalities are equalities, and in particular $\gamma_{M_1^\prime} = \gamma_{M_1}$ and $\gamma_{M_k^\prime} = \gamma_{M_k}$. By part (v) of the previous proposition, we get $M_1^{\prime a} = M_1^a$ and $M_k^{\prime a} = M_k^a$, so that
\[
0\rightarrow M_1^a\rightarrow M_{k+1}^a\rightarrow M_k^a\rightarrow 0
\]
is exact. By induction over $j\geq 1$, the sequence
\[
0\rightarrow M_j^a\rightarrow M_{k+j}^a\rightarrow M_k^a\rightarrow 0
\]
is exact. Let $M=\varprojlim_k M_k$. Taking the inverse limit over $j$ in the previous exact sequence, we see that
\[
0\rightarrow M^a\buildrel {\pi^k}\over\rightarrow M^a\rightarrow M_k^a\rightarrow 0
\]
is exact. This implies that $M^a$ is flat, and because $M_1^a=M^a/\pi$ is almost finitely generated and $\OO$ is complete, also $M^a$ is almost finitely generated, by the following lemma.

\begin{lem} Let $A$ be an $\OO^a$-module such that $A\cong \varprojlim_k A/\pi^k$, and such that $A/\pi$ is almost finitely generated. Then $A$ is almost finitely generated.
\end{lem}

\begin{proof} Choose some $0<\epsilon<1$, $\epsilon\in\log\Gamma$, and some map $\OO^r\rightarrow A/\pi$ whose kernel is annihilated by $\pi^\epsilon$. Take any lift $f: \OO^r\rightarrow A$; we claim that the cokernel of $\OO^r\rightarrow A$ is the same as the cokernel of $\OO^r\rightarrow A/\pi$, in particular annihilated by $\pi^\epsilon$. Indeed, take any $a_0\in A$ with trivial image in the cokernel of $\OO^r\rightarrow A/\pi$. Then $a_0=f(x_0)+\pi b_1$ for some $x_0\in \OO^r$, $b_1\in A$. Let $a_1 = \pi^\epsilon b_1$. Then $a_0=f(x_0) + \pi^{1-\epsilon} a_1$ and $a_1$ has trivial image in the cokernel of $\OO^r\rightarrow A/\pi$. In particular, we can repeat the argument with $a_0$ replaced by $a_1$, which gives a $\pi$-adically convergent series
\[
a_0=f(x_0+\pi^{1-\epsilon} x_1 + \pi^{2(1-\epsilon)} x_2 + \ldots)\ ,
\]
which shows that $a_0$ is in the image of $f$.
\end{proof}

Now the $\varphi_k$ induce an isomorphism $\varphi: M\otimes_{\OO,\varphi} \OO\cong M$. Then $(M\otimes K,\varphi)\cong (K^r,\varphi)$ for some integer $r\geq 0$: Over any ring $R$ of characteristic $p$, locally free $R$-modules $N$ with an isomorphism $N\otimes_{R,\varphi} R\cong N$ are equivalent to \'{e}tale $\mathbb{F}_p$-local systems over $R$; as $K$ is algebraically closed, these are all trivial. Let $M^\prime\subset M\otimes K$ be the image of $M$; as $M^a$ is flat, $M^a\cong M^{\prime a}$. Now $M^\prime\subset M\otimes K\cong K^r$ is $\varphi$-invariant, and because $M^a$ is almost finitely generated, there is some integer $m\geq 1$ such that $\pi^m\OO^r\subset M^\prime\subset \pi^{-m}\OO^r$. By applying $\varphi^{-1}$, we see that $\pi^{m/p^k} \OO^r\subset M^\prime\subset \pi^{-m/p^k} \OO^r$ for all $k\geq 0$, hence $M^{\prime a}\cong (\OO^a)^r$, compatibly with $\varphi$. This gives the desired statement.
\end{proof}

\section{The pro-\'{e}tale site}\label{ProEtaleSection}

First, let us recall some abstract nonsense about pro-objects of a category. For details, we refer to SGA 4 I, 8.

\begin{definition} Let $\mathcal{C}$ be a category, and let $\hat{\mathcal{C}}=\mathrm{Funct}(\mathcal{C},\mathrm{Set})^{\mathrm{op}}$ with the fully faithful embedding $\mathcal{C}\rightarrow \hat{\mathcal{C}}$ be its Yoneda completion. The category $\pro-\mathcal{C}$ of pro-objects of $\mathcal{C}$ is the full subcategory of those objects of $\hat{\mathcal{C}}$ which are small cofiltered inverse limits of representable objects.
\end{definition}

The category $\pro-\mathcal{C}$ can be described equivalently as follows.

\begin{prop} The category $\pro-\mathcal{C}$ is equivalent to the category whose objects are functors $F: I\rightarrow \mathcal{C}$ from small cofiltered index categories $I$ and whose morphisms are given by
\[
\Hom(F,G) = \varprojlim_J \varinjlim_I \Hom(F(i),G(j))\ ,
\]
for any $F: I\rightarrow \mathcal{C}$ and $G: J\rightarrow \mathcal{C}$.
\end{prop}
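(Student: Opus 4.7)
My plan is to produce a fully faithful functor from the described ``diagram'' category $\mathcal{D}$ (objects are cofiltered diagrams $F\colon I\to\mathcal{C}$, morphisms given by the stated double limit) to $\pro$-$\mathcal{C}\subset\hat{\mathcal{C}}$, and then to observe essential surjectivity by the very definition of $\pro$-$\mathcal{C}$.

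First, I define $\Phi\colon\mathcal{D}\to\hat{\mathcal{C}}$ by $\Phi(F) = \varprojlim_{i\in I} h_{F(i)}$, where $h\colon\mathcal{C}\hookrightarrow\hat{\mathcal{C}}$ is the Yoneda embedding and the limit is taken in $\hat{\mathcal{C}}$. By construction, $\Phi$ lands in $\pro$-$\mathcal{C}$, and essential surjectivity is immediate since every object of $\pro$-$\mathcal{C}$ is by definition such a small cofiltered limit of representables.

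The content is the Hom computation. For $Y\in\mathcal{C}$, I first claim
\[
\Hom_{\hat{\mathcal{C}}}\bigl(\Phi(F),h_Y\bigr) \;=\; \varinjlim_{i\in I}\Hom_{\mathcal{C}}\bigl(F(i),Y\bigr).
\]
Unwinding the definition $\hat{\mathcal{C}}=\mathrm{Funct}(\mathcal{C},\mathrm{Set})^{\mathrm{op}}$, the limit $\Phi(F)$ in $\hat{\mathcal{C}}$ is a filtered colimit in $\mathrm{Funct}(\mathcal{C},\mathrm{Set})$, and such colimits are computed objectwise in sets. Thus $\Phi(F)$, viewed as a functor $\mathcal{C}\to\mathrm{Set}$, sends $Z$ to $\varinjlim_i\Hom_{\mathcal{C}}(F(i),Z)$. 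Yoneda then gives the displayed identity. Next, since $h$ preserves limits and $\Hom_{\hat{\mathcal{C}}}(X,-)$ commutes with limits in its second argument, I compute
\[
\Hom_{\hat{\mathcal{C}}}\bigl(\Phi(F),\Phi(G)\bigr)
= \varprojlim_{j\in J}\Hom_{\hat{\mathcal{C}}}\bigl(\Phi(F),h_{G(j)}\bigr)
= \varprojlim_{j\in J}\varinjlim_{i\in I}\Hom_{\mathcal{C}}\bigl(F(i),G(j)\bigr),
\]
which is exactly the Hom set prescribed for $\mathcal{D}$. Functoriality of this identification—i.e.\ that the composition on the diagram side (a straightforward but notation-heavy manipulation of the double limit) agrees with composition in $\hat{\mathcal{C}}$—then follows by naturality of Yoneda and of the limit/colimit isomorphisms.

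The main obstacle is purely bookkeeping: keeping the two opposites straight (the $\mathrm{op}$ in the definition of $\hat{\mathcal{C}}$ versus cofiltered versus filtered), and verifying that the pairing of morphisms in $\mathcal{D}$—represented by a compatible system of equivalence classes in $\varprojlim_J\varinjlim_I\Hom(F(i),G(j))$—composes as it should. Once the key identity $\Hom_{\hat{\mathcal{C}}}(\Phi(F),h_Y)=\varinjlim_i\Hom_{\mathcal{C}}(F(i),Y)$ is in place, everything else is formal: the $\varprojlim_J$ comes out of the second variable for free, and the resulting equivalence of categories is exactly the claimed description of $\pro$-$\mathcal{C}$.
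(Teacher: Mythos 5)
Your argument is correct and is essentially the standard one from SGA 4 I.8, which is exactly what the paper cites for this proposition without reproducing a proof: one sends a cofiltered diagram $F$ to the cofiltered limit $\varprojlim_i h_{F(i)}$ in $\hat{\mathcal{C}}$ (equivalently a filtered colimit of co-representables in $\mathrm{Funct}(\mathcal{C},\mathrm{Set})$, computed objectwise), uses the co-Yoneda lemma to get $\Hom_{\hat{\mathcal{C}}}(\Phi(F),h_Y)=\varinjlim_i\Hom_{\mathcal{C}}(F(i),Y)$, and pulls the $\varprojlim_J$ out of the second variable. The one thing you wave at rather than prove — that the double-limit composition law on the diagram side matches composition in $\hat{\mathcal{C}}$ — is genuinely the bulk of SGA 4's treatment, so if this were to be expanded it should be spelled out; but as a sketch your proposal captures the right argument.
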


In the following, we will use this second description and call $F: I\rightarrow \mathcal{C}$ simply a formal cofiltered inverse system $F_i$, $i\in I$. Note that cofiltered inverse limits exist in $\pro-\mathcal{C}$, cf. (dual of) SGA 4 I, Proposition 8.5.1: This amounts to combining a double inverse system into a single inverse system.

Now let $X$ be a locally noetherian scheme, or a locally noetherian adic space. We recall that an adic space is called locally noetherian if it is locally of the form $\Spa(A,A^+)$, where $A$ is strongly noetherian, or $A$ admits a noetherian ring of definition. As a consequence, if $Y\rightarrow X$ is \'{e}tale, then locally, $Y$ is connected. This will be used in verifying that the pro-\'{e}tale site is a site.

As a first step, we consider the pro-finite \'{e}tale site. Let $X_\fet$ denote the category of spaces $Y$ finite \'{e}tale over $X$. For any $U=\varprojlim U_i\in \pro-X_\fet$, we have the topological space $|U| = \varprojlim |U_i|$.

\begin{definition} The pro-finite \'{e}tale site $X_\profet$ has as underlying category the category $\pro-X_\fet$. A covering is given by a family of open morphisms $\{f_i: U_i\rightarrow U\}$ such that $|U| = \bigcup_i f_i(|U_i|)$.
\end{definition}

We will mostly be using this category in the case that $X$ is connected. In this case, fix a geometric base point $\bar{x}$ of $X$, so that we have the pro-finite fundamental group $\pi_1(X,\bar{x})$: Finite \'{e}tale covers of $X$ are equivalent to finite sets with a continuous $\pi_1(X,\bar{x})$-action.\footnote{In particular in the case of adic spaces, one may consider more refined versions of the fundamental group, cf. e.g. \cite{deJongFundamentalGroup}, which will not be used here.}

\begin{definition} For a profinite group $G$, let $G-\fsets$ denote the site whose underlying category is the category of finite sets $S$ with continuous $G$-action, and a covering is given by a family of $G$-equivariant maps $\{f_i: S_i\rightarrow S\}$ such that $S=\bigcup_i f_i(S_i)$. Let $G-\pfsets$ denote the site whose underlying category is the category of profinite sets $S$ with continuous $G$-action, and a covering is given by a family of open continuous $G$-equivariant maps $\{f_i: S_i\rightarrow S\}$ such that $S=\bigcup_i f_i(S_i)$.
\end{definition}

\begin{prop}\label{ProfinitePi1} Let $X$ be a connected locally noetherian scheme or connected locally noetherian adic space. Then there is a canonical equivalence of sites
\[
X_\profet\cong \pi_1(X,\bar{x})-\pfsets\ .
\]
\end{prop}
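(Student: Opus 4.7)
The plan is to construct the equivalence in three stages, using the fiber functor at $\bar{x}$ as the organizing principle. First, I invoke the classical theory of the \'etale fundamental group, which applies to both locally noetherian schemes and locally noetherian adic spaces: the fiber functor $F: X_\fet \to \pi_1(X,\bar{x})-\fsets$, sending a finite \'etale cover $U$ to its geometric fiber $U_{\bar{x}}$ with its monodromy action, is an equivalence of categories. Applying the pro-construction to both sides, $F$ induces an equivalence $\pro-F: \pro-X_\fet \to \pro-(\pi_1-\fsets)$ of pro-categories.

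Second, I identify $\pro-(\pi_1-\fsets)$ with $\pi_1-\pfsets$ via the functor sending a formal inverse system $\{S_i\}$ to the honest profinite limit $\varprojlim_i S_i$ equipped with its induced continuous $\pi_1$-action. Essential surjectivity: any profinite $\pi_1$-set $S$ is the cofiltered limit of its quotients $S/R$ by open $\pi_1$-invariant equivalence relations, and each such quotient is a finite $\pi_1$-set. Full faithfulness: given $S = \varprojlim_i S_i$ and $T = \varprojlim_j T_j$, compactness of $S$ and discreteness of the finite $T_j$ imply every continuous $\pi_1$-equivariant map $S \to T_j$ factors through some $S_i$, so that
\[
\Hom_{\pi_1-\cont}\bigl(\varprojlim_i S_i,\, \varprojlim_j T_j\bigr) \;=\; \varprojlim_j\, \varinjlim_i\, \Hom_{\pi_1}(S_i, T_j),
\]
which matches exactly the morphism set in $\pro-(\pi_1-\fsets)$. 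Composing with $\pro-F$ yields a functor $\Phi: X_\profet \to \pi_1(X,\bar{x})-\pfsets$ that is an equivalence of underlying categories.

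Third, I verify that $\Phi$ matches the Grothendieck topologies. $\Phi$ preserves fiber products because $F$ does. For a pro-object $U = \varprojlim_i U_i$, the topological space $|U| = \varprojlim_i |U_i|$ is profinite, and for each finite \'etale $U_i$ the underlying space $|U_i|$ is the quotient of the $\pi_1$-set $F(U_i)$ by the monodromy action (geometric points lying over the same scheme-theoretic or adic point are identified); passing to the limit, $|U|$ is the quotient of the profinite $\pi_1$-set $\Phi(U)$ by $\pi_1$. For $\pi_1$-equivariant maps of profinite $\pi_1$-sets, surjectivity and surjectivity after quotienting by $\pi_1$ coincide, so the set-theoretic surjection conditions on both sides match. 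The main obstacle is matching the openness conditions. For this, I would represent a morphism in $\pro-X_\fet$ by a compatible family of morphisms of actual finite \'etale covers, each of which is open because \'etale morphisms of schemes and adic spaces are open; passing to inverse limits preserves openness at the level of basic clopens of profinite spaces, and the analogous observation on the profinite-set side (that a continuous $\pi_1$-equivariant map $\varprojlim S_i \to \varprojlim T_j$ is open precisely when each induced map at finite level is) closes the loop. With the topologies matched, $\Phi$ is an equivalence of sites.
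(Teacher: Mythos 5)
Your first two stages --- the equivalence $X_\fet\cong\pi_1-\fsets$ and the identification $\pro-(\pi_1-\fsets)\cong\pi_1-\pfsets$ --- reproduce the paper's construction of the equivalence of underlying categories, and that part is fine. The gaps are in your third stage, matching the Grothendieck topologies.

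The claim that $|U_i|$ is the quotient of the finite $\pi_1$-set $F(U_i)$ by the monodromy action is false. For a finite \'etale cover $U_i\to X$ of a locally noetherian scheme or adic space, $|U_i|$ is a spectral space of the same dimension as $|X|$: take $U_i=X=\Spec\Z$, where $F(U_i)/\pi_1$ is a single point while $|U_i|$ has infinitely many points. What $F(U_i)/\pi_1$ actually computes is $\pi_0(U_i)$, not $|U_i|$; correspondingly $\Phi(U)/\pi_1$ is $\pi_0$ of the pro-object, not its underlying space. So the derivation that $|U|\cong\Phi(U)/\pi_1$, and the surjectivity-matching built on it, do not stand. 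What is true and needed is weaker: a morphism of finite \'etale covers of $X$ is itself finite \'etale, so its image is open and closed and hence a union of connected components, and therefore coverage of $|U|$ can be tested on $\pi_0$; one then identifies $\pi_0(\varprojlim U_i)$ with $\bigl(\varprojlim F(U_i)\bigr)/\pi_1$ by a compactness argument. Your argument does not supply this and instead relies on the false identification.

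The more serious problem is the openness direction. An arbitrary morphism $Y\to Z$ in $\pro-X_\fet$ is only given by a compatible family $Y_{\phi(j)}\to Z_j$ whose members need not be surjective. Without surjectivity of the finite-level maps and of the transition maps, the image under $|f|$ of a basic open $V_i\times_{|Y_i|}|Y|$ is not $|f_i|(V_i)\times_{|Z_i|}|Z|$, so "each finite level is open (because \'etale maps are open), hence the limit is open" fails. The crucial ingredient, which your proposal omits and which is the heart of the paper's proof, is a structure result (the paper's Lemma~\ref{LemSurjProfinite}): every open surjection $S\to S'$ of profinite $G$-sets can be rewritten as an inverse limit $S=\varprojlim A_i\to S'=\varprojlim B_i$ with each $A_i\to B_i$ a \emph{surjection} of finite $G$-sets and with surjective transition maps. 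Only after re-presenting the morphism this way does openness pass to the limit. Without that lemma, the implication "$S(f)$ open $\Rightarrow$ $f$ open" cannot be closed, and the stated criterion "a continuous $\pi_1$-equivariant map $\varprojlim S_i\to\varprojlim T_j$ is open precisely when each induced map at finite level is" is not well-posed (the composites $S\to T_j$ are always open since the $T_j$ are discrete; the content lies in choosing a good pro-presentation). You also leave the converse direction "$f$ open $\Rightarrow$ $S(f)$ open" untreated; the paper declares it easy, and indeed it follows by extending an open of $S(Y)$ to an open of $|Y|$ at a finite level, pushing forward by $|f|$, and restricting to the fiber over $\bar{x}$, but this too should be said.
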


\begin{proof} The functor is given by sending $Y=\varprojlim Y_i\rightarrow X$ to $S(Y)=S=\varprojlim S_i$, where $S_i$ is the fibre of $\bar{x}$ in $Y_i$. Each $S_i$ carries a continuous $\pi_1(X,\bar{x})$-action, giving such an action on $S$. Recalling that every profinite set with continuous action by a profinite group $G$ is in fact an inverse limit of finite sets with continuous $G$-action, identifying $G-\pfsets\cong \pro-(G-\fsets)$, the equivalence of categories follows immediately from $X_\fet\cong \pi_1(X,\bar{x})-\fsets$.

We need to check that coverings are identified. For this, we have to show that a map $Y\rightarrow Z$ in $X_\profet$ is open if and only if the corresponding map $S(Y)\rightarrow S(Z)$ is open. It is easy to see that if $Y\rightarrow Z$ is open, then so is $S(Y)\rightarrow S(Z)$. Conversely, one reduces to the case of an open surjection $S(Y)\rightarrow S(Z)$.

\begin{lem}\label{LemSurjProfinite} Let $S\rightarrow S^\prime$ be an open surjective map in $G-\pfsets$ for a profinite group $G$. Then $S\rightarrow S^\prime$ can be written as an inverse limit $S=\varprojlim T_i\rightarrow S^\prime$, where each $T_i$ is of the form $T_i = A_i\times_{B_i} S^\prime$, where $A_i\rightarrow B_i$ is a surjection in $G-\fsets$, and $S^\prime\rightarrow B_i$ is some surjective map. Moreover, one can assume that $S=\varprojlim A_i$ and $S^\prime = \varprojlim B_i$.
\end{lem}

\begin{proof} Write $S=\varprojlim S_i$ as an inverse limit of finite $G$-sets. The projection $S\rightarrow S_i$ gives rise to finitely many open $U_{ij}\subset S$, the preimages of the points of $S_i$. Their open images $U_{ij}^\prime$ form a cover of $S^\prime$. We may take the refinement $V_{ij^\prime}^\prime\subset S^\prime$ given by all possible intersections of $U_{ij}^\prime$'s. Taking the preimages $V_{ij^\prime}$ of $V_{ij^\prime}^\prime$ and again taking all possible intersections of $V_{ij^\prime}$'s and $U_{ij}$'s, one gets an open cover $W_{ik}$ of $S$, mapping to the open cover $V_{ij^\prime}$ of $S^\prime$, giving rise to finite $G$-equivariant quotients $S\rightarrow A_i$ and $S^\prime\rightarrow B_i$, and a surjective map $A_i\rightarrow B_i$, such that $S\rightarrow S^\prime$ factors surjectively over $A_i\times_{B_i} S^\prime\rightarrow S^\prime$. Now $S$ is the inverse limit of these maps, giving the first claim. The last statements follow similarly from the construction.
\end{proof}

Using this structure result, one checks that if $S(Y)\rightarrow S(Z)$ is open and surjective, then $Y\rightarrow Z$ is open.
\end{proof}

Using the site $G-\pfsets$, we get a site-theoretic interpretation of continuous group cohomology, as follows. Let $M$ be any topological $G$-module. Associated to $M$, we define a sheaf $\mathcal{F}_M$ on $G-\pfsets$ by setting
\[
\mathcal{F}_M(S) = \Hom_{\cont,G}(S,M)\ .
\]
Checking that this is a sheaf is easy, using that the coverings maps are open to check that the continuity condition glues.

\begin{prop}\begin{altenumerate}
\item[{\rm (i)}] Any continuous open surjective map $S\rightarrow S^\prime$ of profinite sets admits a continuous splitting.
\item[{\rm (ii)}] For any $S\in G-\pfsets$ with free $G$-action, the functor $\mathcal{F}\mapsto \mathcal{F}(S)$ on sheaves over $G-\pfsets$ is exact.
\item[{\rm (iii)}] We have a canonical isomorphism
\[
H^i(\pt,\mathcal{F}_M) = H^i_{\cont}(G,M)
\]
for all $i\geq 0$. Here $\pt\in G-\pfsets$ is the one-point set with trivial $G$-action.
\end{altenumerate}
\end{prop}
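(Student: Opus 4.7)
For part (i), the plan is to apply Lemma \ref{LemSurjProfinite} with trivial $G$: this expresses $S$ as an inverse limit $\varprojlim T_i$, where $T_i = A_i \times_{B_i} S'$, $A_i \to B_i$ is a surjection of finite sets, and $S' = \varprojlim B_i$. Since a surjection of finite sets admits a set-theoretic section $B_i \to A_i$, each $T_i \to S'$ acquires a continuous section (given by $s' \mapsto (\sigma(b_i(s')), s')$ for any chosen set-theoretic section $\sigma$). The (finite, nonempty) sets of such continuous sections form an inverse system, and the inverse limit of a cofiltered system of nonempty finite sets is nonempty, producing a compatible family that assembles into a continuous section $S' \to S = \varprojlim T_i$.

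For part (ii), left exactness of $\mathcal{F} \mapsto \mathcal{F}(S)$ is just the sheaf axiom, so I focus on right exactness; it suffices to show that every cover of $S$ admits a section. A cover $\{S_i \to S\}$ has jointly surjective open image maps, so by compactness of $S$ one can refine to a finite subcover and then replace it with the single open $G$-equivariant surjection $T = \bigsqcup S_i \to S$. Because $G$ acts freely on $S$ by homeomorphisms, the quotient $\pi: S \to S/G$ is an open surjection of profinite sets, so (i) supplies a continuous (non-equivariant) section $\sigma: S/G \to S$; then $G \times S/G \to S$, $(g, x) \mapsto g \sigma(x)$, is a $G$-equivariant continuous bijection of compact Hausdorff spaces, hence a homeomorphism trivializing $S$ as $G \times S/G$. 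Applying (i) again to the continuous open surjection of profinite sets $T \times_S (S/G) \to S/G$ yields a section $\tau: S/G \to T$ over $S/G \hookrightarrow S$, and the $G$-equivariant extension $(g, x) \mapsto g \cdot \tau(x)$ provides the desired section $S \to T$.

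For part (iii), the plan is to compute via the cover $\{G \to \pt\}$, where $G$ has the free left-translation action. The iterated fiber products forming the Čech nerve are $G^{n+1}$ with the diagonal $G$-action; this action is free (the action on the first coordinate already is), so by (ii) the functor $\mathcal{F} \mapsto \mathcal{F}(G^{n+1})$ is exact and in particular all higher cohomology of any sheaf vanishes on these terms. Consequently the Čech-to-derived functor spectral sequence collapses, giving $H^i(\pt, \mathcal{F}_M) = \check{H}^i(\{G \to \pt\}, \mathcal{F}_M)$. The change of coordinates $(h_0, \ldots, h_n) \mapsto (h_0, h_0^{-1} h_1, \ldots, h_0^{-1} h_n)$ identifies $G^{n+1}$ with its diagonal action with $G \times G^n$ where $G$ acts only on the first factor; thus $\mathcal{F}_M(G^{n+1}) \cong \mathrm{Maps}_\cont(G^n, M)$, and a direct inspection of face maps identifies the Čech differential with the standard inhomogeneous continuous cochain differential, so the resulting cohomology is $H^i_\cont(G, M)$.

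The main obstacle is part (ii), and more specifically the construction of a $G$-equivariant section when only a non-equivariant one (from (i)) is available; the crucial observation is that freeness of the $G$-action plus compactness of both $G$ and $S$ forces the trivialization $S \cong G \times S/G$, which reduces the equivariant splitting problem to the non-equivariant one solved by (i). Once (ii) is in hand, (iii) is essentially a bookkeeping identification of the Čech complex of $\{G \to \pt\}$ with the bar complex computing continuous group cohomology.
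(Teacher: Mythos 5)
Your proposal is correct and follows essentially the same route as the paper: Lemma \ref{LemSurjProfinite} for (i), the trivialization $S\cong G\times S/G$ obtained from (i) to reduce the equivariant splitting problem to the non-equivariant one for (ii), and the Cartan--Leray spectral sequence for the cover $G\to\pt$ together with the coordinate change identifying $G$-equivariant continuous maps $G^{n+1}\to M$ with $\Hom_\cont(G^n,M)$ for (iii). The only cosmetic difference is that in (ii) you package the finite cover into a single map $T=\bigsqcup S_i\to S$ and apply (i) to the fiber product $T\times_S(S/G)\to S/G$, whereas the paper splits the family $\{T_i\to T\}$ directly; these are the same argument.
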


\begin{proof}\begin{altenumerate}
\item[{\rm (i)}] Use Lemma \ref{LemSurjProfinite}. Using the notation from the lemma, the set of splittings of $A_i\rightarrow B_i$ is a nonempty finite set, hence the inverse limit is also nonempty, and any compatible system of splittings $B_i\rightarrow A_i$ gives rise to a continuous splitting $S^\prime\rightarrow S$.
\item[{\rm (ii)}] We have the projection map $S\rightarrow S/G$, an open surjective map of profinite topological spaces. By part (i), it admits a splitting, and hence $S=S/G\times G$. In particular, any $S$ with free $G$-action has the form $S=T\times G$ for a certain profinite set $T$ with trivial $G$-action. We have to check that if $\mathcal{F}\rightarrow \mathcal{F}^\prime$ is surjective, then so is $\mathcal{F}(S)\rightarrow \mathcal{F}^\prime(S)$. Let $x^\prime\in \mathcal{F}^\prime(S)$ be any section. Locally, it lifts to $\mathcal{F}$, i.e. there is a cover $\{S_i\rightarrow S\}$, which we may assume to be finite as $S$ is quasicompact, and lifts $x_i\in \mathcal{F}(S_i)$ of $x_i^\prime=x^\prime|_{S_i}\in \mathcal{F}^\prime(S_i)$. Let $T_i\subset S_i$ be the preimage of $T\subset S$; then $S_i = T_i\times G$ and the $T_i$ are profinite sets, with an open surjective family of maps $\{T_i\rightarrow T\}$. By part (i), this map splits continuously, hence $\{S_i\rightarrow S\}$ splits $G$-equivariantly, and by pullback we get $x\in \mathcal{F}(S)$ mapping to $x^\prime$.
\item[{\rm (iii)}] We use the cover $G\rightarrow \pt$ to compute the cohomology using the Cartan-Leray spectral sequence, cf. SGA 4 V Corollaire 3.3. Note that by our previous results,
\[
R\Gamma(G^n,\mathcal{F}_M)=\Hom_{\cont,G}(G^n,M) = \Hom_\cont(G^{n-1},M)
\]
for all $n\geq 1$. The left-hand side is a term of the complex computing $H^i(\pt,\mathcal{F}_M)$ via the Cartan-Leray spectral sequence, the right-hand side is a term of the complex computing $H^i_\cont(G,M)$. One easily identifies the differentials, giving the claim.
\end{altenumerate}
\end{proof}

\begin{cor} The site $G-\mathrm{pfsets}$ has enough points, given by $G$-profinite sets $S$ with free $G$-action.\hfill $\Box$
\end{cor}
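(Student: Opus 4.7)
The plan is to combine part (ii) of the preceding proposition with a simple covering argument. By (ii), for any $S \in G-\pfsets$ with free $G$-action, the functor $\mathcal{F} \mapsto \mathcal{F}(S)$ on sheaves over $G-\pfsets$ is exact; because it is corepresented by $S$ it also commutes with all small colimits, so it defines a point of the associated topos.

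It then remains to show that these points are jointly conservative. The key observation is that every object $T \in G-\pfsets$ admits a covering by an object with free $G$-action: take $S = G \times T$ with $G$ acting by left translation on the first factor only. The action map $S \to T$, $(g,t) \mapsto g \cdot t$, is $G$-equivariant and open surjective (one checks openness by factoring it as the homeomorphism $(g,t) \mapsto (g, g \cdot t)$ followed by the projection to $T$), and $G$ acts freely on $S$ since it does so on the first factor. The fibre product $S \times_T S$, carrying the diagonal $G$-action, also has free $G$-action by the same reasoning. Given a sheaf $\mathcal{F}$ with $\mathcal{F}(S') = 0$ for every $S'$ with free $G$-action, the sheaf axiom for the cover $\{S \to T\}$ gives an equalizer $\mathcal{F}(T) \hookrightarrow \mathcal{F}(S) \rightrightarrows \mathcal{F}(S \times_T S)$, forcing $\mathcal{F}(T) = 0$ for arbitrary $T$ and hence $\mathcal{F} = 0$.

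I do not foresee a real obstacle: the entire content is carried by part (ii) of the previous proposition, together with the formal construction of the cover $G \times T \to T$. The only point that deserves a moment's care is the verification that this map is indeed open and that the free-action property is inherited by the fibre product, both of which are essentially automatic.
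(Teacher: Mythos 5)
Your high-level structure is the right one — the corollary really does follow from part (ii) of the preceding proposition together with the observation that $G\times T \to T$ (action map, $G$ acting on the left factor) covers every $T$ by an object with free $G$-action, and the fibre products retain the free action; that covering argument is correct and is essentially what the author has in mind behind the $\Box$.

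There is, however, a genuine lapse in the justification that evaluation at $S$ is a point. You write that the functor $\mathcal{F}\mapsto\mathcal{F}(S)$ ``commutes with all small colimits because it is corepresented by $S$,'' but corepresentable functors $\Hom(h_S,-)$ preserve \emph{limits}, not colimits — the implication goes exactly the other way. What you actually need for this functor to be the inverse image of a point is that it preserve finite limits (automatic) and have a right adjoint, i.e.\ preserve all small colimits. The correct argument is: part (ii) of the proposition gives preservation of finite colimits, and the fact that every object of $G-\pfsets$ is quasicompact and quasiseparated (profinite sets are compact and the topology is generated by finite open covers) makes $h_S$ a compact object, so evaluation at $S$ also commutes with filtered colimits; since every small colimit is a filtered colimit of finite ones, preservation of all colimits follows. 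This is a standard repair, but as written the proposal asserts a false category-theoretic principle in the one place where the substantive content of part (ii) and the coherence of the site is being used.
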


Now we define the whole pro-\'{e}tale site $X_\proet$. Note that $U=\varprojlim U_i\rightarrow X$ in $\pro-X_\et$ has an underlying topological space $|U|=\varprojlim |U_i|$. This allows us to put topological conditions in the following.

\begin{definition} A morphism $U\rightarrow V$ of objects of $\pro-X_\et$ is called \'{e}tale, resp. finite \'{e}tale, if it is induced by an \'{e}tale, resp. finite \'{e}tale, morphism $U_0\rightarrow V_0$ of objects in $X_\et$, i.e. $U=U_0\times_{V_0} V$ via some morphism $V\rightarrow V_0$. A morphism $U\rightarrow V$ of objects of $\pro-X_\et$ is called pro-\'{e}tale if it can be written as a cofiltered inverse limit $U=\varprojlim U_i$ of objects $U_i\rightarrow V$ \'{e}tale over $V$, such that $U_i\rightarrow U_j$ is finite \'{e}tale and surjective for large $i>j$. Note that here $U_i$ is itself a pro-object of $X_\et$, and we use that the cofiltered inverse limit $\varprojlim U_i$ exists in $\pro-X_\et$. Such a presentation $U=\varprojlim U_i\rightarrow V$ is called a pro-\'{e}tale presentation.

The pro-\'{e}tale site $X_\proet$ has as underlying category the full subcategory of $\pro-X_\et$ of objects that are pro-\'{e}tale over $X$. Finally, a covering in $X_\proet$ is given by a family of pro-\'{e}tale morphisms $\{f_i: U_i\rightarrow U\}$ such that $|U| = \bigcup_i f_i(|U_i|)$.
\end{definition}

We have the following lemma, which in particular verifies that $X_\proet$ is indeed a site.

\begin{lem}\begin{altenumerate}
\item[{\rm (i)}] Let $U,V,W\in \pro-X_\et$, and assume that $U\rightarrow V$ is \'{e}tale, resp. finite \'{e}tale, resp. pro-\'{e}tale, and $W\rightarrow V$ is any morphism. Then $U\times_V W$ exists in $\pro-X_\et$, the map $U\times_V W\rightarrow W$ is \'{e}tale, resp. finite \'{e}tale, resp. pro-\'{e}tale, and the map $|U\times_V W|\rightarrow |U|\times_{|V|} |W|$ of underlying topological spaces is surjective.

\item[{\rm (ii)}] A composition of $U\rightarrow V\rightarrow W$ of two \'{e}tale, resp. finite \'{e}tale, morphisms in $\pro-X_\et$ is \'{e}tale, resp. finite \'{e}tale.

\item[{\rm (iii)}] Let $U\in \pro-X_\et$ and let $W\subset |U|$ be a quasicompact open subset. Then there is some $V\in \pro-X_\et$ with an \'{e}tale map $V\rightarrow U$ such that $|V|\rightarrow |U|$ induces a homeomorphism $|V|\cong W$. If $U\in X_\proet$, the following strengthening is true: One can take $V\in X_\proet$, and for any $V^\prime\in X_\proet$ such that $V^\prime\rightarrow U$ factors over $|W|$ on topological spaces, the map $V^\prime\rightarrow U$ factors over $V$.

\item[{\rm (iv)}] Any pro-\'{e}tale map $U\rightarrow V$ in $\pro-X_\et$ is open.

\item[{\rm (v)}] A surjective \'{e}tale, resp. surjective finite \'{e}tale, map $U\rightarrow V$ in $\pro-X_\et$ with $V\in X_\proet$ comes via pullback along $V\rightarrow V_0$ from a surjective \'{e}tale, resp. surjective finite \'{e}tale, map $U_0\rightarrow V_0$ of objects $U_0,V_0\in X_\et$.

\item[{\rm (vi)}] Let $U\rightarrow V\rightarrow W$ be pro-\'{e}tale morphisms in $\pro-X_\et$, and assume $W\in X_\proet$. Then $U,V\in X_\proet$ and the composition $U\rightarrow W$ is pro-\'{e}tale.

\item[{\rm (vii)}] Arbitrary finite projective limits exist in $X_\proet$.
\end{altenumerate}
\end{lem}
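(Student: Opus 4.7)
The plan is to prove (i)--(vii) essentially in order, with each part building on its predecessors. The central technique throughout is to unfold every pro-object into a cofiltered inverse system $\varprojlim U_i$ with $U_i \in X_\et$, reduce each assertion to the corresponding fact in $X_\et$ at a cofinal index, and then reassemble.

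For (i), if $U \to V$ is étale, arising from $U_0 \to V_0$ in $X_\et$ via a map $V \to V_0$, I set $U \times_V W := U_0 \times_{V_0} W$ using the composite $W \to V \to V_0$; the universal property follows by Yoneda in $\pro-X_\et$. The pro-étale case is then obtained by taking the limit of this construction over a presentation $U = \varprojlim U_i$, and surjectivity on topological spaces reduces to the corresponding surjectivity for étale morphisms in $X_\et$. For (ii), given $U \to V$ étale from $U_0 \to V_0$ and $V \to W$ étale from $V_1 \to W_1$, I would write $W = \varprojlim W_\alpha$ and pick $\alpha$ large enough that $V \to V_0$ factors through $V_\alpha := V_1 \times_{W_1} W_\alpha$; the composition $U_0 \times_{V_0} V_\alpha \to V_\alpha \to W_\alpha$ inside $X_\et$ then witnesses $U \to W$ as étale. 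The finite étale cases are the same argument with ``étale'' replaced by ``finite étale''.

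For (iii), local noetherianity makes each $|U_i|$ coherent, so a quasi-compact open $W \subset |U| = \varprojlim |U_i|$ is pulled back from a quasi-compact open $W_0 \subset |U_i|$ at some index $i$; I lift $W_0$ to an open subspace $V_0 \hookrightarrow U_i$ in $X_\et$ and pull back to $U$, obtaining $V$ with $|V| = W$. The universal property for $U \in X_\proet$ is the universal property of open immersions applied at each finite level of the pro-presentation. Part (iv) uses the decomposition of a pro-étale map into étale maps and cofiltered inverse limits of finite étale surjective maps: étale maps are open in $X_\et$; for the inverse limit part, compactness of finite étale morphisms together with surjective transitions forces $|U| \to |U_j|$ to be surjective for each layer, so the image in $|V|$ of a basic open of $|U|$ equals the image of an open in some $|U_j|$, which is open. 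Part (v) is a quasi-compactness argument: surjectivity of $U = U_0 \times_{V_0} V \to V$ forces, for $\alpha$ large, the image of $|V_\alpha|$ in $|V_0|$ to lie in the image of $|U_0|$, whence $U_0 \times_{V_0} V_\alpha \to V_\alpha$ is the desired surjective étale map in $X_\et$; the finite étale case is identical.

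The principal obstacle is (vi). Given $U \to V \to W$ pro-étale with $W \in X_\proet$, I would first show $V \in X_\proet$ by combining the pro-étale presentations: if $V = \varprojlim_i V_i$ with $V_i \to W$ étale and $W = \varprojlim_l W_l$ is a pro-étale presentation over $X$, then each $V_i = \varprojlim_l V_{i,l}$ with $V_{i,l} \to W_l$ étale in $X_\et$, yielding a double inverse system indexed by $(i,l)$. Transitions in the $l$-direction are finite étale surjective for $l$ large by base change of those in $W$, while transitions in the $i$-direction at sufficiently large $l$ are finite étale surjective for $i$ large because the finite étale surjective maps $V_{i'} \to V_i$ over $W$ descend to finite étale surjective maps of étale models at some finite level $W_l$, using part (v) applied over $W_l$. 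Reindexing the double system gives the required pro-étale presentation of $V$ over $X$, and the same double-limit argument applied to $U \to V$ composed with $V \to W$ yields that $U \to W$ is pro-étale. Finally, (vii) reduces to the terminal object, binary products, and equalizers: $X$ itself is terminal, $U \times_X W$ is pro-étale over $X$ by (i) and (vi), and the equalizer of $f, g: U \rightrightarrows V$ is the pullback of the diagonal $V \to V \times_X V$ along $(f,g)$; since étale maps are unramified, this diagonal is a cofiltered limit of open immersions, so the equalizer is cut out as an open subspace of $U$ and realized inside $X_\proet$ via (iii).
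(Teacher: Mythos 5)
Your treatment of parts (i)--(vi) follows the paper's approach fairly closely (reduce to a cofinal index, use the étale analogue in $X_\et$, reassemble). The one notable divergence is (vi): the paper reduces the composition of two inverse limits of finite étale surjective maps to a combinatorial exercise in $X_\profet\cong \pi_1(X,\bar x)-\pfsets$ via Proposition \ref{ProfinitePi1}, while you attempt a double-inverse-system argument invoking part (v) to descend the finite étale surjective transition maps $V_{i'}\to V_i$. This is potentially workable but needs more care than you indicate: to apply (v) you first need $V_i\in X_\proet$, which is precisely the étale case of (vi) (not yet quoted), and making the reindexed double system into an honest pro-étale presentation of $U$ \emph{over $W$} (not just over $X$) is a nontrivial bookkeeping task that the paper's detour through pfsets sidesteps.

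The genuine gap is in (vii), specifically your treatment of equalizers. You assert that the pullback along $(f,g)$ of the cofiltered limit of open immersions $\Delta_V=\varprojlim \Delta_{V_j}$ is an open subspace of $U$, and then invoke (iii). But a cofiltered limit of open immersions is not an open immersion, and the equalizer need not be topologically open. Concretely, take $X$ a geometric point, $V=U=\varprojlim \mu_{p^j}$, $f=\id$ and $g$ multiplication by a unit $\alpha\neq 1$ in $\Z_p^\times$. Then $|U|\cong \Z_p$, while the equalizer has underlying space $\{0\}$, which is not open. Part (iii) therefore does not apply, and your argument breaks down here. The point the paper is making — and explicitly flags as "the most nonformal part" of the whole lemma — is that at each finite level $U_i$, the relevant subsets become \emph{clopen} (using that the transition maps are eventually finite étale, hence have clopen diagonal), and that since $U_i$ is locally noetherian and hence has locally only finitely many connected components, the decreasing intersection of clopen subsets stabilizes. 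That stabilization is what produces the pro-étale presentation of the equalizer; it has no analogue in your "open subspace" reasoning. You need to replace the claim "the equalizer is open" with the paper's argument that the images $V_i\subset U_i$ are intersections of clopen subsets, hence clopen by local finiteness of components, and that $V=\varprojlim V_i$ is then a pro-étale presentation.
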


\begin{proof}\begin{altenumerate}
\item[{\rm (i)}] If $U\rightarrow V$ is \'{e}tale, resp. finite \'{e}tale, then by definition we reduce to the case that $U,V\in X_\et$. Writing $W$ as the inverse limit of $W_i$, we may assume that the map $W\rightarrow V$ comes from a compatible system of maps $W_i\rightarrow V$. Then $U\times_V W = \varprojlim U\times_V W_i$ exists, and $U\times_V W\rightarrow W$ is by definition again \'{e}tale, resp. finite \'{e}tale. On topological spaces, we have
\[
|U\times_V W| = \varprojlim |U\times_V W_i| \rightarrow \varprojlim |U|\times_{|V|} |W_i| = |U|\times_{|V|} |W|\ ,
\]
the first equality by definition, and the last because fibre products commute with inverse limits. But the middle map is surjective, because at each finite stage it is surjective with finite fibres, and inverse limits of nonempty finite sets are nonempty. In particular, the fibres are nonempty compact spaces.

In the general case, take a pro-\'{e}tale presentation $U=\varprojlim U_i\rightarrow V$. Then $U\times_V W = \varprojlim U_i\times_V W\rightarrow W$ is pro-\'{e}tale over $W$ by what we have just proved. On topological spaces, we have
\[
|U\times_V W| = \varprojlim |U_i\times_V W|\rightarrow \varprojlim |U_i|\times_{|V|} |W| = |U|\times_{|V|} |W|
\]
by similar reasoning. The middle map is surjective on each finite level by our previous results, with fibres compact. Thus the fibres of the middle map are inverse limits of nonempty compact topological spaces, hence nonempty.

\item[{\rm (ii)}] Write $V=V_0\times_{W_0} W$ as a pullback of an \'{e}tale, resp. finite \'{e}tale, map $V_0\rightarrow W_0$ in $X_\et$. Moreover, write $W=\varprojlim W_i$ as an inverse limit of $W_i\in X_\et$, with a compatible system of maps $W_i\rightarrow W_0$ for $i$ large. Set $V_i = V_0\times_{W_0} W_i$; then $V=\varprojlim V_i$.

Now write $U=U_0\times_{V_0} V$ as a pullback of an \'{e}tale, resp. finite \'{e}tale, map $U_0\rightarrow V_0$ in $X_\et$. The map $V\rightarrow V_0$ factors over $V_i\rightarrow V_0$ for $i$ large. Now let $U_i=U_0\times_{V_0} V_i\in X_\et$. This has an \'{e}tale, resp. finite \'{e}tale, map $U_i\rightarrow W_i$, and $U=U_i\times_{W_i} W$.

\item[{\rm (iii)}] Write $U$ as an inverse limit of $U_i\in X_\et$, let $W$ be the preimage of $W_i\subset |U_i|$ for $i$ sufficiently large. Then $W_i$ corresponds to an open subspace $V_i\subset U_i$, and we take $V=V_i\times_{U_i} U$. This clearly has the desired property. Moreover, if $U=\varprojlim U_i$ is a pro-\'{e}tale presentation, then so is the corresponding presentation of $V$. If $V^\prime\rightarrow U$ factors over $W$, then $V^\prime\rightarrow U_i$ factors over $W_i$. Choosing a pro-\'{e}tale presentation of $V^\prime$ as the inverse limit of $V^\prime_j$, the map $V^\prime\rightarrow U_i$ factors over $V^\prime_j\rightarrow U_i$ for $j$ large; moreover, as the transition maps are surjective for large $j$, the map $V^\prime_j\rightarrow U_i$ factors over $V_i\subset U_i$ for $j$ large. Then $V^\prime\rightarrow U$ factors over $V_i\times_{U_i} U=V$, as desired.

\item[{\rm (iv)}] Choose a pro-\'{e}tale presentation $U=\varprojlim U_i\rightarrow V$, and let $W\subset |U|$ be a quasicompact open subset. It comes via pullback from a quasicompact open subset $W_i\subset |U_i|$ for some $i$, and if $i$ is large enough so that all higher transition maps in the inverse limit are surjective, then the map $W\rightarrow W_i$ is surjective. Using parts (iii) and (ii), the image of $W_i\subset |U_i|\rightarrow |V|$ can be written as the image of an \'{e}tale map, so we are reduced to checking that the image of an \'{e}tale map is open.

Hence let $U\rightarrow V$ be any \'{e}tale map, written as a pullback of $U_0\rightarrow V_0$ along $V\rightarrow V_0$. Then the map $|U|\rightarrow |V|$ factors as the composite $|U_0\times_{V_0} V|\rightarrow |U_0|\times_{|V_0|} |V|\rightarrow |V|$. The first map is surjective by (i), so it suffices to check that the image of the second map is open. For this, it is enough to check that the image of $|U_0|$ in $|V_0|$ is open, but this is true because \'{e}tale maps are open, cf. \cite{Huber}, Proposition 1.7.8, in the adic case.

\item[{\rm (v)}] Write $U\rightarrow V$ as the pullback along $V\rightarrow V_0$ of some \'{e}tale, resp. finite \'{e}tale, map $U_0\rightarrow V_0$, and take a pro-\'{e}tale presentation of $V$ as an inverse limit of $V_i$. We get a compatible system of maps $V_i\rightarrow V_0$ for $i$ large. Because the composite $|U|=|U_0\times_{V_0} V|\rightarrow |U_0|\times_{|V_0|} |V|\rightarrow |V|$, and hence the second map, is surjective, we know that $|V|\rightarrow |V_0|$ factors over the image of $|U_0|$ in $|V_0|$. But $|V| = \varprojlim |V_i|$ with surjective transition maps for large $i$, hence also $|V_i|\rightarrow |V_0|$ factors over the image of $|U_0|$ in $|V_0|$ for some large $i$. Then $U_0\times_{V_0} V_i\rightarrow V_i$ is surjective and \'{e}tale, resp. finite \'{e}tale, as desired.

\item[{\rm (vi)}] We may write $U\rightarrow V$ as the composition $U\rightarrow U_0\rightarrow V$ of an inverse system $U=\varprojlim U_i\rightarrow U_0$ of finite \'{e}tale surjective maps $U_i\rightarrow U_j\rightarrow U_0$, and an \'{e}tale map $U_0\rightarrow V$. This reduces us to checking the assertion separately in the case that $U\rightarrow V$ is \'{e}tale, or an inverse system of finite \'{e}tale surjective maps.

First, assume that $U\rightarrow V$ is \'{e}tale. Then it comes via pull-back along $V\rightarrow V_0$ from some $U_0\rightarrow V_0$ of objects $U_0,V_0$ \'{e}tale over $X$. We may choose a pro-\'{e}tale presentation $V=\varprojlim V_i\rightarrow W$, and $V\rightarrow V_0$ is given by a compatible system of maps $V_i\rightarrow V_0$ for $i$ large. Then $U = \varprojlim U_0\times_{V_0} V_i$. This description shows that $U$ is pro-\'{e}tale over $W$, using part (i).

Using this reduction, we assume in the following that all maps $U\rightarrow V\rightarrow W\rightarrow X$ are inverse limits of finite \'{e}tale surjective maps, and that $X$ is connected. We want to show that all compositions are again inverse limits of finite \'{e}tale surjective maps. This reduces to a simple exercise in $X_\profet$.

\item[{\rm (vii)}] We have to check that direct products and equalizers exist. The first case follows from (i) and (vi). To check for equalizers, one reduces to proving that if $U\rightarrow X$ is pro-\'{e}tale and $V\subset U$ is an intersection of open and closed subsets, then $V\rightarrow X$ is pro-\'{e}tale. Writing $U=\varprojlim U_i$, we have $V=\varprojlim V_i$, where $V_i\subset U_i$ is the image of $V$. As $V$ is an intersection of open and closed subsets, and the transition maps are finite \'{e}tale for large $i$, it follows that $V_i\subset U_i$ is an intersection of open and closed subsets for large $i$. Since locally, $U_i$ has only a finite number of connected components, it follows that $V_i\subset U_i$ is open and closed for large $i$. Moreover, the transition maps $V_i\rightarrow V_j$ are by definition surjective, and finite \'{e}tale for large $i$, $j$, as they are unions of connected components of the map $U_i\rightarrow U_j$. This shows that $V$ is pro-\'{e}tale over $X$, as desired.
\end{altenumerate}
\end{proof}

It is part (vii) which is the most nonformal part: One needs that any $U\in X_\et$ has locally on $U$ only a finite number of connected components.

\begin{lem} Under the fully faithful embedding of categories $X_\profet\subset X_\proet$, a morphism $f: U\rightarrow V$ in $X_\profet$ is open if and only if it is pro-\'{e}tale as a morphism in $X_\proet$. In particular, the notions of coverings coincide, and there is a map of sites $X_\proet\rightarrow X_\profet$.
\end{lem}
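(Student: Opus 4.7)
The forward implication (pro-\'etale implies open) is immediate from part (iv) of the preceding lemma: any pro-\'etale morphism in $X_\proet$ is open on underlying topological spaces. For the converse, given $f\colon U\to V$ open in $X_\profet$, I would factor $f$ through a pro-finite-\'etale subobject $V'\hookrightarrow V$ with $|V'|=W:=f(|U|)$, and show separately that $V'\to V$ and $U\to V'$ are pro-\'etale as morphisms in $X_\proet$; the composition is then pro-\'etale by part (vi) of the preceding lemma.

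To produce $V'$: after re-indexing, write $f=\varprojlim f_k$ with $f_k\colon U_k\to V_k$ a morphism between finite \'etale covers of $X$ in $X_\fet$. Each $f_k$ is automatically finite \'etale, so its image $V_k'\subset V_k$ is clopen; being a clopen subspace of a finite \'etale cover of $X$, $V_k'$ is itself in $X_\fet$ (\'etale plus proper plus quasi-finite over $X$). The compatible factorizations $U_k\twoheadrightarrow V_k'\hookrightarrow V_k$ assemble into $U\twoheadrightarrow V':=\varprojlim V_k'\hookrightarrow V$ in $X_\profet$, and $|V'|=W$. To see that $V'\to V$ is pro-\'etale, I would work locally on $V$ by covering by quasicompact opens: on such a piece the restriction of $W$ is quasicompact, hence clopen in the profinite topology of the fibres of $|V|\to|V_k|$, hence comes from a clopen at some finite level $V_k$; locally $V'\to V$ is then the \'etale (even finite \'etale) pullback of the clopen immersion $V_k'\hookrightarrow V_k$ along $V\to V_k$, and pro-\'etaleness being local on the target yields the global claim.

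For $U\to V'$, now open and surjective on topological spaces, I would reduce component-wise to $X$ connected and use Proposition~\ref{ProfinitePi1} to transport the question into $G$-pfsets, where $G=\pi_1(X,\bar x)$. Lemma~\ref{LemSurjProfinite} then writes the map as $|U|=\varprojlim(A_i\times_{B_i}|V'|)\to|V'|$ with $A_i\to B_i$ surjective in $G$-fsets and $|U|=\varprojlim A_i$, $|V'|=\varprojlim B_i$; a short refinement (replacing each $A_i$ by the eventual image of $A_j\to A_i$, $j\gg i$, and similarly for $B_i$) forces surjectivity of the transition maps $A_{i+1}\to A_i\times_{B_i}B_{i+1}$, and hence of the maps between the pullbacks. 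Translating back gives a pro-\'etale presentation $U=\varprojlim(X_i\times_{Y_i}V')\to V'$ with $X_i\to Y_i$ finite \'etale surjective in $X_\fet$ and finite \'etale surjective transition maps. The hardest step is the local-to-global promotion used for $V'\to V$; the surjective case for $U\to V'$ then reduces cleanly to Lemma~\ref{LemSurjProfinite}. The final assertion about the morphism of sites $X_\proet\to X_\profet$ follows immediately, since the equivalence of ``open'' and ``pro-\'etale'' on morphisms of $X_\profet$ identifies jointly surjective covering families, and the inclusion $X_\profet\hookrightarrow X_\proet$ commutes with the relevant fibre products.
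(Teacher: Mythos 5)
Your proof is correct and takes essentially the same route as the paper's, which simply invokes Lemma~\ref{LemSurjProfinite} via the equivalence of Proposition~\ref{ProfinitePi1} and leaves the remaining steps implicit. Your factorization through the clopen image $V'$ (which corresponds to a clopen sub-$G$-set of $S(V)$ detected at a finite level, so that $V'\to V$ is étale and hence pro-\'etale) and the reduction of $U\to V'$ to the open surjective case via Lemma~\ref{LemSurjProfinite} just make explicit what the paper's two-sentence argument keeps tacit.
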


\begin{proof} As pro-\'{e}tale maps are open, we only have to prove the converse. This follows directly from Lemma \ref{LemSurjProfinite}, under the equivalence of Proposition \ref{ProfinitePi1}.
\end{proof}

\begin{prop} Let $X$ be a locally noetherian scheme or locally noetherian adic space.
\begin{altenumerate}
\item[{\rm (i)}] Let $U=\varprojlim U_i\rightarrow X$ be a pro-\'{e}tale presentation of $U\in X_\proet$, such that all $U_i$ are affinoid. Then $U$ is a quasicompact object of $X_\proet$.
\item[{\rm (ii)}] The family of all objects $U$ as in {\rm (i)} is generating, and stable under fibre products.
\item[{\rm (iii)}] The topos associated to the site $X_{\proet}$ is algebraic, cf. SGA 4 VI, Definition 2.3, and all $U$ as in {\rm (i)} are coherent, i.e. quasicompact and quasiseparated.
\item[{\rm (iv)}] An object $U\in X_{\proet}$ is quasicompact, resp. quasiseparated, if and only if $|U|$ is quasicompact, resp. quasiseparated.
\item[{\rm (v)}] If $U\rightarrow V$ is an inverse limit of surjective finite \'{e}tale maps, then $U$ is quasicompact, resp. quasiseparated, if and only if $V$ is quasicompact, resp. quasiseparated.
\item[{\rm (vi)}] A morphism $f: U\rightarrow V$ of objects in $X_{\proet}$ is quasicompact, resp. quasiseparated, if and only if $|f|: |U|\rightarrow |V|$ is quasicompact, resp. quasiseparated.
\item[{\rm (vii)}] The site $X_{\proet}$ is quasiseparated, resp. coherent, if and only if $|X|$ is quasiseparated, resp. coherent.
\end{altenumerate}
\end{prop}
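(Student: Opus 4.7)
Plan: The key observation is that for $U=\varprojlim U_i$ with each $U_i$ affinoid, the topological space $|U|=\varprojlim|U_i|$ is a cofiltered inverse limit of spectral spaces along spectral transition maps, hence itself spectral, and in particular quasicompact and quasiseparated. This immediately gives (i): any cover $\{V_\alpha\to U\}$ in $X_\proet$ consists of open maps by part (iv) of the previous lemma, so the images open-cover the quasicompact $|U|$, and a finite subcover of images lifts to a finite subcover in $X_\proet$.

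For (ii), the generating property is shown as follows. Given any $V=\varprojlim V_j\in X_\proet$, choose $j_0$ large enough that $V_j\to V_{j_0}$ is finite étale surjective for $j\geq j_0$; cover $V_{j_0}$ by affinoid opens (possible since $V_{j_0}$ is étale over the locally noetherian $X$), pull each such open back to affinoid opens $W_j\subset V_j$ for all $j\geq j_0$, and form $W=\varprojlim_{j\geq j_0}W_j$, which by part (iii) of the previous lemma sits étalely over $V$ and belongs to the special family; varying the affinoid open covers $V$. For stability under fibre products, given special-form objects $U,U'$ mapping to a special-form $W$, I reindex the three presentations to a common cofinal system with compatible maps $U_i\to W_k$, $U'_j\to W_k$, so that $U\times_W U'$ is the inverse limit of the fibre products $U_i\times_{W_k}U'_j$, each affinoid (fibre product of affinoids étale over an affinoid); the transition maps remain finite étale surjective at large indices since these properties are stable under fibre products. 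Together with (i), this yields (iii): each special-form object is quasicompact, and it is quasiseparated because any two quasicompact objects over it are finitely covered by special-form ones whose pairwise fibre products over it are again of the special form, hence quasicompact by (i). Thus the special family is a generating family of coherent objects, which is the definition of an algebraic topos.

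The remaining parts are formal translations. For (iv), if $U$ is quasicompact in the site, then any open cover of $|U|$ by quasicompact opens lifts via part (iii) of the previous lemma to a site cover of $U$, whence a finite subcover topologically; conversely, any site cover is open on the topological side, so $|U|$ quasicompact forces a finite subcover in $X_\proet$. The quasiseparated case is analogous, using that fibre products of sub-objects are detected topologically via part (i) of the previous lemma (in particular, the surjection $|U\times_V W|\to|U|\times_{|V|}|W|$). Part (v) follows because $|V|$ is the continuous image of the surjective map $|U|\to|V|$, and conversely $|U|=\varprojlim|U_i|$ is quasicompact whenever $|V|$ is, since each $|U_i|$ is finite over $|V|$; the quasiseparated version is similar. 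Part (vi) applies (iv) to preimages of quasicompact objects in $V$, and (vii) specializes (iv) (respectively the diagonal version) to $U=X$.

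The principal technical obstacle is the reindexing bookkeeping in (ii) to ensure that fibre products admit presentations with affinoid terms and eventually finite étale transition maps; once (i)--(iii) are established, (iv)--(vii) reduce to standard topos-theoretic translations enabled by the spectrality of $|U|$.
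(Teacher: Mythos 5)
Your proof follows essentially the same outline as the paper's: the spectrality of $|U| = \varprojlim |U_i|$ for affinoid $U_i$ is the common key ingredient, (ii) is handled identically via preimages of affinoid opens under finite \'etale maps, and (iv)--(vii) are reduced to topological statements about $|U|$. The main divergence is that where the paper delegates the topos-theoretic bookkeeping to a string of citations to SGA~4~VI (Propositions 1.3, 1.14, 2.1, 2.2, Corollaires 1.17, 2.6), you give direct arguments; in particular, for (iii) the paper restricts to the subfamily of $U$ factoring over an affinoid open $U_0\subset X$ precisely to verify the fibre-product criterion ``(i~ter)'' of SGA~4~VI~2.2 via $U\times_X U = U\times_{U_0} U$, whereas you instead check coherence of special-form objects directly, relying on your stability-under-fibre-products argument from (ii). Both routes work, but be aware that your (v) claim ``$|U|=\varprojlim|U_i|$ is quasicompact whenever $|V|$ is, since each $|U_i|$ is finite over $|V|$'' silently uses that the limit is taken along spectral maps of spectral spaces (which is why the paper instead invokes the machinery of coherent morphisms): an arbitrary cofiltered limit of quasicompact sober spaces need not be quasicompact, so you should record that $|V|$ and hence each $|U_i|$ is spectral before passing to the limit. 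Similarly, the reindexing for fibre-product stability in (ii) and the verification that $U_i\times_{W_k} U'_j$ is affinoid (via the closed embedding into $U_i\times_K U'_j$ using separatedness of $W_k$) deserve a sentence of justification, but there is no gap in the method.
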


\begin{proof} \begin{altenumerate}
\item[{\rm (i)}] Each $|U_i|$ is a spectral space, and the transition maps are spectral. Hence the inverse limit $|U| = \varprojlim |U_i|$ is a spectral space, and in particular quasicompact. As pro-\'{e}tale maps are open, this gives the claim.
\item[{\rm (ii)}] Any $U\in X_\proet$ can be covered by smaller $U^\prime$ that are of the form given in (i), using that preimages of affinoids under finite \'{e}tale maps are again affinoids. This shows that the family is generating, and it is obviously stable under fibre products.
\item[{\rm (iii)}] Using the criterion of SGA 4 VI Proposition 2.1, we see that $X_\proet$ is locally algebraic and all $U$ as in (i) are coherent. We check the criterion of SGA 4 VI Proposition 2.2 by restricting to the class of $U$ as in (i) that have the additional property that $U\rightarrow X$ factors over an affinoid open subset $U_0$ of $X$. It consists of coherent objects and is still generating, and because $U\times_X U = U\times_{U_0} U$, one also checks property (i ter).
\item[{\rm (iv)}] Use SGA 4 VI Proposition 1.3 to see that if $|U|$ is quasicompact, then so is $U$, by covering $U$ by a finite number of open subsets $U_i\subset U$ of the form given in (i). Conversely, if $U$ is quasicompact, any open cover of $|U|$ induces a cover of $U$, which by definition has a finite subcover, inducing a finite subcover of $|U|$, so that $|U|$ is quasicompact.

Now take any $U$, and cover it by open subsets $U_i\subset U$, the $U_i$ as in (i). Using SGA 4 VI Corollaire 1.17, we see that $U$ is quasiseparated if and only if all $U_i\times_U U_j$ are quasicompact if and only if all $|U_i|\times_{|U|} |U_j|$ are quasicompact if and only if $|U|$ is quasiseparated.
\item[{\rm (v)}] Use SGA 4 VI Corollaire 2.6 to show that $U\rightarrow V$ is a coherent morphism, by covering $V$ by inverse limits of affinoids as in (i): they are coherent, and their inverse images are again inverse limits of affinoids, hence coherent. Hence Proposition SGA 4 VI Proposition 1.14 (ii) shows that $V$ quasicompact, resp. quasiseparated, implies $U$ quasicompact, resp. quasiseparated.

Conversely, if $U$ is quasicompact, take any open cover of $|V|$; this gives an open cover of $|U|$ which has a finite subcover. But the corresponding finite subcover of $|V|$ has to cover $|V|$, hence $|V|$ is quasicompact. Arguing similarly shows that $U$ quasiseparated implies $V$ quasiseparated.
\item[{\rm (vi)}] This follows from (iii), (iv) and SGA 4 VI Corollaire 2.6.
\item[{\rm (vii)}] This follows from (iii), (iv) and the definition of quasiseparated, resp. coherent, sites.
\end{altenumerate}
\end{proof}

Moreover, the site $X_\proet$ is clearly functorial in $X$. Let us denote by $T^\sim$ the topos associated to a site $T$. If $X$ is quasiseparated, one can also consider the subsite $X_\proetqc\subset X_\proet$ consisting of quasicompact objects in $X_\proet$; the associated topoi are the same.

\begin{prop} Let $x\in X$, corresponding to a map $Y=\Spa(K,K^+)\rightarrow X$, resp. $Y=\Spec(K)\rightarrow X$, into $X$.\footnote{We note that in the case of adic spaces, the image of $Y$ may be larger than $x$ itself.} Then there is a morphism of topoi
\[
i_x: Y_\profet^\sim\rightarrow X_\proet^\sim\ ,
\]
such that the pullback of $\mathcal{F}\in X_\proet^\sim$ is the sheafification $i_x^\ast \mathcal{F}$ of the functor
\[
V\mapsto \varinjlim_{V\rightarrow U} \mathcal{F}(U)\ ,
\]
where $U\in X_\proet$, and $V\rightarrow U$ is a map compatible with the projections to $Y\rightarrow X$.

If for all points $x$, $i_x^\ast \mathcal{F}=0$, then $\mathcal{F}=0$. In particular, $X_\proet$ has enough points, given by profinite covers of geometric points.
\end{prop}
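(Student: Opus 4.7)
The plan is to build $i_x$ from a continuous functor of sites and then reduce conservativity to the already-established fact that $Y_\profet$ has enough points.

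First, I would define a functor $u\colon X_\proet\to Y_\profet$ by $u(U)=U\times_X Y$. This is well-defined: for a pro-\'etale presentation $U=\varprojlim U_i\to X$, each $U_i\times_X Y\to Y$ is \'etale over a point, hence (locally) a disjoint union of finite \'etale pieces, and $U\times_X Y=\varprojlim(U_i\times_X Y)$ lies in $Y_\profet$; the fibre product itself exists in $\pro$-$X_\et$ by part (i) of the structural Lemma about $X_\proet$. Next I would check that $u$ preserves finite projective limits (which exist in $X_\proet$ by part (vii) of the same Lemma, and commute with base change in $Y$) and that it sends coverings to coverings, using that the natural map $|U\times_X Y|\to |U|\times_{|X|}|Y|$ is surjective (part (i) again), so a family $\{U_i\to U\}$ jointly surjective on $|U|$ remains jointly surjective on $|U\times_X Y|$ after base change. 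Applying the standard SGA 4 machinery yields the morphism of topoi $i_x\colon Y_\profet^\sim\to X_\proet^\sim$, and the description of $i_x^\ast\mathcal{F}$ as the sheafification of $V\mapsto \varinjlim_{V\to u(U)}\mathcal{F}(U)$ is exactly the standard formula for the pullback attached to a continuous functor.

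For the conservativity assertion, I would invoke the Corollary already established in the text: $Y_\profet^\sim\cong (\pi_1(Y,\bar y)-\pfsets)^\sim$ has enough points, given by profinite $G$-sets $S$ with free $G$-action, for which the fibre functor is simply $\mathcal{G}\mapsto \mathcal{G}(S)$. Post-composing each such fibre functor with $i_x^\ast$ produces a family of points of $X_\proet^\sim$, indexed by $x\in X$ together with a free profinite $\pi_1$-set. If $i_x^\ast \mathcal{F}=0$ for all $x$, then all these stalks of $\mathcal{F}$ vanish; hence it suffices to show that this family is conservative.

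To prove conservativity directly, assume $i_x^\ast\mathcal{F}=0$ for every $x$ and pick any $s\in\mathcal{F}(U)$, $U\in X_\proet$; I would show $s$ vanishes locally on $U$. For any $u\in|U|$ with image $x\in |X|$, choose a geometric point $\bar y\in Y=\Spa(K,K^+)$ over $x$. By surjectivity of $|U\times_X Y|\to |U|\times_{|X|}|Y|$ from part (i) of the Lemma, there is a point of $|U\times_X Y|$ lying over $u$ and $\bar y$. Evaluating the sheafification formula for $i_x^\ast\mathcal{F}$ at the stalk at $\bar y$, the hypothesis $i_x^\ast\mathcal{F}=0$ means
\[
\varinjlim_{\bar y\to V\to U}\mathcal{F}(V)=0,
\]
the colimit running over pro-\'etale $V\to U$ through which $\bar y$ factors. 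Consequently there exists such a $V_u\to U$ with $s|_{V_u}=0$ and $u\in\mathrm{image}(|V_u|\to|U|)$. As $u$ ranges over $|U|$, the family $\{V_u\to U\}$ is jointly surjective on $|U|$ and each map is pro-\'etale, hence a covering in $X_\proet$; therefore $s=0$, and $\mathcal{F}=0$. The enough-points claim then follows by explicitly composing the fibre functors of $Y_\profet^\sim$ with $i_x^\ast$. The main technical point, and where I would expect to spend the most care, is the translation of stalk vanishing into the existence of a single pro-\'etale neighbourhood $V_u\to U$ on which $s$ restricts to zero; this relies on the sheafification formula together with the surjectivity property of base change of pro-\'etale maps on underlying topological spaces.
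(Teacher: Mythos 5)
Your construction of the morphism of topoi from the continuous functor $u(U)=U\times_X Y$ is fine, and matches the paper's brief indication. But the conservativity argument has a genuine gap at exactly the step you flag as requiring "the most care," and it differs substantially from the paper's argument.

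The key issue is the passage from the hypothesis $i_x^\ast\mathcal{F}=0$ to the vanishing of the ``naive'' colimit $\varinjlim_{\bar y\to V\to U}\mathcal{F}(V)$. Recall that $i_x^\ast\mathcal{F}$ is defined as the \emph{sheafification} on $Y_\profet$ of the presheaf $W\mapsto\varinjlim_{W\to u(V)}\mathcal{F}(V)$, and the colimit you write down is this presheaf evaluated at $\bar y$. Passing from vanishing of the sheafified object to vanishing of this particular presheaf colimit requires showing that evaluation at $\bar y$ (an object of $Y_\profet$ which has nontrivial covers) commutes with sheafification — equivalently, that the "stalk at $\bar y$" is an honest point of $X_\proet^\sim$ computable as a filtered colimit. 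This is not a formality: it is exactly the content of the Remark immediately following the proposition, which warns that stalks at geometric points do \emph{not} give a conservative family and that one must instead use profinite covers of geometric points. Your proposal treats this as automatic, so the warning applies to it directly.

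The paper's proof sidesteps this by never invoking a colimit description of the stalk at $\bar y$. Instead it evaluates the \emph{sheaf} $i_x^\ast\mathcal{F}$ at the object $\tilde S = U\times_X Y \in Y_\profet$ — the full preimage of $x$ in $U$, which is a profinite $\pi_1$-set that need not be free — where vanishing of the sheaf directly gives $(i_x^\ast\mathcal{F})(\tilde S)=0$. Unwinding the sheafification only then produces a cover $\tilde S'\to\tilde S$ and some $V\to U$ in $X_\proet$, with a lift $\tilde S'\to V$, on which $s_1,s_2$ agree. Crucially, nothing guarantees that $|V|\to|U|$ covers the fibre over $x$; the paper repairs this by translating into $\pi_1(X,\bar x)$-sets, forming $T=\pi_1(X,\bar x)\,S'_{\bar x}\subset S(V)$, taking the corresponding $V'\subset V$, and using the openness of $\tilde S'\to\tilde S$ to see that $V'\to U$ is pro-\'etale and surjective onto the preimage of $x$. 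This shrinking step is the technical heart of the paper's argument and has no counterpart in your sketch. Your assertion that $u\in\mathrm{image}(|V_u|\to|U|)$ also tacitly relies on a carefully chosen map $\bar y\to U$ realizing $u$, which your construction via $|U\times_X Y|\to|U|\times_{|X|}|Y|$ does not fully pin down (one needs the residue field at $u$ to embed into the completed algebraic closure, which holds here because pro-\'etale maps have pro-finite residue field extensions, but should be said).

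In short: the architecture — reduce to conservativity via $Y_\profet$ and its points — is the right one, but the work has not been done at the two delicate spots (commuting evaluation at $\bar y$ with sheafification, and ensuring the resulting $V$'s actually cover $U$ in $X_\proet$), and the paper's solution to the second of these (the $\pi_1$-set shrinking) is entirely missing.
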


\begin{rem} It is not enough to check stalks at geometric points: One has to include the profinite covers of geometric points to get a conservative family.
\end{rem}

\begin{proof} We leave the construction of the morphism of topoi to the reader: One reduces to the case that $X$ is affinoid, in particular quasiseparated. Then there is a morphism of sites $Y_\profet\rightarrow X_\proetqc$, induced from the taking the fibre of $U\in X_\proetqc$ above $x$.

Now let $\mathcal{F}$ be a sheaf on $X_\proet$ such that $i_x^\ast \mathcal{F}=0$ for all $x\in X$. Assume that there is some $U\in X_{\proet}$ with two distinct sections $s_1,s_2\in \mathcal{F}(U)$. We may assume that $U$ is quasicompact. Take any point $x\in X$ and let $S$ be the preimage of $x$ in $U$. It suffices to see that there is a pro-\'{e}tale map $V\rightarrow U$ with image containing $S$ such that $s_1$ and $s_2$ become identical on $V$. The preimage $S$ of $x$ corresponds to a pro-finite \'{e}tale cover $\tilde{S}\in Y_\profet$, $\tilde{S}\rightarrow U$.

Now we use that $s_1$ and $s_2$ become identical in $(i_x^\ast \mathcal{F})(S)$. This says that there is a pro-\'{e}tale cover $\tilde{S}^\prime\rightarrow \tilde{S}$ in $Y_\profet$ and some $V\in X_\proet$, $V\rightarrow U$, with a lift $\tilde{S}^\prime\rightarrow V$ such that $s_1$ and $s_2$ become identical in $\mathcal{F}(V)$. We get the following situation:

\[\xymatrix{
\tilde{S}^\prime\ar[d] \ar[r] & V\ar[d]\\
\tilde{S} \ar[d] \ar[r] & U\ar[d]\\
Y \ar[r] & X
}\]

Here both projections $V\rightarrow X$ and $U\rightarrow X$ are pro-\'{e}tale, and the map $\tilde{S}^\prime\rightarrow \tilde{S}$ is a pro-finite \'{e}tale cover.

By the usual arguments, one reduces to the case that $U$ and $V$ are cofiltered inverse limits of finite \'{e}tale surjective maps. Moreover, one may assume that $X$ is connected, and we choose a geometric point $\bar{x}$ above $x$. Let $S_{\bar{x}}$, $S^\prime_{\bar{x}}$ be the fibres of $\tilde{S}$ and $\tilde{S}^\prime$ above $\bar{x}$. Now $U, V\in X_\profet$ correspond to profinite $\pi_1(X,\bar{x})$-sets $S(U)$, $S(V)$. In fact, $S(U)=S_{\bar{x}}$, and $S^\prime_{\bar{x}}\subset S(V)$. Consider the subset $T = \pi_1(X,\bar{x}) S^\prime_{\bar{x}}\subset S(V)$, and let $V^\prime\subset V$ be the corresponding subset. Since $\tilde{S}^\prime\rightarrow \tilde{S}$ is a pro-finite \'{e}tale cover, $S^\prime_{\bar{x}}\rightarrow S_{\bar{x}}$ is an open surjective map, hence so is the map $S(V^\prime)=T\rightarrow S(U)=S_{\bar{x}}$. This means that the map $V^\prime\rightarrow U$ is pro-\'{e}tale. Since $s_1$ and $s_2$ are identical in $\mathcal{F}(V^\prime)$, this finishes the proof that $\mathcal{F}=0$.

For the last assertion, use that $Y_\profet$ has enough points.
\end{proof}

We will need a lemma about the behaviour of the pro-\'{e}tale site under change of base field. Assume that $X$ lives over a field $K$, i.e. $X\rightarrow \Spec K$, resp. $X\rightarrow \Spa(K,K^+)$, and let $L/K$ be a separable extension (with $L^+\subset L$ the integral closure of $K^+$ in the adic case). Let $X_L = X\times_{\Spec K} \Spec L$, resp. $X_L = X\times_{\Spa(K,K^+)} \Spa(L,L^+)$. We may also define an object of $X_\proet$, by taking the inverse limit of $X_{L_i}\in X_\et$ where $L_i\subset L$ runs through the finite extensions of $K$. By abuse of notation, we denote by the same symbol $X_L\in X_\proet$ this formal inverse limit.

Then, we may consider the localized site $X_\proet / X_L$ of objects with a structure map to $X_L$, and the induced covers. One immediately checks the following result.

\begin{prop}\label{ProetChangeBase} There is an equivalence of sites $X_{L,\proet}\cong X_\proet / X_L$.$\hfill \Box$
\end{prop}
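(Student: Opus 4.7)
The plan is to construct quasi-inverse functors $F: X_{L,\proet} \to X_\proet / X_L$ and $G: X_\proet / X_L \to X_{L,\proet}$ and verify that they preserve coverings. The essential observation, which reduces most of the content of the proposition to formalities, is that $X_L = \varprojlim_i X_{L_i}$ (with $L_i$ ranging over the finite subextensions of $L/K$) is pro-étale over $X$ as an object of $X_\proet$: indeed, by passing to a cofinal system of Galois subextensions the transition maps $X_{L_i} \to X_{L_{i'}}$ become finite \'etale and surjective, and the same holds for $X_{L_i} \to X$. In particular, Lemma~\ref{LemSurjProfinite}(vi) applies to any factorization through $X_L$.

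For the forward direction, given a pro-\'etale map $U \to X_L$ in $X_{L,\proet}$, composition with the pro-\'etale map $X_L \to X$ in $X_\proet$ is again pro-\'etale by Lemma~3.11(vi) (using the numbering in the excerpt); this yields $F(U) \in X_\proet / X_L$. For the inverse direction, suppose $V \in X_\proet$ is equipped with a structure map $V \to X_L$, and fix a pro-\'etale presentation $V = \varprojlim_j V_j$ with $V_j \in X_\et$ and transition maps finite \'etale surjective for large indices. The given map $V \to X_L$ corresponds to a compatible system of maps $V \to X_{L_i}$, and by the explicit description
\[
\Hom_{\pro-X_\et}(V, X_{L_i}) = \varinjlim_j \Hom(V_j, X_{L_i}),
\]
each such map factors as $V \to V_{j(i)} \to X_{L_i}$ for some large $j(i)$, compatibly as $i$ varies after passing to a cofinal subsystem. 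Setting $\tilde V_j := V_j \times_{X_{L_{i(j)}}} X_L$ (with $i(j)$ chosen so that $V_j \to X_{L_{i(j)}}$ is defined), each $\tilde V_j$ is \'etale over $X_L$ in the sense of $X_{L,\proet}$, and $V = \varprojlim_j \tilde V_j$ with transition maps finite \'etale surjective for large $j$ by base change. This exhibits $V \to X_L$ as pro-\'etale in $X_{L,\proet}$, defining $G$.

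That $F$ and $G$ are mutually quasi-inverse is essentially tautological, as both simply reinterpret the same underlying pro-object in $\pro-X_\et$. For the topologies, a family of pro-\'etale morphisms $\{U_\alpha \to U\}$ is a covering in either site precisely when $|U| = \bigcup_\alpha |f_\alpha(U_\alpha)|$, a condition which depends only on the underlying topological spaces, and those spaces are preserved under $F$ and $G$. Thus the sites are equivalent. The main obstacle is really only in the second paragraph: extracting a simultaneous factorization $V_j \to X_{L_{i(j)}}$ cofinally, which is the standard manipulation of pro-systems but deserves a careful cofinality argument.
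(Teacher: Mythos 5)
The paper gives no argument beyond ``one immediately checks,'' so there is no reference proof to compare against; your proposal fills in the natural verification and the overall strategy is sound. But one step you dismiss as ``essentially tautological'' is in fact where most of the content of the statement lives, and there is a citation slip.

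The nontrivial point is the identification of the two pro-categories involved. An object of $X_{L,\proet}$ is by definition a pro-object of $(X_L)_\et$ (honest \'etale maps of spaces to $X_L$), while an object of $X_\proet/X_L$ is a pro-object of $X_\et$ together with a map to the formal inverse limit $X_L\in\pro\text{-}X_\et$. These live a priori in different pro-categories, so ``composition with $X_L\to X$'' and the inverse construction do not even typecheck until you know that $(X_L)_\et$ is the $2$-colimit of the $(X_{L_i})_\et$ --- i.e., that every \'etale space over $X_L$ spreads out to an \'etale map $U_i\to X_{L_i}$ for some finite $L_i\subset L$, and that morphisms do too. This is a limit/spreading-out argument (Corollary 1.7.3 of \cite{Huber} in the adic case, the usual EGA IV statements for schemes), and it is the same ingredient that underlies parts (v) and (vi) of the paper's lemma on pro-\'etale morphisms. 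Once that identification is stated, your forward functor (post-compose with $X_L\to X$) and inverse functor (reindex and base-change the presentation along $X_L\to X_{L_{i(j)}}$) are correct, with the resulting $\tilde V_j$ \'etale over $X_L$ because any morphism between \'etale $X$-spaces is itself \'etale, and $\varprojlim_j \tilde V_j = V\times_{X_L} X_L = V$ once $\{i(j)\}$ is cofinal. The preservation of coverings by topological surjectivity is indeed immediate. Two smaller points: the composition lemma you invoke in the first paragraph is the unnumbered lemma on pro-\'etale morphisms following the definition of $X_\proet$, not Lemma \ref{LemSurjProfinite} (which concerns open surjections of profinite $G$-sets and has no part (vi)); and passing to Galois subextensions is unnecessary, since for any finite separable $L_{i'}\subset L_i$ the base-changed map $X_{L_i}\to X_{L_{i'}}$ is already finite \'etale and surjective.
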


There is the natural projection $\nu: X_\proet\rightarrow X_\et$. Using it, we state some general comparison isomorphisms between the \'{e}tale and pro-\'{e}tale site.

\begin{lem}\label{CompEtProet} Let $\mathcal{F}$ be an abelian sheaf on $X_\et$. For any quasicompact and quasiseparated $U=\varprojlim U_j\in X_\proet$ and any $i\geq 0$, we have
\[
H^i(U,\nu^\ast \mathcal{F}) = \varinjlim H^i(U_j,\mathcal{F})\ .
\]
\end{lem}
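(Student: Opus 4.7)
My plan is to reduce to the case $i=0$ by a hypercovering argument, exploiting that cohomology on a coherent topos is computable via Verdier hypercoverings and that finite diagrams of qcqs pro-\'{e}tale objects descend to a finite level in the pro-system $\{U_j\}$.

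First I would establish the statement for $i = 0$. The sheaf $\nu^{\ast}\mathcal{F}$ is the sheafification of the presheaf sending $V \in X_\proet$ to $\varinjlim_{V \to V'} \mathcal{F}(V')$, the colimit taken over maps from $V$ to objects $V' \in X_\et$. When $U = \varprojlim U_j$ is qcqs with its pro-\'{e}tale presentation, morphisms in $\pro\text{-}X_\et$ from $U$ to $V' \in X_\et$ are computed as $\varinjlim_j \Hom(U_j, V')$, so $\{U_j\}$ is cofinal, and the presheaf value simplifies to $\varinjlim_j \mathcal{F}(U_j)$. I would then verify that this presheaf is already a sheaf on the generating family of qcqs pro-\'{e}tale objects: any pro-\'{e}tale cover of qcqs $U$ admits a finite refinement by qcqs objects (by coherence of the topos, established above), which by part (v) of the preceding lemma descends to a genuine \'{e}tale cover of some $U_j$; the sheaf axiom for $\mathcal{F}$ on $X_\et$ combined with exactness of filtered colimits finishes this case.

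For $i \geq 1$, I would invoke Verdier's hypercovering theorem to write
\[
H^i(U, \nu^{\ast}\mathcal{F}) \ =\ \varinjlim_{V_\bullet \to U} \check{H}^i(V_\bullet, \nu^{\ast}\mathcal{F}),
\]
where the colimit runs over pro-\'{e}tale hypercovers of $U$ by qcqs objects; such hypercovers are cofinal because the topos is algebraic. Since $\check{H}^i$ depends only on the truncation in degrees $\leq i+1$, only finitely many levels of $V_\bullet$ matter. Using the pro-object formalism together with parts (i), (ii), (v) and (vi) of the preceding lemma, each of these levels, and the finitely many face and degeneracy morphisms between them, can be realized as the pullback along $U \to U_j$ of a compatible hypercover $V_{\bullet, j} \to U_j$ in $X_\et$ for some sufficiently large $j$.

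With this descent in place, the $i = 0$ statement applied level-wise shows that the Čech cochain complex of $V_\bullet$ with values in $\nu^{\ast}\mathcal{F}$ is the filtered colimit, over $j' \geq j$, of the Čech cochain complexes of $V_{\bullet, j'}$ with values in $\mathcal{F}$. Exactness of filtered colimits gives $\check{H}^i(V_\bullet, \nu^{\ast}\mathcal{F}) = \varinjlim_{j'} \check{H}^i(V_{\bullet, j'}, \mathcal{F})$. Taking the colimit over all hypercovers, and using cofinality (hypercovers descended from the varying $U_j$ are cofinal among hypercovers of $U$, and conversely hypercovers of each $U_j$ pull back to hypercovers of $U$), and invoking Verdier's theorem on each $X_\et$, yields the asserted identification with $\varinjlim_j H^i(U_j, \mathcal{F})$.

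The main obstacle I anticipate is the descent of hypercovers to a finite level together with the cofinality at the end; the real content is the interplay between the pro-object formalism and quasicompactness, which lets a finite simplicial diagram of qcqs objects pro-\'{e}tale over $U$ be traced back to a finite diagram pro-\'{e}tale over a single $U_j$. Once that bookkeeping is in place, exactness of filtered colimits does all the work.
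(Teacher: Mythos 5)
Your overall strategy — reduce to finite-level data via the pro-object formalism and then let exactness of filtered colimits do the work — is the right one, and it is what the paper does as well. But there is a genuine gap in the way you carry out the descent, both in the $i=0$ case and, more seriously, in the hypercovering step.

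The problem is that objects of $X_\proet$ pro-\'{e}tale over $U$ are themselves cofiltered inverse limits, not merely pullbacks of \'{e}tale objects from a finite level of the pro-system $\{U_j\}$. Concretely, if $V\to U$ is a pro-\'{e}tale cover, it has a presentation $V = \varprojlim_l V_l\to U$ where the $V_l$ are \'{e}tale over $U$ and the transition maps are finite \'{e}tale surjective for large $l$; only the individual \emph{\'{e}tale} stages $V_l$ can be descended (via part (v) of the preceding lemma) to some $V_{l,j}\to U_j$. Your argument treats a pro-\'{e}tale cover, and in the hypercover step an entire truncated simplicial object $V_\bullet$ with pro-\'{e}tale levels, as if it descended in one go to a diagram in $X_\et$ over a single $U_j$. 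That is false: a generic $V_n\to U$ is not of the form $V_{n,j}\times_{U_j} U$ with $V_{n,j}\to U_j$ \'{e}tale, so the claimed "compatible hypercover $V_{\bullet,j}\to U_j$ in $X_\et$" need not exist. The missing step is an inner filtered colimit that first replaces each pro-\'{e}tale cover (or hypercover level) by the cofiltered family of \'{e}tale covers through which it factors, observing that the Cech complex of the pro-\'{e}tale cover is the direct limit of the Cech complexes of the \'{e}tale ones; only after this reduction is descent to finite level available.

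A related issue is the cofinality claim at the end. Hypercovers pulled back from some $U_j$ (hence with \'{e}tale levels over $U$) are \emph{coarser} than arbitrary pro-\'{e}tale hypercovers — every $V\to U$ pro-\'{e}tale factors through its \'{e}tale stages $V_l\to U$, so the latter are refined by the former, not vice versa. Cofinality in the hypercover colimit runs in the direction of refinements, so the \'{e}tale hypercovers are not cofinal in the sense you need, and Verdier's theorem on $X_\et$ does not plug in directly. The paper avoids all of this by working with Cartan's criterion (SGA 4 V 4.3) rather than Verdier hypercovers: it reduces immediately to injective $\mathcal{F}$, then only has to check that Cech complexes of ordinary coverings are exact, reduces a covering to a single morphism $V\to U$ by quasicompactness, then replaces $V$ by its \'{e}tale stages $V_l$ via the inner colimit, and finally descends each \'{e}tale cover $V_l\to U$ to a finite level $V_{j'}\to U_{j'}$. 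This two-step limit (first over the pro-\'{e}tale presentation of the cover, then over the pro-\'{e}tale presentation of $U$) is what makes the argument go through, and it is precisely what your proposal omits.
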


\begin{proof} One may assume that $\mathcal{F}$ is injective, and that $X$ is qcqs. Let us work with the site $X_\proetqc$; as it has the same associated topos, this is allowed. Let $\tilde{\mathcal{F}}$ be the presheaf on $X_\proetqc$ given by $\tilde{\mathcal{F}}(V) = \varinjlim \mathcal{F}(V_j)$, where $V=\varprojlim V_j$. Obviously, $\nu^\ast\mathcal{F}$ is the sheaf associated to $\tilde{\mathcal{F}}$. We have to show that $\tilde{\mathcal{F}}$ is a sheaf with $H^i(V,\tilde{\mathcal{F}})=0$ for all $V\in X_\proetqc$ and $i>0$. Using SGA 4 V Proposition 4.3, equivalence of (i) and (iii), we have to check that for any $U\in X_\proetqc$ with a pro-\'{e}tale covering by $V_k\rightarrow U$, $V_k\in X_\proetqc$, the corresponding Cech complex
\[
0\rightarrow \tilde{\mathcal{F}}(U)\rightarrow \prod_k \tilde{\mathcal{F}}(V_k)\rightarrow \prod_{k,k^\prime} \tilde{\mathcal{F}}(V_k\times_U V_{k^\prime})\rightarrow \ldots
\]
is exact. This shows in the first step that $\tilde{\mathcal{F}}$ is separated; in the second step that $\tilde{\mathcal{F}}$ is a sheaf; in the third step that all higher cohomology groups vanish.

We may pass to a finite subcover because $U$ is quasicompact; this may be combined into a single morphism $V\rightarrow U$. Now take a pro-\'{e}tale presentation $V=\varprojlim V_l\rightarrow U$. Then $V_l\rightarrow U$ is an \'{e}tale cover for large $l$. Since $\tilde{\mathcal{F}}(V) = \varinjlim \tilde{\mathcal{F}}(V_l)$, the Cech complex for the covering $V\rightarrow U$ is the direct limit of the Cech complexes for the coverings $V_l\rightarrow U$. This reduces us to the case that $V\rightarrow U$ is \'{e}tale.

Choose $V_j\rightarrow U_j$ \'{e}tale such that $V=V_j\times_{U_j} U$. Then denoting for $j^\prime \geq j$ by $V_{j^\prime}\rightarrow U_{j^\prime}$ the pullback of $V_j\rightarrow U_j$, $V_{j^\prime}\rightarrow U_{j^\prime}$ is an \'{e}tale cover for large $j^\prime$, and the Cech complex for $V\rightarrow U$ is the direct limit over $j^\prime$ of the Cech complexes for $V_{j^\prime}\rightarrow U_{j^\prime}$. This reduces us to checking exactness of the Cech complexes for the covers $V_{j^\prime}\rightarrow U_{j^\prime}$. But this is just the acyclicity of the injective sheaf $\mathcal{F}$ on $X_\et$.
\end{proof}

\begin{cor}\label{CompProetVSEt}\begin{altenumerate}
\item[{\rm (i)}] For any sheaf $\mathcal{F}$ on $X_\et$, the adjunction morphism $\mathcal{F}\rightarrow R\nu_\ast \nu^\ast \mathcal{F}$ is an isomorphism.

\item[{\rm (ii)}] Let $f: X\rightarrow Y$ be a quasicompact and quasiseparated morphism. Then for any sheaf $\mathcal{F}$ on $X_\et$, the base-change morphism
\[
\nu_Y^{\ast} Rf_{\et\ast} \mathcal{F}\rightarrow Rf_{\proet\ast} \nu_X^{\ast} \mathcal{F}
\]
associated to the diagram
\[\xymatrix{
X_{\proet}\ar[d]^{f_{\proet}} \ar[r]^{\nu_X} & X_{\et} \ar[d]^{f_{\et}} \\
Y_{\proet}\ar[r]^{\nu_Y} & Y_{\et}
}\]
is an isomorphism.
\end{altenumerate}
\end{cor}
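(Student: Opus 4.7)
The plan is to bootstrap everything from Lemma \ref{CompEtProet}, which already expresses the pro-\'{e}tale cohomology of a qcqs pro-\'{e}tale object as a filtered colimit of \'{e}tale cohomologies of its constituents. Both parts of the corollary reduce to this lemma once the definitions are unwound; no new geometric input is needed.

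For part (i), I would compute $R^i\nu_\ast \nu^\ast \mathcal{F}$ as the sheaf on $X_\et$ associated to the presheaf $U\mapsto H^i(U_\proet,\nu^\ast \mathcal{F})$, where $U\in X_\et$ is viewed as an object of $X_\proet$ via the obvious fully faithful embedding. Restricting to qcqs $U$ (which form a generating family in our locally noetherian setting) and applying Lemma \ref{CompEtProet} with the trivial pro-\'{e}tale presentation $U=U$, the colimit collapses to give $H^i(U_\proet, \nu^\ast \mathcal{F}) = H^i(U_\et, \mathcal{F})$. For $i=0$ this immediately yields $\nu_\ast \nu^\ast \mathcal{F} = \mathcal{F}$. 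For $i>0$, the presheaf $U\mapsto H^i(U_\et, \mathcal{F})$ has vanishing sheafification on $X_\et$: its stalk at any geometric point is $H^i$ of the strict henselization, which is zero.

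For part (ii), I would check the isomorphism after evaluating on a generating family of qcqs $V\in Y_\proet$. Fix such a $V = \varprojlim V_j$ with $V_j\in Y_\et$. Since $f$ is qcqs, the fibre product $V\times_Y X = \varprojlim (V_j\times_Y X)$ is a qcqs pro-\'{e}tale object of $X_\proet$. For the right-hand side one has
\[
R\Gamma(V, Rf_{\proet\ast}\nu_X^\ast \mathcal{F}) = R\Gamma(V\times_Y X, \nu_X^\ast \mathcal{F}),
\]
and by Lemma \ref{CompEtProet} this equals $\varinjlim_j R\Gamma((V_j\times_Y X)_\et, \mathcal{F}) = \varinjlim_j R\Gamma(V_{j,\et}, Rf_{\et\ast}\mathcal{F})$, the second identity being the standard fact that \'{e}tale pushforward commutes with \'{e}tale localization along $V_j\to Y$. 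For the left-hand side, applying Lemma \ref{CompEtProet} to each cohomology sheaf $R^qf_{\et\ast}\mathcal{F}$ and using that filtered colimits are exact (so that one may pass through the hypercohomology spectral sequence) yields $R\Gamma(V, \nu_Y^\ast Rf_{\et\ast}\mathcal{F}) = \varinjlim_j R\Gamma(V_{j,\et}, Rf_{\et\ast}\mathcal{F})$. The base-change morphism realizes the tautological identification of these two colimits, so since the chosen $V$ form a generating family, the isomorphism holds globally.

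The only point that requires genuine care is in part (ii), where one must verify that pro-\'{e}tale fibre products behave correctly under the projection to $X_\proet$, that is, that $V\times_Y X$ formed in $\pro-X_\et$ really computes $f_{\proet}^{-1}(V)$; this is ensured by the stability results for pro-\'{e}tale morphisms proved earlier in this section, combined with the fact that qcqs-ness of $f$ keeps $V\times_Y X$ qcqs so that Lemma \ref{CompEtProet} actually applies. Beyond this bookkeeping, the argument is purely formal.
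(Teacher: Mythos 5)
Your proof is correct and follows essentially the same route as the paper: both parts reduce to Lemma~\ref{CompEtProet} by evaluating over quasicompact quasiseparated objects and sheafifying. The only expository differences are that in part (i) the paper observes directly that higher cohomology classes vanish locally in the \'etale topology (avoiding any appeal to a ``strict henselization'' for adic spaces, which is the less robust formulation), and in part (ii) the paper identifies the $i$-th cohomology sheaves of both sides as the sheafification of $U \mapsto \varinjlim_j H^i(U_j\times_Y X,\mathcal{F})$ rather than comparing $R\Gamma$ on a generating family, but the underlying computation is identical.
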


\begin{proof}\begin{altenumerate}
\item[{\rm (i)}] We recall that for any $i\geq 0$, $R^i\nu_\ast \nu^\ast \mathcal{F}$ is the sheaf on $X_\et$ associated to the presheaf $U\mapsto H^i(U,\nu^\ast \mathcal{F})$, where in the last expression $U$ is considered as an element of $X_\proet$. Hence the last lemma already implies that we get an isomorphism for $i=0$. Moreover, in degree $i>0$, the lemma says that if $U$ is qcqs, then $H^i(U,\nu^\ast \mathcal{F})=H^i(U,\mathcal{F})$. But any section of $H^i(U,\mathcal{F})$ vanishes locally in the \'{e}tale topology, so that the associated sheaf is trivial.
\item[{\rm (ii)}] One checks that for any $i\geq 0$, the $i$-th cohomology sheaf of both sides is the sheafification of the presheaf taking a quasicompact and quasiseparated $U=\varprojlim U_j\rightarrow Y$ to
\[
\varinjlim H^i(U_j\times_Y X,\mathcal{F})\ .
\]
\end{altenumerate}
\end{proof}

Part (i) implies that $\nu^\ast$ gives a fully faithful embedding from abelian sheaves on $X_\et$ to abelian sheaves on $X_\proet$, and we will sometimes confuse a sheaf $\mathcal{F}$ on $X_\et$ with its natural extension $\nu^\ast \mathcal{F}$ to $X_\proet$.

One useful property of the pro-\'{e}tale site is that inverse limits are often exact. This is in stark contrast with the usual \'{e}tale site, the difference being that property (ii) of the following lemma is rarely true on the \'{e}tale site.

\begin{lem}\label{InverseLimitExact} Let $\mathcal{F}_i$, $i\in \mathbb{N}$, be an inverse system of abelian sheaves on a site $T$. Assume that there is a basis $B$ for the site $T$, such that for any $U\in B$, the following two conditions hold:
\begin{altenumerate}
\item[{\rm (i)}] The higher inverse limit $R^1\varprojlim \mathcal{F}_i(U)=0$ vanishes.
\item[{\rm (ii)}] All cohomology groups $H^j(U,\mathcal{F}_i)=0$ vanish for $j>0$.
\end{altenumerate}

Then $R^j\varprojlim \mathcal{F}_i = 0$ for $j>0$ and $(\varprojlim \mathcal{F}_i)(U) = \varprojlim \mathcal{F}_i(U)$ for all $U\in T$. Moreover, $H^j(U,\varprojlim \mathcal{F}_i)=0$ for $U\in B$ and $j>0$.
\end{lem}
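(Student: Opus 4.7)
The plan is to exploit the standard two-term resolution of the derived inverse limit. For any $\mathbb{N}$-indexed inverse system $\{X_i\}$ in an abelian category with exact countable products, $R\varprojlim X_i$ is canonically represented by the two-term complex
\[
\Bigl[\,\prod_i X_i \xrightarrow{\ \mathrm{id}-\mathrm{shift}\ } \prod_i X_i\,\Bigr],
\]
concentrated in degrees $0$ and $1$, whose $H^0$ is $\varprojlim X_i$ and whose $H^1$ is $R^1\varprojlim X_i$. Applied to the $\mathcal{F}_i$ in abelian sheaves on $T$, this representation immediately gives $R^j\varprojlim \mathcal{F}_i = 0$ for all $j \geq 2$.

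The equality $(\varprojlim \mathcal{F}_i)(U) = \varprojlim \mathcal{F}_i(U)$ for every $U \in T$ is formal: limits in the category of sheaves are computed sectionwise, because being a sheaf is itself a limit condition and limits commute with limits, so no hypothesis is needed here. To see $R^1\varprojlim \mathcal{F}_i = 0$, one observes that this sheaf is the sheafification of the presheaf cokernel of $\mathrm{id}-\mathrm{shift}$ on $\prod_i \mathcal{F}_i$; over any $U \in B$ that presheaf cokernel is exactly $R^1\varprojlim \mathcal{F}_i(U)$, which vanishes by (i). Since $B$ generates the topology, a presheaf vanishing on $B$ has zero sheafification, and so $R^1\varprojlim \mathcal{F}_i = 0$.

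It remains to establish $H^j(U,\varprojlim \mathcal{F}_i) = 0$ for $U \in B$ and $j > 0$. From the previous step, $R\varprojlim \mathcal{F}_i \cong \varprojlim \mathcal{F}_i$ in the derived category, so $R\Gamma(U,\varprojlim \mathcal{F}_i) = R\Gamma(U, R\varprojlim \mathcal{F}_i)$. I would then apply $R\Gamma(U,-)$ to the two-term complex of sheaves. Since it commutes with the countable product $\prod_i \mathcal{F}_i$ (products of injective sheaves are injective in a Grothendieck topos), and since each $R\Gamma(U,\mathcal{F}_i)$ collapses to $\mathcal{F}_i(U)$ in degree $0$ by (ii), we are reduced to computing the cohomology of the two-term complex of abelian groups $[\prod_i \mathcal{F}_i(U) \to \prod_i \mathcal{F}_i(U)]$, which by (i) is $\varprojlim \mathcal{F}_i(U)$ in degree $0$ and zero in degree $1$.

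The main technical point, and essentially the only nontrivial step, is the commutation of $R\Gamma(U,-)$ with the countable product $\prod_i \mathcal{F}_i$; once this is granted, everything else is formal manipulation of the two-term complex together with the two hypotheses (i) and (ii).
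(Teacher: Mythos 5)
Your proof rests twice on the assumption that countable products are exact in the category of abelian sheaves on $T$ (axiom AB4${}^*$): first when you assert that the two-term complex $\bigl[\prod_i\mathcal{F}_i\to\prod_i\mathcal{F}_i\bigr]$ represents $R\varprojlim\mathcal{F}_\bullet$ in $\mathrm{Sh}(T)$, and again when you commute $R\Gamma(U,-)$ with the countable product. You flag the hypothesis (``abelian category with exact countable products''), but you do not verify it for $\mathrm{Sh}(T)$ — and in fact it can fail for sheaves on a general site: sheaf categories are Grothendieck abelian and hence satisfy AB3${}^*$ and AB5, but not AB4${}^*$ in general, and without AB4${}^*$ the $\mathbb{N}$-indexed $R\varprojlim$ need not be bounded above by degree $1$. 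The parenthetical justification you give for the last step — that products of injective sheaves are injective — is true (the product functor is a right adjoint of an exact functor) but does not close the gap: knowing each $\prod_i\mathcal{I}_i^j$ is injective does not make $\prod_i\mathcal{I}_i^\bullet$ a \emph{resolution} of $\prod_i\mathcal{F}_i$; that exactness is precisely AB4${}^*$ again. So the ``main technical point'' you isolate at the end is indeed the crux, and the proposed justification for it is insufficient.

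The paper's proof is organized precisely to sidestep this. It never manipulates $R\varprojlim$ directly in $\mathrm{Sh}(T)$; instead it factors through $\mathrm{Sh}^{\mathbb{N}}\to\mathrm{PreSh}^{\mathbb{N}}\to\mathrm{PreSh}\to\mathrm{Sh}$. In the presheaf categories everything is computed sectionwise in $\mathrm{Ab}$, where AB4${}^*$ does hold, so the two-term complex is available there. Hypothesis (ii) is what makes this transfer work: it guarantees that an injective resolution in $\mathrm{Sh}^{\mathbb{N}}$, forgotten to presheaves and evaluated on $U\in B$, remains a resolution of $\mathcal{F}_\bullet(U)$ in $\mathrm{Ab}^{\mathbb{N}^{\mathrm{op}}}$, so that the sectionwise derived inverse limit really is $R^j\varprojlim_{\mathrm{Ab}}\mathcal{F}_\bullet(U)$. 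One symptom of the gap in your argument is that your derivation of $R^j\varprojlim\mathcal{F}_i=0$ for $j>0$ does not use hypothesis (ii) at all, whereas the paper needs it already at that stage — exactly because one cannot appeal to the two-term complex directly at the level of sheaves.
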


\begin{proof} Consider the composition of functors
\[
\mathrm{Sh}^{\mathbb{N}}\rightarrow \mathrm{PreSh}^{\mathbb{N}}\rightarrow \mathrm{PreSh}\rightarrow \mathrm{Sh}\ .
\]
Here the first is the forgetful functor, the second is the inverse limit functor on presheaves, and the last is the sheafification functor. The first functor has the exact left adjoint given by sheafification, so that it preserves injectives and one sees that upon taking the derived functors, the conditions guarantee that for $U\in B$, all $(R^j\varprojlim \mathcal{F}_i)(U)$ vanish for $j>0$: They do before the last sheafification, hence they do after the sheafification. This already shows that all higher inverse limits $R^j\varprojlim \mathcal{F}_i$, $j>0$, vanish. Because an inverse limit of sheaves calculated as presheaves is again a sheaf, the description of $\varprojlim \mathcal{F}_i$ is always true.

For the last statement, consider the commutative diagram
\[\xymatrix{
\mathrm{Sh}^{\mathbb{N}}\ar[r]\ar[d] & \mathrm{PreSh}^{\mathbb{N}}\ar[d]\\
\mathrm{Sh}\ar[r] & \mathrm{PreSh}
}\]
expressing that the inverse limit of sheaves calculated on the level of presheaves is a sheaf. Going over the upper right corner, we have checked that the higher derived functors of the composite map are zero for $i>0$ on sections over $U\in B$. As the left vertical functor has an exact left adjoint giving by taking a sheaf to the constant inverse limit, it preserves injectives, and we have a Grothendieck spectral sequence for the composition over the lower left corner. There are no higher derived functors appearing for the left vertical functor, by what we have proved. Hence this gives $H^j(U,\varprojlim \mathcal{F}_i)=0$ for $j>0$ and $U\in B$.
\end{proof}

\section{Structure sheaves on the pro-\'{e}tale site}\label{StructureSheafSection}

\begin{definition} Let $X$ be a locally noetherian adic space over $\Spa(\Q_p,\Z_p)$. Consider the following sheaves on $X_{\proet}$.
\begin{altenumerate}
\item[{\rm (i)}] The (uncompleted) structure sheaf $\mathcal{O}_X = \nu^{\ast} \mathcal{O}_{X_\et}$, with subring of integral elements $\mathcal{O}_X^+ = \nu^\ast \mathcal{O}_{X_\et}^+$.
\item[{\rm (ii)}] The integral completed structure sheaf $\hat{\mathcal{O}}_X^+ = \varprojlim \mathcal{O}_X^+/p^n$, and the completed structure sheaf $\hat{\mathcal{O}}_X = \hat{\mathcal{O}}_X^+[\frac 1p]$.
\end{altenumerate}
\end{definition}

\begin{lem} Let $X$ be a locally noetherian adic space over $\Spa(\Q_p,\Z_p)$, and let $U\in X_\proet$.
\begin{altenumerate}
\item[{\rm (i)}] For any $x\in |U|$, we have a natural continuous valuation $f\mapsto |f(x)|$ on $\mathcal{O}_X(U)$.
\item[{\rm (ii)}] We have
\[
\mathcal{O}_X^+(U) = \{f\in \mathcal{O}_X(U)\mid \forall x\in |U|: |f(x)|\leq 1\}\ .
\]
\item[{\rm (iii)}] For any $n\geq 1$, the map of sheaves $\mathcal{O}_X^+/p^n\rightarrow \hat{\mathcal{O}}_X^+/p^n$ is an isomorphism, and $\hat{\mathcal{O}}_X^+(U)$ is flat over $\Z_p$ and $p$-adically complete.
\item[{\rm (iv)}] For any $x\in |U|$, the valuation $f\mapsto |f(x)|$ extends to a continuous valuation on $\hat{\mathcal{O}}_X(U)$.
\item[{\rm (v)}] We have
\[
\hat{\mathcal{O}}_X^+(U) = \{f\in \hat{\mathcal{O}}_X(U)\mid \forall x\in |U|: |f(x)|\leq 1\}\ .
\]
In particular, $\hat{\mathcal{O}}_X^+(U)\subset \hat{\mathcal{O}}_X(U)$ is integrally closed.
\end{altenumerate}
\end{lem}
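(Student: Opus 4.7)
My plan is to reduce (i) and (ii) directly to a pro-\'etale presentation of $U$, to handle (iii) via Lemma~\ref{InverseLimitExact} applied to the system $\{\OO_X^+/p^m\}_m$, and then to deduce (iv) and (v) from (iii) by continuous $p$-adic extension.

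For (i) and (ii), fix a pro-\'etale presentation $U = \varprojlim_i U_i$ with each $U_i$ affinoid in $X_\et$, so that Lemma~\ref{CompEtProet} gives $\OO_X(U) = \varinjlim_i \OO_{X_\et}(U_i)$ and $\OO_X^+(U) = \varinjlim_i \OO_{X_\et}^+(U_i)$. A point $x \in |U| = \varprojlim_i |U_i|$ is a compatible family $(x_i)$, and the continuous adic-space valuations on $\OO_{X_\et}(U_i)$ at $x_i$ are compatible under the transition maps, so they descend to a continuous valuation on the colimit, giving (i). For (ii), the inclusion $\subseteq$ is immediate. For the reverse, I lift $f \in \OO_X(U)$ with $|f(x)| \le 1$ everywhere to some $f_i \in \OO_{X_\et}(U_i)$; by taking $i$ large enough that all transition maps $U_j \to U_i$ are surjective finite \'etale for $j \ge i$, the image of $|U|$ in $|U_i|$ is all of $|U_i|$, which forces $|f_i(y)| \le 1$ for every $y \in |U_i|$, hence $f_i \in \OO_{X_\et}^+(U_i)$ and $f \in \OO_X^+(U)$.

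For (iii), I apply Lemma~\ref{InverseLimitExact} to $\mathcal{F}_m = \OO_X^+/p^m$ on the basis $B$ of affinoid pro-\'etale opens as above. The Mittag-Leffler hypothesis reduces, on $B$, to surjectivity of the transition maps of $\mathcal{F}_m(U) = \OO_X^+(U)/p^m$; the cohomological hypothesis, via $\OO_X^+/p^m = \nu^{\ast}(\OO_{X_\et}^+/p^m)$ and Lemma~\ref{CompEtProet}, reduces to the vanishing $\varinjlim_i H^j(U_{i,\et}, \OO_{X_\et}^+/p^m) = 0$ for $j>0$. Granting these inputs, Lemma~\ref{InverseLimitExact} yields $\hat{\OO}_X^+(V) = \varprojlim_m \OO_X^+(V)/p^m$ for every $V \in X_\proet$, together with the sheaf-level vanishing $R^j\varprojlim_m \OO_X^+/p^m = 0$ for $j>0$. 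Consequently $\hat{\OO}_X^+(V)$ is literally the $p$-adic completion of $\OO_X^+(V)$, hence $p$-adically complete, and $\Z_p$-flat since $\OO_X^+ \subset \OO_X$ is $p$-torsion-free and this is preserved under $p$-adic completion. Passing to $\varprojlim_m$ in the short exact sequences $0 \to \OO_X^+/p^m \xrightarrow{p^n} \OO_X^+/p^{m+n} \to \OO_X^+/p^n \to 0$ and using the vanishing of $R^1\varprojlim$ then yields the sheaf isomorphism $\OO_X^+/p^n \xrightarrow{\sim} \hat{\OO}_X^+/p^n$.

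For (iv) and (v), the valuation $f \mapsto |f(x)|$ from (i) extends uniquely by $p$-adic continuity to $\hat{\OO}_X(U)$, using the identification of $\hat{\OO}_X^+(U)$ with the $p$-adic completion of $\OO_X^+(U)$ from (iii). In (v), the inclusion $\subseteq$ is immediate from (ii) and continuity, while the reverse inclusion follows from a Cauchy-approximation argument applying (ii) at each finite $p$-adic truncation; integral closedness of $\hat{\OO}_X^+(U)$ in $\hat{\OO}_X(U)$ then drops out of the valuative description. The main obstacle is the cohomology vanishing $\varinjlim_i H^j(U_{i,\et}, \OO_{X_\et}^+/p^m) = 0$ used in applying Lemma~\ref{InverseLimitExact} on the basis $B$; everything else is formal manipulation with filtered colimits, inverse limits, and the topology on $|U|$.
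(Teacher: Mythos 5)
The argument for (i) and (ii) matches the paper's. The gap is in (iii), and it infects (iv) and (v) as well.

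You invoke Lemma~\ref{InverseLimitExact} for the system $\mathcal{F}_m = \mathcal{O}_X^+/p^m$ on a basis $B$ of affinoid pro-\'etale opens. That lemma requires the vanishing $H^j(U,\mathcal{F}_m)=0$ for $j>0$ and all $U\in B$, which you flag as the ``main obstacle'' but never supply. This vanishing is in fact not available: even the \emph{almost} vanishing of $H^j(U,\mathcal{O}_X^+/p)$ is only established later in the paper (Lemma~\ref{ContOXplusonAffPerf}), and only for \emph{affinoid perfectoid} $U$ over a perfectoid base field, using the full strength of the almost purity theorem. At the point where the present lemma sits, $X$ is an arbitrary locally noetherian adic space over $\Q_p$, the perfectoid machinery has not been set up, and no such vanishing is known. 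So the hypotheses of Lemma~\ref{InverseLimitExact} are simply not met.

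The tell is what you conclude from it: that $\hat{\mathcal{O}}_X^+(V)$ is the $p$-adic completion of $\mathcal{O}_X^+(V)$, i.e.\ $\mathcal{O}_X^+(V)/p^n \xrightarrow{\sim} \hat{\mathcal{O}}_X^+(V)/p^n$ for all $V\in X_\proet$. The paper explicitly cautions, immediately after the lemma, that it does \emph{not} know whether this holds for general $U$. If your argument were correct it would settle that open question as a byproduct, which should have been a warning sign. (Along the way you also identify $(\mathcal{O}_X^+/p^m)(U)$ with $\mathcal{O}_X^+(U)/p^m$ for affinoid pro-\'etale $U$, another unjustified sheafification step.)

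What the paper actually does for (iii) is much weaker and purely formal from the $\Z_p$-flatness of $\mathcal{O}_X^+$: the claim is only a \emph{sheaf}-level isomorphism $\mathcal{O}_X^+/p^n \xrightarrow{\sim} \hat{\mathcal{O}}_X^+/p^n$, which follows from left-exactness of $\varprojlim$ on sheaves, multiplication by $p^n$ being injective, and the vanishing of the inverse limit of the $p$-torsion subsheaves (whose transition maps are zero). Together with the fact that inverse limits of sheaves are computed sectionwise, one gets $\hat{\mathcal{O}}_X^+(U)=\varprojlim_n(\mathcal{O}_X^+/p^n)(U)$, and then $p$-adic completeness and $\Z_p$-flatness of $\hat{\mathcal{O}}_X^+(U)$ drop out formally; no acyclicity input is needed. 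Since your (iv) and (v) are built on the (unknown) identification of $\hat{\mathcal{O}}_X^+(U)$ with the $p$-adic completion of $\mathcal{O}_X^+(U)$, they inherit the gap; the paper instead argues (iv) directly by representing a section of $\hat{\mathcal{O}}_X^+$ as a compatible system $(\overline{f}_n)$, lifting $\overline{f}_n$ locally and observing that $\max(|f_n(\tilde x)|,|p|^n)$ is independent of the choice of lift and cover, and argues (v) by reducing to (ii) after a local splitting $f=f_0+p^n g$ provided by the sheaf isomorphism of (iii).
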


\begin{proof} All assertions are local in $U$, so we may assume that $U$ is quasicompact and quasiseparated. We choose a pro-\'{e}tale presentation $U=\varprojlim U_i\rightarrow X$.
\begin{altenumerate}
\item[{\rm (i)}] A point $x\in |U|$ is given by a sequence of points $x_i\in |U_i|$, giving a compatible system of continuous valuations on $\mathcal{O}_{X_\et}(U_i)=\mathcal{O}_X(U_i)$. But
\[
\mathcal{O}_X(U) = (\nu^\ast \mathcal{O}_{X_\et})(U) = \varinjlim \mathcal{O}_X(U_i)\ ,
\]
so these valuations combine into a continuous valuation on $\mathcal{O}_X(U)$.

\item[{\rm (ii)}] Assume $i$ is large enough so that $|U|\rightarrow |U_i|$ is surjective, and $f\in \mathcal{O}_X(U)$ is the image of $f_i\in \mathcal{O}_X(U_i)$. Then the condition $|f(x)|\leq 1$ for all $x\in |U|$ implies $|f_i(x_i)|\leq 1$ for all $x_i\in |U_i|$, whence $f_i\in \mathcal{O}_X^+(U_i)$, so that $f\in \mathcal{O}_X^+(U)$. Conversely, if $f\in \mathcal{O}_X^+(U)$, then it comes as the image of some $f_i\in \mathcal{O}_X^+(U_i)$, and it lies in the right-hand side.

\item[{\rm (iii)}] This follows formally from flatness of $\mathcal{O}_X^+$ over $\Z_p$.

\item[{\rm (iv)}] To define the desired valuation on $f\in \hat{\OO}_X^+(U)$, represent it as the inverse system of $\overline{f}_n\in (\OO_X^+/p^n)(U)$. It makes sense to talk about $\max(|\overline{f}_n(x)|,|p|^n)$: Cover $U$ so that $\overline{f}_n$ lifts to some $f_n\in \OO_X^+$; then the valuation $|f_n(\tilde{x})|$ will depend on the preimage $\tilde{x}$ of $x$ in the cover, but the expression $\max(|f_n(\tilde{x})|,|p|^n)$ does not. If $\max(|\overline{f}_n(x)|,|p|^n)>|p|^n$ for some $n$, then we define $|f(x)| = |\overline{f}_n(x)|$; otherwise, we set $|f(x)|=0$. One easily checks that this is well-defined and continuous. Clearly, it extends to $\hat{\OO}_X(U)$.

\item[{\rm (v)}] By definition, the left-hand side is contained in the right-hand side. For the converse, note that since $U$ is quasicompact and quasiseparated, we have $\hat{\OO}_X(U) = \hat{\OO}_X^+(U)[\frac 1p]$. This reduces us to checking that if $f\in \hat{\OO}_X^+(U)$ satisfies $|f(x)|\leq |p|^n$ for all $x\in |U|$, then $f\in p^n\hat{\OO}_X^+(U)$. For this, one may use part (iii) to write $f=f_0+p^n g$ for some $f_0\in \OO_X^+(\tilde{U})$, $g\in p^n\hat{\OO}_X^+(\tilde{U})$ over some cover $\tilde{U}$ of $U$. Then we see that $|f_0(\tilde{x})|\leq |p|^n$ for all $\tilde{x}\in \tilde{U}$, and hence by part (ii), $f_0\in p^n\OO_X^+(\tilde{U})$.
\end{altenumerate}
\end{proof}

We caution the reader that we do not know whether $\mathcal{O}_X(U)\subset \hat{\mathcal{O}}_X(U)$ is always dense with respect to the topology on $\hat{\mathcal{O}}_X(U)$ having $p^n\hat{\mathcal{O}}_X^+(U)$, $n\geq 0$, as a basis of open neighborhoods of $0$. This amounts to asking whether $\mathcal{O}_X^+(U)/p^n\rightarrow \hat{\mathcal{O}}_X^+(U)/p^n$ is an isomorphism for all $n\geq 1$, or whether one could define $\hat{\mathcal{O}}_X(U)$ as the completion of $\mathcal{O}_X(U)$ with respect to the topology having $p^n\hat{\mathcal{O}}_X^+(U)$, $n\geq 0$, as a basis of open neighborhoods of $0$. In a similar vein, we ignore whether for all $U\in X_\proet$, the triple $(|U|,\hat{\mathcal{O}}_X|_{|U|},(|\bullet(x)|\mid x\in |U|))$ is an adic space. Here $\hat{\mathcal{O}}_X|_{|U|}$ denotes the restriction of $\hat{\OO}_X$ to the site of open subsets of $|U|$.

However, we will check next that there is a basis for the pro-\'{e}tale topology where these statements are true. For simplicity, let us work over a perfectoid field $K$ of characteristic $0$ with an open and bounded valuation subring $K^+\subset K$, and let $X$ over $\Spa(K,K^+)$ be a locally noetherian adic space. As in the Section \ref{AlmostMath}, we write $\Gamma = |K^\times|\subset \R_{>0}$, and identify $\R_{>0}$ with $\R$ using the logarithm with base $|p|$. For any $r\in \log \Gamma\subset \R$, we choose an element, written $p^r\in K$, such that $|p^r| = |p|^r$.

\begin{definition} Let $U\in X_{\proet}$.
\begin{altenumerate}
\item[{\rm (i)}] We say that $U$ is affinoid perfectoid if $U$ has a pro-\'{e}tale presentation $U=\varprojlim U_i\rightarrow X$ by affinoid $U_i=\Spa(R_i,R_i^+)$ such that, denoting by $R^+$ the $p$-adic completion of $\varinjlim R_i^+$, and $R=R^+[p^{-1}]$, the pair $(R,R^+)$ is a perfectoid affinoid $(K,K^+)$-algebra.
\item[{\rm (ii)}] We say that $U$ is perfectoid if it has an open cover by affinoid perfectoid $V\subset U$. Here, recall that quasicompact open subsets of $U\in X_\proet$ naturally give rise to objects in $X_\proet$.
\end{altenumerate}
\end{definition}

\begin{example} If
\[
X=\mathbb{T}^n = \Spa(K\langle T_1^{\pm 1},\ldots,T_n^{\pm 1}\rangle,K^+\langle T_1^{\pm 1},\ldots,T_n^{\pm 1}\rangle)\ ,
\]
then the inverse limit $\tilde{\mathbb{T}}^n\in X_\proet$ of the
\[
\Spa(K\langle T_1^{\pm 1/p^m},\ldots,T_n^{\pm 1/p^m}\rangle,K^+\langle T_1^{\pm 1/p^m},\ldots,T_n^{\pm 1/p^m}\rangle)\ ,
\]
$m\geq 0$, is affinoid perfectoid.
\end{example}

To an affinoid perfectoid $U$ as in (i), one associates $\hat{U} = \Spa(R,R^+)$, an affinoid perfectoid space over $K$. One immediately checks that it is well-defined, i.e. independent of the pro-\'{e}tale presentation $U=\varprojlim U_i$. Also, $U\mapsto \hat{U}$ defines a functor from affinoid perfectoid $U\in X_\proet$ to affinoid perfectoid spaces over $K$. Moreover, if $U$ is affinoid perfectoid and $U=\varprojlim U_i$ is a pro-\'{e}tale presentation, then $\hat{U}\sim \varprojlim U_i$ in the sense of \cite{ScholzePerfectoidSpaces1}, in particular $|\hat{U}| = |U|$.

\begin{lem}\label{PreciseLocalStructure} Let $U=\varprojlim U_i\in X_\proet$, $U_i=\Spa(R_i,R_i^+)$, be affinoid perfectoid, with a pro-\'{e}tale presentation. Let $(R,R^+)$ be the completion of the direct limit of the $(R_i,R_i^+)$, so that $\hat{U}=\Spa(R,R^+)$.

Assume that $V_i=\Spa(S_i,S_i^+)\rightarrow U_i$ is an \'{e}tale map which can be written as a composition of rational subsets and finite \'{e}tale maps. For $j\geq i$, write $V_j = V_i\times_{U_i} U_j = \Spa(S_j,S_j^+)$, and $V=V_i\times_{U_i} U = \varprojlim V_j\in X_\proet$. Let $A_j$ be the $p$-adic completion of the $p$-torsion free quotient of $S_j^+\otimes_{R_j^+} R^+$. Then
\begin{altenumerate}
\item[{\rm (i)}] The completion $(S,S^+)$ of the direct limit of the $(S_j,S_j^+)$ is a perfectoid affinoid $(K,K^+)$-algebra. In particular, $V$ is affinoid perfectoid. Moreover, $\hat{V} = V_j\times_{U_j} \hat{U}$ in the category of adic spaces over $K$, and $S = A_j[\frac 1p]$ for any $j\geq i$.
\item[{\rm (ii)}] For any $j\geq i$, the cokernel of the map $A_j\rightarrow S^+$ is annihilated by some power $p^N$ of $p$.
\item[{\rm (iii)}] Let $\epsilon>0$, $\epsilon\in \log \Gamma$. Then there exists some $j$ such that the cokernel of the map $A_j\rightarrow S^+$ is annihilated by $p^\epsilon$.
\end{altenumerate}
\end{lem}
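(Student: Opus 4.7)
The plan is to induct on the length of the factorization of $V_i\to U_i$ into rational subsets and finite étale maps, reducing to the two elementary cases. At each step, the bounds in (ii) and (iii) propagate: for two successive steps in which the cokernels are annihilated by $p^{\epsilon_1}$ and $p^{\epsilon_2}$ respectively, the composite is annihilated by $p^{\epsilon_1+\epsilon_2}$. So it suffices to treat separately the rational subset case and the finite étale case.

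For a rational subset $V_i = U_i(\tfrac{f_1,\ldots,f_n}{g})$ with $f_a,g\in R_i$ generating the unit ideal, pullback gives $V_j = U_j(\tfrac{f_1,\ldots,f_n}{g})$ and $V_i\times_{U_i}\hat{U} = \hat{U}(\tfrac{f_1,\ldots,f_n}{g})$. The latter is affinoid perfectoid by \cite[Theorem 6.3]{ScholzePerfectoidSpaces1}; call it $\Spa(S,S^+)$. Both $S_j^+$ and $S^+$ are presented as $p$-adically completed integral closures of an explicit basic algebra of the form $R_j^+\langle T_1,\ldots,T_n\rangle/(gT_a-f_a)$ (respectively $R^+\langle T_1,\ldots,T_n\rangle/(gT_a-f_a)$) in its generic fibre. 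Since $R^+$ is the $p$-adic completion of $\varinjlim R_j^+$, the map $A_j\to S^+$ is injective and $A_j[\tfrac{1}{p}]=S$, giving the last clause of (i); the cokernel consists precisely of integral elements that had to be added after base change to $R^+$, which is bounded by a power of $p$ giving (ii), and shrinks to be annihilated by $p^\epsilon$ once $j$ is large enough that the necessary $p$-power roots of the data $f_a,g$ lie in $R_j^+$, giving (iii).

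For a finite étale map $V_i\to U_i$, the pullback $V_i\times_{U_i}\hat{U}\to\hat{U}$ is finite étale over a perfectoid affinoid, hence affinoid perfectoid $\Spa(S,S^+)$ by almost purity \cite[Theorem 7.9(ii)]{ScholzePerfectoidSpaces1}, with $S^+$ almost finite étale over $R^+$; this establishes part (i) since the compatibility of finite étale base change with inverting $p$ yields $S=S_j\otimes_{R_j}R = A_j[\tfrac{1}{p}]$. The cokernel of $A_j\to S^+$ is therefore $p$-power torsion and, because $S^+$ is almost finitely presented over $R^+$ while $A_j$ is obtained by base change from the finitely presented $S_j^+$ over $R_j^+$, it is killed by a single power of $p$, proving (ii). For (iii) one uses that as $j\to\infty$ the discriminant (or trace) ideal of $R_j^+\to S_j^+$ approaches the unit ideal in the almost category, since the limit $R^+\to S^+$ is itself almost étale; this converts directly into the annihilation of the cokernel by $p^\epsilon$ at sufficiently large $j$.

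The main obstacle is the quantitative refinement in (iii). What is needed is not just that $A_j\to S^+$ becomes an isomorphism in the colimit, but that each prescribed $p^\epsilon$-approximation is achieved at some finite stage $j$. In the finite étale case this is a direct consequence of almost purity, which produces the quantitative approximations. The rational subset case is slightly more delicate because integral closure does not commute with filtered colimits on the nose; one must exploit that the tower $R_j^+$ asymptotically absorbs all $p$-power roots of the rational data $f_a,g$, ensuring the integral elements witnessing membership in $S^+$ are realized up to a loss of $p^\epsilon$ at some finite stage.
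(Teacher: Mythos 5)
Your high-level route matches the paper's: split $V_i\to U_i$ into the rational-subset and finite-\'etale cases, identify $V_j\times_{U_j}\hat{U}$ as affinoid perfectoid via \cite[Theorem~6.3]{ScholzePerfectoidSpaces1} and almost purity, and derive (i) and (ii) from boundedness of $T^+\subset T$. But your argument for part (iii) has gaps in both cases. In the rational-subset case, the condition that ``the necessary $p$-power roots of $f_a,g$ lie in $R_j^+$'' for $j$ large cannot hold: each $R_j$ is a noetherian Tate algebra, so no finite stage contains $p$-power roots. The mechanism the paper actually uses is \cite[Lemma~6.4]{ScholzePerfectoidSpaces1}: the relevant $p$-power roots exist already in the perfectoid ring $R^+$ (via $\sharp$ from the tilt), giving topologically finitely generated subalgebras $R^+\langle(f_a^\sharp/g^\sharp)^{1/p^m},\ldots\rangle\subset T^+$ whose cokernel is killed by $p^\epsilon$; one then approximates each of these finitely many generators modulo a high power of $p$ by elements of $S_j^+$ for $j$ large, using $R^+/p^N=\varinjlim R_j^+/p^N$.

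In the finite \'etale case, the trace/discriminant heuristic does not deliver (iii). Almost \'etaleness of $R^{+a}\to S^{+a}$ says the trace map of the \emph{limit} is almost surjective; it does not by itself give a quantitative statement that the discriminant or trace ideals of the finite-stage maps $R_j^+\to S_j^+$ ``approach the unit ideal'' as $j\to\infty$ --- indeed the discriminant of $S_j/R_j$ is pulled back from stage $i$ and does not change. The ingredient the paper uses is the stronger output of almost purity, \cite[Theorem~7.9~(iii)]{ScholzePerfectoidSpaces1}: $T^{+a}=T^{\circ a}$ is a \emph{uniformly} almost finitely generated $R^{+a}$-module, which directly supplies the finitely generated approximating subalgebras needed, after which one finishes exactly as in the rational case. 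Finally, note that in the rational case you implicitly identify the completion $(S,S^+)$ of $\varinjlim(S_j,S_j^+)$ with the pullback $(T,T^+)=(\OO_{\hat U}(W),\OO_{\hat U}^+(W))$; this needs its own short argument (showing $\varinjlim S_j^+/p^n\to T^+/p^n$ is an isomorphism, using surjectivity of $W\to V_j$ for $j$ large), which the paper supplies but your sketch elides.
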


\begin{proof} Clearly, one can separately treat the cases where $V_i\subset U_i$ is a rational subset and where $V_i\rightarrow U_i$ is finite \'{e}tale.

Assume first that $V_i\subset U_i$ is a rational subset, given by certain functions $f_1,\ldots,f_n,g\in R_i$. By pullback, it induces a rational subset $W$ of $\hat{U}$. Let
\[
(T,T^+) = (\OO_{\hat{U}}(W),\OO_{\hat{U}}^+(W))\ .
\]
Then $(T,T^+)$ is a perfectoid affinoid $(K,K^+)$-algebra by Theorem 6.3 (ii) of \cite{ScholzePerfectoidSpaces1}, $W=\Spa(T,T^+)$. Recall that for perfectoid affinoid $(K,K^+)$-algebras $(A,A^+)$, $A^+$ is open and bounded, i.e. carries the $p$-adic topology. Let $R_{j0}\subset R_j^+$ be an open and bounded subring. Then
\[
S_{j0} = R_{j0}\langle \frac{f_1}g,\ldots,\frac{f_n}g\rangle\subset S_j^+
\]
is an open and bounded subring. If we give $T_j = S_j\otimes_{R_j} R$ its natural topology making the image of $S_{j0}\otimes_{R_{j0}} R^+$ open and bounded, and let $T_j^+\subset T_j$ be the integral closure of the image of $S_j^+\otimes_{R_j^+} R^+$, then $\Spa(T_j,T_j^+)$ is the fibre product $V_j\times_{U_j} \hat{U}=W$. By the universal property of $(T,T^+)$, we find that $(T,T^+)$ is the completion of $(T_j,T_j^+)$. In particular, the natural topology on $T_j$ makes $T_j^+$ open and bounded (as this is true after completion, $T$ being perfectoid). Hence, the natural topology on $T_j$ agrees with the topology making the image of $S_j^+\otimes_{R_j^+} R^+$ open and bounded, as
\[
\im(S_{j0}\otimes_{R_{j0}} R^+\rightarrow T_j)\subset \im(S_j^+\otimes_{R_j^+} R^+\rightarrow T_j)\subset T_j^+\ .
\]
In particular, $T=A_j[p^{-1}]$. As $T^+\subset T$ is bounded, we also find that the cokernel of $A_j\rightarrow T^+$ is killed by some power of $p$.

Next, we claim that $S^+=T^+$, hence also $S=T$. We have to show that $S^+/p^n = \varinjlim S_j^+/p^n\rightarrow T^+/p^n$ is an isomorphism for all $n$. As $W\rightarrow V_j$ is surjective for $j$ large, the map from $S_j^+/p^n$ to $T^+/p^n$ is injective for large $j$, hence so is the map $S^+/p^n\rightarrow T^+/p^n$. For surjectivity, take $f\in T^+$. After multiplying by $p^N$, it is the image of some element of $S_i^+\hat{\otimes}_{R_i^+} R^+$, which we can approximate by $g\in S_j^+$ modulo $p^{n+N}$ if $j$ is large enough, as $R^+/p^{n+N} = \varinjlim R_j^+/p^{n+N}$. Then $g\in p^N S_j^+$ (by surjectivity of $W\rightarrow V_j$ for $j$ large), and writing $g=p^N h$, $h\in S_j^+$, we find that $h\equiv f$ modulo $p^n T^+$.

It remains to see that for any $\epsilon>0$, there exists some $j$ such that the cokernel of $A_j\rightarrow T^+$ is annihilated by $p^\epsilon$. For this, it suffices to exhibit a subalgebra $T_\epsilon^+\subset T^+$, topologically finitely generated over $R^+$, whose cokernel is annihilated by $p^\epsilon$: Any generator can be approximated modulo $p^N$ by an element of $S_j^+$ for $j$ large enough, so that for $j$ very large, $T_\epsilon^+$ will be in the image of $A_j$.

The existence of such subalgebras is an abstract question about perfectoid spaces. It follows from Lemma 6.4 of \cite{ScholzePerfectoidSpaces1}: In the notation of that lemma, the subalgebras
\[
R^+\langle \left(\frac{f_1^\sharp}{g^\sharp}\right)^{1/p^m},\ldots,\left(\frac{f_n^\sharp}{g^\sharp}\right)^{1/p^m} \rangle\subset \mathcal{O}_X(U^\sharp)^+
\]
for $m\geq 0$ have the desired property.

Now assume that $V\rightarrow U$ is finite \'{e}tale. In that case, $S_j$ is a finite \'{e}tale $R_j$-algebra for all $j$, and the almost purity theorem \cite{ScholzePerfectoidSpaces1}, Theorem 7.9 (iii), shows that $T = S_j\otimes_{R_j} R$ is a perfectoid $K$-algebra. Let $T^+$ be the integral closure of $R^+$ in $T$. By \cite{ScholzePerfectoidSpaces1}, Lemma 7.3 (iv) and Proposition 7.10, $\Spa(T,T^+)$ is the fibre product $V_j\times_{U_j} \hat{U}$. Then the proof of parts (i) and (ii) goes through as before. For part (iii), it suffices to check that one can find subalgebras $T_\epsilon^+\subset T^+$ topologically finitely generated over $R^+$, such that the cokernel is annihilated by $p^\epsilon$. In this case, this follows from the stronger statement that $T^{+a} = T^{\circ a}$ is a uniformly almost finitely generated $R^{+ a} = R^{\circ a}$-module, cf. \cite{ScholzePerfectoidSpaces1}, Theorem 7.9 (iii).
\end{proof}

In particular, this implies that the functor $U\mapsto \hat{U}$ is compatible with open embeddings, and one can extend the functor to a functor $U\mapsto \hat{U}$ from perfectoid $U\in X_\proet$ to perfectoid spaces over $K$.

\begin{lem} Let $U\in X_\proet$ be perfectoid. For any $V\rightarrow U$ pro-\'{e}tale, also $V$ is perfectoid.
\end{lem}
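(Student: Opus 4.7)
The claim is local on both $U$ and $V$, so I may reduce to the case that $U$ is affinoid perfectoid, with a fixed pro-étale presentation $U = \varprojlim U_i \to X$ having $\hat{U} = \Spa(R,R^+)$, and it suffices to produce, for each point of $|V|$, an affinoid perfectoid neighborhood in $X_\proet$.

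By definition of pro-étale, I may further shrink $V$ and assume $V \to U$ factors as $V \to V_0 \to U$, where $V_0 \to U$ is étale and $V = \varprojlim_j V_j \to V_0$ is written as an inverse system with finite étale surjective transition maps $V_j \to V_k$ for $j \geq k$ large. Thus I handle the étale case first, then use it to handle the inverse-limit-of-finite-étale case.

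For the étale step, I would pick a point of $|V_0|$; locally near its image in some $U_i$, the étale morphism $V_0 \to U$ comes (as in Lemma 3.1.10 style reductions for adic spaces) via base change from an étale map of affinoids that is a composition of rational localizations and finite étale morphisms. Lemma \ref{PreciseLocalStructure}(i) then directly gives that $V_0$ is affinoid perfectoid near this point, with $\hat{V}_0 = V_0' \times_{U_i} \hat{U}$ for the relevant finite-level model $V_0'$. For the inverse-limit step, each $V_j \to V_0$ is finite étale and therefore, by the étale case just established, $V_j$ is affinoid perfectoid with associated $\hat{V}_j = \Spa(S_j, S_j^+)$; moreover, by the almost purity theorem \cite{ScholzePerfectoidSpaces1} (already invoked in Lemma \ref{PreciseLocalStructure}), the transition maps $\hat V_j \to \hat V_k$ are finite étale with $S_k^+ \to S_j^+$ almost finite étale. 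I would then verify that $V$ admits a pro-étale presentation obtained by combining the presentation of $V_0$ (constructed in the étale step) with the tower $\{V_j\}_j$ (or, equivalently, by forming the double inverse system indexed by $(i,j)$), and check that the $p$-adic completion $S^+$ of $\varinjlim_j S_j^+$ gives a perfectoid affinoid $(K,K^+)$-algebra: $S^+/p = \varinjlim_j S_j^+/p$ has surjective Frobenius up to $\mm$ because each $S_j^+/p$ does, and uniformity/boundedness of $S^+$ follows from the uniform almost finite étaleness in the tower together with uniformity of each $S_j^+$.

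The main obstacle is the second step: one must argue that completing an inverse limit of affinoid perfectoids along a tower of finite étale surjective maps produces again a perfectoid affinoid $(K,K^+)$-algebra, and that the resulting $\hat V$ matches the pro-étale object $V$. Here I would lean on the fact that the transition maps are almost finite étale on integral levels (almost purity), so that passage to the colimit is controlled, and on \cite{ScholzePerfectoidSpaces1} to identify the perfectoid space $\hat V$ with the inverse limit $\varprojlim \hat V_j$ via the $\sim$-relation. Once $V$ is shown to be affinoid perfectoid, the lemma follows, since every point of a general pro-étale $V \to U$ admits a neighborhood of the above form.
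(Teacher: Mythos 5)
Your argument follows the paper's route closely: reduce to $U$ affinoid perfectoid, factor $V\to U$ as an \'etale map $V_0\to U$ followed by an inverse limit of finite \'etale surjections $V=\varprojlim V_j\to V_0$, dispatch the \'etale piece using Lemma~\ref{PreciseLocalStructure}, and then form the double direct system indexed by $(i,j)$ to handle the tower. The reduction, the use of Lemma~\ref{PreciseLocalStructure}, and the observation that $S^+/p = \varinjlim_j S_j^+/p$ inherits Frobenius surjectivity are all exactly the paper's steps. Where you diverge is the final uniformity verification: you appeal to almost purity and ``uniform almost finite \'etaleness in the tower'' to bound $S^\circ$. This is more machinery than needed, and it is also shaky --- the uniformity in \cite[Theorem 7.9 (iii)]{ScholzePerfectoidSpaces1} is an almost-uniformity statement for each individual finite \'etale extension $S_0^{+a}\subset S_j^{+a}$, but the number of almost generators needed (for a fixed $\epsilon$) is allowed to grow with $j$ as you climb the tower, so it does not in fact give a bound on $S^\circ$ independent of $j$. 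The paper instead simply invokes the general fact that the completion of a filtered direct limit of perfectoid affinoid $(K,K^+)$-algebras is again perfectoid affinoid; boundedness of $S^\circ$ is a consequence of the Frobenius surjectivity you have already established (this is basic perfectoid theory, not almost purity). Replacing your uniformity justification with that one fact closes the gap and recovers the paper's proof.
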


\begin{proof} We may assume that $U$ is affinoid perfectoid, given as the inverse limit of $U_i = \Spa(R_i,R_i^+)$. Moreover, factor $V\rightarrow U$ as the composition $V\rightarrow V_0\rightarrow U$ of an inverse limit of finite \'{e}tale surjective maps $V=\varprojlim V_j\rightarrow V_0$ and an \'{e}tale map $V_0\rightarrow U$. The latter is locally the composite of a rational subset of a finite \'{e}tale cover. These cases have already been dealt with, so we can assume that $V_0=U$, i.e. $V$ is an inverse limit of finite \'{e}tale surjective maps.

For any $j$, $V_j$ comes as the pullback of some finite \'{e}tale cover $V_{ij} = \Spa(S_{ij},S_{ij}^+)$ of $U_i$. For any $j$, the completion $(S_j,S_j^+)$ of the direct limit over $i$ of the $(S_{ij},S_{ij}^+)$ is perfectoid affinoid by the previous lemma. It follows that the completion of the direct limit over $i$ and $j$ of $(S_{ij},S_{ij}^+)$ is the completion of the direct limit of the $(S_j,S_j^+)$. But the completion of a direct limit of perfectoid affinoid $(K,K^+)$-algebras is again perfectoid affinoid.
\end{proof}

\begin{cor} Assume that $X$ is smooth over $\Spa(K,K^+)$. Then the set of $U\in X_\proet$ which are affinoid perfectoid form a basis for the topology.
\end{cor}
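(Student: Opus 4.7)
The plan is to exhibit, for every $V \in X_\proet$, a pro-\'etale cover by affinoid perfectoid objects. By the previous lemma, any pro-\'etale object over a perfectoid object is perfectoid, so I can separate two tasks: (a) construct affinoid perfectoid objects in $X_\proet$ that collectively cover $X$, and (b) base-change along $V \to X$ and subdivide.

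For (a), I would exploit smoothness to reduce to the standard perfectoid cover of a polydisc. Any point $x \in |X|$ admits an open affinoid neighborhood $X_0 \subset X$ together with an \'etale morphism $f \colon X_0 \to \mathbb{B}^n$ to the closed unit polydisc $\mathbb{B}^n = \Spa(K\langle T_1,\ldots,T_n\rangle, K^+\langle T_1,\ldots,T_n\rangle)$; this is the standard local structure theorem for smooth adic spaces (one picks local coordinates trivializing $\Omega^1_X$ near $x$ and restricts to the locus where they land in a polydisc). After further shrinking $X_0$, one may assume $f$ factors as a composition of rational localizations and finite \'etale maps, as required by Lemma \ref{PreciseLocalStructure}. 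The analogue of the example for the torus shows that
\[
\tilde{\mathbb{B}}^n := \varprojlim_m \Spa\bigl(K\langle T_1^{1/p^m},\ldots,T_n^{1/p^m}\rangle, K^+\langle T_1^{1/p^m},\ldots,T_n^{1/p^m}\rangle\bigr) \in (\mathbb{B}^n)_\proet
\]
is affinoid perfectoid, since the completed colimit of its structure rings is $K\langle T_i^{1/p^\infty}\rangle$. Setting $\tilde{X}_0 := X_0 \times_{\mathbb{B}^n} \tilde{\mathbb{B}}^n \in X_\proet$, Lemma \ref{PreciseLocalStructure}(i) applies directly and gives that $\tilde{X}_0$ is affinoid perfectoid. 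Taking $X_0$ to range over a cover of $X$ gives the desired family.

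For (b), given an arbitrary $V \in X_\proet$, form the pullback $V \times_X \tilde{X}_0 \to V$ for each of the $\tilde{X}_0$ above. This is pro-\'etale (composition of the open immersion $V \times_X X_0 \hookrightarrow V$ with the pro-\'etale map $V \times_X \tilde{X}_0 \to V \times_X X_0$), and $V \times_X \tilde{X}_0$ is pro-\'etale over the perfectoid object $\tilde{X}_0$, hence itself perfectoid by the previous lemma. Covering each $V \times_X \tilde{X}_0$ by affinoid perfectoid opens, and letting $X_0$ vary over an open cover of $|X|$ containing the image of $|V|$, I obtain the required pro-\'etale cover of $V$ by affinoid perfectoids.

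The main obstacle is the local existence of an \'etale map $X_0 \to \mathbb{B}^n$ that factors as a composition of rational localizations and finite \'etale maps; once this is in hand, Lemma \ref{PreciseLocalStructure} and the preceding lemma do all the real work. For smooth adic spaces over $\Spa(K,K^+)$, both the existence of the \'etale map to $\mathbb{B}^n$ and its local factorization into rational and finite \'etale pieces are standard, so in this setting the corollary reduces to a bookkeeping exercise combining the two preceding lemmas.
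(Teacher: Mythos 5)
There is a genuine gap in your argument: $\tilde{\mathbb{B}}^n$ is not an object of $(\mathbb{B}^n)_\proet$, because the tower $\Spa(K\langle T_i^{1/p^{m+1}}\rangle)\to\Spa(K\langle T_i^{1/p^m}\rangle)$ is \emph{not} finite \'etale --- the map $T\mapsto T^p$ on the closed unit disc is ramified at $T=0$ (its derivative $pT^{p-1}$ vanishes there). The definition of a pro-\'etale presentation requires the transition maps to eventually be finite \'etale surjective, which fails here. This is precisely why the paper and its Example insist on working with the torus $\mathbb{T}^n=\Spa(K\langle T_1^{\pm 1},\ldots,T_n^{\pm 1}\rangle,\ldots)$: since the $T_i$ are units, adjoining their $p$-power roots \emph{is} finite \'etale, so $\tilde{\mathbb{T}}^n\in(\mathbb{T}^n)_\proet$ genuinely lives in the pro-\'etale site. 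The completed ring $K\langle T_i^{1/p^\infty}\rangle$ you write down is indeed a perfectoid $K$-algebra, but that alone does not make the corresponding object pro-\'etale over $\mathbb{B}^n$.

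The fix is exactly what the paper does: after invoking Huber, Corollary 1.6.10, to get an \'etale map from a neighborhood $X_0$ to $\mathbb{B}^n$, embed $\mathbb{B}^n\hookrightarrow\mathbb{T}^n$ as a rational subset (e.g.\ the locus $|T_i-1|\leq|p|$), and then pull back the perfectoid cover $\tilde{\mathbb{T}}^n$. The rest of your argument --- the reduction to (a) producing affinoid perfectoids over a cover of $X$ and (b) base-changing along $V\to X$ and appealing to the preceding lemma that pro-\'etale over perfectoid is perfectoid --- is correct and matches the paper's route.
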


\begin{proof} If $X=\mathbb{T}^n$, then we have constructed an explicit cover of $X$ by an affinoid perfectoid $\tilde{\mathbb{T}}^n\in X_\proet$. By the last lemma, anything pro-\'{e}tale over $\tilde{\mathbb{T}}^n$ again has a basis of affinoid perfectoid subspaces, giving the claim in this case: Any $U\in X_\proet$ is covered by $U\times_{\mathbb{T}^n} \tilde{\mathbb{T}}^n$, which is pro-\'{e}tale over $\tilde{\mathbb{T}}^n$.

In general, $X$ locally admits an \'{e}tale map to $\mathbb{T}^n$, cf. \cite{Huber}, Corollary 1.6.10, reducing us to this case.
\end{proof}

This corollary is all we will need, but a result of Colmez, \cite{ColmezBanach}, shows that the statement is true in full generality.

\begin{prop} Let $X$ be a locally noetherian adic space over $\Spa(K,K^+)$. Then the set of $U\in X_\proet$ which are affinoid perfectoid form a basis for the topology.
\end{prop}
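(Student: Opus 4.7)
The plan is to reduce to an affinoid $X = \Spa(R,R^+)$ and exhibit a single pro-\'etale cover of $X$ by an affinoid perfectoid; Lemma~\ref{PreciseLocalStructure}(i) then propagates the property along pull-back, producing a basis for $X_\proet$.

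The smooth case used an \'etale chart to $\mathbb{T}^n$ and pulled back the toric perfectoid tower $\tilde{\mathbb{T}}^n$. Without smoothness I would instead use the closed immersion $X \hookrightarrow \mathbb{B}^n_K$ coming from a surjection $K\langle T_1,\ldots,T_n\rangle \twoheadrightarrow R$, and construct a perfectoid pro-\'etale cover of $\mathbb{B}^n_K$ directly. The construction, which goes back to Colmez \cite{ColmezBanach}, is to cover $\mathbb{B}^n_K$ by rational subdomains of the form
$$W_{a,r} = \{x \in \mathbb{B}^n_K \mid r \leq |(T_i - a_i)(x)| \leq 1 \text{ for all } i\},$$
for $a \in (K^+)^n$ and $r \in |K^\times|$, together with finitely many rescaled smaller polydiscs needed to cover the complement of these annuli, and to recurse on each such smaller polydisc. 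On any individual $W_{a,r}$ each $T_i - a_i$ is a unit, so the Kummer tower adjoining $(T_i - a_i)^{1/p^m}$ consists of finite \'etale surjective covers (using that $K$, being perfectoid of characteristic $0$, contains all $p$-power roots of unity); by Lemma~\ref{PreciseLocalStructure} its inverse limit $\tilde W_{a,r}$ is affinoid perfectoid, with completed coordinate ring sitting inside the perfectoid Tate algebra $K\langle U_1^{\pm 1/p^\infty},\ldots,U_n^{\pm 1/p^\infty}\rangle$ (where $U_i = T_i - a_i$).

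The hard part, and the reason Colmez's theorem is needed, will be to check that the pull-back $\tilde X_{a,r} := \tilde W_{a,r} \times_{W_{a,r}} X$ of such a Kummer perfectoid along the closed immersion $X \hookrightarrow \mathbb{B}^n_K$ is itself affinoid perfectoid. The completed coordinate ring of $\tilde X_{a,r}$ is a closed quotient of the ambient perfectoid Tate algebra, but a closed quotient of a perfectoid algebra is not automatically perfectoid: one needs the defining ideal of $R$ to be stable under the inverse Frobenius on the tilt. Colmez's result addresses precisely this by enriching the pro-\'etale tower so as to absorb $p$-power roots of a set of generators of the defining ideal as well, producing a genuine perfectoidization of the reduced Tate affinoid $X \cap W_{a,r}$ realized by a pro-finite-\'etale cover over the perfectoid base $\tilde W_{a,r}$. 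Granting this, each $\tilde X_{a,r}$ is affinoid perfectoid, and the finitely many pieces obtained at each stage of the recursion furnish, in the limit, a basis of affinoid perfectoid objects for $X_\proet$.
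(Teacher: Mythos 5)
Your plan reaches for the right tool --- Colmez's theorem --- and you correctly flag the crux: a closed quotient of a perfectoid algebra is not automatically perfectoid, so pulling back a perfectoidization of an ambient ball does not suffice. But the way you invoke Colmez is incorrect, and this is a genuine gap. Colmez's construction does not ``absorb $p$-power roots of a set of generators of the defining ideal'': those generators vanish on $X$, so their $p$-th roots there are simply $0$, and in any case the defining ideal lives in characteristic $0$, so ``stable under inverse Frobenius on the tilt'' is not a condition one can impose on it directly. What Colmez (\cite{ColmezBanach}, \S 4.4) actually does is iterate adjoining $p$-th roots of all $1$-units $1 + A^{\circ\circ}$ of the affinoid algebra $A$ itself --- a finite \'etale extension --- and he proves the result is ``sympathetic,'' hence perfectoid (\cite{ColmezBanach}, Lemma 1.15(iii)). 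This is applied to $A$ directly, independently of any presentation of $A$ as a quotient of a Tate algebra.

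Once you grant this, the preliminary machinery --- the closed immersion $X\hookrightarrow \mathbb{B}^n_K$, the decomposition into annuli $W_{a,r}$ and rescaled polydiscs, the Kummer towers $\tilde W_{a,r}$ --- becomes pure overhead: you never actually exploit the perfectoid structure of $\tilde W_{a,r}$, since the pullback $\tilde X_{a,r}$ is not perfectoid and the thing that rescues you is Colmez's construction on the ring of $X$ (or of $X\cap W_{a,r}$), not on a quotient of the ring of $\tilde W_{a,r}$. The paper's proof is correspondingly much shorter: reduce to $X=\Spa(A,A^+)$ with $A$ having no nontrivial idempotents, reduce to $K$ algebraically closed via Proposition~\ref{ProetChangeBase}, and apply Colmez's $1$-unit iteration to $A$ directly, the only subtlety being the identification $(1+A^{\circ\circ})/(1+A^{\circ\circ})^p\cong(1+\hat{A}^{\circ\circ})/(1+\hat{A}^{\circ\circ})^p$, which shows the construction is insensitive to $p$-adic completion of $A^+$. (A minor slip in your write-up: a perfectoid field of characteristic $0$ need not contain all $p$-power roots of unity --- the completion of $\Q_p(p^{1/p^\infty})$ is a counterexample --- though your Kummer covers are finite \'etale regardless.)
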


\begin{proof} We may assume that $X=\Spa(A,A^+)$, where $A$ has no nontrivial idempotents. It is enough to find a sequence $A_i/A$ of finite \'{e}tale extensions, such that, denoting by $A_i^+$ the integral closure of $A^+$ in $A_i^+$, the completion $(B,B^+)$ of the direct limit of $(A_i,A_i^+)$ is perfectoid affinoid. Here, we put the $p$-adic topology on the direct limit of the $A_i^+$, even though all $A_i^+$ may not carry the $p$-adic topology. Using Proposition \ref{ProetChangeBase}, we may assume that $K$ is algebraically closed.

Now we follow the construction of Colmez, \cite{ColmezBanach}, \S 4.4. The construction is to iterate adjoining $p$-th roots of all $1$-units $1+A^{\circ\circ}$; here $A^{\circ\circ}\subset A^+$ denotes the subset of topologically nilpotent elements. Note that Colmez works in a setup which amounts to assuming that $A^+$ has the $p$-adic topology; however,
\[
(1+A^{\circ\circ})/(1+A^{\circ\circ})^p\cong (1+\hat{A}^{\circ\circ})/(1+\hat{A}^{\circ\circ})^p\ ,
\]
where $\hat{A}=\hat{A}^+[p^{-1}]$, with $\hat{A}^+$ the $p$-adic completion of $A^+$. This means that adjoining the $p$-th roots to $A$ first and then completing is the same as first completing and then adjoining the $p$-th roots. Colmez shows that repeating this construction produces a sympathetic $K$-algebra, and sympathetic $K$-algebras are perfectoid by \cite{ColmezBanach}, Lemma 1.15 (iii).
\end{proof}

Using the construction of the previous proposition, we prove the following theorem.

\begin{thm}\label{TheoremKPi1} Let $X=\Spa(A,A^+)$ be an affinoid connected noetherian adic space over $\Spa(\Q_p,\Z_p)$. Then $X$ is a $K(\pi,1)$ for $p$-torsion coefficients, i.e. for all $p$-torsion local systems $\mathbb{L}$ on $X_\et$, the natural map
\[
H^i_\cont(\pi_1(X,\bar{x}),\mathbb{L}_{\bar{x}})\to H^i(X_\et,\mathbb{L})
\]
is an isomorphism, where $\bar{x}\in X$ is a geometric point, and $\pi_1(X,\bar{x})$ denotes the profinite \'etale fundamental group.
\end{thm}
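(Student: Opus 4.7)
The plan is to factor the natural map through the pro-étale site and reduce to a vanishing statement on an affinoid perfectoid pro-finite-\'etale cover. By Proposition \ref{ProfinitePi1} the continuous group cohomology $H^i_\cont(\pi_1(X,\bar x),\mathbb L_{\bar x})$ is canonically the sheaf cohomology $H^i(X_\profet,\mathbb L)$, while by Corollary \ref{CompProetVSEt}(i) we have $H^i(X_\et,\mathbb L) = H^i(X_\proet,\mathbb L)$. The natural map of the theorem is then the edge map of the Leray spectral sequence for the morphism of sites $\mu\colon X_\proet \to X_\profet$, and so the theorem amounts to the vanishing $R^i\mu_\ast\mathbb L = 0$ for $i>0$.

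By the preceding proposition of Colmez, affinoid perfectoid pro-finite-\'etale objects $\tilde U = \varprojlim \tilde U_j$ form a generating family in $X_\profet$, and by the almost purity theorem for perfectoid spaces any further finite \'etale cover of such a $\tilde U$ remains affinoid perfectoid. It therefore suffices to exhibit, for each such $\tilde U$, a pro-finite-\'etale refinement, still affinoid perfectoid, on which $H^i(\tilde U_\proet,\mathbb L)=0$ for $i>0$. Enlarging the tower so that some $\tilde U_j$ trivializes $\mathbb L$ reduces us to $\mathbb L = \mathbb F_p$, and by Lemma \ref{CompEtProet} we are left to show $\varinjlim_j H^i(\tilde U_{j,\et},\mathbb F_p)=0$ for all $i>0$.

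The key input is tilting: for affinoid perfectoid $\tilde U$ the \'etale sites of $\tilde U$ and of its tilt $\tilde U^\flat$ are equivalent, compatibly with the constant sheaf $\mathbb F_p$. In characteristic $p$ one has the Artin--Schreier short exact sequence
\[
0 \to \mathbb F_p \to \mathcal O_{\tilde U^\flat} \xrightarrow{\varphi - 1} \mathcal O_{\tilde U^\flat} \to 0
\]
on $\tilde U^\flat_\et$, and Tate acyclicity for affinoid perfectoid spaces yields $H^i(\tilde U^\flat_\et,\mathcal O_{\tilde U^\flat})=0$ for $i>0$. The long exact cohomology sequence then forces $H^i(\tilde U_\et,\mathbb F_p) = 0$ for $i\ge 2$ already at finite level, and identifies $H^1(\tilde U_\et,\mathbb F_p)$ with the cokernel of $\varphi - 1$ on global sections. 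Each class in this $H^1$ is represented by an $\mathbb F_p$-torsor, equivalently a further Artin--Schreier finite \'etale cover of $\tilde U$; arranging the tower to absorb all such covers kills every class in the colimit $\varinjlim_j H^1(\tilde U_{j,\et},\mathbb F_p)$.

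The hard part is the construction of a single pro-finite-\'etale tower over $X$ that is simultaneously affinoid perfectoid, trivializes $\mathbb L$, and at each finite stage absorbs every $\mathbb F_p$-torsor appearing over the current stage, all while remaining an object of $X_\profet$. This combines Colmez's perfectoidization procedure from the preceding proposition with iterated finite \'etale enlargement (permissible by almost purity, which preserves the perfectoid property); the cofilteredness of $\pro$-$X_\et$ and the hereditary nature of the perfectoid property under finite \'etale covers are what make this simultaneous bookkeeping feasible.
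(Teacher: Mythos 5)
Your proposal follows essentially the same route as the paper's proof: pass to a pro-finite-\'etale cover of $X$ whose completion is affinoid perfectoid, tilt, and run Artin--Schreier against Tate acyclicity. The framing via $R^i\mu_\ast$ for $\mu\colon X_\proet\to X_\profet$ is a valid variant of the paper's more direct reduction, which instead shows $R^if_\ast\mathbb{L}=0$ for $f\colon X_\et\to X_\fet$ and then observes $H^i(X_\fet,\mathbb{L})=H^i_\cont(\pi_1,\mathbb{L}_{\bar x})$ more or less by definition. Either reduction works; yours adds an extra layer of pro-site formalism.

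Where the proposal goes off course is the final paragraph. You describe "the hard part" as a delicate interleaved construction of a tower that is simultaneously perfectoid, trivializes $\mathbb{L}$, and absorbs $\mathbb{F}_p$-torsors stage by stage, requiring "simultaneous bookkeeping." There is no such difficulty, and the paper does not do this. One simply takes the universal pro-finite-\'etale cover $X_\infty=\varprojlim X_i$: a directed colimit $A_\infty=\varinjlim A_i$ of faithfully flat finite \'etale $A$-algebras with no nontrivial idempotents such that every faithfully flat finite \'etale $A_\infty$-algebra splits. This is a purely formal construction (the \'etale universal cover), it trivializes $\mathbb{L}$ automatically, and the "absorption of torsors" is built in by definition---no bookkeeping. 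That the completion $\hat A_\infty$ is perfectoid is then a \emph{consequence}: the universal cover factors through a cofinal perfectoid subtower (e.g.\ Colmez's), and finite \'etale over perfectoid is perfectoid by almost purity. You do not need perfectoidness during the construction, and you certainly do not need to interleave perfectoidization steps with torsor-killing steps. Inverting the logic as you do makes a non-issue look like the crux.

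One further point to tighten: after reducing via Lemma~\ref{CompEtProet} to showing $\varinjlim_j H^i(\tilde U_{j,\et},\mathbb{F}_p)=0$, you then invoke Tate acyclicity and Artin--Schreier for the perfectoid space $\hat{\tilde U}^\flat$. But these live on $\hat{\tilde U}^\flat_\et\cong\hat{\tilde U}_\et$, and Lemma~\ref{CompEtProet} only relates the colimit to $\tilde U_\proet$, not to $\hat{\tilde U}_\et$. The needed identification $\varinjlim_j H^i(\tilde U_{j,\et},\mathbb{F}_p)\cong H^i(\hat{\tilde U}_\et,\mathbb{F}_p)$ is exactly Corollary~7.18 of \cite{ScholzePerfectoidSpaces1}, which the paper cites explicitly at this step; you use it silently. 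Similarly, the fact that $\hat A_\infty^\flat$ has no nontrivial finite \'etale covers (which is what kills the $H^1$, not the stage-by-stage "absorbing" you describe) needs the tilting equivalence of finite \'etale sites together with Lemma~7.5(i) of \cite{ScholzePerfectoidSpaces1} to pass from finite \'etale covers of $A_\infty$ to those of $\hat A_\infty$ and then of $\hat A_\infty^\flat$.
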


\begin{proof} We have to show that the natural map
\[
H^i(X_\fet,\mathbb{L})\to H^i(X_\et,\mathbb{L})
\]
is an isomorphism. For this, let $f:X_\et\rightarrow X_\fet$ be the natural map of sites; then it is enough to show that $R^if_\ast \mathbb{L} = 0$ for $i>0$, and $f_\ast \mathbb{L} = \mathbb{L}$. The second property is clear. It remains to show that for any $U\to X$, which we may assume to be connected, any cohomology class of $H^i(U_\et,\mathbb{L})$ can be killed by a finite \'etale cover of $U$. Renaming $U=X$, we have to show that any cohomology class of $H^i(X_\et,\mathbb{L})$ can be killed by a finite \'etale cover. For this, we can assume that $\mathbb{L}$ is trivial (by passing to the cover trivializing $\mathbb{L}$), and then that $\mathbb{L} = \mathbb{F}_p$.

Now we argue with the universal cover of $X$. Let $A_\infty$ be a direct limit of faithfully flat finite \'etale $A$-algebras $A_i\subset A_\infty$, such that $A_\infty$ has no nontrivial idempotents, and such that any faithfully flat finite \'etale $A_\infty$-algebra has a section. Let $A_\infty^+\subset A_\infty$ be the integral closure of $A^+$, and let $(\hat{A}_\infty,\hat{A}_\infty^+)$ be the completion of $(A_\infty,A_\infty^+)$, for the $p$-adic topology on $A_\infty^+$. Then $(\hat{A}_\infty,\hat{A}_\infty^+)$ is a perfectoid affinoid $\mathbb{C}_p$-algebra (which can either be easily checked directly, or deduced from the proof of the previous proposition and the almost purity theorem). Let $X_\infty = \Spa(\hat{A}_\infty,\hat{A}_\infty^+)$, which is a perfectoid space over $\mathbb{C}_p$. Moreover, $X_\infty\sim \varprojlim X_i$ in the sense of \cite[Definition 7.14]{ScholzePerfectoidSpaces1}, where $X_i=\Spa(A_i,A_i^+)\to X$ is finite \'etale. Then by \cite[Corollary 7.18]{ScholzePerfectoidSpaces1}, we have
\[
H^j(X_{\infty,\et},\mathbb{L}) = \varinjlim H^j(X_{i,\et},\mathbb{L})
\]
for all $j\geq 0$. We see that it is enough to prove that $H^j(X_{\infty,\et},\mathbb{L}) = 0$ for $j>0$.

For this, we argue with the tilt $X_\infty^\flat$ of $X_\infty$. Recall that we have reduced to the case $\mathbb{L} = \mathbb{F}_p$. We have the Artin -- Schreier sequence
\[
0\to \mathbb{F}_p\to \mathcal{O}_{X_\infty^\flat}\to \mathcal{O}_{X_\infty^\flat}\to 0
\]
on $X^\flat_{\infty,\et}\cong X_{\infty,\et}$. Taking cohomology, we see that $H^j(X_{\infty,\et},\mathbb{F}_p) = 0$ for $j\geq 2$, and
\[
0\to \mathbb{F}_p\to \hat{A}_\infty^\flat\to \hat{A}_\infty^\flat\to H^1(X_{\infty,\et},\mathbb{F}_p)\to 0
\]
However, as $A_\infty$ has no nontrivial finite \'etale covers, also $\hat{A}_\infty$ has no nontrivial finite \'etale covers (cf. e.g. \cite[Lemma 7.5 (i)]{ScholzePerfectoidSpaces1}), and thus $\hat{A}_\infty^\flat$ has no nontrivial finite \'etale covers, by \cite[Theorem 5.25]{ScholzePerfectoidSpaces1}. This implies that the Artin -- Schreier map $\hat{A}_\infty^\flat\to \hat{A}_\infty^\flat$ is surjective, giving $H^1(X_{\infty,\et},\mathbb{F}_p)=0$, as desired.
\end{proof}

\begin{lem}\label{ContOXplusonAffPerf} Assume that $U\in X_\proet$ is affinoid perfectoid, with $\hat{U} = \Spa(R,R^+)$. In the following, use the almost setting with respect to $K^+$ and the ideal of topologically nilpotent elements.
\begin{altenumerate}
\item[{\rm (i)}] For any nonzero $b\in K^+$, we have $\mathcal{O}_X^+(U)/b = R^+/b$, and this is almost equal to $(\mathcal{O}_X^+/b)(U)$.
\item[{\rm (ii)}] The image of $(\mathcal{O}_X^+/b_1)(U)$ in $(\mathcal{O}_X^+/b_2)(U)$ is equal to $R^+/b_2$ for any nonzero $b_1,b_2\in K^+$ such that $|b_1|<|b_2|$.
\item[{\rm (iii)}] We have $\hat{\mathcal{O}}_X^+(U) = R^+$, $\hat{\mathcal{O}}_X(U) = R$.
\item[{\rm (iv)}] The ring $\hat{\mathcal{O}}_X^+(U)$ is the $p$-adic completion of $\mathcal{O}_X^+(U)$.
\item[{\rm (v)}] The cohomology groups $H^i(U,\hat{\mathcal{O}}_X^+)$ are almost zero for $i>0$.
\end{altenumerate}

In particular, for any perfectoid $U\in X_\proet$, $(|U|,\hat{\mathcal{O}}_X|_{|U|},(|\bullet(x)|\mid x\in |U|))$ is an adic, in fact perfectoid, space, given by $\hat{U}$.
\end{lem}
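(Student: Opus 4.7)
The plan is to first establish the key cohomological input --- the almost vanishing $H^i(U, \OO_X^+/b) \approx 0$ for $i>0$ and the almost equality $(\OO_X^+/b)(U) \approx R^+/b$ --- and then derive (i)--(v) in sequence. For the section-level computation, Lemma~\ref{CompEtProet} applied to a pro-\'etale presentation $U = \varprojlim U_i$ with $U_i = \Spa(R_i,R_i^+)$ gives $\OO_X^+(U) = \varinjlim R_i^+$. Since $R^+$ is by definition the $p$-adic completion of $\varinjlim R_i^+$, one has $R^+/p^n = \varinjlim R_i^+/p^n$ and hence $\OO_X^+(U)/b = R^+/b$ for any nonzero $b \in K^+$.

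The almost vanishing of $H^i(U, \OO_X^+/b)$ for $i>0$ is the main technical input. It is deduced from the analogous statement on the \'etale site of the affinoid perfectoid $\hat U = \Spa(R,R^+)$ --- namely, the known vanishing for perfectoid spaces from \cite{ScholzePerfectoidSpaces1}, which gives $H^i(\hat U_\et, \OO^+_{\hat U}/b) \approx 0$ for $i>0$ and $\approx R^+/b$ for $i=0$. By Lemma~\ref{CompEtProet}, $H^i(U, \OO_X^+/b) = \varinjlim_j H^i(U_{j,\et}, \OO^+/b)$; this limit can be computed by a \v{C}ech argument along a pro-\'etale covering of $U$ by affinoid perfectoid opens, which by Lemma~\ref{PreciseLocalStructure} has the property that all iterated fibre products are again affinoid perfectoid and explicitly controlled as $j \to \infty$. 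Matching the \v{C}ech complex on the pro-\'etale side with the \v{C}ech complex on $\hat U_\et$ (up to almost equivalence) yields the desired vanishing, and with it part (i).

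Part (ii) then follows from a diagram chase. From (i), the cokernel $(\OO_X^+/b)(U)/(R^+/b) \cong H^1(U, \OO_X^+)[b]$ --- read off from the long exact sequence of $0 \to \OO_X^+ \xrightarrow{b} \OO_X^+ \to \OO_X^+/b \to 0$ --- is almost zero for every $b$, i.e. annihilated by $\mm$. Compare the corresponding sequences for $b=b_1$ and $b=b_2$ via the morphism with vertical arrows $\cdot(b_1/b_2)$, identity, and the natural surjection (which commutes because $b_2 \cdot (b_1/b_2) = b_1$); the induced map on the connecting $H^1$-terms
\[ H^1(U, \OO_X^+)[b_1] \longrightarrow H^1(U, \OO_X^+)[b_2] \]
is multiplication by $b_1/b_2$. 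Since $|b_1|<|b_2|$ forces $b_1/b_2 \in \mm$ and the source is annihilated by $\mm$, this map vanishes. Hence the image of $(\OO_X^+/b_1)(U)$ in $(\OO_X^+/b_2)(U)$ is contained in the kernel of the projection onto $H^1(U, \OO_X^+)[b_2]$, which is $R^+/b_2$; the reverse inclusion is immediate via $\OO_X^+(U) \to (\OO_X^+/b_1)(U)$.

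Parts (iii)--(v) then follow quickly. For (iii), any $(f_n) \in \hat\OO_X^+(U) = \varprojlim (\OO_X^+/p^n)(U)$ has each $f_n$ in the image of the transition from level $n+1$, hence in $R^+/p^n$ by (ii); the limit is $\varprojlim R^+/p^n = R^+$, and inverting $p$ gives $\hat\OO_X(U) = R$. Part (iv) is immediate from the definition of $R^+$ as the $p$-adic completion of $\OO_X^+(U) = \varinjlim R_i^+$. For (v), apply Lemma~\ref{InverseLimitExact} (in its almost incarnation) to $\{\OO_X^+/p^n\}$ on the basis of affinoid perfectoid opens: the higher-cohomology hypothesis is the vanishing from the second paragraph, while the $R^1\varprojlim$ hypothesis follows from (ii), which makes the image system of the transition maps equal to $\{R_V^+/p^n\}$ with surjective transitions. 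The concluding adic-space assertion is then immediate from (iii) together with $|U| = |\hat U|$. The main obstacle is the second paragraph: rigorously bridging the pro-\'etale cohomology on $U$ with the perfectoid \'etale vanishing on $\hat U$, which is where Lemmas~\ref{CompEtProet}, \ref{PreciseLocalStructure} and the main vanishing results of \cite{ScholzePerfectoidSpaces1} must be combined.
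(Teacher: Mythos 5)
Your outline for parts (ii)--(v) is essentially correct, and your argument for (ii) is a genuinely different (and rather clean) homological route: rather than the paper's element-chasing (which picks $c=b_1/b_2$, finds $g\in R^+$ with $cf=g$ modulo $b_1$ using part (i), and factors $g=ch$ using valuations), you identify the obstruction with $H^1(U,\OO_X^+)[b]$ from the long exact sequence, show the map $H^1[b_1]\to H^1[b_2]$ is multiplication by $b_1/b_2\in\mm$, and note it kills an almost-zero module. Both work; yours makes the role of almost-vanishing more transparent, at the cost of needing the $H^1$-identification from the long exact sequence. Parts (iii), (iv), (v) coincide with the paper's proof, and your extra remark on why the $R^1\varprojlim$ hypothesis of Lemma~\ref{InverseLimitExact} holds (via the Mittag--Leffler property of the image system $\{R^+/p^n\}$ supplied by (ii)) is correct and a useful elaboration.

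The genuine gap is in your second paragraph, which you correctly flag as the obstacle but whose proposed mechanism is not quite right. Invoking Lemma~\ref{CompEtProet} to write $H^i(U,\OO_X^+/b)=\varinjlim_j H^i(U_{j,\et},\OO^+/b)$ is a correct identity, but it does not lead anywhere directly: the $U_j$ are noetherian affinoids, not perfectoid, and nothing controls their individual \'etale cohomology of $\OO^+/b$; the perfectoid acyclicity of \cite{ScholzePerfectoidSpaces1} is a statement about $\hat U$, not about the $U_j$. The two sides of your intended ``matching'' live on different sites, and the transition from $\varinjlim_j H^i(U_{j,\et},\cdot)$ to $H^i(\hat U_\et,\cdot)$ for $\OO^+/b$ is exactly the content that needs proving, so the argument as sketched is circular. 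The paper bypasses Lemma~\ref{CompEtProet} entirely here. Instead it uses the fact (the Proposition directly preceding this Lemma) that affinoid perfectoid objects form a basis of $X_\proet$, and directly checks that the explicit presheaf $V\mapsto (\OO^+_{\hat V}(\hat V)/b)^a$ on that basis has exact \v{C}ech complexes for all pro-\'etale covers $V\to U$ by basis elements: one reduces (by quasicompactness) to a single cover, takes a pro-\'etale presentation $V=\varprojlim V_l$ and passes to the direct limit $\mathcal{C}(U,V)=\varinjlim_l\mathcal{C}(U,V_l)$ so that $V\to U$ becomes a composite of rational embeddings and finite \'etale maps, and then Lemma~\ref{PreciseLocalStructure} identifies $\mathcal{C}(U,V)$ with the \v{C}ech complex of $\OO^{+a}/b$ for the \'etale cover $\hat V\to\hat U$ of perfectoid affinoids, whose almost-acyclicity is \cite[Prop.~7.13]{ScholzePerfectoidSpaces1}; the equivalence of \v{C}ech-exactness with sheaf-acyclicity is SGA~4~V~Prop.~4.3. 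Once this is done one gets simultaneously that the presheaf is a sheaf equal to $(\OO_X^+/b)^a$ and that it has no higher cohomology on affinoid perfectoids, which is part (i). So the missing idea is: work with the explicit presheaf on the affinoid perfectoid basis and check \v{C}ech-exactness there, rather than trying to push the étale cohomology of the non-perfectoid $U_j$ to the limit.
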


\begin{rem} This gives the promised base of the topology on which the sheaves $\hat{\mathcal{O}}_X^+$ and $\hat{\mathcal{O}}_X$ behave as expected. Note that by Proposition \ref{ProetChangeBase}, such a base of the topology exists for all locally noetherian adic spaces over $\Spa(\Q_p,\Z_p)$.
\end{rem}

\begin{proof}\begin{altenumerate}
\item[{\rm (i)}] The equality $\mathcal{O}_X^+(U)/b = R^+/b$ follows from the definition of $(R,R^+)$. By the last proposition, giving a sheaf on $X_\proet$ is equivalent to giving a presheaf on the set of affinoid perfectoid $U\in X_\proet$, satisfying the sheaf property for pro-\'{e}tale coverings by such. We claim that
\[
U\mapsto \mathcal{F}(U) = (\mathcal{O}_{\hat{U}}^+(\hat{U})/b)^a=(\mathcal{O}_X^+(U)/b)^a
\]
is a sheaf $\mathcal{F}$ of almost $K^+$-algebras, with $H^i(U,\mathcal{F})=0$ for $i>0$. Indeed, let $U$ be covered by $V_k\rightarrow U$. We may assume that each $V_k$ is pro-finite \'{e}tale over $V_{k0}\rightarrow U$, and that $V_{k0}\rightarrow U$ factors as a composite of rational embeddings and finite \'{e}tale maps. By quasicompactness of $U$, we can assume that there are only finitely many $V_k$, or just one $V$ by taking the union. Then $V=\varprojlim V_j\rightarrow V_{j_0}\rightarrow U$, where $V_{j_0}$ is a composite of rational embeddings and finite \'{e}tale maps, and $V_j\rightarrow V_{j^\prime}$ is finite \'{e}tale surjective for $j,j^\prime\geq j_0$. We have to see that the complex
\[
\mathcal{C}(U,V): 0\rightarrow \mathcal{F}(U)\rightarrow \mathcal{F}(V)\rightarrow \mathcal{F}(V\times_U V)\rightarrow \ldots
\]
is exact. Now $\mathcal{F}(V) = \varinjlim \mathcal{F}(V_j)$ etc., so that
\[
\mathcal{C}(U,V) = \varinjlim \mathcal{C}(U,V_j)\ ,
\]
and one reduces to the case that $V\rightarrow U$ is a composite of rational embeddings and finite \'{e}tale maps. In that case, $V$ and $U$ are affinoid perfectoid, giving rise to perfectoid spaces $\hat{U}$ and $\hat{V}$, and an \'{e}tale cover $\hat{V}\rightarrow \hat{U}$. Then Lemma \ref{PreciseLocalStructure} implies that
\[
\mathcal{C}(U,V): 0\rightarrow (\mathcal{O}_{\hat{U}_\et}^+(\hat{U})/b)^a\rightarrow (\mathcal{O}_{\hat{U}_\et}^+(\hat{V})/b)^a\rightarrow (\mathcal{O}_{\hat{U}_\et}^+(\hat{V}\times_{\hat{U}} \hat{V})/b)^a\rightarrow \ldots\ ,
\]
so the statement follows from the vanishing of $H^i(W_\et,\OO_{W_\et}^{+ a})$, $i>0$, for any affinoid perfectoid space $W$, proved in \cite{ScholzePerfectoidSpaces1}, Proposition 7.13.

This shows in particular that $\mathcal{F} = (\mathcal{O}_X^+/b)^a$, giving part (i).

\item[{\rm (ii)}] Define $c=\frac{b_1}{b_2}\in K^+$, with $|c|<1$. Let $f\in (\mathcal{O}_X^+/b_1)(U)$, and take $g\in R^+$ such that $cf=g$ in $(\OO_X^+/b_1)(U)$, which exists by part (i). Looking at valuations, one finds that $g=ch$ for some $h\in R^+$. As multiplication by $c$ induces an injection $\OO_X^+/b_2\rightarrow \OO_X^+/b_1$, it follows that $f-h=0$ in $(\OO_X^+/b_2)(U)$. Hence the image of $f$ in $(\OO_X^+/b_2)(U)$ is in the image of $R^+/b_2$, as desired.

\item[{\rm (iii)}] Using part (ii), one sees that
\[
\hat{\mathcal{O}}_X^+(U) = \varprojlim (\mathcal{O}_X^+/p^n)(U) = \varprojlim R^+/p^n = R^+\ .
\]
Inverting $p$ gives $\hat{\mathcal{O}}_X(U) = R$.

\item[{\rm (iv)}] This follows from part (iii) and the definition of $R^+$.

\item[{\rm (v)}] We have checked in the proof of part (i) that the cohomology groups $H^i(U,\mathcal{O}_X^+/p^n)$ are almost zero for $i>0$ and all $n$. Now it follows for $\hat{\mathcal{O}}_X^+$ from the almost version of Lemma \ref{InverseLimitExact}.
\end{altenumerate}
\end{proof}

\begin{lem}\label{TwistedOXPlus} In the situation of the previous lemma, assume that $\mathbb{L}$ is an $\mathbb{F}_p$-local system on $X_\et$. Then for all $i>0$, the cohomology group
\[
H^i(U,\mathbb{L}\otimes \OO_X^+/p)^a = 0
\]
is almost zero, and it is an almost finitely generated projective $R^{+a}/p$-module $M(U)$ for $i=0$. If $U^\prime\in X_\proet$ is affinoid perfectoid, corresponding to $\hat{U^\prime} = \Spa(R^\prime,R^{\prime +})$, and $U^\prime\rightarrow U$ some map in $X_\proet$, then $M(U^\prime) = M(U)\otimes_{R^{+a}/p} R^{\prime +a}/p$.
\end{lem}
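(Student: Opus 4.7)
The plan is to trivialize $\mathbb{L}$ on a finite Galois pro-\'etale cover of $U$ and to combine Lemma \ref{ContOXplusonAffPerf} with (almost) Galois descent.

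Pick some $U_i\in X_\et$ to which $U$ maps, and let $\rho:\pi_1^\et(U_i,\bar{x})\to \GL_r(\mathbb{F}_p)$ be the continuous representation classifying $\mathbb{L}|_{U_i}$. Let $G$ be the (finite) image of $\rho$, and $V_i\to U_i$ the corresponding connected Galois cover trivializing $\mathbb{L}|_{U_i}$. Set $V=V_i\times_{U_i}U\in X_\proet$; by Lemma \ref{PreciseLocalStructure}, $V$ is again affinoid perfectoid, say with $\hat{V}=\Spa(S,S^+)$, and $V\to U$ is a $G$-torsor. Since $\mathbb{L}|_V$ is the constant sheaf $\mathbb{F}_p^r$, the sheaf $\mathbb{L}\otimes \OO_X^+/p$ restricted to $V$ is $(\OO_X^+/p)^r$, on which $G$ acts diagonally via $\rho$ on $\mathbb{F}_p^r$ and via its Galois action on $\OO_X^+/p|_V$.

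For each $n\geq 0$, the iterated fibre product $V^{n+1}:=V\times_U\cdots\times_U V$ is $G$-equivariantly identified with $V\times G^n$, hence is again affinoid perfectoid with $\mathbb{L}$ trivial on it. Lemma \ref{ContOXplusonAffPerf} then gives that $H^q(V^{n+1},\mathbb{L}\otimes \OO_X^+/p)^a$ is almost zero for $q>0$ and is the module of maps from $G^n$ to $(S^{+a}/p)^r$ for $q=0$. The Čech-to-derived spectral sequence for the single pro-\'etale cover $V\to U$ therefore collapses in the almost category and identifies
\[
H^i(U,\mathbb{L}\otimes\OO_X^+/p)^a\cong H^i(G,(S^{+a}/p)^r)\ ,
\]
where the right-hand side is group cohomology computed with the twisted diagonal action.

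By almost purity (\cite{ScholzePerfectoidSpaces1}, Theorem 7.9), the extension $R^{+a}/p\to S^{+a}/p$ is a finite \'etale $G$-Galois extension of almost $R^{+a}/p$-algebras. Almost Galois descent (as developed in \cite{FaltingsAlmostEtale} and \cite{GabberRamero}) then gives $H^i(G,M)=0$ for $i>0$ and any $G$-equivariant $S^{+a}/p$-module $M$ which is almost finitely generated projective, and shows that $M^G$ is almost finitely generated projective over $R^{+a}/p$ with $M^G\otimes_{R^{+a}/p} S^{+a}/p\cong M$. Applied to $M=(S^{+a}/p)^r$ with the twisted action, this yields the almost vanishing for $i>0$ and identifies $M(U):=H^0(U,\mathbb{L}\otimes\OO_X^+/p)^a$ with the desired almost finitely generated projective $R^{+a}/p$-module. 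For the base-change assertion, given $U'\to U$ with $U'$ affinoid perfectoid, set $V'=V\times_U U'$ and run the same argument; Lemma \ref{PreciseLocalStructure}(i) gives $S'^{+a}/p\cong S^{+a}/p\otimes_{R^{+a}/p} R'^{+a}/p$, and the flatness of $S^{+a}/p$ over $R^{+a}/p$ then yields $M(U')\cong M(U)\otimes_{R^{+a}/p} R'^{+a}/p$.

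The main technical obstacle lies in the last step: propagating the almost finitely generated projective property through Galois descent while working consistently in the almost setting. Once almost purity is invoked to install the $G$-Galois structure on $R^{+a}/p\to S^{+a}/p$, the rest is a formal combination of standard descent with the formalism from Section \ref{AlmostMath}.
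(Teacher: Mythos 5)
Your proof is correct and follows essentially the same route as the paper: both pass to a finite étale Galois cover $V\to U$ trivializing $\mathbb{L}$, compute the Čech complex of $\mathbb{L}\otimes\OO_X^+/p$ for this cover using Lemma \ref{ContOXplusonAffPerf}, invoke the almost purity theorem to see that $R^{+a}/p\to S^{+a}/p$ is almost finite étale and faithfully flat, and conclude by almost faithfully flat descent. The only cosmetic difference is that you phrase the Čech complex as the standard complex computing $H^\bullet(G,(S^{+a}/p)^r)$ with the twisted diagonal action and invoke "almost Galois descent," whereas the paper keeps the Čech/Amitsur-complex language and cites faithfully flat descent directly; for a $G$-Galois cover these are identical complexes, so the two formulations are interchangeable.
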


\begin{proof} We may assume that $X$ is connected; in particular, one can trivialize $\mathbb{L}$ to a constant sheaf $\mathbb{F}_p^k$ after a surjective finite \'{e}tale Galois cover. Let $V\rightarrow U$ be such a surjective finite \'{e}tale Galois cover trivializing $\mathbb{L}$, with Galois group $G$, and let $\hat{V}=\Spa(S,S^+)$. Then $\mathbb{L}\cong \mathbb{F}_p^k$ over $V$. Let $V^{j/U}$ be the $j$-fold fibre product of $V$ over $U$, for $j\geq 1$. Then the previous lemma implies that $H^i(V^{j/U},\mathbb{L}\otimes \OO_X^+/p)$ is almost zero for $i>0$, and almost equal to $(S_j^+/p)^k$ for $i=0$, where $S_j^+$ is the $j$-fold tensor product of $S^+$ over $R^+$. But $S^{+a}/p$ is almost finite \'{e}tale over $R^{+a}/p$ by the almost purity theorem, and then faithfully flat (as $V\rightarrow U$ is surjective), so the result follows from faithfully flat descent in the almost setting, cf. Section 3.4 of \cite{GabberRamero}.
\end{proof}

\section{Finiteness of \'{e}tale cohomology}\label{FinitenessSection}

In this section, we prove the following result.

\begin{thm}\label{Finiteness} Let $K$ be an algebraically closed complete extension of $\Q_p$ with an open and bounded valuation subring $K^+\subset K$, let $X$ be a proper smooth adic space over $\Spa(K,K^+)$, and let $\mathbb{L}$ be an $\mathbb{F}_p$-local system on $X_\et$. Then $H^i(X_\et,\mathbb{L})$ is a finite-dimensional $\mathbb{F}_p$-vector space for all $i\geq 0$, which vanishes for $i>2\dim X$. Moreover, there is an isomorphism of almost $K^+$-modules for all $i\geq 0$,
\[
H^i(X_\et,\mathbb{L})\otimes K^{+a}/p\cong H^i(X_\et,\mathbb{L}\otimes \mathcal{O}_X^{+a}/p)\ .
\]
\end{thm}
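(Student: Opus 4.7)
The plan is to move everything to the pro-étale site, exploit the affinoid perfectoid local structure to do a Kiehl-style finiteness argument for $H^i(X_\proet, \mathbb{L}\otimes\OO_X^+/p)$, then upgrade the resulting almost finitely generated module to something with a Frobenius-compatible structure via Lemma \ref{KeyAlmostLemma}, and finally extract the $\mathbb{F}_p$-cohomology via the Artin--Schreier sequence.

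First, by Corollary \ref{CompProetVSEt} (i), I may replace $X_\et$ by $X_\proet$ throughout. By the corollary in Section \ref{StructureSheafSection}, affinoid perfectoid $U\in X_\proet$ form a basis of the topology, and by Lemma \ref{TwistedOXPlus} the sheaf $\mathbb{L}\otimes\OO_X^+/p$ is acyclic on such a $U$ with sections $M(U)$ an almost finitely generated projective $R^{+a}/p$-module that behaves like a quasi-coherent sheaf on $\hat U = \Spa(R,R^+)$. Hence $H^i(X_\proet,\mathbb{L}\otimes\OO_X^+/p)$ can be computed by the Čech complex of any finite affinoid perfectoid cover $\mathcal{V}=\{V_k\}$ of $X$ in $X_\proet$.

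The core step is to show that $H^i(X_\proet,\mathbb{L}\otimes\OO_X^+/p)$ is almost finitely generated over $K^{+a}/p$ and vanishes for $i>2\dim X$. For this I would mimic Kiehl's proof of coherent finiteness on proper rigid spaces. Using properness of $X$, choose two finite affinoid perfectoid covers $\mathcal{V}=\{V_k\}$ and $\mathcal{V}'=\{V_k'\}$ of $X$ in $X_\proet$ with $V_k'\Subset V_k$ (they can be constructed from finite étale covers of rational subdomains of $\tilde{\mathbb{T}}^n$-type charts, both of which are affinoid perfectoid by Lemma \ref{PreciseLocalStructure}). Both Čech complexes compute the same cohomology, while Lemma \ref{PreciseLocalStructure} (iii) together with the compact containment makes the restriction map from the $\mathcal{V}$-Čech complex to the $\mathcal{V}'$-Čech complex \emph{almost completely continuous} in the sense that for every $\epsilon>0$ it factors, up to multiplication by $p^\epsilon$, through an almost finitely generated $K^{+a}/p$-submodule. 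A Cartan--Serre / Kiehl-type diagram chase, carried out inside the almost-category and using the subquotient stability established in Section \ref{AlmostMath}, then yields that each $H^i$ is almost finitely generated; the bound $i\leq 2\dim X$ comes from combining the dimension of $\mathcal{V}$ with the fact that on affinoid perfectoids the cohomology with $\mathbb{F}_p$-coefficients vanishes above degree $\dim X$.

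Next I would set $M_k^i := H^i(X_\proet,\mathbb{L}\otimes\OO_X^+/p^k)$ and verify the hypotheses of Lemma \ref{KeyAlmostLemma}: (i) follows from the previous step; (ii) is the long exact sequence associated to $0\to \OO_X^+/p^k \xrightarrow{p} \OO_X^+/p^{k+1}\to \OO_X^+/p\to 0$, giving maps $p_k,q_k$ with $p_kq_k=p$; (iii) comes from the relative Frobenius on $\OO_X^+/p^k$, which on any affinoid perfectoid $U$ arises from the tilting equivalence applied to $R^+$. Lemma \ref{KeyAlmostLemma} then produces an integer $r_i\geq 0$ and Frobenius-compatible almost isomorphisms $M_k^{i,a}\cong (\OO^a/p^k)^{r_i}$. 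Finally, the Artin--Schreier sequence
\[
0\to \mathbb{L}\to \mathbb{L}\otimes \OO_X^+/p \xrightarrow{\varphi-1} \mathbb{L}\otimes \OO_X^+/p\to 0
\]
on $X_\proet$ is exact because on any affinoid perfectoid $U$ tilting reduces surjectivity of $\varphi-1$ to Artin--Schreier exactness over a perfect characteristic-$p$ algebra, which splits after a further pro-étale (in fact pro-finite-étale) cover when $K$ is algebraically closed. The long exact sequence, combined with the identification $M_1^{i,a}\cong (\OO^a/p)^{r_i}$ on which $\varphi-1$ is almost surjective with kernel $\mathbb{F}_p^{r_i}$, yields both the finiteness $\dim_{\mathbb{F}_p}H^i(X_\et,\mathbb{L})=r_i$ (with $r_i=0$ for $i>2\dim X$) and the desired almost isomorphism $H^i(X_\et,\mathbb{L})\otimes K^{+a}/p\cong H^i(X_\et,\mathbb{L}\otimes\OO_X^{+a}/p)$.

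The main obstacle is the Kiehl-style almost-finiteness step: choosing the pair of affinoid perfectoid covers with the right compact-containment property and showing quantitatively that the resulting restriction maps on Čech complexes are almost completely continuous in the precise sense required by the almost classification theorem of Section \ref{AlmostMath}. All later steps, including the Frobenius-structure argument via Lemma \ref{KeyAlmostLemma} and the Artin--Schreier conclusion, are essentially formal consequences of that finiteness input.
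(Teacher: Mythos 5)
The overall architecture — pro-étale site, almost-finiteness of $H^i(X,\mathbb L\otimes\OO_X^+/p)$ via a Kiehl-type argument, Lemma~\ref{KeyAlmostLemma} to get a Frobenius-rigidified structure, Artin--Schreier to conclude — is the paper's architecture, and your Kiehl step, while phrased slightly differently (the paper works with analytic covers of $X$ and uses Lemmas \ref{ChoiceOfCovers}, \ref{SpecSeqFiniteness}, \ref{BasicLocalComputation}, \ref{LocalComputation} rather than Čech complexes of perfectoid covers), is in the right spirit. But there is a genuine gap in the middle third of your argument.

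You set $M_k^i:=H^i(X_\proet,\mathbb L\otimes\OO_X^+/p^k)$ and propose to verify hypothesis (iii) of Lemma~\ref{KeyAlmostLemma} using ``the relative Frobenius on $\OO_X^+/p^k$.'' This cannot work: for $k\geq 2$ the ring $\OO_X^+(U)/p^k$ is a mixed-characteristic ring, not of characteristic $p$, and $x\mapsto x^p$ is not a ring homomorphism on it, so there is no Frobenius at all in the relevant sense. Moreover Lemma~\ref{KeyAlmostLemma} is stated and proved for modules over the ring of integers of an \emph{algebraically closed field of characteristic $p$}, and its conclusion — locally free $\varphi$-modules over $K$ are equivalent to $\mathbb F_p$-local systems, which are trivial over an algebraically closed field — uses the characteristic-$p$ setting essentially. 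Your $M_k^i$ are modules over $\OO_K/p^k$ with $K$ of characteristic $0$, so the hypotheses are not even of the right type. The paper avoids this by introducing the tilted integral structure sheaf $\hat\OO_{X^\flat}^+=\varprojlim_\Phi\OO_X^+/p$, a sheaf of \emph{perfect} flat $\OO_{K^\flat}$-algebras of characteristic $p$ with $\hat\OO_{X^\flat}^+/\pi=\OO_X^+/p$, and takes $M_k=H^i(X'_\proet,\mathbb L\otimes\hat\OO_{X^\flat}^+/\pi^k)$ for $\pi\in\OO_{K^\flat}$ with $\pi^\sharp=p$; these really are $\OO_{K^\flat}/\pi^k$-modules with the required Frobenius structure, and your hypothesis (i) passes over because $M_1$ is the same object.

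This choice of tilted sheaf is also what makes the Artin--Schreier step clean. The paper's exact sequence lives on $\hat\OO_{X^\flat}=\hat\OO_{X^\flat}^+\otimes_{K^{\flat+}}K^\flat$, which is a sheaf of perfect $K^\flat$-algebras, so $\varphi-1$ has kernel exactly $\mathbb F_p$ and surjectivity reduces to the fact that $\hat U^\flat$ has the same finite étale covers as $\hat U$ (tilting) and finite étale covers of $\hat U$ come from $X_\proet$. Your proposed sequence on $\OO_X^+/p$, a sheaf which is only semiperfect, would require a separate justification that the kernel is $\mathbb F_p$ and that surjectivity holds pro-étale locally; it is not the Artin--Schreier sequence of a characteristic-$p$ ring. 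Finally, after applying Lemma~\ref{KeyAlmostLemma} the paper takes $\varprojlim_k$ (using Lemma~\ref{InverseLimitExact} and Lemma~\ref{ContOXplusonAffPerfTilt}) to reach $\hat\OO_{X^\flat}^+$, and inverts $\pi$ so that the almost-isomorphisms become honest isomorphisms before applying Artin--Schreier; your sketch omits this inversion, which is what lets one avoid almost-mathematics bookkeeping in the last step.
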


We start with some lemmas. Here and in the following, for any nonarchimedean field $K$, we denote by $\OO_K=K^\circ\subset K$ its ring of integers.

\begin{lem} Let $K$ be a complete nonarchimedean field, let $V$ be an affinoid smooth adic space over $\Spa(K,\OO_K)$, and let $x\in V$ with closure $M=\overline{\{x\}}\subset V$. Then there exists a rational subset $U\subset V$ containing $M$, together with an \'{e}tale map $U\rightarrow \mathbb{T}^n$ which factors as a composite of rational embeddings and finite \'{e}tale maps.
\end{lem}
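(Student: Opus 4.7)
The plan is to combine two structural results—Huber's local structure theorem for smooth adic spaces (Corollary 1.6.10) and the adic-space analog of Zariski's main theorem for étale morphisms—choosing each shrinking carefully so that the resulting rational subset contains all of $M$, not just a single point. The key principle is that open subsets of adic spaces are stable under generization: if one can identify a sufficiently specialized point $y \in M$ and work in an open neighborhood of $y$, that neighborhood will automatically contain every generization of $y$ in $M$, and in particular all of $M$ provided every point of $M$ generizes to $y$.

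First, I would pick a closed point $y \in M$, which exists by quasi-compactness of $V$ (and hence of the closed subset $M$). Applying Huber's Corollary 1.6.10 at $y$ produces an open neighborhood $W$ of $y$ in $V$ together with an étale morphism $W \to \mathbb{T}^n$ given by coordinates $t_1, \ldots, t_n$ with $|t_i(y)| = 1$. Shrinking $W$ to a rational subset $U_1 \ni y$ on which the map is well-defined and étale (rational subsets form a basis of the topology) yields a rational neighborhood of $y$ together with an étale map to $\mathbb{T}^n$. By the generization-stability principle, and using that every point of $M = \overline{\{x\}}$ generizes to $y$ because $y$ is a maximal specialization in the closure of the single point $x$, one obtains $M \subset U_1$.

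Next, I would apply the local structure theorem for étale morphisms of adic spaces (cf.\ Huber's book, Chapter 2): any étale morphism of affinoid adic spaces factors Zariski-locally on the source as a rational embedding followed by a finite étale morphism. Applied to $U_1 \to \mathbb{T}^n$ at $y$, this produces a rational subset $U \subset U_1$ with $y \in U$ together with a factorization $U \hookrightarrow U' \to \mathbb{T}^n$ in which the second map is finite étale. Once more, $M \subset U$ by generization-stability.

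The hardest step will be justifying that every point of $M$ generizes to a single chosen closed point $y$—i.e., that the closure of a point in an analytic adic space has a unique maximal specialization. This relies on analyzing the specialization partial order on $V = \Spa(A,A^+)$ for the analytic point $x$: the chain of specializations from $x$ terminates at a canonical rank-$r$ specialization (where $r$ is the rank of the valuation at $x$), and this canonical specialization is the sought-after $y$. Once this is in hand, the rest of the argument is essentially formal, combining standard facts from Huber's theory.
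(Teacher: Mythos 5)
Your argument has a genuine gap at the step you yourself flag as the hardest: the claim that $M = \overline{\{x\}}$ has a \emph{unique} closed point $y$, to which every point of $M$ generizes. This is false. In an analytic adic space it is the \emph{generizations} of a point that form a chain (every point has a unique maximal, rank-one generization), not the specializations. The specializations of a maximal point are typically very far from being a chain. Concretely, take $V = \mathbb{B}^1 = \Spa(K\langle T\rangle,\OO_K\langle T\rangle)$ over an algebraically closed $K$ and let $x = \eta$ be the Gauss point. Then $M = \overline{\{\eta\}}$ consists of $\eta$ together with, for each residue class $\bar a$ in the residue field of $K$, a rank-two point $\eta_a$ characterized by $|T-a|(\eta_a) = \gamma$ with $\gamma$ infinitesimally less than $1$. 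All of these $\eta_a$ are closed points, and there are infinitely many of them. So there is no single $y$ such that a rational neighborhood of $y$ picks up all of $M$ by generization-stability, and your reduction to working at one closed point collapses. (You also conflate $\mathbb{B}^n$ and $\mathbb{T}^n$ in invoking Huber's Corollary 1.6.10, which produces an \'etale map to $\mathbb{B}^n$; that part is minor, since $\mathbb{B}^n$ embeds as a rational subset of $\mathbb{T}^n$.)

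The paper's proof circumvents this by a different, and more delicate, topological observation: $M$ is the \emph{intersection} of all rational subsets $U \subset V$ containing $M$. This uses that $M$ is stable under generization (which does hold, because $x$ is a maximal point and the generizations of any point of $M$ form a chain terminating at $x$), not that $M$ has a unique closed point. With that observation, one applies Corollary 1.6.10 and Lemma 2.2.8 of Huber and checks, by inspecting the constructions, that each successive open neighborhood can be taken to contain $M$ (and correspondingly $N = \overline{\{f(x)\}}$ downstairs), then uses the intersection property to pass to a rational subset still containing $M$. To repair your approach you would need to replace the unique-closed-point argument with some version of this: either prove directly that $M$ is the intersection of the rational opens containing it, or track $M$ explicitly through Huber's constructions as the paper does.
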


\begin{proof} We note that $M$ is the intersection of all rational subsets $U\subset V$ that contain $M$.

Now, first, one may replace $V$ by a rational subset such that there exists an \'{e}tale map $f: V\rightarrow \mathbb{B}^n$, where $\mathbb{B}^n$ denotes the $n$-dimensional unit ball. This follows from Corollary 1.6.10 of \cite{Huber}, once one observes that the open subset constructed there may be assumed to contain $M$. Let $y=f(x)\in \mathbb{B}^n$, with closure $N\subset \mathbb{B}^n$. Then by Lemma 2.2.8 of \cite{Huber} (with similar analysis of its proof) one may find a rational subset $W\subset \mathbb{B}^n$ containing $N$, such that $f^{-1}(W)\rightarrow W$ factors as an open embedding $f^{-1}(W) \rightarrow Z$ and a finite \'{e}tale map $Z\rightarrow W$. Note that $M\subset f^{-1}(W)$. Choose some open subset $U\subset f^{-1}(W)$ containing $M$, such that $U$ is rational in $Z$ (and hence in $f^{-1}(W)$). Then $U\subset f^{-1}(W)\subset V$ is a rational subset, and $U\rightarrow W$ factors as a composite of a rational embedding and a finite \'{e}tale map. Finally, embed $\mathbb{B}^n\rightarrow \mathbb{T}^n$ as a rational subset, e.g. as the locus where $|T_i-1|\leq |p|$ for all $i=1,\ldots,n$. Then $W\subset \mathbb{B}^n\subset \mathbb{T}^n$ is a rational subset, so that $U\rightarrow \mathbb{T}^n$ gives the desired \'{e}tale map. 
\end{proof}

\begin{lem}\label{ChoiceOfCovers} Let $K$ be a complete nonarchimedean field and let $X$ be a proper smooth adic space over $\Spa(K,\OO_K)$. For any integer $N\geq 1$, one may find $N$ finite covers $V_i^{(1)},\ldots,V_i^{(N)}$ of $X$ by affinoid open subsets, such that the following conditions are satisfied.
\begin{altenumerate}
\item[{\rm (i)}] For all $i$, $k=1,\ldots,N-1$, the closure $\overline{V}_i^{(k+1)}$ of $V_i^{(k+1)}$ in $X$ is contained in $V_i^{(k)}$.
\item[{\rm (ii)}] For all $i$, $V_i^{(N)}\subset \ldots\subset V_i^{(1)}$ is a chain of rational subsets.
\item[{\rm (iii)}] For all $i$, $j$, the intersection $V_i^{(1)}\cap V_j^{(1)}\subset V_i^{(1)}$ is a rational subset.
\item[{\rm (iv)}] For all $i$, there is an \'{e}tale map $V_i^{(1)}\rightarrow \mathbb{T}^n$ that factors as a composite of rational subsets and finite \'{e}tale maps.
\end{altenumerate}
\end{lem}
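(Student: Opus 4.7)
My strategy is to extract the local structure from the previous lemma and then coordinate the pieces across a finite cover using properties of rational subsets. Since $X$ is proper, hence quasicompact, choose a finite affinoid open cover $W_1,\ldots,W_m$ of $X$. For each point $x\in X$, pick some $W_{s(x)}$ containing $x$ and apply the previous lemma to the smooth affinoid $W_{s(x)}$ to obtain a rational subset of $W_{s(x)}$ containing $x$ together with an \'{e}tale map to $\mathbb{T}^n$ factoring as a composite of rational embeddings and finite \'{e}tale maps. Extracting a finite subcover yields initial candidates $V_1^{(1)},\ldots,V_r^{(1)}$, already satisfying property (iv), with each $V_i^{(1)}$ a rational subset of some $W_{s(i)}$.

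The essential task is property (iii). The key observation is that two rational subsets of the same affinoid intersect in a rational subset of each. To globalize this, I would first refine the cover $\{W_j\}$ so that whenever two members intersect, their intersection is covered by affinoids that are simultaneously rational in both; this is possible because rational subsets form a basis of the topology, and are closed under finite intersection, inside any affinoid. Then, by subdividing each $V_i^{(1)}$ into sufficiently small rational pieces (still covering $X$), I can arrange that any two intersecting $V_i^{(1)}$ and $V_j^{(1)}$ lie, as rational subsets, in some common affinoid from the refined cover. Their intersection is then rational in that common affinoid, and hence a rational subset of each $V_i^{(1)}$, establishing (iii).

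For properties (i) and (ii), I would iterate the standard shrinking of rational subsets. Given a rational subset $V=\{x:|f_k(x)|\leq |g(x)|\}\subset W$ and a compact subset $C\subset V$, for $|c|<1$ sufficiently close to $1$ the subset $V'=\{x:|f_k(x)|\leq |cg(x)|\}$ is again a rational subset with $C\subset V'\subset V$ and $\overline{V'}\subset V$ in $X$: this uses that $\{|f_k/g|\leq |c|\}$ is closed inside $V$, together with properness of $X$ to prevent specializations of points in $V'$ from escaping outside the ambient affinoid $W_{s(i)}$. Iterating this shrinking $N-1$ times inside each initial $V_i^{(1)}$, and starting from slightly enlarged initial pieces so that the innermost layer $\{V_i^{(N)}\}_i$ still covers $X$, produces the chain $V_i^{(N)}\subset\ldots\subset V_i^{(1)}$ satisfying (i) and (ii).

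The main obstacle is property (iii): the combinatorial coordination requires a delicate initial refinement of $\{W_j\}$ so that sufficiently small rational subsets of any two intersecting members can always be matched inside a common affinoid. The subtlety in (i), that closure is computed in $X$ rather than in the ambient affinoid, is handled by properness together with the explicit closedness inside $V$ of the shrunken rational subsets $V'$.
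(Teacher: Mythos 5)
Your overall strategy — extract local structure from the preceding lemma and globalize — matches the paper's, and your treatment of (ii) and (iv) is essentially correct. The argument for (iii), however, has a genuine gap. You propose to refine $\{W_j\}$ so that each intersection $W_i\cap W_j$ is covered by affinoids simultaneously rational in both, and then to subdivide the $V_i^{(1)}$ finely enough that any two intersecting pieces fit inside a common member of the refined cover. There are two problems. First, the justification offered for the simultaneously-rational refinement (that rational subsets form a basis and are closed under intersection) is insufficient, since the two rational subsets live in \emph{different} ambient affinoids; reconciling them requires a Gerritzen--Grauert type input plus the observation that a rational subset of an affinoid remains rational in any intermediate affinoid open. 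Second and more seriously, the subdivision step is a Lebesgue-number argument, and there is no obvious uniform notion of ``small enough'' in a spectral space forcing the \emph{union} of two intersecting quasicompact opens, each small, into a single member of a fixed finite cover — especially since $V_i^{(1)}$ and $V_j^{(1)}$ need not both lie in $W_{s(i)}\cap W_{s(j)}$ to begin with. Making this precise requires several iterated refinements that your sketch does not spell out, and it is not clear the bookkeeping closes.

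The paper avoids the Lebesgue-number issue with a two-layer construction. It first builds auxiliary finite covers $U_i^0\subset V_i^0$ with $\overline{U_i^0}\subset V_i^0$, $U_i^0\subset V_i^0$ rational, and each $\overline{\{x\}}$ contained in some $U_i^0$. It then produces, around each $x$, an affinoid $V$ with the decisive extra property that $V\cap U_i^0$ is rational in $U_i^0$ for \emph{every} index $i$, not just the one with $V\subset U_i^0$. After taking a finite subcover and shrinking to arrange (i), (ii), (iv), property (iii) follows by routing through the auxiliary layer: pick $i$ with $V_j^{(1)}\subset U_i^0$ rational, write $V_j^{(1)}\cap V_{j'}^{(1)} = V_j^{(1)}\cap (U_i^0\cap V_{j'}^{(1)})$, and observe that both factors are rational in the single ambient $U_i^0$. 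To repair your argument you would need to replace the subdivision step with a construction of this kind, forcing each new piece to interact rationally with \emph{all} members of a fixed finite auxiliary cover, rather than hoping that size alone pins two intersecting pieces into a common affinoid.
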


\begin{proof} For any $x\in X$, there is some affinoid open subset $V\subset X$ such that $\overline{\{x\}}\subset V$, by \cite{Temkin}. Inside $V$, we may find a rational subset $U\subset V$ such that $\overline{\{x\}}\subset U$ and the closure $\overline{U}$ of $U$ in $X$ is contained in $V$. Taken together, the $U$'s cover $X$, so we may find finite covers $U_i^0, V_i^0\subset X$ by affinoid open subsets such that for all $x\in X$, there is some $i$ for which $\overline{\{x\}}\subset U_i^0$, and such that $\overline{U}_i^0\subset V_i^0$, with $U_i^0\subset V_i^0$ a rational subset.

Next, we may find for any $x\in X$ an affinoid open subset $V\subset X$ such that $\overline{\{x\}}\subset V$, such that $V\cap U_i^0\subset U_i^0$ is a rational subset for all $i$, and such that $V\subset U_i^0$ for one $i$. It suffices to do this for maximal points $x$. Now, let $I_x = \{i | \overline{\{x\}}\subset V_i^0\}$. There is some open affinoid subset $W\subset X$ containing $\overline{\{x\}}$ such that $W\cap U_i^0 = \emptyset$ if $i\not\in I_x$, and $W\subset V_i^0$ for $i\in I_x$. For each $i\in I_x$, choose some rational subset $W_i\subset V_i^0$ such that $\overline{\{x\}}\subset W_i\subset W$, and let $V^\prime$ be the intersection of all $W_i$ for $i\in I_x$. There is some $i\in I_x$ such that $\overline{\{x\}}\subset U_i^0$; we set $V=V^\prime\cap U_i^0$, which is a rational subset of $V^\prime$. Then $V\subset V_i^0$ is rational for all $i\in I_x$, hence $V\cap U_i^0\subset U_i^0$ is rational. If $i\not\in I_x$, then $V\cap U_i^0\subset W\cap U_i^0=\emptyset$, which is also a rational subset of $U_i^0$. Finally, $V\subset U_i^0$ for one $i$.

Replacing $V$ by a further rational subset, we may assume that there is an \'{e}tale map $V\rightarrow \mathbb{T}^n$ that factors as a composite of rational embeddings and finite \'{e}tale maps. Also, we may find a chain of rational subsets $V^{(N)}\subset \ldots \subset V=V^{(1)}$ such that $\overline{\{x\}}\subset V^{(N)}$ and $\overline{V}^{(k+1)}\subset V^{(k)}$ for $k=1,\ldots,N-1$. Then the $V^{(N)}$'s cover $X$, so we may find finite covers $V_j^{(N)},\ldots,V_j^{(1)}\subset X$ by affinoid open subsets such that (i), (ii) and (iv) are satisfied. In fact, (iii) is satisfied as well: For each $j$, there is some $i$ such that $V_j^{(1)}\subset U_i^0$. Then for all $j^\prime$, $V_j^{(1)}\cap V_{j^\prime}^{(1)} = V_j^{(1)}\cap (U_i^0\cap V_{j^\prime}^{(1)})$. Now, $U_i^0\cap V_{j^\prime}^{(1)}\subset U_i^0$ is a rational subset by construction of the $V^{(1)}$'s. It follows that $V_j^{(1)}\cap (U_i^0\cap V_{j^\prime}^{(1)})\subset V_j^{(1)}$ is a rational subset as well, as desired.
\end{proof}

Let us record the following lemma on tracing finiteness results on images of maps through spectral sequences.

\begin{lem}\label{SpecSeqFiniteness} Let $K$ be a nondiscretely valued complete nonarchimedean field, and let $E_{\ast,(i)}^{p,q}\Rightarrow M_{(i)}^{p+q}$, $i=1,...,N$, be upper-right quadrant spectral sequences of almost $\OO_K$-modules, together with maps of spectral sequences $E_{\ast,(i)}^{p,q}\rightarrow E_{\ast,(i+1)}^{p,q}$, $M_{(i)}^{p+q}\rightarrow M_{(i+1)}^{p+q}$ for $i=1,...,N-1$. Assume that for some $r$, the image on the $r$-th sheet, $E_{r,(i)}^{p,q}\rightarrow E_{r,(i+1)}^{p,q}$ is almost finitely generated over $\OO_K$ for all $i$, $p$, $q$. Then the image of $M_{(1)}^k$ in $M_{(N)}^k$ is an almost finitely generated $\OO_K$-module for $k\leq N-2$.
\end{lem}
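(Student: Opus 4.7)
The strategy is to propagate the hypothesis on images from page $r$ all the way to $E_\infty$ by repeatedly applying a general fact about cohomology, and then to transfer the resulting control at $E_\infty$ to the abutment via the standard filtration. The main input I would draw from Section~\ref{AlmostMath} is that the class of almost finitely generated $\OO_K$-modules is closed under subquotients, and in particular is abelian.

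The first step is the observation that if $f\colon A^\bullet\to B^\bullet$ is a morphism of complexes of almost $\OO_K$-modules whose image $\mathrm{im}(f)$ is almost finitely generated in every degree, then the image of the induced map $H^\ast(f)\colon H^\ast(A^\bullet)\to H^\ast(B^\bullet)$ is again almost finitely generated in every degree. This is immediate from the identification
$$\mathrm{im}(H^\ast(f))=\bigl(f(Z^\ast(A))+B^\ast(B)\bigr)/B^\ast(B)\cong f(Z^\ast(A))/\bigl(f(Z^\ast(A))\cap B^\ast(B)\bigr),$$
which exhibits $\mathrm{im}(H^\ast(f))$ as a subquotient of $\mathrm{im}(f)$. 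Applying this to each of the chain maps $(E_{s,(i)}^{\bullet,\bullet},d_s)\to(E_{s,(i+1)}^{\bullet,\bullet},d_s)$ and inducting on $s\ge r$, the image $\mathrm{im}(E_{s,(i)}^{p,q}\to E_{s,(i+1)}^{p,q})$ remains almost finitely generated for every $s\ge r$ and every $i,p,q$.

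Because the spectral sequences are upper-right quadrant, for each fixed $(p,q)$ the incoming and outgoing differentials at $(p,q)$ vanish once $s>\max(p,q+1)$, so $E_s^{p,q}=E_\infty^{p,q}$ for $s$ large enough. Hence $\mathrm{im}(E_{\infty,(i)}^{p,q}\to E_{\infty,(i+1)}^{p,q})$ is almost finitely generated for every $i,p,q$, and by composing consecutive maps the image $\mathrm{im}(E_{\infty,(1)}^{p,q}\to E_{\infty,(N)}^{p,q})$ is almost finitely generated as well, being a quotient of any such adjacent image.

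Finally, I would transfer this to the abutment using the standard filtration: $M_{(N)}^k$ carries a finite decreasing filtration $F^\bullet$ with $\mathrm{gr}_F^p M_{(N)}^k=E_{\infty,(N)}^{p,k-p}$. Setting $I^k=\mathrm{im}(M_{(1)}^k\to M_{(N)}^k)$ and $F^pI^k=I^k\cap F^pM_{(N)}^k$, the graded piece $\mathrm{gr}_F^pI^k$ is the image in $E_{\infty,(N)}^{p,k-p}$ of $F^pM_{(1)}^k$, which factors through $E_{\infty,(1)}^{p,k-p}$; so $\mathrm{gr}_F^pI^k$ is a subquotient of $\mathrm{im}(E_{\infty,(1)}^{p,k-p}\to E_{\infty,(N)}^{p,k-p})$ and hence almost finitely generated. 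Since only finitely many such graded pieces (those with $0\le p\le k$) are nonzero, $I^k$ is an iterated extension of almost finitely generated modules and therefore almost finitely generated. The key technical point is verifying the subquotient passage at the first step on each bigraded component; once that is in place, the remaining bookkeeping with the abutment filtration is formal, and the hypothesis $k\le N-2$ leaves more than enough slack.
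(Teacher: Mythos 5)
The first three paragraphs of your argument are correct and match the paper's strategy: the subquotient observation about morphisms of complexes, the inductive propagation from page $r$ to $\infty$, and the composition of consecutive images are all exactly what is needed, and the class of almost finitely generated modules is indeed stable under subquotients and extensions.

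The gap is in the last paragraph, in the claim that $\mathrm{gr}_F^p I^k$ ``is the image in $E_{\infty,(N)}^{p,k-p}$ of $F^p M_{(1)}^k$.'' This is false: an element of $I^k \cap F^p M_{(N)}^k$ is the image of some $m\in M_{(1)}^k$ whose image lies in $F^pM_{(N)}^k$, but $m$ itself need not lie in $F^p M_{(1)}^k$, because transition maps between filtered abutments can \emph{strictly increase} the filtration level. What you actually get is only an inclusion: the image of $F^pM_{(1)}^k$ in $E_{\infty,(N)}^{p,k-p}$ sits inside $\mathrm{gr}_F^p I^k$, which could be strictly larger. So $\mathrm{gr}_F^pI^k$ is \emph{not} a subquotient of $\mathrm{im}\bigl(E_{\infty,(1)}^{p,k-p}\to E_{\infty,(N)}^{p,k-p}\bigr)$, and the a.f.g.\ conclusion does not follow. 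A symptom that something is off: your argument never uses the hypothesis $k\le N-2$ in an essential way, and as written it would establish the conclusion for all $k$ with $N=2$, which is not what the lemma claims and is the reason $N$ spectral sequences are required.

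The paper circumvents exactly this filtration-jumping phenomenon by trading one spectral sequence for one filtration step. It proves by induction on $i=1,\ldots,k+2$ that the image of $M_{(1)}^k$ in $M_{(i)}^k/\Fil_{(i)}^{i-1}$ is a.f.g., using at each step only the single term $\mathrm{im}\bigl(E_{\infty,(i)}^{i-1,k+1-i}\to E_{\infty,(i+1)}^{i-1,k+1-i}\bigr)$ to control the newly exposed graded piece (which, as a quotient of $\Fil_{(i)}^{i-1}/(\Fil_{(i)}^{i-1}\cap\Fil_{(i+1)}^i)$, genuinely \emph{is} a subquotient of that image), together with a short exact sequence comparing the three quotients. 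Since $\Fil_{(k+2)}^{k+1}=0$, taking $i=k+2$ yields the result, which is precisely why $k+2\le N$ is needed. You should replace your final paragraph with an argument of this form; the first half of your proof can be kept as is.
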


\begin{proof} Fix $k\leq N-2$. Each spectral sequence defines the decreasing separated and exhaustive abutment filtration $\Fil_{(i)}^p$ on $M_{(i)}^k$, such that
\[
\Fil_{(i)}^p / \Fil_{(i)}^{p+1} = E_{\infty,(i)}^{p,k-p}\ .
\]
In particular, $\Fil_{(i)}^0 = M_{(i)}^k$ and $\Fil_{(i)}^{k+1} = 0$. We note that the existence of the maps of spectral sequences means that $\Fil_{(i)}^p$ maps into $\Fil_{(i+1)}^p$ for all $p$, and the induced maps
\[
E_{\infty,(i)}^{p,k-p} = \Fil_{(i)}^p / \Fil_{(i)}^{p+1}\rightarrow \Fil_{(i+1)}^p / \Fil_{(i+1)}^{p+1} = E_{\infty,(i+1)}^{p,k-p}
\]
agree with the map on the spectral sequence.

By induction on $i=1,\ldots,k+2$, we claim that the image of $M_{(1)}^k$ in $M_{(i)}^k / \Fil_{(i)}^{i-1}$ is an almost finitely generated $\OO_K$-module. For $i=1$, there is nothing to show. Now assume that the image of $M_{(1)}^k$ in $M_{(i)}^k / \Fil_{(i)}^{i-1}$ is an almost finitely generated $\OO_K$-module. There is some $r$ such that the image of $E_{r,(i)}^{p,q}\rightarrow E_{r,(i+1)}^{p,q}$ is an almost finitely generated $\OO_K$-module for all $p$, $q$. It follows that the same stays true on the $E_\infty$-page, so that in particular the image of
\[
E_{\infty,(i)}^{i-1,k+1-i}\rightarrow E_{\infty,(i+1)}^{i-1,k+1-i}
\]
is almost finitely generated. Under the identification of these terms with the abutment filtration, this image is precisely
\[
\Fil_{(i)}^{i-1} / (\Fil_{(i)}^{i-1}\cap \Fil_{(i+1)}^i)\ .
\]
Now we use the exact sequence
\[\begin{aligned}
0\rightarrow (M_{(1)}^k\cap \Fil_{(i)}^{i-1}) / (M_{(1)}^k\cap \Fil_{(i)}^{i-1}\cap \Fil_{(i+1)}^i)&\rightarrow M_{(1)}^k / (M_{(1)}^k\cap \Fil_{(i+1)}^i)\\
&\rightarrow M_{(1)}^k / (M_{(1)}^k\cap (\Fil_{(i)}^{i-1} + \Fil_{(i+1)}^i))\rightarrow 0\ ,
\end{aligned}\]
where the intersection $M_{(1)}^k\cap F$ for $F\subset M_{(i)}^k$ means taking those elements of $M_{(1)}^k$ whose image in $M_{(i)}^k$ lies in $F$. The right-most term is almost finitely generated by induction, and we have just seen that the left-most term is almost finitely generated. The middle term is isomorphic to the image of $M_{(1)}^k$ in $M_{(i+1)}^k / \Fil_{(i+1)}^i$, so we get the claim.

Now we use this information for $i=k+2$, where it says that the image of $M_{(1)}^k$ in $M_{(k+2)}^k = M_{(k+2)}^k / \Fil_{(k+2)}^{k+1}$ is almost finitely generated. In particular, the same stays true for the image in $M_{(N)}^k$, as desired.
\end{proof}

\begin{lem}\label{BasicLocalComputation} Let $K$ be a complete nonarchimedean field of characteristic $0$ that contains all $p$-power roots of unity; choose a compatible system $\zeta_{p^\ell}\in K$ of $p^\ell$-th roots of unity. Let $R_0=\OO_K\langle T_1^{\pm 1},\ldots,T_n^{\pm 1}\rangle$, and $R=\OO_K\langle T_1^{\pm 1/p^\infty},\ldots,T_n^{\pm 1/p^\infty}\rangle$. Let $\Z_p^n$ act on $R$, such that the $k$-th basis vector $\gamma_k\in \Z_p^n$ acts on the basis via
\[
T_1^{i_1}\cdots T_n^{i_n}\mapsto \zeta^{i_k} T_1^{i_1}\cdots T_n^{i_n}\ ,
\]
where $\zeta^{i_k} = \zeta_{p^\ell}^{i_k p^\ell}$ whenever $i_k p^\ell\in \Z$. Then $H^q_\cont(\Z_p^n,R/p^m)$ is an almost finitely presented $R_0$-module for all $m$, and the map
\[
\bigwedge^q R_0^n = H^q_\cont(\Z_p^n,R_0)\rightarrow H^q_\cont(\Z_p^n,R)
\]
is injective with cokernel killed by $\zeta_p-1$.

Moreover, if $S_0$ is a $p$-adically complete flat $\Z_p$-algebra with the $p$-adic topology, then
\[
H^q_\cont(\Z_p^n,S_0/p^m\otimes_{R_0/p^m} R/p^m) = S_0/p^m\otimes_{R_0/p^m} H^q_\cont(\Z_p^n,R/p^m)
\]
for all $m$, and
\[
H^q_\cont(\Z_p^n,S_0\hat{\otimes}_{R_0} R) = S_0\hat{\otimes}_{R_0} H^q_\cont(\Z_p^n,R)\ .
\]
\end{lem}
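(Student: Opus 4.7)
The plan is to decompose $R$ as a $\Z_p^n$-equivariant direct sum of rank-one $R_0$-modules and then compute the continuous cohomology piece by piece via an explicit Koszul complex. Concretely, modulo $p^m$ we have an honest $R_0/p^m$-module decomposition
\[
R/p^m = \bigoplus_{\alpha \in \Z[1/p]^n/\Z^n} (R_0/p^m) \cdot T^\alpha,
\]
each summand stable under $\Z_p^n$, with $\gamma_k$ acting on $(R_0/p^m) \cdot T^\alpha$ by multiplication by the scalar $\zeta^{\alpha_k} \in \OO_K$. Continuous cohomology therefore decomposes as a direct sum and on each summand is computed by the Koszul complex
\[
K^\bullet\bigl(R_0/p^m;\ \zeta^{\alpha_1}-1,\ldots,\zeta^{\alpha_n}-1\bigr),
\]
whose differentials are scalars. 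The summand $\alpha=0$ contributes $\bigwedge^q(R_0/p^m)^n$ in degree $q$; passing to the limit in $m$, the $\alpha=0$ component identifies $\bigwedge^q R_0^n = H^q_\cont(\Z_p^n, R_0)$ as a direct summand of $H^q_\cont(\Z_p^n, R)$, proving injectivity of the map in the statement.

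For the cokernel and almost finite presentation, I use the standard computation $v_p(\zeta_{p^\ell}-1) = 1/(p^{\ell-1}(p-1))$. When $\alpha \neq 0$ some $\zeta^{\alpha_k} - 1$ is nonzero with $v_p(\zeta^{\alpha_k}-1)\leq v_p(\zeta_p-1) = 1/(p-1)$, so the cohomology of the corresponding Koszul summand is killed by $\zeta^{\alpha_k}-1$ and hence by $\zeta_p-1$; summing over $\alpha \neq 0$ gives the cokernel bound. For almost finite presentation, given $\epsilon > 0$ with $\epsilon\in\log\Gamma$, choose $\ell$ large enough that $1/(p^{\ell-1}(p-1))<\epsilon$. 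The summands with some coordinate of denominator $>p^\ell$ then have their Koszul cohomology killed by $p^\epsilon$, whereas the (finitely many) summands with all coordinates of denominator $\leq p^\ell$ form a finite direct sum of finitely presented $R_0/p^m$-modules --- each an iterated Koszul cohomology of scalars in $\OO_K$, hence visibly finitely presented. This gives the required $\epsilon$-approximation.

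For the base-change statements, the key point is that the Koszul differentials are scalars in $\OO_K \subset R_0$, so the Koszul complex for $S_0/p^m \otimes_{R_0/p^m} R/p^m$ is obtained termwise from that for $R/p^m$ by tensoring with $S_0/p^m$; since $R/p^m$ is a free $R_0/p^m$-module with basis $\{T^\alpha\}$, this tensor product preserves the $\alpha$-decomposition, and on each rank-one summand the cohomology commutes with the scalar base change, yielding the mod-$p^m$ formula. For the completed version, I apply $\varprojlim_m$: from the explicit description given by the almost finite presentation just established, the transition maps in the system $\{S_0/p^m \otimes_{R_0/p^m} H^q_\cont(\Z_p^n, R/p^m)\}_m$ satisfy the Mittag-Leffler condition (in fact are surjective up to a controlled error), so $R^1\varprojlim$ vanishes and continuous cohomology commutes with the inverse limit, giving the identification with $S_0 \hat{\otimes}_{R_0} H^q_\cont(\Z_p^n, R)$. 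The main obstacle throughout is reconciling the completed structure of $R$ with continuous cohomology of $\Z_p^n$; reducing modulo $p^m$ to replace the completed direct sum by an honest one is what makes the piece-by-piece Koszul calculation legitimate.
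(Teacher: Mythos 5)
Your proposal follows essentially the same route as the paper: decompose $R/p^m$ into rank-one $R_0/p^m\cdot T^\alpha$ summands, compute continuous cohomology via the Koszul complex with scalar differentials $\zeta^{\alpha_k}-1$, bound the torsion via $v_p(\zeta_{p^\ell}-1)=1/(p^{\ell-1}(p-1))$ to get both the cokernel bound and almost finite presentation, and deduce base change from the scalar nature of the differentials. The only minor presentational difference is that you spell out a Mittag--Leffler/$R^1\varprojlim$ argument for the completed base-change statement where the paper just declares it ``immediate from the calculations''; the core ideas and the order of deduction are the same.
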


\begin{proof} Recall that in general, if $M$ is a topological $\Z_p^n$-module such that $M=\varprojlim M/p^m$, with $M$ carrying the inverse limit topology of the discrete topologies on $M/p^m$, then continuous $\Z_p^n$-cohomology with values in $M$ is computed by the Koszul complex
\[
0\rightarrow M \rightarrow M^n \rightarrow \ldots \rightarrow \bigwedge^q M^n\rightarrow \ldots \rightarrow M^n \rightarrow M \rightarrow 0\ ,
\]
where the first map is $(\gamma_1-1,\ldots,\gamma_n-1)$. To check this, consider the Iwasawa algebra $\Lambda = \Z_p[[\Z_p^n]]\cong \Z_p[[T_1,\ldots,T_n]]$, with $T_i$ corresponding to $\gamma_i - 1$, use the Koszul complex
\[
0\rightarrow \Lambda\rightarrow \Lambda^n\rightarrow \ldots \rightarrow \bigwedge^q \Lambda^n\rightarrow \ldots \rightarrow \Lambda^n \rightarrow \Lambda\rightarrow 0
\]
associated to $(T_1,\ldots,T_n)$, which resolves $\Z_p$. Now take $\Hom_\cont(-,M)$, which gives
\[
0\rightarrow \Hom_\cont(\Z_p^n,M)\rightarrow \Hom_\cont(\Z_p^n,M)^n\rightarrow \ldots\ ,
\]
and resolves $M$ as a topological $\Z_p^n$-module. Then taking continuous $\Z_p^n$-cohomology gives the result.

Let us compute $H^q(\Z_p^n,R/p^m)$ for all $m$. It is the direct sum of
\[
H^q(\Z_p^n,R_0/p^m \cdot T_1^{i_1}\cdots T_n^{i_n})
\]
over $i_1,\ldots,i_n\in [0,1)\cap \Z[p^{-1}]$. This is computed by the tensor product of the complexes
\[
0\rightarrow R_0/p^m \buildrel{\zeta^{i_k}-1}\over\rightarrow R_0/p^m\rightarrow 0
\]
over $k=1,\ldots,n$. If $i_k\neq 0$, the cohomology of $0\rightarrow R_0/p^m\buildrel{\zeta^{i_k}-1}\over\rightarrow R_0/p^m\rightarrow 0$ is annihilated by $\zeta_p - 1$. It follows that
\[
\bigwedge^q (R_0/p^m)^n = H^q(\Z_p^n,R_0/p^m)\rightarrow H^q(\Z_p^n,R/p^m)
\]
is injective with cokernel killed by $\zeta_p - 1$, for all $m$. Taking the inverse limit over $m$, we get the statement about $H^q_\cont(\Z_p^n,R)$.

More precisely, if $i_k$ has denominator $p^\ell$, then the cohomology of $0\rightarrow R_0/p^m\buildrel{\zeta^{i_k}-1}\over\rightarrow R_0/p^m\rightarrow 0$ is annihilated by $\zeta_{p^\ell}-1$. If $\epsilon>0$, $\epsilon\in \log\Gamma$, then $\zeta_{p^\ell}-1$ divides $p^\epsilon$ for almost all $\ell$, hence only finitely many $n$-tuples $(i_1,\ldots,i_n)$ contribute cohomology which is not $p^\epsilon$-torsion. The cohomology for each such tuple is finitely presented, hence the cohomology group $H^q(\Z_p^n,R/p^m)$ is almost finitely presented.

The compatibility with base-change is immediate from the calculations.
\end{proof}

\begin{lem}\label{LocalComputation} Let $K$ be a perfectoid field of characteristic $0$ containing all $p$-power roots of unity. Let $V$ be an affinoid smooth adic space over $\Spa(K,\OO_K)$ with an \'{e}tale map $V\rightarrow \mathbb{T}^n$ that factors as a composite of rational embeddings and finite \'{e}tale maps. Let $\mathbb{L}$ be an $\mathbb{F}_p$-local system on $V_\et$.
\begin{altenumerate}
\item[{\rm (i)}] For $i>n=\dim V$, the cohomology group
\[
H^i(V_\et,\mathbb{L}\otimes \OO_V^+/p)
\]
is almost zero as $\OO_K$-module.
\item[{\rm (ii)}] Assume that $V^\prime\subset V$ is a rational subset such that $V^\prime$ is strictly contained in $V$. Then the image of
\[
H^i(V_\et,\mathbb{L}\otimes \OO_V^+/p)\rightarrow H^i(V^\prime_\et,\mathbb{L}\otimes \OO_V^+/p)
\]
is an almost finitely generated $\OO_K$-module.
\end{altenumerate}
\end{lem}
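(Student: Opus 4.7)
The plan is to compute $H^i(V_\et, \mathbb{L} \otimes \OO_V^+/p)$ via the Cartan--Leray spectral sequence for a pro-\'etale perfectoid Galois cover with group $\Gamma = \Z_p^n$, reducing both parts to statements about continuous group cohomology of $\Gamma$. Concretely, I would set $\tilde V = V \times_{\mathbb{T}^n} \tilde{\mathbb{T}}^n \in V_\proet$; since $V \to \mathbb{T}^n$ factors as rational embeddings and finite \'etale maps, Lemma \ref{PreciseLocalStructure} shows that $\tilde V$ is affinoid perfectoid, with $\hat{\tilde V} = \Spa(R, R^+)$, and analogously $\tilde V' := V' \times_V \tilde V$ is affinoid perfectoid, $\hat{\tilde V}' = \Spa(R', R'^+)$. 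By Lemma \ref{TwistedOXPlus}, $H^q(\tilde V, \mathbb{L} \otimes \OO_V^+/p)$ is almost zero for $q > 0$, and $M := H^0(\tilde V, \mathbb{L} \otimes \OO_V^+/p)$ is almost finitely generated projective over $R^{+a}/p$. Together with Corollary \ref{CompProetVSEt} (to pass between \'etale and pro-\'etale), the Cartan--Leray spectral sequence for $\tilde V \to V$ therefore almost degenerates, giving an almost isomorphism
\[
H^i(V_\et, \mathbb{L} \otimes \OO_V^+/p) \;\cong\; H^i_\cont(\Gamma, M),
\]
and similarly $H^i(V'_\et, \mathbb{L} \otimes \OO_V^+/p) \cong H^i_\cont(\Gamma, M')$ with $M' = M \otimes_{R^{+a}/p} R'^{+a}/p$.

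Part (i) then follows immediately: continuous cohomology of $\Gamma = \Z_p^n$ on any $p$-torsion module is computed by the Koszul complex of length $n+1$, so $H^i_\cont(\Gamma, M) = 0$ for $i > n$.

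For part (ii), my plan is to combine Lemma \ref{PreciseLocalStructure}(iii)---which, for every $\epsilon > 0$, identifies $R^+$ with the $p$-adic completion of $A^+ \otimes_{R_0} R_{\mathrm{torus}}$ up to $p^\epsilon$-torsion, where $A^+ = \OO_V^+(V)$, $R_0 = \OO_K\langle T_1^{\pm 1}, \ldots, T_n^{\pm 1}\rangle$, and $R_{\mathrm{torus}} = \OO_K\langle T_1^{\pm 1/p^\infty}, \ldots, T_n^{\pm 1/p^\infty}\rangle$---with the base-change formula of Lemma \ref{BasicLocalComputation} applied with $S_0 = A^+$, yielding
\[
H^q_\cont(\Gamma, R^+/p) \;\approx\; A^+/p \otimes_{R_0/p} H^q_\cont(\Gamma, R_{\mathrm{torus}}/p),
\]
where $H^q_\cont(\Gamma, R_{\mathrm{torus}}/p)$ is almost finitely presented over $R_0/p$. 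The twist by $\mathbb{L}$ can be handled by first trivialising it over a finite \'etale Galois cover $W \to V$ and descending via Hochschild--Serre, so I may assume $M \approx (R^+/p)^r$. The strict containment $V' \Subset V$ forces the restriction $A^+ \to A'^+ := \OO_V^+(V')$ to be compact (in the sense of Kiehl for rigid-analytic Banach algebras); modulo $p$, this says the image of $A^+/p \to A'^+/p$ is almost finitely generated as $\OO_K/p$-module. Tensoring against $H^q_\cont(\Gamma, R_{\mathrm{torus}}/p)$ and unwinding the almost identifications should then show the image of $H^i_\cont(\Gamma, M) \to H^i_\cont(\Gamma, M')$ is almost finitely generated over $\OO_K$.

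The main obstacle will be the last step of this bookkeeping. The subtlety is that $H^q_\cont(\Gamma, R_{\mathrm{torus}}/p)$ is almost finitely presented over $R_0$, not over $\OO_K$, so one must argue that the $R_0$-action on the restricted image effectively collapses on account of the strict containment. This uses the explicit description from the proof of Lemma \ref{BasicLocalComputation}: up to $(\zeta_p - 1)$-torsion the cohomology is the free module $\bigwedge^q R_0^n$ coming from integer multi-exponents, while each fractional-exponent summand is killed by $\zeta_{p^\ell}-1$, only finitely many of which fail to be $p^\epsilon$-torsion for a given $\epsilon$. Combined with the compactness of $A^+ \to A'^+$, this should suffice to conclude.
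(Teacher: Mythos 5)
Your reduction of part (i) to the Cartan--Leray spectral sequence for the $\Z_p^n$-cover $\tilde V\to V$, together with Lemma~\ref{TwistedOXPlus} and the cohomological dimension of $\Z_p^n$, is exactly the paper's argument and is correct.

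For part (ii) there is a genuine gap, and it starts with a misstatement of Lemma~\ref{PreciseLocalStructure}(iii). That lemma does \emph{not} say that $R^+=\hat{\OO}^+_X(\tilde V)$ is approximated up to $p^\epsilon$-torsion by the completed tensor product $A^+\hat\otimes_{R_0}R_{\mathrm{torus}}$ at the fixed finite level $0$. It says that for each $\epsilon>0$ there is \emph{some level} $j=j(\epsilon)$ for which the cokernel of $A_j\to S^+$ is annihilated by $p^\epsilon$; at level $0$ one only gets a bound $p^N$ with $N$ fixed (part (ii) of that lemma), which is useless in the almost category where one must take $\epsilon\to 0$. Consequently your proposed ``almost'' identification
\[
H^q_\cont(\Gamma, R^+/p)\approx A^+/p\otimes_{R_0/p}H^q_\cont(\Gamma,R_{\mathrm{torus}}/p)
\]
does not follow: the base-change formula of Lemma~\ref{BasicLocalComputation} applies to $S_0\hat\otimes_{R_0}R_{\mathrm{torus}}$, but relating that module to the actual coefficient module $\hat{\OO}^+_X(\tilde V)/p$ to within $p^\epsilon$ requires replacing $R_0$ by $R_m$ with $m=m(\epsilon)\to\infty$. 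Once you do so, the group over which base change is available shrinks to $(p^m\Z_p)^n$, and you must re-assemble $H^\ast_\cont(\Z_p^n,-)$ from $H^\ast_\cont((p^m\Z_p)^n,-)$ via the Hochschild--Serre spectral sequence for $(\Z/p^m)^n$. That spectral sequence is precisely why the paper needs a chain of $N=n+2$ nested rational subsets $V^{(N)}\subset\cdots\subset V^{(1)}$ and the device of Lemma~\ref{SpecSeqFiniteness} for tracking almost-finiteness of images through a map of spectral sequences. Your plan uses a single strict inclusion and compactness of $A^+\to A'^+$; this is one ingredient, but without the multi-stage spectral-sequence argument you cannot conclude. (A second, smaller inaccuracy: the reduction to an untwisted coefficient module should not be phrased as $M\approx (R^+/p)^r$; the module $M$ is only almost finitely generated projective, and the paper reduces to rank one using the $\epsilon$-approximations of Lemma~2.4.15 of Gabber--Ramero, which is the form of the reduction actually needed.)
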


\begin{rem} It is probably true that $H^i(V_\et,\mathbb{L}\otimes \OO_V^+/p)$ is an almost finitely generated $\OO_V^+(V)/p$-module. If one assumes that $K$ is algebraically closed, then using Theorem \ref{BGRStuff}, the arguments of the following proof would show this result if $\OO_K/p[T_1,\ldots,T_n]$ is 'almost noetherian' for all $n$; the problem occurs in the Hochschild-Serre spectral sequence, where certain subquotients have to be understood.
\end{rem}

\begin{proof} We may use the pro-\'{e}tale site to compute these cohomology groups. Let $\tilde{V} = V\times_{\mathbb{T}^n} \tilde{\mathbb{T}}^n\in V_\proet$. Let $\tilde{V}^{j/V}$ be the $j$-fold fibre product of $\tilde{V}$ over $V$, for $j\geq 1$. Recall that the category underlying $\Spa(K,\OO_K)_\profet$ contains the category of profinite sets (with trivial Galois action); in particular, we can make sense of $\tilde{V}\times \Z_p^{n(j-1)}\in V_\proet$, by considering $\Z_p^{n(j-1)}$ as an object of $\Spa(K,\OO_K)_\profet$, pulled back to $V_\proet$. As $\tilde{V}\rightarrow V$ is a Galois cover with Galois group $\Z_p^n$, we see that $\tilde{V}^{j/V}\cong \tilde{V}\times \Z_p^{n(j-1)}$. Then by Lemma \ref{CompEtProet},
\[
H^i(\tilde{V}^{j/V},\mathbb{L}\otimes \OO_V^+/p)\cong \Hom_\cont(\Z_p^{n(j-1)},H^i(\tilde{V},\mathbb{L}\otimes \OO_V^+/p))
\]
for all $i\geq 0$, $j\geq 1$. But Lemma \ref{TwistedOXPlus} implies that
\[
H^i(\tilde{V},\mathbb{L}\otimes \OO_V^+/p)^a = 0
\]
for $i>0$, and is an almost finitely generated projective $S^{+a}/p$-module $M$ for $i=0$, where $\hat{\tilde{V}} = \Spa(S,S^+)$. Taken together, the Cartan-Leray spectral sequence shows that
\[
H^i(V_\proet,\mathbb{L}\otimes \OO_V^+/p)^a\cong H^i_\cont(\Z_p^n,M)\ .
\]
As $\Z_p^n$ has cohomological dimension $n$, we get part (i).

For part (ii), we have to check that the image of
\[
H^i_\cont(\Z_p^n,M)\rightarrow H^i_\cont(\Z_p^n,M\otimes_{S^{+a}/p} S^{\prime+a}/p)
\]
is an almost finitely generated $\OO_K$-module, where $(S^\prime,S^{\prime +})$ is defined as $(S,S^+)$, with $V^\prime$ in place of $V$. Recall that $M$ is an almost finitely generated projective $S^{+a}/p$-module. By Lemma 2.4.15 of \cite{GabberRamero}, this means that for any $\epsilon>0$, $\epsilon\in \log \Gamma$, there exists some $k$ and maps $f_\epsilon: M\rightarrow (S^{+a}/p)^k$, $g_\epsilon: (S^{+a}/p)^k\rightarrow M$ with $g_\epsilon f_\epsilon = p^\epsilon$. One sees that in order to prove (ii), we may replace $M$ by $S^{+a}/p$, i.e. we have to show that the image of
\[
H^i_\cont(\Z_p^n,S^{+a}/p)\rightarrow H^i_\cont(\Z_p^n,S^{\prime+a}/p)
\]
is an almost finitely generated $\OO_K$-module.

Now, choose $N=n+2$ rational subsets $V^{(N)} = V^\prime\subset \ldots\subset V^{(1)} = V$, such that $V^{(j+1)}$ is strictly contained in $V^{(j)}$ for $j=1,\ldots,N-1$. Let $\tilde{V}^{(j)}$ and $(S^{(j)},S^{(j)+})$ be defined as for $V$, using $V^{(j)}$ in place of $V$. We need to show that the image of
\[
H^i_\cont(\Z_p^n,S^{(1)+}/p)\rightarrow H^i_\cont(\Z_p^n,S^{(N)+}/p)
\]
is almost finitely generated over $\OO_K$. Now we use Lemma \ref{PreciseLocalStructure}, applied to $X=\mathbb{T}^n$, $U=\tilde{\mathbb{T}}^n$, giving rise to $\hat{U} = \Spa(R,R^+)$, where
\[
R^+ = \OO_K\langle T_1^{\pm 1/p^\infty},\ldots,T_n^{\pm 1/p^\infty}\rangle\ .
\]
Also, in the notation of that lemma, let $U_m = \Spa(R_m,R_m^+)$ with
\[
R_m^+ = \OO_K\langle T_1^{\pm 1/p^m},\ldots,T_n^{\pm 1/p^m}\rangle\ ,
\]
giving rise to $V_m^{(j)}=V^{(j)}\times_X U_m = \Spa(S_m^{(j)},S_m^{(j)+})$. By Lemma \ref{PreciseLocalStructure}, it is enough to show that the image of
\[
H^i_\cont(\Z_p^n,(S_m^{(1)+}\otimes_{R_m^+} R^+)/p)\rightarrow H^i_\cont(\Z_p^n,(S_m^{(N)+}\otimes_{R_m^+} R^+)/p)
\]
is almost finitely generated over $\OO_K$ for all $m$. These groups can be computed via the Hochschild-Serre spectral sequence
\[
H^{i_1}((\Z/p^m\Z)^n,H^{i_2}_\cont((p^m\Z_p)^n,(S_m^{(j)+}\otimes_{R_m^+} R^+)/p))\Rightarrow H^{i_1+i_2}_\cont(\Z_p^n,(S_m^{(j)+}\otimes_{R_m^+} R^+)/p)\ ,
\]
as the coefficients carry the discrete topology. But by Lemma \ref{BasicLocalComputation},
\[
H^i_\cont((p^m\Z_p)^n,(S_m^{(j)+}\otimes_{R_m^+} R^+)/p) = S_m^{(j)+}/p\otimes_{R_m^+/p} H^i_\cont((p^m\Z_p)^n,R^+/p)\ .
\]
Now as $N=n+2\geq i+2$, Lemma \ref{SpecSeqFiniteness} shows that it is enough to prove that the image of
\[
S_m^{(j)+}/p\otimes_{R_m^+/p} H^i_\cont((p^m\Z_p)^n,R^+/p)\rightarrow S_m^{(j+1)+}/p\otimes_{R_m^+/p} H^i_\cont((p^m\Z_p)^n,R^+/p)
\]
is almost finitely generated for $j=1,\ldots,N-1$. The image $A$ of $S_m^{(j)+}/p\rightarrow S_m^{(j+1)+}/p$ is almost finitely generated over $\OO_K$: As $V^{(j+1)}_m$ is strictly contained in $V^{(j)}_m$, the map $S_m^{(j)}\rightarrow S_m^{(j+1)}$ is completely continuous, and hence one easily checks that $A$ is a subquotient of a bounded subset of a finite-dimensional $K$-vector space. Such $\OO_K$-modules are almost finitely generated as $\OO_K$ is almost noetherian. Then the image of
\[
S_m^{(j)+}/p\otimes_{R_m^+/p} H^i_\cont((p^m\Z_p)^n,R^+/p)\rightarrow S_m^{(j+1)+}/p\otimes_{R_m^+/p} H^i_\cont((p^m\Z_p)^n,R^+/p)
\]
is a quotient of $A\otimes_{R_m^+/p} H^i_\cont((p^m\Z_p)^n,R^+/p)$. The group $H^i_\cont((p^m\Z_p)^n,R^+/p)$ is almost finitely generated over $R_m^+$ by Lemma \ref{BasicLocalComputation}. Choosing a map
\[
(R_m^+/p)^{N(\epsilon)}\rightarrow H^i_\cont((p^m\Z_p)^n,R^+/p)
\]
with cokernel annihilated by $p^\epsilon$, we find a map
\[
A^{N(\epsilon)}\rightarrow A\otimes_{R_m^+/p} H^i_\cont((p^m\Z_p)^n,R^+/p)
\]
with cokernel annihilated by $p^\epsilon$. It follows that $A\otimes_{R_m^+/p} H^i_\cont((p^m\Z_p)^n,R^+/p)$ is almost finitely generated over $\OO_K$, giving the claim.
\end{proof}

Now we can prove the crucial statement.

\begin{lem}\label{LemmaFiniteness} Let $K$ be a perfectoid field of characteristic $0$ containing all $p$-power roots of unity. Let $X$ be a proper smooth adic space over $\Spa(K,\OO_K)$, and let $\mathbb{L}$ be an $\mathbb{F}_p$-local system on $X_\et$. Then
\[
H^j(X_\et,\mathbb{L}\otimes \OO_X^+/p)
\]
is an almost finitely generated $\OO_K$-module, which is almost zero for $j>2\dim X$.
\end{lem}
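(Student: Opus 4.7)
The plan is to mimic Kiehl's proof of finiteness of coherent cohomology on proper rigid spaces, feeding in the local input of Lemma~\ref{LocalComputation} and the spectral-sequence bookkeeping of Lemma~\ref{SpecSeqFiniteness}. Fix $j\geq 0$, set $N=j+2$, and apply Lemma~\ref{ChoiceOfCovers} to produce $N$ finite affinoid covers $\{V_i^{(k)}\}_{i\in I}$ of $X$, $k=1,\ldots,N$, with the stated nesting and rationality properties and with étale factorizations $V_i^{(1)}\to\mathbb{T}^n$ through rational embeddings and finite étale maps. These properties propagate to intersections: any $V_{i_0\cdots i_p}^{(k)}=\bigcap_\ell V_{i_\ell}^{(k)}$ is a rational subset of $V_{i_0}^{(k)}$, hence inherits such an étale factorization, and $V_{i_0\cdots i_p}^{(k+1)}$ is a rational subset strictly contained in $V_{i_0\cdots i_p}^{(k)}$; so every such intersection falls in the scope of Lemma~\ref{LocalComputation}.

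With $\mathcal{F}:=\mathbb{L}\otimes\OO_X^+/p$, form the Čech-to-derived-functor spectral sequences
\[
E_2^{p,q}(k)=\check H^p\bigl(\{V_i^{(k)}\}_i,\,\mathcal{H}^q(\mathcal{F})\bigr)\;\Longrightarrow\;H^{p+q}(X_\et,\mathcal{F}),
\]
where $\mathcal{H}^q(\mathcal{F})(U)=H^q(U_\et,\mathcal{F})$. The refinements induce morphisms $E_\bullet^{p,q}(k)\to E_\bullet^{p,q}(k+1)$, compatible with the identity on the common abutment. By Lemma~\ref{LocalComputation}(ii), on each Čech component the image of the restriction map is almost finitely generated over $\OO_K$; and since almost finite generation is stable under finite direct sums, subobjects and quotients, the same holds on every map $E_2^{p,q}(k)\to E_2^{p,q}(k+1)$, and hence on the $E_\infty$-page. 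Lemma~\ref{SpecSeqFiniteness} applied to this chain of $N=j+2$ spectral sequences shows that the image of $H^j$ from the first to the last abutment is almost finitely generated --- but this image is $H^j(X_\et,\mathcal{F})$ itself, since the maps on abutments are all the identity. This gives the finiteness statement for every $j$.

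For the vanishing in degrees $j>2n$, I change point of view and pass to the Leray spectral sequence for $\nu\colon X_\et\to |X|$, namely $H^p(|X|,R^q\nu_\ast\mathcal{F})\Rightarrow H^{p+q}(X_\et,\mathcal{F})$. Lemma~\ref{LocalComputation}(i), applied on the basis of affinoids with the required étale factorization, gives $R^q\nu_\ast\mathcal{F}=0$ almost for $q>n$; and $|X|$ is a spectral space of Krull dimension $n$, so sheaf cohomology of abelian sheaves on it vanishes in degrees $>n$ by dimensional vanishing on spectral spaces. Hence $E_2^{p,q}$ in this Leray spectral sequence almost vanishes outside the square $p,q\leq n$, and the abutment almost vanishes for $p+q>2n$. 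The main obstacle in the whole argument is the first half: making sure that Lemma~\ref{SpecSeqFiniteness} applies across the full chain of nested covers, which is exactly where the strict containment from Lemma~\ref{ChoiceOfCovers}(i) and the rationality of pairwise intersections from Lemma~\ref{ChoiceOfCovers}(iii) are essential --- without them, one cannot identify the Čech terms at each refinement step as data to which the local input of Lemma~\ref{LocalComputation}(ii) can be applied.
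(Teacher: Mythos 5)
Your proof is correct and takes essentially the same route as the paper: the finiteness part runs the same chain of $N=j+2$ Čech spectral sequences from Lemma~\ref{ChoiceOfCovers}, feeds Lemma~\ref{LocalComputation}(ii) into Lemma~\ref{SpecSeqFiniteness} (you phrase it on the $E_2$-page, the paper on the $E_1$-page — same spectral sequence), and the vanishing part is the same Leray argument for $X_\et\to X_\an$ using Lemma~\ref{LocalComputation}(i), with the cohomological dimension bound on $|X|$ justified by Scheiderer-type dimensional vanishing on spectral spaces rather than the paper's citation of de Jong--van der Put Proposition 2.5.8.
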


\begin{proof} Let $X_\an$ be the site of open subsets of $X$. Lemma \ref{LocalComputation} shows that under the projection $\lambda: X_\et\rightarrow X_\an$, $R^j\lambda_\ast(\mathbb{L}\otimes \OO_X^+/p)$ is almost zero for $j>\dim X$. As the cohomological dimension of $X_\an$ is $\leq\dim X$ by Proposition 2.5.8 of \cite{deJongvanderPut}, we get the desired vanishing result.

To see that $H^j(X_\et,\mathbb{L}\otimes \OO_X^+/p)$ is an almost finitely generated $\OO_K$-module, choose $N=j+2$ covers $V_i^{(j+2)},\ldots,V_i^{(1)}\subset X$ as in Lemma \ref{ChoiceOfCovers}. Let $I$ be the finite index set. For any nonempty subset $J\subset I$, let $V_J^{(k)} = \cap_{i\in J} V_i^{(k)}$. Then the conditions of Lemma \ref{ChoiceOfCovers} ensure that each $V_J^{(k)}$ admits an \'{e}tale map $V_J^{(k)}\rightarrow \mathbb{T}^n$ that factors as a composite of rational embeddings and finite \'{e}tale maps. For each $k=1,\ldots,j+2$, we get a spectral sequence
\[
E_{1,(k)}^{m_1,m_2} = \bigoplus_{|J|=m_1+1} H^{m_2}(V_{J\et}^{(k)},\mathbb{L}\otimes \OO_X^+/p)\Rightarrow H^{m_1+m_2}(X_\et,\mathbb{L}\otimes \OO_X^+/p)\ ,
\]
together with maps between these spectral sequences $E_{\ast,(k)}^{m_1,m_2}\rightarrow E_{\ast,(k+1)}^{m_1,m_2}$ for $k=1,\ldots,j+1$. Then Lemma \ref{SpecSeqFiniteness} combined with Lemma \ref{LocalComputation} (ii) shows the desired finiteness.
\end{proof}

To finish the proof, we have to introduce the 'tilted' structure sheaf.

\begin{definition} Let $X$ be a locally noetherian adic space over $\Spa(\Q_p,\Z_p)$. The tilted integral structure sheaf is $\hat{\OO}_{X^\flat}^+ = \varprojlim_\Phi \OO_X^+/p$, where the inverse limit is taken along the Frobenius map.

If $X$ lives over $\Spa(K,K^+)$, where $K$ is a perfectoid field with an open and bounded valuation subring $K^+\subset K$, we set $\hat{\OO}_{X^\flat} = \hat{\OO}_{X^\flat}^+\otimes_{K^{\flat +}} K^\flat$.
\end{definition}

\begin{lem}\label{ContOXplusonAffPerfTilt} Let $K$ be a perfectoid field of characteristic $0$ with an open and bounded valuation subring $K^+\subset K$, let $X$ be a locally noetherian adic space over $\Spa(K,K^+)$, and $U\in X_\proet$ be affinoid perfectoid, with $\hat{U} = \Spa(R,R^+)$, where $(R,R^+)$ is a perfectoid affinoid $(K,K^+)$-algebra. Let $(R^\flat,R^{\flat +})$ be its tilt.

\begin{altenumerate}
\item[{\rm (i)}] We have $\hat{\OO}_{X^\flat}^+(U) = R^{\flat +}$, $\hat{\OO}_{X^\flat}(U) = R^\flat$.
\item[{\rm (ii)}] The cohomology groups $H^i(U,\hat{\OO}_{X^\flat}^+)$ are almost zero for $i>0$, with respect to the almost setting defined by $K^{\flat +}$ and its ideal of topologically nilpotent elements.
\end{altenumerate}
\end{lem}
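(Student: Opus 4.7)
The plan is to combine Lemma \ref{ContOXplusonAffPerf} with the almost version of Lemma \ref{InverseLimitExact}, working on the basis $B$ of affinoid perfectoid $U \in X_\proet$ furnished by the preceding proposition. For such a $U$ with $\hat{U} = \Spa(R,R^+)$, Lemma \ref{ContOXplusonAffPerf}(i) supplies an almost identification $(\OO_X^+/p)(U)^a = (R^+/p)^a$, and the proof of that part establishes $H^j(U,\OO_X^+/p)^a = 0$ for $j>0$ as well. I will use these inputs to compute the sheaf $\hat{\OO}_{X^\flat}^+ = \varprojlim_\Phi \OO_X^+/p$ on the basis $B$.

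Next I would verify the two hypotheses of Lemma \ref{InverseLimitExact} for the Frobenius-indexed inverse system $(\OO_X^+/p, \Phi)$ on $B$, in the almost category. Condition (ii) --- almost vanishing of $H^j(U, \OO_X^+/p)$ for $j>0$ --- is the previous remark. For condition (i), $R^1\varprojlim_\Phi (\OO_X^+/p)^a(U) = 0$, I reduce modulo almost-zero to the Mittag--Leffler criterion for $(R^+/p)^a$ with Frobenius transitions; since $R$ is perfectoid, the Frobenius $\Phi : R^+/p \to R^+/p$ is surjective (it factors as $R^+/p \twoheadrightarrow R^+/p^{1/p} \xrightarrow{\sim} R^+/p$ via the perfectoid property), hence Mittag--Leffler applies. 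Lemma \ref{InverseLimitExact} then yields the almost identification
\[
\hat{\OO}_{X^\flat}^+(U)^a = \varprojlim_\Phi (\OO_X^+/p)(U)^a = \varprojlim_\Phi (R^+/p)^a = (R^{\flat+})^a
\]
(using $R^{\flat+} = \varprojlim_\Phi R^+/p$ from \cite{ScholzePerfectoidSpaces1}) together with the almost vanishing of $H^i(U,\hat{\OO}_{X^\flat}^+)$ for $i>0$. The latter is exactly part (ii).

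For part (i), I would promote the almost identification to an honest equality. The natural componentwise map $R^+/p = \hat{\OO}_X^+(U)/p \hookrightarrow (\OO_X^+/p)(U)$, coming from Lemma \ref{ContOXplusonAffPerf}(iii) together with the sheaf isomorphism $\OO_X^+/p \cong \hat{\OO}_X^+/p$ of that lemma, is injective; taking Frobenius inverse limits yields an injection $R^{\flat+} \hookrightarrow \hat{\OO}_{X^\flat}^+(U)$ whose cokernel is almost zero by the previous step. To upgrade this to bijectivity, I would argue that any Frobenius-compatible system $(x_i) \in \hat{\OO}_{X^\flat}^+(U)$ must lie componentwise in $R^+/p$: the perfectoid identity $R^+/p^{1/p} \xrightarrow{\sim} R^+/p$ lets one iteratively extract $p$-th roots in $R^+/p$, and $p^\flat$-adic completeness of both sides then forces the almost-zero discrepancy to vanish. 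Inverting a pseudouniformizer $p^\flat \in K^{\flat+}$ gives $\hat{\OO}_{X^\flat}(U) = R^\flat$.

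The main obstacle is this last upgrade from an almost to an honest equality in (i); fortunately only the almost version is used in the subsequent applications of this lemma, so in practice it suffices to work entirely in the almost category, where Lemma \ref{InverseLimitExact} does all the work.
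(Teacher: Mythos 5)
Your strategy for part (ii) is correct and essentially matches the paper's: one checks the two hypotheses of the almost version of Lemma~\ref{InverseLimitExact} on the basis of affinoid perfectoid $U$, where condition (ii) is the almost acyclicity from Lemma~\ref{ContOXplusonAffPerf}(i) and condition (i) follows by Mittag--Leffler once you note that the Frobenius on $R^+/p$ is surjective and that the inverse system $(\OO_X^+/p)(U)$ sits in a short exact sequence of towers with $R^+/p$ whose (co)kernel is almost zero, so its $R^1\varprojlim$ is too. This yields the almost identification and the almost acyclicity, which is all of part (ii).

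Where the proposal has a genuine gap is the upgrade to the honest equality in part (i), which the lemma does assert. The mechanisms you suggest --- ``iteratively extract $p$-th roots'' and ``$p^\flat$-adic completeness forces the discrepancy to vanish'' --- are not what makes this work; no iteration or completion argument is needed, and the completeness of $R^{\flat+}$ does not by itself kill an almost-zero cokernel. The correct step, and the one the paper alludes to by ``repeating part of its proof in the tilted situation'', is the analogue of Lemma~\ref{ContOXplusonAffPerf}(ii)--(iii) with Frobenius transitions in place of multiplication by $p$. Concretely: pick $b\in K^+$ with $|b|=|p|^{1/p}$ and $b^p/p$ a unit (possible since $K$ is perfectoid). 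The Frobenius $\Phi:\OO_X^+/p\to\OO_X^+/p$ factors as the reduction $\OO_X^+/p\to\OO_X^+/b$ followed by the $p$-power map $\OO_X^+/b\to\OO_X^+/p$. By Lemma~\ref{ContOXplusonAffPerf}(ii), the image of $(\OO_X^+/p)(U)$ in $(\OO_X^+/b)(U)$ is exactly $R^+/b$; and the $p$-power map carries $R^+/b$ isomorphically onto $R^+/p$ by the perfectoid property of $R$. Hence the image of $\Phi$ on global sections is exactly $R^+/p$. Every component $x_i$ of a Frobenius-compatible sequence $(x_i)\in\hat{\OO}_{X^\flat}^+(U)$ is therefore in $R^+/p$ --- with no iteration --- so $\hat{\OO}_{X^\flat}^+(U)=\varprojlim_\Phi R^+/p=R^{\flat+}$ on the nose, and inverting a pseudouniformizer gives $\hat{\OO}_{X^\flat}(U)=R^\flat$. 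Your fallback remark that ``only the almost version is used downstream'' is a separate (and not obviously correct) claim and does not discharge the stated assertion.
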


\begin{proof} This follows from Lemma \ref{ContOXplusonAffPerf} by repeating part of its proof in the tilted situation.
\end{proof}

\begin{proof}{\it (of Theorem \ref{Finiteness}.)} Let $X^\prime$ be the fibre product $X\times_{\Spa(K,K^+)} \Spa(K,\OO_K)$, which is an open subset of $X$. Then the induced morphism
\[
H^i(X_\et,\mathbb{L}\otimes \OO_X^+/p)\rightarrow H^i(X^\prime_\et,\mathbb{L}\otimes \OO_X^+/p)
\]
is an almost isomorphism of $K^+$-modules. Indeed, take a simplicial cover $U_\bullet$ of $X$ by affinoid perfectoid $U_k\rightarrow X$. Then $U_\bullet\times_X X^\prime\rightarrow X^\prime$ is a simplicial cover of $X^\prime$ by affinoid perfectoid $U_k\times_X X^\prime$. Then for all $i,k\geq 0$,
\[
H^i(U_k,\mathbb{L}\otimes \OO_X^+/p)\rightarrow H^i(U_k\times_X X^\prime,\mathbb{L}\otimes \OO_X^+/p)
\]
is an almost isomorphism by Lemma \ref{TwistedOXPlus}, which implies the same result for $X$ compared to $X^\prime$.

Now recall that $K^\flat$ is an algebraically closed field of characteristic $p$. Fix an element $\pi\in \OO_{K^\flat}$ such that $\pi^\sharp = p$. Let $M_k = H^i(X^\prime_\proet,\mathbb{L}\otimes \hat{\OO}_{X^\flat}^+/\pi^k)$. As $\hat{\OO}_{X^\flat}^+|_{X^\prime}$ is a sheaf of perfect flat $\OO_{K^\flat}$-algebras with $\hat{\OO}_{X^\flat}^+/\pi = \OO_X^+/p$, we see that Lemma \ref{LemmaFiniteness} implies that the $M_k$ satisfy the hypotheses of Lemma \ref{KeyAlmostLemma}. It follows that there is some integer $r\geq 0$ such that
\[
H^i(X_\proet,\mathbb{L}\otimes \hat{\OO}_{X^\flat}^+/\pi^k)^a\cong H^i(X^\prime_\proet,\mathbb{L}\otimes \hat{\OO}_{X^\flat}^+/\pi^k)^a\cong (\OO_{K^\flat}^a/\pi^k)^r
\]
as almost $\OO_{K^\flat}$-modules, compatibly with the Frobenius action. By Lemma \ref{InverseLimitExact} and Lemma \ref{ContOXplusonAffPerfTilt}, we have
\[
R\varprojlim (\mathbb{L}\otimes \hat{\OO}_{X^\flat}^+/\pi^k)^a = (\mathbb{L}\otimes \hat{\OO}_{X^\flat}^+)^a\ .
\]
Therefore,
\[
H^i(X_\proet,\mathbb{L}\otimes \hat{\OO}_{X^\flat}^+)^a\cong (\OO_{K^\flat}^a)^r\ .
\]
Inverting $\pi$, we see that
\[
H^i(X_\proet,\mathbb{L}\otimes \hat{\OO}_{X^\flat})\cong (K^\flat)^r\ ,
\]
still compatible with the action of Frobenius $\varphi$. Now we use the Artin-Schreier sequence
\[
0\rightarrow \mathbb{L}\rightarrow \mathbb{L}\otimes \hat{\OO}_{X^\flat}\rightarrow \mathbb{L}\otimes \hat{\OO}_{X^\flat}\rightarrow 0\ ,
\]
where the second map is $v\otimes f\mapsto v\otimes (f^p - f)$. This is an exact sequence of sheaves on $X_\proet$: It suffices to check locally on $U\in X_\proet$ which is affinoid perfectoid and over which $\mathbb{L}$ is trivial, and only the surjectivity is problematic. To get surjectivity, one has to realize a finite \'{e}tale cover of $\hat{U}^\flat$, but $\hat{U}^\flat_\fet\cong \hat{U}_\fet$, and finite \'{e}tale covers of $\hat{U}$ come via pullback from finite \'{e}tale covers in $X_\proet$, by Lemma 7.5 (i) of \cite{ScholzePerfectoidSpaces1}.

On cohomology, the Artin-Schreier sequence gives
\[
\ldots\rightarrow H^i(X_\proet,\mathbb{L})\rightarrow H^i(X_\proet,\mathbb{L}\otimes \hat{\OO}_{X^\flat})\rightarrow H^i(X_\proet,\mathbb{L}\otimes \hat{\OO}_{X^\flat})\rightarrow \ldots\ .
\]
But the second map is the same as $(K^\flat)^r\rightarrow (K^\flat)^r$, which is coordinate-wise $x\mapsto x^p-x$. This is surjective as $K^\flat$ is algebraically closed, hence the long-exact sequence reduces to short exact sequences, and (using Lemma \ref{CompProetVSEt} (i))
\[
H^i(X_\et,\mathbb{L}) = H^i(X_\proet,\mathbb{L}) = H^i(X_\proet,\mathbb{L}\otimes \hat{\OO}_{X^\flat})^{\varphi = 1}\cong \F_p^r\ ,
\]
which implies all desired statements.
\end{proof}

\begin{cor}\label{RelPrimComp} Let $f:X\rightarrow Y$ be a proper smooth morphism of locally noetherian adic spaces over $\Spa(\Q_p,\Z_p)$. Let $\mathbb{L}$ be an $\mathbb{F}_p$-local system on $X_\et$. Then for all $i\geq 0$, there is an isomorphism of sheaves of almost $\OO_Y^+$-modules
\[
(R^if_{\et\ast} \mathbb{L})\otimes \OO_Y^{+a}/p\cong R^if_{\et\ast}(\mathbb{L}\otimes \OO_X^{+a}/p)\ .
\]
\end{cor}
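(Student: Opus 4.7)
\medskip

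The plan is to reduce the corollary to Theorem \ref{Finiteness} fiberwise, using proper base change. Since the assertion is local on $Y$, I would assume $Y=\Spa(A,A^+)$ is affinoid. To check that the evident comparison map
\[
(R^if_{\et\ast}\mathbb{L})\otimes \OO_Y^{+a}/p\longrightarrow R^if_{\et\ast}(\mathbb{L}\otimes \OO_X^{+a}/p)
\]
(built from the composite $f^\ast\OO_Y^+\to \OO_X^+$ and a projection formula) is an isomorphism of almost $\OO_Y^+$-modules on $Y_\et$, it suffices to verify it on stalks at all geometric points. Let $\bar y\colon \Spa(C,C^+)\to Y$ be such a geometric point, with $C$ a complete algebraically closed extension of $\Q_p$, and write $X_{\bar y}=X\times_Y \Spa(C,C^+)$, a proper smooth adic space over $\Spa(C,C^+)$.

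Next I would compute both stalks. The stalk of $\OO_Y^+/p$ at $\bar y$ is (canonically, up to almost isomorphism) $C^+/p$. Since $\mathbb{L}$ is $p$-torsion and $f$ is proper, Huber's proper base change for torsion abelian sheaves on adic spaces gives
\[
(R^if_{\et\ast}\mathbb{L})_{\bar y}= H^i(X_{\bar y,\et},\mathbb{L}|_{X_{\bar y}}),
\]
so the stalk of the left hand side is $H^i(X_{\bar y,\et},\mathbb{L}|_{X_{\bar y}})\otimes_{\F_p} C^{+a}/p$. For the right hand side, since $\mathbb{L}\otimes\OO_X^+/p$ is again a $p$-torsion abelian sheaf on $X_\et$, the same proper base change identifies
\[
\bigl(R^if_{\et\ast}(\mathbb{L}\otimes\OO_X^+/p)\bigr)_{\bar y}= H^i\bigl(X_{\bar y,\et},(\mathbb{L}\otimes\OO_X^+/p)|_{X_{\bar y}}\bigr).
\]
One then has to identify $(\OO_X^+/p)|_{X_{\bar y}}$ with $\OO_{X_{\bar y}}^+/p$ as almost sheaves; for the purposes of computing cohomology this can be checked on affinoid perfectoid covers using Lemma \ref{ContOXplusonAffPerf} and the base change behavior of perfectoid affinoid algebras under $(A,A^+)\to(C,C^+)$. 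Finally, Theorem \ref{Finiteness} applied to $X_{\bar y}\to \Spa(C,C^+)$ and $\mathbb{L}|_{X_{\bar y}}$ yields the almost isomorphism
\[
H^i(X_{\bar y,\et},\mathbb{L}|_{X_{\bar y}})\otimes_{\F_p} C^{+a}/p\;\cong\; H^i\bigl(X_{\bar y,\et},\mathbb{L}|_{X_{\bar y}}\otimes \OO_{X_{\bar y}}^{+a}/p\bigr),
\]
which matches the two stalks computed above.

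The main obstacle is the second proper base change invocation: the sheaf $\OO_X^+/p$ is not constructible, only $p$-torsion, so one has to justify that proper base change still applies in this almost setting and that the restriction $(\OO_X^+/p)|_{X_{\bar y}}$ is almost the structure sheaf mod $p$ of $X_{\bar y}$. An alternative, which avoids relying on a black-boxed proper base change for non-constructible coefficients, is to compute $R^if_{\et\ast}(\mathbb{L}\otimes\OO_X^+/p)$ \v{C}ech-theoretically on a cover of $X$ by affinoid perfectoids $U\to X$ (pulled back from $Y$), where by Lemma \ref{TwistedOXPlus} the relevant cohomology groups are already almost finitely generated projective over the structure sheaf; the argument in the proof of Theorem \ref{Finiteness} then globalizes to yield the relative statement directly. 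Either route reduces Corollary \ref{RelPrimComp} to the absolute case already proved.
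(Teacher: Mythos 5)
Your proposal is correct and follows essentially the same route the paper takes: reduce to stalks at geometric points, apply Huber's result (Proposition 2.6.1 of \cite{Huber}) to identify $(R^if_{\et\ast}\mathcal{F})_{\bar y}$ with the cohomology of the fibre for a proper $f$ and torsion $\mathcal{F}$, and then invoke Theorem \ref{Finiteness} on the fibre $X_{\bar y}$. The one point where you hedge---whether proper base change applies to the non-constructible sheaf $\mathbb{L}\otimes\OO_X^+/p$---is not actually a gap: Huber's Proposition 2.6.1 computes stalks of higher direct images for arbitrary torsion abelian sheaves under a proper map, so no passage to constructible coefficients or separate almost-mathematics argument is needed, and the paper indeed cites it directly for both sheaves. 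Your observation that one must still identify $(\OO_X^+/p)|_{X_{\bar y}}$ with $\OO_{X_{\bar y}}^+/p$ (at least almost, for cohomological purposes) is a fair remark about an implicit step the paper elides, and your suggestion to check this on affinoid perfectoid covers via Lemma \ref{ContOXplusonAffPerf} is the right way to do it; the alternative \v{C}ech-theoretic route you sketch is not what the paper does and is unnecessary once Huber's base change is in hand.
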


Here, we use the almost setting relative to the site $Y_\et$, the sheaf of algebras $\OO_Y^+$, and the ideal of elements of valuation $<1$ everywhere. If $Y$ lives over $\Spa(K,K^+)$ for some nondiscretely valued extension $K$ of $\Q_p$, this is the same as the almost setting with respect to $K^+$ and the ideal of topologically nilpotent elements in $K^+$.

\begin{proof} It suffices to check at stalks at all geometric points $y$ of $Y$. Let $y$ correspond to $\Spa(L,L^+)\rightarrow Y$, where $L$ is an algebraically closed complete extension of $K$, and let $X_y = X\times_Y \Spa(L,L^+)$. By Proposition 2.6.1 of \cite{Huber}, we have
\[
((R^if_{\et\ast} \mathbb{L})\otimes \OO_Y^{+a}/p)_y = (R^if_{\et\ast} \mathbb{L})_y\otimes (\OO_Y^{+a}/p)_y = H^i(X_{y\et},\mathbb{L})\otimes L^{+a}/p\ ,
\]
and
\[
(R^if_{\et\ast}(\mathbb{L}\otimes \OO_X^{+a}/p))_y = H^i(X_{y\et},\mathbb{L}\otimes \OO_{X_y}^{+a}/p)\ .
\]
Now the result follows from Theorem \ref{Finiteness}.
\end{proof}

\section{Period sheaves}\label{PeriodSheavesSection}

\begin{definition} Let $X$ be a locally noetherian adic space over $\Spa(\Q_p,\Z_p)$. Consider the following sheaves on $X_\proet$.
\begin{altenumerate}
\item[{\rm (i)}] The sheaf $\A_\inf = W(\hat{\OO}_{X^{\flat}}^+)$, and its rational version $\B_\inf = \A_\inf[\frac 1p]$. Note that we have $\theta: \A_\inf\rightarrow \hat{\OO}_X^+$ extending to $\theta: \B_\inf \rightarrow \hat{\OO}_X$.
\item[{\rm (ii)}] The positive deRham sheaf
\[
\B_\dR^+ = \varprojlim \B_\inf/(\ker \theta)^n\ ,
\]
with its filtration $\Fil^i \B_\dR^+ = (\ker \theta)^i \B_\dR^+$.
\item[{\rm (iii)}] The deRham sheaf
\[
\B_\dR = \B_\dR^+[t^{-1}]\ ,
\]
where $t$ is any element that generates $\Fil^1 \B_\dR^+$. It has the filtration
\[
\Fil^i \B_\dR = \sum_{j\in \mathbb{Z}} t^{-j} \Fil^{i+j} \B_\dR^+\ .
\]
\end{altenumerate}
\end{definition}

\begin{rem} We will see that locally on $X_\proet$, the element $t$ exists, is unique up to a unit and is not a zero-divisor. This shows that the sheaf $\B_\dR$ and its filtration are well-defined.
\end{rem}

Before we describe these period sheaves explicitly, we first study them abstractly for a perfectoid field $K$ with open and bounded valuation subring $K^+\subset K$ of characteristic $0$ and a perfectoid affinoid $(K,K^+)$-algebra $(R,R^+)$. Fix $\pi\in K^\flat$ such that $\pi^\sharp/p\in (K^+)^\times$. We make the following definitions.
\[\begin{aligned}
\A_\inf(R,R^+) &= W(R^{\flat +})\ ,\\
\B_\inf(R,R^+) &=  \A_\inf(R,R^+)[p^{-1}]\ ,\\
\B_\dR^+(R,R^+) &= \varprojlim \B_\inf(R,R^+)/(\ker \theta)^i\ .
\end{aligned}\]
Moreover, we know that $\theta: \A_\inf(R,R^+) = W(R^{\flat +})\rightarrow R^+$ is surjective.

\begin{lem}\label{ExistenceXi} There is an element $\xi\in \A_\inf(K,K^+)$ that generates $\ker \theta$, where $\theta: \A_\inf(K,K^+)\rightarrow K^+$. The element $\xi$ is not a zero-divisor, and hence is unique up to a unit.

In fact, for any perfectoid affinoid $(K,K^+)$-algebra $(R,R^+)$, the element $\xi$ generates $\ker (\theta: \A_\inf(R,R^+)\rightarrow R^+)$, and is not a zero-divisor in $\A_\inf(R,R^+)$.
\end{lem}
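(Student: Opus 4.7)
The plan is to exhibit an explicit generator $\xi \in \A_\inf(K,K^+)$ of $\ker\theta$, and then verify both the generation and non-zero-divisor properties simultaneously for any perfectoid affinoid $(K,K^+)$-algebra $(R,R^+)$ by standard successive-approximation arguments with Witt vectors. For the construction: surjectivity of $\theta\colon W(K^{\flat+}) \twoheadrightarrow K^+$ lets me pick $u \in W(K^{\flat+})$ with $\theta(u) = -\pi^\sharp/p$ (noting that $\pi^\sharp/p \in (K^+)^\times$ by the choice of $\pi$), and I set $\xi := [\pi] + pu$, so that $\theta(\xi) = \pi^\sharp - \pi^\sharp = 0$. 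Via the functorial inclusion $W(K^{\flat+}) \hookrightarrow W(R^{\flat+})$ this same $\xi$ lies in $\ker\theta \subset \A_\inf(R,R^+)$, and satisfies $\xi \equiv [\pi] \pmod p$.

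To show that $\xi$ generates $\ker\theta$ in $\A_\inf(R,R^+)$, I would argue by successive approximation modulo powers of $p$. Given $x \in \ker\theta$ with Witt expansion $x = \sum_{n\geq 0} p^n [x_n]$, reducing $\sum p^n x_n^\sharp = 0$ modulo $p$ forces $x_0^\sharp \in pR^+$, i.e.\ $|x_0|_\flat \leq |\pi|_\flat$. The identification $R^{\flat+}/\pi \xrightarrow{\sim} R^+/p$ built into the tilting construction then yields $x_0 = \pi y_0$ for some $y_0 \in R^{\flat+}$. Since $[\pi][y_0] = [\pi y_0]$ has first Witt coordinate $x_0$ and $\xi \equiv [\pi] \pmod p$, we obtain $x - \xi[y_0] \in pW(R^{\flat+})$, say $x - \xi[y_0] = p w_1$. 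Applying $\theta$ and using $p$-torsion-freeness of $R^+$ (which holds since $R$ is a $\Q_p$-vector space) gives $\theta(w_1) = 0$, so we can iterate. The resulting series $y := \sum p^n [y_n]$ converges in the $p$-adically complete ring $W(R^{\flat+})$ and satisfies $\xi y = x$.

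The non-zero-divisor statement follows by the same Witt-vector bookkeeping. If $\xi z = 0$ with $z = \sum p^n[z_n]$, reduction mod $p$ gives $\pi z_0 = 0$ in $R^{\flat+}$; since $\pi^\sharp = p\cdot(\text{unit})$ is a non-zero-divisor in the $p$-torsion-free ring $R^+$, also $\pi$ is a non-zero-divisor in $R^{\flat+}$, forcing $z_0 = 0$ and hence $z = pz'$. Then $p \xi z' = 0$, and $W(R^{\flat+})$ is $p$-torsion-free because $R^{\flat+}$ is reduced (being perfect of characteristic $p$), so $\xi z' = 0$. Induction together with the $p$-adic separatedness of $W(R^{\flat+})$ gives $z = 0$. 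Uniqueness of $\xi$ up to a unit is then a formal consequence of generation and the non-zero-divisor property.

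The only mildly delicate input is the structural identification $R^{\flat+}/\pi \cong R^+/p$ coming from the tilting equivalence of \cite{ScholzePerfectoidSpaces1}; otherwise the argument is standard Witt-vector manipulation of the kind used in the construction of Fontaine's ring $A_\inf$, and I do not anticipate any substantive obstacle.
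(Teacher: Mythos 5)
Your proof is correct and follows essentially the same strategy as the paper's: an explicit $\xi$ congruent to $[\pi]$ mod $p$, successive approximation modulo powers of $p$ for generation (both arguments boil down to $W(R^{\flat+})/(\xi,p)\cong R^{\flat+}/\pi\cong R^+/p$), and Witt-vector bookkeeping for the non-zero-divisor claim. The only minor difference is that the paper justifies $\pi$ being a non-zero-divisor in $R^{\flat+}$ by citing flatness of $R^{\flat+}$ over $K^{\flat+}$ directly, whereas you transfer the statement across the $\sharp$-map using reducedness of $R^+$; both routes are fine.
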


\begin{proof} We will choose the element $\xi$ of the form $\xi=[\pi] - \sum_{i=1}^\infty p^i [x_i]$ for certain elements $x_i\in \OO_{K^\flat}$. In fact, $[\pi]$ maps via $\theta$ to some element $\pi^\sharp$ in $p(K^+)^\times$, and any such element can be written as a sum $\sum_{i=1}^\infty p^i \theta([x_i])$, as desired.

Let $y=\sum_{i=0}^\infty p^i [y_i]\in W(R^{\flat +})$, and assume $\xi y=0$, but $y\neq 0$. Because $W(R^{\flat +})$ is flat over $\Z_p$, we may assume that $y_0\neq 0$. Reducing modulo $p$, we see that $\pi y_0 = 0$, which implies $y_0=0$, as $R^{\flat +}$ is flat over $K^{\flat +}$, contradiction.

Assume now that $y=\sum_{i=0}^\infty p^i [y_i]\in \ker (\theta: W(R^{\flat +})\rightarrow R^+)$. We want to show that it is divisible by $\xi$. Because $R^+$ is flat over $\Z_p$, we may assume that $y_0\neq 0$. As a first step, we will find $z_0\in W(R^{\flat +})$ so that $y-z_0 \xi$ is divisible by $p$. Indeed, $W(R^{\flat +})/(\xi,p) = R^{\flat +}/\pi = R^+/p$, so that $f$ is mapped to zero in this quotient, which amounts to the existence of $z_0$ as desired.

Continuing in this fashion gives us a sequence $z_0,z_1,\ldots\in W(R^{\flat +})$ such that $y-(\sum_{i=0}^k p^i z_i)\xi$ is divisible by $p^{k+1}$, for all $k\geq 0$. But $W(R^{\flat +})$ is $p$-adically complete and separated, hence $y=(\sum_{i=0}^\infty p^i z_i) \xi$, as desired.
\end{proof}

In particular, we can also define $\B_\dR(R,R^+) = \B_\dR^+(R,R^+)[\xi^{-1}]$, with the filtration given by $\Fil^i \B_\dR(R,R^+) = \xi^i \B_\dR^+(R,R^+)$, $i\in \Z$.

\begin{cor} For any $i\in \Z$, we have $\gr^i \B_\dR(R,R^+)\cong \xi^i R$, which is a free $R$-module of rank $1$. In particular, $\gr^\bullet \B_\dR(R,R^+)\cong R[\xi^{\pm 1}]$.
\end{cor}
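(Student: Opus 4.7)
The assertion $\gr^\bullet \B_\dR(R,R^+) \cong R[\xi^{\pm 1}]$ reduces, by multiplying by the invertible element $\xi^i \in \B_\dR(R,R^+)$, to the single degree $i = 0$ case: namely, that $\xi$ is not a zero-divisor on $\B_\dR^+(R,R^+)$, and that the map $\bar\theta \colon \B_\dR^+(R,R^+)/\xi\B_\dR^+(R,R^+) \to R$ induced by $\theta$ is an isomorphism. Indeed, granting these two points, multiplication by $\xi^i$ induces an isomorphism from the degree $0$ quotient onto $\xi^i \B_\dR^+/\xi^{i+1} \B_\dR^+ = \gr^i$, and the graded multiplication is evidently the one on $R[\xi^{\pm 1}]$.

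To prove both simultaneously, the plan is to invoke Lemma \ref{ExistenceXi}, which says that $\xi$ is not a zero-divisor in $\B_\inf(R,R^+)$ and generates $\ker \theta$. This immediately yields, for each $n \geq 1$, a short exact sequence
\[
0 \to \B_\inf(R,R^+)/\xi^{n-1} \xrightarrow{\,\xi\,} \B_\inf(R,R^+)/\xi^n \to R \to 0,
\]
where the quotient is identified with $R$ via $\theta$, using $\B_\inf(R,R^+)/\xi \cong R$. These sequences fit into a short exact sequence of inverse systems with respect to the canonical projection transition maps; the compatibility of the multiplication-by-$\xi$ maps with these projections is the one bit of bookkeeping to record, but it is immediate.

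The step to check carefully is the passage to the inverse limit. The tower $\{\B_\inf(R,R^+)/\xi^n\}_n$ has surjective transitions, so Mittag-Leffler gives $R^1\varprojlim = 0$; the same applies to the shifted tower $\{\B_\inf(R,R^+)/\xi^{n-1}\}_n$, which is cofinal with the original, so its inverse limit is again $\B_\dR^+(R,R^+)$. Exactness is therefore preserved, producing
\[
0 \to \B_\dR^+(R,R^+) \xrightarrow{\,\xi\,} \B_\dR^+(R,R^+) \xrightarrow{\bar\theta} R \to 0,
\]
which is exactly the combination of the two claims needed above.

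The main point is really just Lemma \ref{ExistenceXi}; everything after it is a formal consequence, so no genuine obstacle is expected. The only substantive check is the Mittag-Leffler argument and the cofinality observation used to commute $\varprojlim$ with the shifted indexing, both of which are routine.
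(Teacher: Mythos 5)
Your proof is correct and follows the same route as the paper: both rest entirely on Lemma \ref{ExistenceXi} (that $\xi$ generates $\ker\theta$ and is a non-zero-divisor in $\B_\inf(R,R^+)$) and then pass to the inverse limit defining $\B_\dR^+$. The paper compresses the limiting step to "the corollary follows"; your short exact sequence of towers and Mittag-Leffler/cofinality bookkeeping is exactly what that phrase is eliding.
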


\begin{proof} The element $\xi$ has the same properties in $\B_\inf$ as in $\A_\inf$, i.e. it generates $\ker \theta$ and is not a zero-divisor. The corollary follows.
\end{proof}

Note that all of these rings are $\A_\inf(K,K^+)$-algebras, e.g. $K^+$ via the map $\theta$. In the following, we consider the almost-setting with respect to this ring and the ideal generated by all $[\pi^{1/p^N}]$.

\begin{thm}\label{DescriptionPeriodSheaves} Let $X$ be a locally noetherian adic space over $\Spa(K,K^+)$. Assume that $U\rightarrow X_\proet$ is affinoid perfectoid, with $\hat{U} = \Spa(R,R^+)$.
\begin{altenumerate}
\item[{\rm (i)}] We have a canonical isomorphism
\[
\A_\inf(U) = \A_\inf(R,R^+)\ ,
\]
and analogous statements for $\B_\inf$, $\B_\dR^+$ and $\B_\dR$. In particular, there is an element $\xi$ generating $\Fil^1 \B_\dR^+(U)$, unique up to a unit, and it is not a zero-divisor.
\item[{\rm (ii)}] All $H^i(U,\mathcal{F})$ are almost zero for $i>0$, where $\mathcal{F}$ is any of the sheaves just considered.
\item[{\rm (iii)}] In $\B_\dR^+(U)$ and $\B_\dR(U)$ the element $[\pi]$ becomes invertible, in particular the cohomology groups $H^i(U,\mathcal{F})$ vanish for these sheaves.
\end{altenumerate}
\end{thm}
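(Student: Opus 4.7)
My strategy is to propagate from $\A_\inf$ to $\B_\dR$ in stages, using vanishing results obtained at one step to justify the section formula at the next. For $\A_\inf$ in (i), I would write $\A_\inf = \varprojlim_n W_n(\hat{\OO}_{X^\flat}^+)$ and observe that because $\hat{\OO}_{X^\flat}^+$ is a sheaf of $\F_p$-algebras, the underlying sheaf of sets of $W_n(\hat{\OO}_{X^\flat}^+)$ is just $(\hat{\OO}_{X^\flat}^+)^n$ with addition and multiplication given by fixed Witt polynomials, so $W_n(\hat{\OO}_{X^\flat}^+)(U) = W_n(R^{\flat+})$ by Lemma \ref{ContOXplusonAffPerfTilt}(i). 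Passing to the inverse limit gives $\A_\inf(U) = W(R^{\flat+}) = \A_\inf(R,R^+)$, provided the sheaf inverse limit can be computed on sections over $U$; this requires the almost vanishing in (ii). Then $\B_\inf(U) = \A_\inf(U)[p^{-1}]$ follows because filtered colimits commute with sections on the quasicompact, quasiseparated object $U$. For $\B_\dR^+$ one matches $\theta\colon \B_\inf(U) \to \hat{\OO}_X(U) = R$ with the abstract $\theta$ and invokes Lemma \ref{ExistenceXi} to identify $\ker\theta$ with $\xi \cdot \B_\inf(U)$ where $\xi$ is not a zero divisor; inverting $\xi$ then gives $\B_\dR(U) = \B_\dR(R,R^+)$, and the uniqueness up to units of any $\Fil^1$-generator is automatic.

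For (ii) I would proceed sheaf by sheaf. For $\A_\inf$, the Verschiebung filtration on $W_n(\hat{\OO}_{X^\flat}^+)$ has graded pieces (Frobenius-twists of) $\hat{\OO}_{X^\flat}^+$, so Lemma \ref{ContOXplusonAffPerfTilt}(ii) and dévissage in $n$ give almost vanishing of $H^{>0}(U, W_n(\hat{\OO}_{X^\flat}^+))$. An almost version of Lemma \ref{InverseLimitExact}, applied on the basis of affinoid perfectoids, then simultaneously yields the almost vanishing of $H^{>0}(U, \A_\inf)$ and the section formula used in (i); the $R^1\varprojlim$ hypothesis is trivial since the transition maps $W_{n+1}(R^{\flat+}) \to W_n(R^{\flat+})$ are surjective. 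For $\B_\inf$, invert $p$, using that filtered colimits preserve both cohomology on qcqs sites and the almost-zero property. For $\B_\dR^+$, the element $\xi$ is a non-zero-divisor locally in $\B_\inf$ (by Lemma \ref{ExistenceXi} applied to every affinoid perfectoid in a cover), so $\gr^i_\xi \B_\dR^+ \simeq \hat{\OO}_X$, whose higher cohomology on $U$ is almost zero by Lemma \ref{ContOXplusonAffPerf}(v). Dévissage in $n$ gives almost vanishing for each $\B_\inf/\xi^n$, and the almost version of Lemma \ref{InverseLimitExact} again delivers both the almost vanishing of $H^{>0}(U, \B_\dR^+)$ and the interchange needed in (i). Finally, $\B_\dR = \B_\dR^+[\xi^{-1}]$ is another filtered colimit, handled identically.

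For (iii), note that $\theta([\pi]) = \pi^\sharp \in R^+$ equals $p$ times a unit of $K^+$ by our normalization, so $\pi^\sharp$ is a unit in $R = R^+[p^{-1}]$. Hence $[\pi]$ is a unit modulo $\xi$ in $\B_\dR^+(U) = \B_\dR^+(R,R^+)$, and $\xi$-adic completeness upgrades this to outright invertibility of $[\pi]$. The identical argument applied to each $\pi^{1/p^N} \in K^{\flat+}$ shows that every $[\pi^{1/p^N}]$ is invertible in $\B_\dR^+(U)$ and a fortiori in $\B_\dR(U)$. Since the cohomology groups in (ii) are almost zero with respect to the ideal generated by the $[\pi^{1/p^N}]$, and each of these elements acts invertibly on those groups, the groups vanish identically. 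The hard part will be the coupled argument for $\B_\dR^+$ in (i) and (ii): both hypotheses of the almost inverse-limit lemma must be checked on the basis of affinoid perfectoids, and its conclusion is precisely what justifies identifying $\B_\dR^+(U)$ with $\varprojlim_n \B_\inf(U)/\xi^n$, so items (i) and (ii) for $\B_\dR^+$ have to be proved in parallel rather than sequentially.
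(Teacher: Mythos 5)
Your argument is substantially correct and follows the same overall architecture as the paper's proof: establish the section formula and almost vanishing of cohomology for $\A_\inf$ via a dévissage on $p$-power (equivalently, Verschiebung) filtration and Lemma~\ref{InverseLimitExact}, invert $p$ for $\B_\inf$, then treat $\B_\dR^+$ via the $\xi$-adic structure, and finally invoke the invertibility of $[\pi]$ to upgrade the almost statements to honest ones in (iii). Two remarks are worth making, however.

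First, for $\B_\dR^+$ the paper does not run a dévissage through the $\xi$-adic graded pieces as you propose. It instead uses the single short exact sequence
\[
0\rightarrow \B_\inf\buildrel \xi^i\over\rightarrow \B_\inf\rightarrow \B_\inf / (\ker \theta)^i \rightarrow 0
\]
of sheaves on $X_\proet$: the defect in exactness of sections over $U$ is controlled by $H^1(U,\B_\inf)$, which is already known to be almost zero, and then one observes that the relevant cohomology and cokernel terms are modules over $\B_\inf(K,K^+)/(\ker\theta)^i$, in which $[\pi]$ is invertible because $\theta([\pi]) = \pi^\sharp\in K^\times$. That kills the almost-ambiguity in one stroke and gives the actual section formula $(\B_\inf/(\ker\theta)^i)(U) = \B_\inf(R,R^+)/(\ker\theta)^i$ together with actual vanishing of higher cohomology, after which the non-almost Lemma~\ref{InverseLimitExact} applies directly. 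Your graded-pieces dévissage is a valid alternative, but the paper's version needs to compute the cohomology of only one sheaf ($\B_\inf$) rather than re-examining all the $\gr^i$'s.

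Second, and more importantly, your worry at the end about a coupled (i)--(ii) argument for $\B_\dR^+$ is unfounded, and the place where you hedge with ``almost'' is where you are actually giving away information you already have. The graded piece $\gr^i_\xi\B_\dR^+\cong\hat{\OO}_X$ already has \emph{actually} vanishing higher cohomology on $U$, not merely almost vanishing: $\hat{\OO}_X = \hat{\OO}_X^+[p^{-1}]$, and the ideal defining the almost setting (generated by the $[\pi^{1/p^N}]$, whose images under $\theta$ have absolute values $|p|^{1/p^N}$) becomes the unit ideal after inverting $p$. Therefore, your dévissage gives honest vanishing of $H^{>0}(U,\B_\inf/\xi^n)$ and the honest section formula $(\B_\inf/\xi^n)(U)=\B_\inf(R,R^+)/\xi^n$ by a straightforward five-lemma induction, after which the ordinary (non-almost) Lemma~\ref{InverseLimitExact} gives both the section formula and vanishing for $\B_\dR^+$. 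There is no circularity and no need to prove (i) and (ii) ``in parallel''; the upgrade from almost to actual is already built into the graded pieces, and (iii) can then be read off directly from the section formula and the invertibility of $[\pi]$ in the abstract ring $\B_\dR^+(R,R^+)$.
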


\begin{proof} By induction on $m$, we get a description of $W(\hat{\OO}_{X^\flat}^+)/p^m$, together with almost vanishing of cohomology. Now we use Lemma \ref{InverseLimitExact} to get the description of $\A_\inf$. Afterwards, one passes to $\B_\inf$ by taking a direct limit, which is obviously exact. This proves parts (i) and (ii) for these sheaves.

In order to pass to $\B_\dR^+$, one has to check that the exact sequence of sheaves on $X_\proet$
\[
0\rightarrow \B_\inf\buildrel \xi^i\over\rightarrow \B_\inf\rightarrow \B_\inf / (\ker \theta)^i \rightarrow 0
\]
stays exact after taking sections over $U$. We know that the defect is controlled by $H^1(U,\B_\inf)$, which is almost zero. We see that all statements follow once we know that $[\pi]$ is invertible in $\B_\inf / (\ker \theta)$. But the latter is a sheaf of $\B_\inf(K,K^+)/(\ker \theta)= K$-modules, and $[\pi]$ maps to the unit $\pi^\sharp\in K^\times$.
\end{proof}

\begin{cor}\label{TrivialCovers} Let $X$ be a locally noetherian adic space over $\Spa(K,K^+)$, and assume that $U\in X_\proet$ is affinoid perfectoid. Further, let $S$ be some profinite set, and $V=U\times S\in X_\proet$, which is again affinoid perfectoid. Then
\[
\mathcal{F}(V) = \Hom_\cont(S,\mathcal{F}(U))
\]
for any of the sheaves
\[
\mathcal{F}\in \{\hat{\OO}_X,\hat{\OO}_X^+,\hat{\OO}_{X^\flat},\hat{\OO}_{X^\flat}^+,\A_\inf,\B_\inf,\B_\dR^+,\B_\dR,\gr^i \B_\dR\}\ .
\]
Here, $\hat{\OO}_X^+(U)$ is given the $p$-adic topology, and all other period sheaves are given the induced topology: For example, $\A_\inf(U)$ the inverse limit topology, $\B_\inf(U)$ the direct limit topology, the quotients $(\B_\inf/(\ker \theta)^n)(U)$ the quotient topology, and then finally $\B_\dR^+(U)$ the inverse limit topology and $\B_\dR(U)$ the direct limit topology.
\end{cor}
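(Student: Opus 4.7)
The plan is first to show that $V = U \times S$ is affinoid perfectoid with an explicit perfectoid algebra, then to invoke Theorem \ref{DescriptionPeriodSheaves} (i) to reduce each claim to an algebraic identity comparing $\mathcal{F}(R_V, R_V^+)$ with $\mathrm{Cont}(S, \mathcal{F}(R, R^+))$. Writing $S = \varprojlim_j S_j$ with each $S_j$ finite discrete, we have $V = \varprojlim_j V_j$ where $V_j = U \times S_j$ is a finite disjoint union of copies of $U$ and hence affinoid perfectoid with $\hat V_j = \Spa(\mathrm{Maps}(S_j, R), \mathrm{Maps}(S_j, R^+))$. Passing to the $p$-adic completion of the direct limit yields
\[
R_V^+ = \varprojlim_n \varinjlim_j \mathrm{Maps}(S_j, R^+/p^n) = \varprojlim_n \mathrm{Cont}(S, R^+/p^n) = \mathrm{Cont}(S, R^+)
\]
(with $R^+$ in its $p$-adic topology), so $V$ is affinoid perfectoid and $R_V = \mathrm{Cont}(S, R)$.

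By Theorem \ref{DescriptionPeriodSheaves} (i) it then suffices to check that the algebraic constructions defining each $\mathcal{F}$ commute with $\mathrm{Cont}(S,-)$. The building blocks are: (a) for profinite $S$, $\mathrm{Cont}(S,-)$ commutes with countable inverse limits of discrete modules, which handles $\hat{\OO}_X^+/p^n \leadsto \hat{\OO}_X^+$ and the tilted variant $\hat{\OO}_{X^\flat}^+$; (b) $W_n$ is set-theoretically a finite product, so $W_n(\mathrm{Cont}(S, A)) = \mathrm{Cont}(S, W_n(A))$, which together with (a) handles $\A_\inf = \varprojlim_n W_n \circ \hat{\OO}_{X^\flat}^+$; (c) for compact $S$, any continuous map to $\varinjlim p^{-k} M$ factors through some $p^{-k} M$, so $\mathrm{Cont}(S, M[\tfrac1p]) = \mathrm{Cont}(S, M)[\tfrac 1 p]$, which handles $\B_\inf = \A_\inf[\tfrac 1 p]$, and later $\B_\dR$.

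For $\B_\dR^+ = \varprojlim_n \B_\inf/\xi^n$, we argue inductively on $n$. Since $\xi$ is not a zero-divisor on $\B_\inf(R_V, R_V^+)$ by Lemma \ref{ExistenceXi}, and since $H^1(V, \B_\inf)$ is almost zero (Theorem \ref{DescriptionPeriodSheaves} (ii)), one identifies
\[
(\B_\inf/\xi^n)(V) = \B_\inf(R_V, R_V^+)/\xi^n = \mathrm{Cont}(S, \B_\inf(R, R^+))/\xi^n,
\]
and the remaining task is to match this with $\mathrm{Cont}(S, \B_\inf(R, R^+)/\xi^n)$. This we do by induction along the short exact sequences $0 \to \gr^n \B_\dR^+ \to \B_\inf/\xi^{n+1} \to \B_\inf/\xi^n \to 0$, using that $\gr^n \B_\dR^+ \cong \xi^n R$ is a free $R$-module of rank one and hence reduces to the $\hat{\OO}_X$ case already settled. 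Taking the inverse limit over $n$ via Lemma \ref{InverseLimitExact} gives $\B_\dR^+$, inverting $t = \xi$ by (c) gives $\B_\dR$, and the associated gradeds $\gr^i \B_\dR \cong \xi^i \hat{\OO}_X$ reduce once more to the $\hat{\OO}_X$ case.

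The main obstacle will be the passage from $\mathrm{Cont}(S, M)/I$ to $\mathrm{Cont}(S, M/I)$ for $I = \xi^n$, which is not automatic for an arbitrary surjection of topological modules. What saves us is the very specific structure of the quotient $\B_\inf/\xi^n$ as an iterated extension of finitely many copies of $R$: checking exactness of $\mathrm{Cont}(S,-)$ along each such extension (via continuous set-theoretic sections, which exist because each graded piece is a free rank-one $R$-module) is the most delicate step, and the one where the compactness of $S$ and the completeness of $R^+$ are both genuinely used.
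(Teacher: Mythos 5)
Your reduction to an algebraic statement — identify $\hat V = \Spa(R_V,R_V^+)$ with $R_V^+=\Hom_\cont(S,R^+)$, then invoke Theorem \ref{DescriptionPeriodSheaves} (i) and check that $\Hom_\cont(S,-)$ commutes with the algebraic constructions — is the right way to flesh out the paper's one-line proof, and steps (a)--(c) are clean. You are also right that the only delicate point is the quotient $\B_\inf/\xi^n$, and the inductive reduction along $0\to\gr^n\B_\dR^+\to\B_\inf/\xi^{n+1}\to\B_\inf/\xi^n\to 0$ is the correct device. However, the justification you offer for exactness of $\Hom_\cont(S,-)$ along these extensions — that continuous sections exist \emph{because} each graded piece is a free rank-one $R$-module — is not an argument. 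Algebraic freeness of a graded piece says nothing about whether the extension of \emph{topological} groups splits continuously; indeed the assertion that the filtration on $\B_\inf(U)/\xi^n$ is topologically split (equivalently, that $\B_\inf(U)/\xi^n\cong R^n$ as a topological $R$-module) is morally equivalent to what you are trying to prove. To make that route work one needs to observe in addition that each $\B_\inf(U)/\xi^n$ is a $K$-Banach space (the quotient topology is $p$-adic because $[\pi]^n$ is divisible by $p$ modulo $\xi^n$, and $\A_\inf(U)/\xi^n$ is $p$-adically complete and $p$-torsion free since $(p,\xi)$ is a regular sequence), so that the continuous $R$-linear bijection $R^n\to\B_\inf(U)/\xi^n$ built from the filtration is a homeomorphism by the open mapping theorem; only then do the extensions split topologically.

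Alternatively, you can close the induction without producing any section at all, using only what you already have. Taking sections over $V$ of the exact sequence of sheaves $0\to\gr^n\B_\dR^+\to\B_\inf/\xi^{n+1}\to\B_\inf/\xi^n\to 0$ stays exact, because $H^1(V,\gr^n\B_\dR^+)$ is almost zero by Theorem \ref{DescriptionPeriodSheaves} (ii) while $[\pi]$ is invertible on $\gr^n\B_\dR^+\cong\hat\OO_X(n)$, hence it vanishes. By the $\hat\OO_X$ case the left term is $\Hom_\cont(S,\xi^nR)$, and by induction the right term is $\Hom_\cont(S,\B_\inf(U)/\xi^n)$. Comparing with the (always left-exact) sequence $0\to\Hom_\cont(S,\xi^nR)\to\Hom_\cont(S,\B_\inf(U)/\xi^{n+1})\to\Hom_\cont(S,\B_\inf(U)/\xi^n)$ via the natural map, the outer verticals are isomorphisms and a short diagram chase shows the middle map $(\B_\inf/\xi^{n+1})(V)\to\Hom_\cont(S,\B_\inf(U)/\xi^{n+1})$ is bijective; exactness of $\Hom_\cont(S,-)$ on the extension drops out \emph{a posteriori}. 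This is both shorter and avoids the topological subtlety you were appealing to.
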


\begin{proof} Go through all identifications.
\end{proof}

This proposition shows that even though we defined our sheaves completely abstractly without any topology, their values on certain profinite covers naturally involve the topology. This will later imply the appearance of continuous group cohomology.

\begin{prop} Let $X$ be a locally noetherian adic space over $\Spa(\Q_p,\Z_p)$. For all $i\in \Z$, we have $\gr^i \B_\dR\cong \hat{\OO}_X(i)$, where $(i)$ denotes a Tate twist: Let $\hat{\Z}_p = \varprojlim \Z/p^n\Z$ as sheaves on $X_\proet$, and $\hat{\Z}_p(1) = \varprojlim \mu_{p^n}$. Then for any sheaf $\mathcal{F}$ of $\hat{\Z}_p$-modules on $X_\proet$, we set $\mathcal{F}(1) = \mathcal{F}\otimes_{\hat{\Z}_p} \hat{\Z}_p(1)$.
\end{prop}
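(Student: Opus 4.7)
The plan is to construct a natural map $\alpha\colon \hat\OO_X(1) \to \gr^1 \B_\dR$ via the logarithm of a Teichm\"uller lift, verify it is an isomorphism on a suitable basis of $X_\proet$, and then pass to arbitrary $i \in \Z$ using multiplicativity of the filtration. For a local section $\zeta = (\zeta_{p^n})_n$ of $\hat\Z_p(1) = \varprojlim \mu_{p^n}$ over $U \in X_\proet$, the identification $\hat\OO_{X^\flat}^+ = \varprojlim_\Phi \hat\OO_X^+/p$ produces an element $\epsilon(\zeta) \in \hat\OO_{X^\flat}^+(U)$ with $\epsilon(\zeta)^\sharp = 1$. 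Hence $\theta([\epsilon(\zeta)]) = 1$ in $\A_\inf(U)$, and $\mu_\zeta := [\epsilon(\zeta)] - 1 \in \Fil^1 \B_\dR^+(U)$. Since $\B_\dR^+$ is a complete-filtered $\Q_p$-algebra, the series
\[
t_\zeta := \log[\epsilon(\zeta)] = \sum_{k \geq 1} (-1)^{k+1}\mu_\zeta^k/k
\]
converges in $\Fil^1 \B_\dR^+(U)$. Multiplicativity $[\epsilon_1\epsilon_2] = [\epsilon_1][\epsilon_2]$ of Teichm\"uller lifts combined with the identity $\log(xy) = \log(x)+\log(y)$ on $1+\Fil^1 \B_\dR^+$ gives $t_{\zeta\zeta'} = t_\zeta + t_{\zeta'}$, so $\zeta \mapsto t_\zeta$ defines an $\hat\Z_p$-linear map of sheaves $\hat\Z_p(1) \to \Fil^1 \B_\dR^+$. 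Projecting to $\gr^1 \B_\dR$ and extending $\hat\OO_X$-linearly produces $\alpha$.

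To show $\alpha$ is an isomorphism, I would check on the basis of $X_\proet$ consisting of affinoid perfectoid $U$, $\hat U = \Spa(R,R^+)$, on which $\hat\Z_p(1)|_U$ is trivialized by a generator $\zeta$; such $U$ form a basis, since the pro-\'etale cover of $X$ adjoining all $p$-power roots of unity is pro-\'etale, and its intersection with the established affinoid perfectoid basis is again (locally) affinoid perfectoid. On such $U$, Theorem \ref{DescriptionPeriodSheaves} and the corollary following Lemma \ref{ExistenceXi} identify $\gr^1 \B_\dR(U)$ with a rank-one free $R$-module, so it suffices to prove $\alpha(\zeta) = t_\zeta \bmod \Fil^2$ is nonzero, equivalently $\mu_\zeta \notin \Fil^2 \B_\dR^+(U)$. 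The key identity is the factorization in $\A_\inf(U)$
\[
\mu_\zeta = \bigl([\epsilon(\zeta)^{1/p}]-1\bigr)\cdot\bigl(1+[\epsilon(\zeta)^{1/p}]+\cdots+[\epsilon(\zeta)^{(p-1)/p}]\bigr) =: v\cdot\omega,
\]
using the $p$-th root in the perfect ring $\hat\OO_{X^\flat}^+(U)$. One computes $\theta(\omega) = 1+\zeta_p+\cdots+\zeta_p^{p-1} = 0$ and $\theta(v) = \zeta_p-1$; the latter is a unit of $R$ because $R$ contains $\Q_p(\zeta_p)$, in which $(\zeta_p-1)^{p-1}$ differs from $p$ by a unit (Eisenstein). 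Any element of $\B_\dR^+$ with invertible $\theta$-image is itself a unit, inverted by the $\Fil^1$-adic geometric series, so $v$ is a unit of $\B_\dR^+(U)$, and $\mu_\zeta$ and $\omega$ generate the same ideal. It remains to show $\omega \notin \Fil^2$, which reduces by functoriality in $(R,R^+)$ to the cyclotomic perfectoid field $\widehat{\Q_p(\mu_{p^\infty})}$, where $\omega$ is the classical Fontaine generator of $\ker\theta$.

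For arbitrary $i\in\Z$, the multiplication pairing $\gr^i \B_\dR \otimes_{\hat\OO_X} \gr^j \B_\dR \to \gr^{i+j} \B_\dR$ is an isomorphism: in the local basis it sends the generator $t_\zeta^i \otimes t_\zeta^j$ to the generator $t_\zeta^{i+j}$, using that $t_\zeta$ becomes invertible in $\B_\dR = \B_\dR^+[t_\zeta^{-1}]$. Iterating $\alpha$ and its inverse thus yields $\gr^i \B_\dR \cong (\gr^1 \B_\dR)^{\otimes i} \cong \hat\OO_X(i)$ for every $i \in \Z$, the case $i=0$ being the tautology $\gr^0 \B_\dR = \hat\OO_X$. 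The main obstacle is the local nonvanishing step $\omega \notin \Fil^2 \B_\dR^+$; this is the sole place where genuine $p$-adic Hodge-theoretic input is needed beyond the framework developed in the paper, and everything else is formal consequence of the period sheaf computations already in hand.
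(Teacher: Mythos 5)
Your proof is essentially the paper's argument in expanded form: the map $\zeta\mapsto \log[\epsilon(\zeta)]$ is the same $t=\log[\epsilon]$, your observation that this assignment is $\hat{\Z}_p$-linear is the sheaf-theoretic packaging of the paper's remark that Galois acts on $t$ via the cyclotomic character, and the nonvanishing reduces in both cases to the classical Fontaine generator over $\widehat{\Q_p(\mu_{p^\infty})}$. One small imprecision worth tightening: you write that it suffices to show $t_\zeta \bmod \Fil^2$ is "nonzero, equivalently $\mu_\zeta\notin\Fil^2$," but over a general perfectoid affinoid $R$ the module $\gr^1\B_\dR(U)\cong R$ can have nonzero non-generators, so the goal is really that $t_\zeta$ \emph{generates}; your factorization $\mu_\zeta=v\omega$ with $v$ a unit, together with the fact that $\omega$ generates $\ker\theta$ over $\widehat{\Q_p(\mu_{p^\infty})}$ and hence (by Lemma \ref{ExistenceXi}, applied with that field as base) over any perfectoid affinoid algebra over it, does prove this stronger statement — the phrase "reduces by functoriality" should be replaced by an explicit appeal to Lemma \ref{ExistenceXi}.
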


\begin{proof} Let $K$ be the completion of $\Q_p(\mu_{p^\infty})$. A choice of $p^n$-th roots of unity gives rise to an element $\epsilon\in \OO_{K^\flat}$. Recall the element
\[
t=\log([\epsilon])\in \Fil^1 \B_\dR^+(K,K^+)\ ,
\]
which generates $\Fil^1$, so that we have $\gr^i \B_\dR = t^i \hat{\OO}_X$ over $X_{K,\proet}\cong X_\proet / X_K$, cf. Proposition \ref{ProetChangeBase}. Because the action of the Galois group $\Gal(\Q_p(\mu_{p^\infty})/\Q_p)$ on $t$ is through the cyclotomic character, the isomorphism descends to an isomorphism $\gr^i \B_\dR\cong \hat{\OO}_X(i)$ on $X_\proet$.
\end{proof}

\begin{definition} Let $X$ be a smooth adic space over $\Spa(k,\OO_k)$, where $k$ is a discretely valued complete nonarchimedean extension of $\Q_p$ with perfect residue field $\kappa$. Consider the following sheaves on $X_\proet$.
\begin{altenumerate}
\item[{\rm (i)}] The sheaf of differentials $\Omega_X^1 = \nu^\ast \Omega_{X_\et}^1$, and its exterior powers $\Omega_X^i$.
\item[{\rm (ii)}] The tensor product $\OO\B_\inf = \OO_X\otimes_{W(\kappa)} \B_\inf$. Here $W(\kappa)=\nu^{\ast} W(\kappa)$ is the constant sheaf associated to $W(\kappa)$. It still admits $\theta: \OO\B_\inf\rightarrow \hat{\OO}_X$.
\item[{\rm (iii)}] The positive structural deRham sheaf
\[
\OO\B_\dR^+ = \varprojlim \OO\B_\inf / (\ker \theta)^n\ ,
\]
with its filtration $\Fil^i \OO\B_\dR^+ = (\ker \theta)^i \OO\B_\dR^+$.
\item[{\rm (iv)}] The structural deRham sheaf
\[
\OO\B_\dR = \OO\B_\dR^+[t^{-1}]\ ,
\]
where $t$ is a generator of $\Fil^1 \B_\dR^+$, with the filtration
\[
\Fil^i \OO\B_\dR = \sum_{j\in \mathbb{Z}} t^{-j} \Fil^{i+j} \OO\B_\dR^+\ .
\]
\end{altenumerate}
\end{definition}

\begin{rem} Because locally on $X_\proet$, the element $t$ exists and is unique up to a unit and not a zero-divisor, the sheaf $\OO\B_\dR$ and its filtrations are well-defined.
\end{rem}

Also note that the sheaf $\OO\B_\inf$ admits a unique $\B_\inf$-linear connection
\[
\nabla: \OO\B_\inf\rightarrow \OO\B_\inf\otimes_{\OO_X} \Omega^1_X\ ,
\]
extending the one on $\OO_X$. This connection extends uniquely to the completion
\[
\nabla: \OO\B_\dR^+\rightarrow \OO\B_\dR^+\otimes_{\OO_X} \Omega^1_X\ ,
\]
and this extension is $\B_\dR^+$-linear. Because $t\in \B_\dR^+$, it further extends to a $\B_\dR$-linear connection
\[
\nabla: \OO\B_\dR\rightarrow \OO\B_\dR\otimes_{\OO_X} \Omega^1_X\ .
\]

We want to describe $\OO\B_\dR^+$. For this, choose an algebraic extension of $k$ whose completion $K$ is perfectoid. We get the base-change $X_K$ of $X$ to $\Spa(K,\OO_K)$, and again consider $X_K\in X_\proet$ by slight abuse of notation. We assume given an \'{e}tale map $X\rightarrow \mathbb{T}^n$; such a map exists locally on $X$. Let $\tilde{X} = X\times_{\mathbb{T}^n} \tilde{\mathbb{T}}^n$. Taking a further-base change to $K$, $\tilde{X}_K\in X_{K,\proet}\cong X_\proet / X_K$ is perfectoid.

In the following, we look at the localized site $X_\proet / \tilde{X}$. We get the elements
\[
u_i = T_i\otimes 1 - 1\otimes [T_i^\flat]\in \OO\B_\inf|_{\tilde{X}} = (\OO_X\otimes_{W(\kappa)} W(\hat{\OO}_{X^\flat}^+))|_{\tilde{X}}
\]
in the kernel of $\theta$, where $T_i^\flat\in \hat{\OO}_{X^\flat}^+ = \varprojlim \OO_X^+/p$ is given by the sequence $(T_i,T_i^{\frac 1p},\ldots)$ in the inverse limit.

\begin{prop}\label{DescrBdR} The map
\[
\B_\dR^+|_{\tilde{X}}[[X_1,\ldots,X_n]]\rightarrow \OO\B_\dR^+|_{\tilde{X}}
\]
sending $X_i$ to $u_i$ is an isomorphism of sheaves over $X_\proet / \tilde{X}$.
\end{prop}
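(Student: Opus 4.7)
The plan is to verify the assertion locally on a basis of the site $X_\proet/\tilde X$ given by affinoid perfectoid $U$, using the explicit descriptions of $\B_\inf(U)$ and $\B_\dR^+(U)$ from Theorem~\ref{DescriptionPeriodSheaves}. Both sides of the map
\[
\varphi : \B_\dR^+|_{\tilde X}[[X_1,\ldots,X_n]] \to \OO\B_\dR^+|_{\tilde X}
\]
are complete for the evident filtrations (by powers of $\mathfrak I=(\ker\theta, X_1,\ldots,X_n)$ on the left and $\ker\theta$ on the right), and $\varphi$ carries the first into the second, so it will suffice to construct a continuous inverse $\psi$.

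My strategy is to build $\psi$ from a map $\OO_X|_{\tilde X} \to \B_\dR^+|_{\tilde X}[[X_1,\ldots,X_n]]$ characterized by $T_i\mapsto [T_i^\flat]+X_i$. Since $\theta([T_i^\flat])=T_i^\sharp\in \hat{\OO}_X^\times$, the element $[T_i^\flat]$ is a unit in $\B_\dR^+$, and then $[T_i^\flat]+X_i$ is a unit in the target by henselianness with respect to $\mathfrak I$. This lets me first define the map on $\OO_{\mathbb{T}^n}=k\langle T_1^{\pm1},\ldots,T_n^{\pm1}\rangle$, the convergence of the power series for $T_i^{-1}$ being controlled by a power-bounded subring containing $[T_i^\flat]$ combined with $\mathfrak I$-adic completeness. Étaleness of $X\to \mathbb{T}^n$ then yields a unique extension to $\OO_X|_{\tilde X}$ via the henselian lifting property for étale morphisms. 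Combined with the evident $W(\kappa)$-linear inclusion $\B_\inf\hookrightarrow \B_\dR^+[[X_1,\ldots,X_n]]$, this produces a map on $\OO\B_\inf=\OO_X\otimes_{W(\kappa)}\B_\inf$ which carries $\ker\theta$ into $\mathfrak I$; passing to the $\ker\theta$-adic completion then gives $\psi$.

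To finish, I would check that $\psi\circ\varphi$ and $\varphi\circ\psi$ are identities. On generators, this is immediate: $X_i\mapsto u_i=T_i-[T_i^\flat]\mapsto X_i$, and $T_i\mapsto [T_i^\flat]+X_i\mapsto [T_i^\flat]+u_i=T_i$, while both composites are the identity on $\B_\dR^+$ (resp. $\B_\inf$). Uniqueness of the étale lift and continuity in the completed topologies upgrade these to identities of ring maps. Since the construction was functorial in $U$, it sheafifies.

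The main obstacle will be making rigorous both the convergence argument defining the map on $\OO_{\mathbb{T}^n}$ and the henselian extension along $X\to \mathbb{T}^n$ at the level of sheaves on $X_\proet$, not just of individual sections. One must ensure that $[T_i^\flat]$ sits in a sufficiently well-behaved subring of $\B_\dR^+$ so that expressions like $[T_i^\flat]^{-1}\sum_{k\ge 0}(-X_i/[T_i^\flat])^k$ genuinely converge in the $\mathfrak I$-adic topology, and that the henselian extension of rings is functorial enough to globalize to a morphism of sheaves. Both of these should follow from the concrete identifications already established on affinoid perfectoid opens.
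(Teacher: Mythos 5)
Your proposal follows essentially the same route as the paper's proof: construct the inverse $\psi$ by defining a ring map $\mathcal{O}_X|_{\tilde X} \to \B_\dR^+|_{\tilde X}[[X_1,\ldots,X_n]]$ with $T_i \mapsto [T_i^\flat] + X_i$, invert $T_i$ via completeness of the target, extend along the \'etale map $X \to \mathbb{T}^n$ by Hensel's lemma, pass to completions, and then check the two composites on generators. The place your outline is imprecise about the substantive difficulty: the geometric series for $T_i^{-1}$ converges for free in the $(\ker\theta,X_1,\ldots,X_n)$-adically complete target, so that is not where the work lies. The genuine issue is that the rings you need to map out of are $p$-adic completions (the Tate algebra $k\langle T_1^{\pm1},\ldots,T_n^{\pm1}\rangle$ and the $\OO_X(U_i)$ feeding into $\OO_X(U)$), whereas $\B_\dR^+(R,R^+)[[X_1,\ldots,X_n]]$ is not $p$-adically complete since $p$ is invertible there. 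The paper supplies exactly this: one lemma shows that any ring map $f$ from a finitely generated $\mathcal{O}_k$-algebra $S$ into $\B_\dR^+(R,R^+)[[X_1,\ldots,X_n]]$ with $\theta(f(S))\subset R^+$ extends to the $p$-adic completion of $S$, because its image lands in a $p$-adically complete subring of the form $(W(R^{\flat +})[[X_1,\ldots,X_n]]/(\ker\theta)^i)[\xi/p^k,X_1/p^k,\ldots,X_n/p^k]$; a second lemma (from Huber plus excellence of $W(\kappa)$) produces, for each affinoid $U_i$ \'etale over $\mathbb{T}^n$, a finitely generated $W(\kappa)[T_1^{\pm 1},\ldots,T_n^{\pm 1}]$-algebra $R_0^+$ with $R_0^+[1/p]$ \'etale over the algebraic Laurent ring and with $\OO_X^+(U_i)$ its $p$-adic completion. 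This two-step passage through algebraic models is what makes your ``power-bounded subring'' remark and the Hensel extension at the sheaf level rigorous; you have correctly flagged where the work lies, and the paper does it in precisely this way.
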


\begin{proof} It suffices to check this over $X_\proet / \tilde{X}_K$. The crucial point is to show that $\B_\dR^+|_{\tilde{X}_K}[[X_1,\ldots,X_n]]$ admits a unique $\OO_X|_{\tilde{X}_K}$-algebra structure, sending $T_i$ to $[T_i^\flat]+X_i$ and compatible with the structure on
\[
\B_\dR^+[[X_1,\ldots,X_n]]/(\ker \theta) = \hat{\OO}_X\ .
\]
This being granted, we get a natural map
\[
(\OO_X\otimes_{W(\kappa)} W(\hat{\OO}_{X^\flat}^+))|_{\tilde{X}_K}\rightarrow \B_\dR^+|_{\tilde{X}_K}[[X_1,\ldots,X_n]]\ ,
\]
which induces a map $\OO\B_\dR^+|_{\tilde{X}_K}\rightarrow \B_\dR^+|_{\tilde{X}_K}[[X_1,\ldots,X_n]]$ which is easily seen to be inverse to the map above, giving the desired isomorphism.

In order to check that $\B_\dR^+|_{\tilde{X}_K}[[X_1,\ldots,X_n]]$ admits a natural $\OO_X|_{\tilde{X}_K}$-algebra structure, we need the following lemma.

\begin{lem} Let $(R,R^+)$ be a perfectoid affinoid $(K,\OO_K)$-algebra, so that we get $\B_\dR^+(R,R^+)$. Let $S$ be a finitely generated $\OO_k$-algebra. Then any morphism
\[
f: S\rightarrow \B_\dR^+(R,R^+)[[X_1,\ldots,X_n]]
\]
such that $\theta(f(S))\subset R^+$ extends to the $p$-adic completion of $S$.
\end{lem}

\begin{proof} It suffices to check modulo $(\ker \theta)^i$ for all $i$. There it follows from the fact that any finitely generated $R^+$-submodule of $\gr^i \B_\dR^+(R,R^+)\cong R$ is $p$-adically complete: In fact, the image of $S$ will be contained in
\[
(W(R^{\flat +})[[X_1,\ldots,X_n]]/(\ker \theta)^i)[\frac {\xi}{p^k},\frac{X_1}{p^k},\ldots,\frac{X_n}{p^k}]
\]
for some $k$, and this algebra is $p$-adically complete. Here, $\xi$ is as in Lemma \ref{ExistenceXi}.
\end{proof}

Moreover, we have the following lemma about \'{e}tale maps of adic spaces, specialized to the case $\mathbb{T}^n$.

\begin{lem} Let $U=\Spa(R,R^+)$ over $\Spa(W(\kappa)[p^{-1}],W(\kappa))$ be an affinoid adic space of finite type with an \'{e}tale map $U\rightarrow \mathbb{T}^n$. Then there exists a finitely generated $W(\kappa)[T_1^{\pm 1},\ldots,T_n^{\pm 1}]$-algebra $R_0^+$, such that $R_0 = R_0^+[\frac 1p]$ is \'{e}tale over
\[
W(\kappa)[p^{-1}][T_1^{\pm 1},\ldots,T_n^{\pm 1}]
\]
and $R^+$ is the $p$-adic completion of $R_0^+$.
\end{lem}

\begin{proof} We use \cite{Huber}, Corollary 1.7.3 (iii), to construct the affinoid ring $(R_0,R_0^+)$, denoted $B$ there. We have to see that $R_0^+$ is a finitely generated $W(\kappa)[T_1^{\pm 1},\ldots,T_n^{\pm 1}]$-algebra. But \cite{Huber}, Remark 1.2.6 (iii), implies that it is the integral closure of a finitely generated $W(\kappa)[T_1^{\pm 1},\ldots,T_n^{\pm 1}]$-algebra $R_1^+\subset R_0^+$ inside $R_0$, with $R_1^+[\frac 1p]=R_0$. But $W(\kappa)$ is excellent, in particular for any reduced flat finitely generated $W(\kappa)$-algebra $S^+$, the normalization of $S^+$ inside $S^+[p^{-1}]$ is finite over $S^+$, giving the desired result.
\end{proof}

First note that one has a map
\[
W(\kappa)[p^{-1}][T_1^{\pm 1},\ldots,T_n^{\pm 1}]\rightarrow \B_\dR^+|_{\tilde{X}}[[X_1,\ldots,X_n]]
\]
sending $T_i$ to $[T_i^\flat]+X_i$. For this, note that $T_i\mod (\ker \theta)$ is $[T_i^\flat]$, which is invertible, hence $T_i$ is itself invertible.

Now take some affinoid perfectoid $U\in X_\proet / \tilde{X}_K$, and write it as the inverse limit of affinoid $U_i\in X_\et$. In particular, $\OO_X(U) = \varinjlim \OO_X(U_i)$, and we may apply the last lemma to $U_i\rightarrow \mathbb{T}^n$. This gives algebras $R_{i0}^+$ whose generic fibre $R_{i0}$ is \'{e}tale over $W(\kappa)[p^{-1}][T_1^{\pm 1},\ldots,T_n^{\pm 1}]$. By Hensel's lemma, we can lift $R_{i0}$ uniquely to $\B_\dR^+(U)[[X_1,\ldots,X_n]]$, hence we get lifts of $R_{i0}^+$. These extend to the $p$-adic completion, hence we get lifts of $\mathcal{O}_X^+(U_i)$, and thus of $\mathcal{O}_X(U_i)$. Take the direct limits of these lifts to conclude.
\end{proof}

Let us collect some corollaries. First off, we have the following version of the Poincar\'{e} lemma.

\begin{cor}\label{DeepPoincareLemma} Let $X$ be an $n$-dimensional smooth adic space over $\Spa(k,\OO_k)$. The following sequence of sheaves on $X_\proet$ is exact.
\[
0\rightarrow \B_\dR^+\rightarrow \OO\B_\dR^+\buildrel\nabla\over\rightarrow \OO\B_\dR^+\otimes_{\OO_X} \Omega_X^1 \buildrel\nabla\over\rightarrow \ldots \buildrel\nabla\over\rightarrow \OO\B_\dR^+\otimes_{\OO_X} \Omega_X^n\rightarrow 0\ .
\]
Moreover, the derivation $\nabla$ satisfies Griffiths transversality with respect to the filtration on $\OO\B_\dR^+$, and with respect to the grading giving $\Omega_X^i$ degree $i$, the sequence is strict exact.
\end{cor}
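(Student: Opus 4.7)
The statement is local on $X_\proet$; \'etale-locally on $X$ we may choose an \'etale map $f\colon X \to \mathbb{T}^n$, and pulling back along the pro-\'etale cover $\tilde{X} = X \times_{\mathbb{T}^n} \tilde{\mathbb{T}}^n$ reduces us to checking exactness on $X_\proet/\tilde{X}$. By Proposition~\ref{DescrBdR}, $\OO\B_\dR^+|_{\tilde{X}} \cong \B_\dR^+|_{\tilde{X}}[[X_1,\ldots,X_n]]$ with $X_i \leftrightarrow u_i$, and since $f$ is \'etale the $dT_i$ form an $\OO_X$-basis of $\Omega_X^1$. Because $\nabla$ is $\B_\dR^+$-linear and restricts to the usual differential on $\OO_X$, one has $\nabla([T_i^\flat])=0$ (as $[T_i^\flat] \in \B_\inf$) and $\nabla(u_i) = \nabla(T_i) = dT_i$, so by Leibniz $\nabla = \sum_{i=1}^n \partial_{X_i} \otimes dT_i$. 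Under the identification $dX_i \leftrightarrow dT_i$, the complex becomes the $\B_\dR^+$-linear formal de Rham complex of $\B_\dR^+[[X_1,\ldots,X_n]]$ over $\B_\dR^+$.

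I would then prove the algebraic Poincar\'e lemma for this formal de Rham complex using the Euler vector field $E = \sum_i X_i \partial_{X_i}$: Cartan's formula $\mathcal{L}_E = d\iota_E + \iota_E d$ holds, and $\mathcal{L}_E$ acts on $X^\alpha\, dX_{i_1} \wedge \cdots \wedge dX_{i_q}$ by multiplication by $|\alpha|+q$. Since $\B_\dR^+$ is a $\mathbb{Q}$-algebra, $\mathcal{L}_E$ is invertible on the subspace of positive Euler weight, so $h := \iota_E \circ \mathcal{L}_E^{-1}$ is a contracting homotopy there. A closed form $\omega = \sum_{m \geq 0} \omega_m$ of form-degree $q \geq 1$ decomposes uniquely into its Euler-weight components; each $\omega_m$ is closed (as $d$ preserves Euler weight) and of positive total weight $m+q$, hence $\omega_m = d(h(\omega_m))$. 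The sum $\eta := \sum_m h(\omega_m)$, having one summand per Euler weight, is a well-defined element of the formal power series $\Omega^{q-1}$ with $d\eta = \omega$. In form-degree $0$, closedness of $f = \sum c_\alpha X^\alpha$ forces $c_\alpha \cdot \alpha_j = 0$ for all $\alpha, j$; since $\B_\dR^+$ is a $\mathbb{Q}$-algebra, this gives $c_\alpha = 0$ for $\alpha \neq 0$, identifying the kernel with $\B_\dR^+$.

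For the filtration, the local isomorphism identifies $\Fil^i \OO\B_\dR^+$ with the closure of $(\xi, X_1, \ldots, X_n)^i \B_\dR^+[[X]]$, equivalently $\{\sum c_\alpha X^\alpha : c_\alpha \in \Fil^{i-|\alpha|} \B_\dR^+\}$. Since $\nabla$ is $\B_\dR^+$-linear (so preserves each $\Fil^a \B_\dR^+$) and lowers the $X$-multidegree by one, Griffiths transversality $\nabla(\Fil^i \OO\B_\dR^+) \subseteq \Fil^{i-1} \OO\B_\dR^+ \otimes_{\OO_X} \Omega^1_X$ is immediate. On $i$-th graded pieces the complex decomposes as $\bigoplus_{a+b=i} \xi^a \cdot K^\bullet_b$, where $K^\bullet_b$ is the total-degree-$b$ piece of the de Rham complex of the polynomial ring $\hat{\OO}_X[X_1,\ldots,X_n]$ over $\hat{\OO}_X$. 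The same Euler-vector-field homotopy, now in the polynomial setting with no convergence issue, shows $K^\bullet_b$ is exact in positive form-degree; together with the identification of the surviving degree-$0$ piece (from $a=i$, $b=0$, $q=0$) with $\gr^i \B_\dR^+$, this gives strict exactness.

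The principal subtlety I expect is the clean identification of $\nabla$ under Proposition~\ref{DescrBdR} and, relatedly, matching the $\ker\theta$-adic completion defining $\OO\B_\dR^+$ with the $(X_i)$-adic completion of the formal power series used in the algebraic Poincar\'e lemma. Once these are in place the proof reduces to a standard computation.
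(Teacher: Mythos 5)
Your proof is correct and takes the same route as the paper: reduce via Proposition~\ref{DescrBdR} to the formal power series ring $\B_\dR^+|_{\tilde{X}}[[X_1,\ldots,X_n]]$ and then apply the formal Poincar\'e lemma, checking strictness on graded pieces. The paper simply declares the remaining step ``obvious,'' and your Euler vector field contracting homotopy, the identification $\nabla = \sum_i \partial_{X_i}\otimes dT_i$, and the decomposition of $\gr^\bullet$ into total-degree pieces of the polynomial de Rham complex are exactly the details that make it precise.
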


\begin{proof} Using the description of Proposition \ref{DescrBdR}, this is obvious.
\end{proof}

In particular, we get the following short exact sequence, often called Faltings's extension.

\begin{cor}\label{FaltingssExtension} Let $X$ be a smooth adic space over $\Spa(k,\OO_k)$. Then we have a short exact sequence of sheaves over $X_\proet$,
\[
0\rightarrow \hat{\OO}_X(1)\rightarrow \gr^1 \OO\B_\dR^+\rightarrow \hat{\OO}_X\otimes_{\OO_X} \Omega_X^1\rightarrow 0\ .
\]
\end{cor}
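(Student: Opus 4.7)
The plan is to extract Faltings's extension as the degree-one graded piece of the (strict exact, filtered) Poincaré lemma of Corollary \ref{DeepPoincareLemma}. Equip $\OO\B_\dR^+$ with its given filtration $\Fil^p\OO\B_\dR^+=(\ker\theta)^p\OO\B_\dR^+$, and equip $\OO\B_\dR^+\otimes_{\OO_X}\Omega_X^i$ with the shifted filtration $\Fil^p(\OO\B_\dR^+\otimes_{\OO_X}\Omega_X^i)=\Fil^{p-i}\OO\B_\dR^+\otimes_{\OO_X}\Omega_X^i$. With this convention Griffiths transversality says that $\nabla$ is compatible with the filtrations, and the assertion "strict exact with $\Omega_X^i$ of degree $i$" in Corollary \ref{DeepPoincareLemma} means precisely that the complex becomes exact after applying $\gr^p$ for every $p\in\Z$.

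The plan is to specialize to $p=1$. This produces the complex
\[
0\to\gr^1\B_\dR^+\to\gr^1\OO\B_\dR^+\xrightarrow{\gr\nabla}\gr^0\OO\B_\dR^+\otimes_{\OO_X}\Omega_X^1\xrightarrow{\gr\nabla}\gr^{-1}\OO\B_\dR^+\otimes_{\OO_X}\Omega_X^2\to\cdots,
\]
which is exact. The terms of negative filtration degree in $\OO\B_\dR^+$ vanish, so $\gr^{-j}\OO\B_\dR^+=0$ for $j\ge1$ and the complex truncates, yielding a short exact sequence. It remains to identify the three non-zero terms: by the proposition just preceding the definition of $\OO\B_\dR^+$, one has $\gr^1\B_\dR^+=\gr^1\B_\dR\cong\hat{\OO}_X(1)$; and $\gr^0\OO\B_\dR^+=\hat{\OO}_X$ since $\theta:\OO\B_\dR^+\to\hat{\OO}_X$ is surjective with kernel $\Fil^1\OO\B_\dR^+$. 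The middle term is by definition $\gr^1\OO\B_\dR^+$, giving the desired sequence.

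The one point that needs a sanity check is the vanishing of the differential just to the right, i.e.\ surjectivity onto $\hat{\OO}_X\otimes_{\OO_X}\Omega_X^1$ and the identification of $\gr^0\OO\B_\dR^+$ with $\hat{\OO}_X$ sitting in the filtration of $\OO\B_\dR^+\otimes\Omega_X^1$ in the right degree. To verify both simultaneously I would work locally on $\tilde X$ and use the explicit description of Proposition \ref{DescrBdR}: there $\OO\B_\dR^+\cong\B_\dR^+[[X_1,\ldots,X_n]]$ with $X_i=T_i\otimes1-1\otimes[T_i^\flat]$, and $\nabla$ extends $d$ on $\OO_X$ by being trivial on $\B_\inf$, so $\nabla(X_i)=dT_i$ and $\nabla(t)=0$. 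Hence $\gr^1\OO\B_\dR^+$ is the free $\hat{\OO}_X$-module on $\{t,X_1,\ldots,X_n\}$, the submodule $\hat{\OO}_X\cdot t$ is exactly $\gr^1\B_\dR^+$, and the quotient maps isomorphically via $X_i\mapsto dT_i$ onto $\hat{\OO}_X\otimes_{\OO_X}\Omega_X^1$ because $dT_1,\ldots,dT_n$ is a basis of $\Omega_X^1$ on the étale chart $X\to\mathbb T^n$. This both exhibits surjectivity and re-proves the exactness in the middle by hand; globalizing is immediate since the resulting sequence of sheaves on $X_\proet$ is intrinsic and matches on overlaps of étale charts.

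The main (minor) obstacle is really just bookkeeping: the grading convention that places $\Omega_X^i$ in degree $i$ must be tracked carefully so that the $\gr^1$ of the complex picks out $\gr^0\OO\B_\dR^+\otimes\Omega_X^1$ (not $\gr^1\OO\B_\dR^+\otimes\Omega_X^1$), and so that the next term genuinely vanishes. Once that is done, the result is a formal consequence of Corollary \ref{DeepPoincareLemma} together with the already established identification $\gr^i\B_\dR\cong\hat{\OO}_X(i)$.
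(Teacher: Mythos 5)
Your proposal is correct and takes exactly the approach of the paper: the paper's proof of this corollary is simply the one-liner "This is the first graded piece of the Poincaré lemma," and your writeup spells out that $\gr^1$-extraction, the vanishing of the terms past $\Omega_X^1$, and the identification of the three surviving terms. The local verification via Proposition \ref{DescrBdR} is a sound (if optional) sanity check consistent with how the Poincaré lemma itself is proved.
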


\begin{proof} This is the first graded piece of the Poincar\'{e} lemma.
\end{proof}

\begin{cor} Let $X\rightarrow \mathbb{T}^n$, $\tilde{X}$, etc., be as above. For any $i\in \mathbb{Z}$, we have an isomorphism of sheaves over $X_\proet / \tilde{X}_K$,
\[
\gr^i \OO\B_\dR\cong \xi^i \hat{\OO}_X[\frac{X_1}{\xi},\ldots,\frac{X_n}{\xi}]\ .
\]
In particular,
\[
\gr^\bullet \OO\B_\dR\cong \hat{\OO}_X[\xi^{\pm 1},X_1,\ldots,X_n]\ ,
\]
where $\xi$ and all $X_i$ have degree $1$.$\hfill \Box$
\end{cor}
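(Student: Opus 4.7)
The plan is to combine the explicit description of $\OO\B_\dR^+$ given by Proposition \ref{DescrBdR} with a direct graded computation in a power series ring. By that proposition, on $X_\proet/\tilde{X}_K$ we have the isomorphism $\OO\B_\dR^+|_{\tilde{X}_K} \cong \B_\dR^+|_{\tilde{X}_K}[[X_1, \ldots, X_n]]$ sending $X_i \mapsto u_i$. Each $u_i$ lies in $\ker\theta$, and by Lemma \ref{ExistenceXi} the kernel of $\theta: \B_\dR^+ \to \hat{\OO}_X$ is generated by the non-zero-divisor $\xi$; therefore $\ker\theta \subset \B_\dR^+[[X_1, \ldots, X_n]]$ equals $(\xi, X_1, \ldots, X_n)$, and the filtration $\Fil^i \OO\B_\dR^+ = (\ker\theta)^i$ is the $(\xi, X_1, \ldots, X_n)$-adic filtration.

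I would then compute the associated graded of this filtration. The ring $\B_\dR^+[[X_1, \ldots, X_n]]$ is defined as the $(X_1, \ldots, X_n)$-adic completion of the polynomial ring, and since $\B_\dR^+$ is already $\xi$-adically complete, it is also complete for the finer $(\xi, X_1, \ldots, X_n)$-adic topology. One checks by induction on $i$ that $(\xi, X_1, \ldots, X_n)^i / (\xi, X_1, \ldots, X_n)^{i+1}$ is a free $\hat{\OO}_X$-module with basis the monomials $\xi^{\alpha_0} X_1^{\alpha_1} \cdots X_n^{\alpha_n}$ of total degree $i$ and $\alpha_j \geq 0$: working modulo $(X_1, \ldots, X_n)^{i+1}$ reduces the problem to a finite-rank free module over $\B_\dR^+$, where the fact that $\xi$ is a non-zero-divisor with quotient $\hat{\OO}_X$ does the rest. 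Multiplying these pieces together gives $\gr^\bullet \OO\B_\dR^+ \cong \hat{\OO}_X[\xi, X_1, \ldots, X_n]$, with each generator in degree one.

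Finally, I would pass from $\OO\B_\dR^+$ to $\OO\B_\dR = \OO\B_\dR^+[t^{-1}]$. Because $t$ generates $\Fil^1 \B_\dR^+$ just like $\xi$ does, they differ by a unit in $\B_\dR^+$, so their images in $\gr^1$ do too; inverting $t$ in the filtered ring --- using the extended filtration $\Fil^i \OO\B_\dR = \sum_j t^{-j} \Fil^{i+j} \OO\B_\dR^+$ --- therefore amounts to localizing the associated graded at $\xi$, giving $\gr^\bullet \OO\B_\dR \cong \hat{\OO}_X[\xi^{\pm 1}, X_1, \ldots, X_n]$. The degree-$i$ piece is the $\hat{\OO}_X$-span of monomials $\xi^{\alpha_0} X^\alpha$ with $\alpha_0 \in \Z$, $\alpha_j \geq 0$ for $j \geq 1$, and $\alpha_0 + |\alpha| = i$; factoring out $\xi^i$ rewrites this as $\xi^i \hat{\OO}_X[X_1/\xi, \ldots, X_n/\xi]$, as desired.

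The main obstacle is the bookkeeping in the middle paragraph: one must verify that the associated graded computation done in the polynomial ring persists after completing in the $X_i$ direction, and that the resulting free $\hat{\OO}_X$-module structure on each graded piece matches the claimed basis. Once this regular-sequence style computation is in hand, the remaining manipulations with $t$ are purely formal.
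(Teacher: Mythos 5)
Your proposal is correct and takes the same route the paper intends: the corollary is stated with only a $\Box$ precisely because it is a direct consequence of Proposition~\ref{DescrBdR}, and your computation — identifying $\ker\theta$ in $\B_\dR^+[[X_1,\ldots,X_n]]$ as $(\xi,X_1,\ldots,X_n)$, noting these form a regular sequence cutting out $\hat{\OO}_X$ so that the associated graded is the polynomial ring $\hat{\OO}_X[\xi,X_1,\ldots,X_n]$, and then observing that inverting $t$ localizes the graded at $\bar\xi$ — is exactly the expected unwinding of that proposition.
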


\begin{prop}\label{CohomOOBdR} Let $X=\Spa(R,R^+)$ be an affinoid adic space of finite type over $\Spa(k,\OO_k)$ with an \'{e}tale map $X\rightarrow \mathbb{T}^n$ that factors as a composite of rational embeddings and finite \'{e}tale maps.
\begin{altenumerate}
\item[{\rm (i)}] Assume that $K$ contains all $p$-power roots of unity. Then
\[
H^q(X_K,\gr^0 \OO\B_\dR)=0
\]
unless $q=0$, in which case it is given by $R\hat{\otimes}_k K$.
\item[{\rm (ii)}] We have
\[
H^q(X,\gr^i \OO\B_\dR) = 0
\]
unless $i=0$ and $q=0,1$. If $i=0$, we have $(\gr^0 \OO\B_\dR)(X) = R$ and $H^1(X,\gr^0 \OO\B_\dR) = R \log \chi$. Here, $\chi: \Gal(\bar{k}/k)\rightarrow \Z_p^\times$ is the cyclotomic character and
\[
\log \chi\in \Hom_\cont(\Gal(\bar{k}/k),\Q_p) = H^1_\cont(\Gal(\bar{k}/k),\Q_p)
\]
is its logarithm.
\end{altenumerate}
\end{prop}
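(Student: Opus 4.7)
My plan is to reduce the computation to continuous group cohomology via Cartan--Leray, and to choose formal generators of $\OO\B_\dR^+$ that diagonalize the Galois action. Let $K$ be the completion of an algebraic closure of $k$. Since the \'etale map $X\to\mathbb{T}^n$ factors as a composite of rational embeddings and finite \'etale maps, Lemma \ref{PreciseLocalStructure} gives that $\tilde X_K := X_K\times_{\mathbb{T}^n}\tilde{\mathbb{T}}^n\in X_\proet$ is affinoid perfectoid, with $\tilde X_K\to X_K$ pro-\'etale Galois with group $\Z_p^n$ and $X_K\to X$ pro-\'etale Galois with group $\Gal(\bar k/k)$. Theorem \ref{DescriptionPeriodSheaves}, combined with $\gr^i\B_\dR\cong\hat\OO_X(i)$ (trivialized over $\tilde X_K$ since $K\supset\mu_{p^\infty}$), yields $H^q(\tilde X_K,\gr^i\OO\B_\dR)=0$ for $q>0$. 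Cartan--Leray thus reduces (i) to computing $H^p_\cont(\Z_p^n, \gr^i\OO\B_\dR(\tilde X_K))$, and (ii) follows by a further Cartan--Leray for $X_K\to X$.

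To diagonalize the action, I would replace the formal variables $X_i$ in Proposition \ref{DescrBdR} by $V_i:=\log(T_i/[T_i^\flat])\in\Fil^1\OO\B_\dR^+(\tilde X_K)$ (convergent since $T_i/[T_i^\flat]-1\in\Fil^1$). Because $V_i\equiv -u_i/T_i \pmod{\Fil^2}$, they still give topological generators, and $\OO\B_\dR^+|_{\tilde X_K}\cong\B_\dR^+|_{\tilde X_K}[[V_1,\ldots,V_n]]$. From $\gamma_j([T_i^\flat])=[\epsilon]^{\delta_{ij}}[T_i^\flat]$ and $\gamma_j(t)=t$ with $t=\log[\epsilon]$, one computes $\gamma_j(V_i)=V_i-\delta_{ij}t$. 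Setting $W_i:=V_i/t$ and passing to graded pieces,
\[
\gr^i\OO\B_\dR|_{\tilde X_K}\;\cong\; t^i\,\hat\OO_X[W_1,\ldots,W_n],\qquad \gamma_j(W_i)=W_i-\delta_{ij},
\]
with $t^i$ carrying the $i$-fold cyclotomic twist as a $\Gal_k$-module over $X$.

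The heart of the proof is the vanishing
\[
H^p_\cont(\Z_p^n,\; R_\infty[W_1,\ldots,W_n])\;=\;\begin{cases} R_K & p=0,\\ 0 & p>0,\end{cases}
\]
where $R_\infty = \hat\OO_X(\tilde X_K)$ carries the standard $\Z_p^n$-action and $R_K=R\hat\otimes_k K$. I would argue by iterated Hochschild--Serre, one factor at a time. For a single factor $\gamma$ acting on an input of the form $M[W]$ with $M$ a $\gamma$-module, a $\gamma$-invariant $\sum r_k W^k$ of top degree $d$ forces $r_d\in M^\gamma$ and then $(\gamma-1)r_{d-1}=d\,r_d$; by Lemma \ref{BasicLocalComputation} (after inverting $p$) the image of $\gamma-1$ on $M$ intersects $M^\gamma$ trivially, so $d\,r_d=0$ and hence $r_d=0$ for $d>0$, giving $H^0=M^\gamma$. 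For surjectivity of $\gamma-1$, given $g$ of degree $d$, decompose $g_d=g_d^{\gamma\text{-inv}}+g_d^{\mathrm{im}}$; raise the degree of $f$ by one with $f_{d+1}:=-g_d^{\gamma\text{-inv}}/(d+1)$ (inverting $d+1$ uses char zero) so that the $\gamma$-invariant obstruction is absorbed by the shift in $W$, then solve $(\gamma-1)f_d = g_d^{\mathrm{im}}$ and induct down in degree. Iterating the $n$ Hochschild--Serre steps strips off the $W_i$'s one by one and successively fixes factors of $R_\infty$, leaving $R_K$ in degree $0$.

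Part (i) follows immediately. For part (ii), Cartan--Leray for $X_K\to X$ gives $H^q(X,\gr^i\OO\B_\dR)=H^a_\cont(\Gal_k, R_K(i))=R\hat\otimes_k H^a_\cont(\Gal_k, \hat{\bar k}(i))$ by flatness of $R$ over $k$, and Tate's theorem supplies the answer: $R$ in bidegree $(0,0)$ with $i=0$, $R\log\chi$ in bidegree $(1,0)$ with $i=0$, and zero otherwise. The main obstacle is the core $\Z_p^n$-computation: although $R_\infty$ alone contributes the exterior algebra $\bigwedge^\bullet R_K^n$ in cohomology, the shift on the $W_i$'s supplies a cancelling homotopy via the formal antiderivative $W^k\mapsto -W^{k+1}/(k+1)$ that kills all positive-degree contributions; keeping the interplay between the twist on $R_\infty$ and the shift on the $W_i$'s rigorous through the induction is the delicate point.
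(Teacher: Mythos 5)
Your proposal follows the paper's overall strategy — Cartan--Leray for $\tilde X_K\to X_K$ to reduce to continuous $\Z_p^n$-cohomology, a second Cartan--Leray for $X_K\to X$ to bring in the arithmetic Galois group, and Lemma~\ref{BasicLocalComputation} plus the shift action on the log-variables as the computational engine — but you organize the $\Z_p^n$-step differently. The paper first shows that the inclusion $(R\hat\otimes_k K)[V_1,\ldots,V_n]\subset \tilde R[V_1,\ldots,V_n]$ is a quasi-isomorphism for continuous $\Z_p^n$-cohomology (checked on associated gradeds for the polynomial-degree filtration, where the shift disappears and one invokes Lemma~\ref{BasicLocalComputation} directly), and then computes $H^*_\cont(\Z_p^n,(R\hat\otimes_k K)[V])$ by a clean inductive polynomial-shift argument where $\Z_p^n$ acts trivially on the coefficient ring. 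You instead keep $R_\infty=\tilde R$ throughout and iterate one $\gamma_i$ at a time over $R_\infty[W]$, combining the shift with the nontrivial coefficient action via the decomposition $M=M^\gamma\oplus\mathrm{im}(\gamma-1)$. This fusion is a genuinely different bookkeeping of the same content; it buys a more ``all at once'' argument at the cost of needing the direct-sum decomposition at \emph{every} step of the iteration, i.e.\ for the partial invariant rings $R_\infty^{\gamma_n},\,R_\infty^{\gamma_n,\gamma_{n-1}},\ldots$, which is implicit in the character-by-character proof of Lemma~\ref{BasicLocalComputation} but is a larger claim than the one you cite. Two smaller points: for part~(ii) the paper takes $K$ to be the completion of the cyclotomic tower and works with $\Gamma_k=\Gal(k(\mu_{p^\infty})/k)$ rather than the full $\Gal(\bar k/k)$ as you do (both lead to Tate's computation, but the paper's choice is slightly more economical); and the final decompletion step ``$H^q_\cont(\Gal_k,R\hat\otimes_k K(i))=R\hat\otimes_k H^q_\cont(\Gal_k,\hat{\bar k}(i))$ by flatness'' is not a consequence of flatness (every $k$-module is flat) — the paper handles this by first establishing that $R(i)\to R\hat\otimes_k K(i)$ is a quasi-isomorphism for $\Gamma_k$-cohomology, again by a graded/character argument, and only then pulls $R$ out of cohomology using that the action on $R$ is trivial. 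Your sketch of this last step is the one place where a real argument is missing rather than merely compressed.
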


\begin{proof}\begin{altenumerate}
\item[{\rm (i)}] We use the cover $\tilde{X}_K\rightarrow X_K$ to compute the cohomology using the Cartan-Leray spectral sequence. This is a $\Z_p^n$-cover, and all fibre products $\tilde{X}_K\times_{X_K} \cdots \times_{X_K} \tilde{X}_K$ are affinoid perfectoid, and hence we know that all higher cohomology groups of the sheaves considered vanish. The version of Corollary \ref{TrivialCovers} for $\gr^0 \OO\B_\dR$ stays true, so we find that
\[
H^q(X_K,\gr^0 \OO\B_\dR) = H^q_\cont(\Z_p^n,\gr^0 \OO\B_\dR(\tilde{X}_K))\ .
\]
Now we follow the computation of this Galois cohomology group given in \cite{BrinonRepresentations}, Proposition 4.1.2. First, note that we may write
\[
\gr^0 \OO\B_\dR(\tilde{X}_K) = \tilde{R}[V_1,\ldots,V_n]\ ,
\]
where $\hat{\tilde{X}}_K = \Spa(\tilde{R},\tilde{R}^+)$, and the $V_i$ are given by $t^{-1} \log([T_i^\flat]/T_i)$, where $t=\log([\epsilon])$ as usual. Let $\gamma_i\in \Z_p^n$ be the $i$-th basis vector.

\begin{lem} The action of $\gamma_i$ on $V_j$ is given by $\gamma_i(V_j) = V_j$ if $i\neq j$ and $\gamma_i(V_i) = V_i + 1$.
\end{lem}

\begin{proof} By definition, $\gamma_i$ acts on $T_j^\flat$ trivially if $i\neq j$, and by multiplication by $\epsilon$ if $i=j$. This gives the claim.
\end{proof}

We claim that the inclusion
\[
(R\hat{\otimes}_k K)[V_1,\ldots,V_n]\subset \tilde{R}[V_1,\ldots,V_n]
\]
induces an isomorphism on continuous $\Z_p^n$-cohomology. It is enough to check this on associated gradeds for the filtration given by the degree of polynomials. On associated gradeds, the action of $\Z_p^n$ on the variables $V_i$ is trivial by the previous lemma, and it suffices to see that $R\hat{\otimes}_k K\subset \tilde{R}$ induces an isomorphism on continuous $\Z_p^n$-cohomology. The following lemma reduces the computation to Lemma \ref{BasicLocalComputation}.

\begin{lem}\label{RoughLocalStructure} The map
\[
R^+\hat{\otimes}_{\OO_k\langle T_1^{\pm 1},\ldots,T_n^{\pm 1}\rangle} \OO_K\langle T_1^{\pm 1/p^\infty},\ldots,T_n^{\pm 1/p^\infty}\rangle\rightarrow \tilde{R}^+
\]
is injective with cokernel killed by some power of $p$. In particular, we have
\[
\tilde{R} = R\hat{\otimes}_{k\langle T_1^{\pm 1},\ldots,T_n^{\pm 1}\rangle} K\langle T_1^{\pm 1/p^\infty},\ldots,T_n^{\pm 1/p^\infty}\rangle\ .
\]
\end{lem}

\begin{proof} This is an immediate consequence of Lemma \ref{PreciseLocalStructure} (ii).
\end{proof}

Now we have to compute
\[
H^q_\cont(\Z_p^n,(R\hat{\otimes}_k K)[V_1,\ldots,V_n])\ .
\]
We claim that inductively $H^q_\cont(\mathbb{Z}_p \gamma_i,(R\hat{\otimes}_k K)[V_1,\ldots,V_i]) = 0$ for $q>0$ and equal to $(R\hat{\otimes}_k K)[V_1,\ldots,V_{i-1}]$ for $q=0$. For this purpose, note that the cohomology is computed by the complex
\[
(R\hat{\otimes}_k K)[V_1,\ldots,V_i]\buildrel {\gamma_i - 1}\over\rightarrow (R\hat{\otimes}_k K)[V_1,\ldots,V_i]\ .
\]
If we set $S=(R\hat{\otimes}_k K)[V_1,\ldots,V_{i-1}]$, then the map $\gamma_i - 1$ sends a polynomial $P\in S[V_i]$ to $P(V_i+1) - P(V_i)$. One sees that the kernel of $\gamma_i - 1$ consists precisely of the constant polynomials, i.e. $S$, and the cokernel of $\gamma_i - 1$ is trivial.

\item[{\rm (ii)}] First note that in part (i), we have calculated $H^q(X_K,\gr^i \OO\B_\dR)$ for any $i\in \Z$, as all of these sheaves are isomorphic on $X_\proet / X_K$ to $\gr^0 \OO\B_\dR$.

We take $K$ as the completion of $k(\mu_{p^\infty})$, and we let $\Gamma_k = \Gal(k(\mu_{p^\infty})/k)$. We want to use the Cartan-Leray spectral sequence for the cover $X_K\rightarrow X$. For this, we have to know
\[
H^q(X_K^{m/X},\gr^i \OO\B_\dR)\ ,
\]
where we set $X_K^{m/X} = X_K\times_X\cdots\times_X X_K$. Inspection of the proof shows that they are given by
\[
H^q(X_K^{m/X},\gr^i \OO\B_\dR) = \Hom_\cont(\Gamma_k^{m-1},H^q(X_K,\gr^i \OO\B_\dR))\ :
\]
In fact, using the cover $\tilde{X}_K\times_{X_K} X_K^{m/X}$ of $X_K^{m/X}$ to compute the cohomology via the Cartan-Leray spectral sequence, the version of Corollary \ref{TrivialCovers} for $\gr^i \OO\B_\dR$ says that at each step in the proof, one has to take $\Hom_\cont(\Gamma_k^{m-1},\bullet)$. This shows that we have an identity
\[
H^q(X,\gr^i \OO\B_\dR) = H^q_\cont(\Gamma_k,R\hat{\otimes}_k K(i))\ .
\]
Similarly to Lemma \ref{BasicLocalComputation}, the map $R(i)\rightarrow R\hat{\otimes}_k K(i)$ induces an isomorphism on continuous $\Gamma_k$-cohomology. But then we get
\[
H^q_\cont(\Gamma_k,R\hat{\otimes}_k K(i)) = H^q_\cont(\Gamma_k,R(i)) = R\otimes_{\mathbb{Q}_p} H^q_\cont(\Gamma_k,\mathbb{Q}_p(i))\ ,
\]
and the latter groups are well-known, cf. \cite{TatePDivGroups}.
\end{altenumerate}
\end{proof}

\begin{cor}\label{CompDeepStructureSheaf} Let $X$ be a smooth adic space over $\Spa(k,\OO_k)$. Then $\nu_\ast \OO\B_\dR = \OO_{X_\et}$. Moreover, $\nu_\ast \hat{\OO}_X = \OO_{X_\et}$, $\nu_\ast \hat{\OO}_X(n)=0$ for $n\geq 1$,
\[
R^1\nu_\ast \hat{\OO}_X(1)\cong \Omega_{X_\et}^1
\]
via the connecting map in Faltings's extension, and $R^1\nu_\ast \hat{\OO}_X(n)=0$ for $n\geq 2$.
\end{cor}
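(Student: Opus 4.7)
The plan is to deduce all the claims from Proposition \ref{CohomOOBdR}(ii), which computes the cohomology of the graded pieces $\gr^i \OO\B_\dR$ on a basis of $X_\et$, combined with the Poincar\'{e} lemma (Corollary \ref{DeepPoincareLemma}). Throughout I work on the basis of $X_\et$ consisting of affinoids $U = \Spa(R,R^+)$ admitting an \'{e}tale map to $\mathbb{T}^n$ that factors as a composite of rational embeddings and finite \'{e}tale maps.

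For $\nu_\ast \OO\B_\dR = \OO_{X_\et}$ I first analyze the exhaustive and separated filtration $\Fil^\bullet\OO\B_\dR$. By Proposition \ref{CohomOOBdR}(ii), on any such $U$ one has $H^q(U,\gr^i\OO\B_\dR) = 0$ unless $i=0$ and $q \in \{0,1\}$, with $H^0(U,\gr^0\OO\B_\dR) = R$. Writing $\OO\B_\dR = \varinjlim_a \Fil^{-a}\OO\B_\dR$ and, by separatedness, $\Fil^{-a}\OO\B_\dR = \varprojlim_b \Fil^{-a}\OO\B_\dR/\Fil^b\OO\B_\dR$, a finite-length filtration argument on each quotient gives $H^0(U,\Fil^{-a}\OO\B_\dR/\Fil^b\OO\B_\dR) = R$ as soon as $-a \leq 0 < b$. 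Since the transition maps in both systems stabilize, passing to the limits yields $H^0(U,\OO\B_\dR) = R$, and sheafification gives $\nu_\ast\OO\B_\dR = \OO_{X_\et}$.

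For the remaining claims I invert $t$ in Corollary \ref{DeepPoincareLemma} to obtain the strict exact Poincar\'{e} lemma for $\OO\B_\dR$, and take its $n$-th graded piece using Griffiths transversality and the identification $\gr^n\B_\dR = \hat{\OO}_X(n)$:
\[
0 \to \hat{\OO}_X(n) \to \gr^n\OO\B_\dR \to \gr^{n-1}\OO\B_\dR \otimes_{\OO_X}\Omega^1_X \to \cdots \to \gr^{n-N}\OO\B_\dR \otimes_{\OO_X}\Omega^N_X \to 0,
\]
with $N = \dim X$. Applying $R\nu_\ast$ and using the projection formula (the $\Omega^j_X$ are pulled back from $X_\et$) together with Proposition \ref{CohomOOBdR}(ii), each $R\nu_\ast(\gr^{n-j}\OO\B_\dR \otimes \Omega^j_X)$ vanishes unless $n = j$, in which case it equals $\Omega^n_{X_\et}$ in degree $0$ and $\Omega^n_{X_\et}\log\chi$ in degree $1$. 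The hypercohomology spectral sequence computing $R\nu_\ast \hat{\OO}_X(n)$ is therefore concentrated in the single column $p = n$, so it degenerates and yields $R^n\nu_\ast\hat{\OO}_X(n) = \Omega^n_{X_\et}$, $R^{n+1}\nu_\ast\hat{\OO}_X(n) = \Omega^n_{X_\et}\log\chi$, and all other $R^q\nu_\ast \hat{\OO}_X(n) = 0$. Specializing: $n = 0$ gives $\nu_\ast\hat{\OO}_X = \OO_{X_\et}$; for $n \geq 1$ one obtains $\nu_\ast\hat{\OO}_X(n) = 0$; for $n = 1$, $R^1\nu_\ast\hat{\OO}_X(1) = \Omega^1_{X_\et}$; and for $n \geq 2$, $R^1\nu_\ast\hat{\OO}_X(n) = 0$.

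To identify the isomorphism $R^1\nu_\ast\hat{\OO}_X(1) \cong \Omega^1_{X_\et}$ with the connecting map of Faltings's extension (Corollary \ref{FaltingssExtension}), I compare Faltings's extension $0 \to \hat{\OO}_X(1) \to \gr^1\OO\B_\dR^+ \to \hat{\OO}_X \otimes \Omega^1_X \to 0$ with the $n = 1$ graded piece above via the natural inclusion $\gr^1\OO\B_\dR^+ \hookrightarrow \gr^1\OO\B_\dR$; the resulting commutative diagram of long exact sequences under $R\nu_\ast$ identifies the Faltings boundary with the edge map of the spectral sequence, hence with the isomorphism just constructed. The principal obstacle is the bookkeeping around the filtration and limits in the first step, in particular checking the vanishing of the relevant derived inverse limit so that $H^0(U,\OO\B_\dR)$ really is computed by the limit of $H^0$'s of the quotients; beyond this, everything is a formal application of the projection formula and the hypercohomology spectral sequence.
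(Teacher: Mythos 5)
Your argument takes the same route as the paper's: invert $t$ in the Poincar\'e lemma, take the $n$-th graded piece, and feed the resulting resolution of $\hat{\OO}_X(n)$ into Proposition \ref{CohomOOBdR}(ii). The paper just splits the long exact sequence into short ones by hand, whereas you package the bookkeeping as the hypercohomology spectral sequence concentrated in column $p=n$; the two are equivalent, and your version is cleaner. The identification of the boundary map with the Faltings boundary via the inclusion $\gr^1\OO\B_\dR^+\hookrightarrow\gr^1\OO\B_\dR$ is also the intended check (the paper leaves it as ``one directly checks''), and it works because the map $\hat{\OO}_X\otimes\Omega^1_X\to\gr^0\OO\B_\dR\otimes\Omega^1_X$ factors through the kernel $\mathcal{K}_1$ (since $\gr^{-1}\OO\B_\dR^+ = 0$), and $\nu_\ast$ of both $\hat{\OO}_X\otimes\Omega^1_X$ and $\mathcal{K}_1$ is $\Omega^1_{X_\et}$.

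One genuine slip, in the part the paper dismisses as ``clear'': the filtration on $\OO\B_\dR$ is \emph{not} complete, only separated. Indeed $\OO\B_\dR$ is the localization $\OO\B_\dR^+[t^{-1}]$, and the sum $\sum_n X^{2n}/\xi^n$ (each term lying in $\Fil^n\OO\B_\dR$) does not converge in $\OO\B_\dR$ since no power of $\xi$ clears all denominators. So the identification $\Fil^{-a}\OO\B_\dR = \varprojlim_b \Fil^{-a}\OO\B_\dR/\Fil^b\OO\B_\dR$ that you write is false. Fortunately your conclusion only needs the injective direction: separatedness gives $\Fil^{-a}\OO\B_\dR\hookrightarrow\varprojlim_b\Fil^{-a}/\Fil^b$, hence $H^0(U,\Fil^{-a}\OO\B_\dR)\hookrightarrow\varprojlim_b H^0(U,\Fil^{-a}/\Fil^b)=R$, and the reverse inclusion comes from the tautological constants $R=\OO_{X_\et}(U)\subset H^0(U,\Fil^0\OO\B_\dR)\subset H^0(U,\Fil^{-a}\OO\B_\dR)$. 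With that patch your first step is fine; the rest of the proof is correct as written.
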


\begin{rem} One could compute all $R^i \nu_\ast \hat{\OO}_X(j)$. They are $0$ if $i<j$ or $i>j+1$, and they are $\Omega_{X_\et}^i$ if $i=j$, and $\Omega_{X_\et}^i \log \chi$ if $i=j+1$.
\end{rem}

\begin{proof} The first part is clear. For the second, note that after inverting $t$ in the Poincar\'{e} lemma, we get the exact sequence
\[
0\rightarrow \B_\dR\rightarrow \OO\B_\dR\buildrel\nabla\over\rightarrow \ldots \ ,
\]
whose $0$-th graded piece is an exact sequence
\[
0\rightarrow \hat{\OO}_X\rightarrow \gr^0 \OO\B_\dR\rightarrow \ldots\ ,
\]
giving in particular an injection $\nu_\ast \hat{\OO}_X\rightarrow \nu_\ast \gr^0 \OO\B_\dR$. But we know that $\mathcal{O}_{X_\et}$ maps isomorphically into $\nu_\ast \gr^0 \OO\B_\dR$.

Similarly, we have a long exact sequence
\[
0\rightarrow \hat{\OO}_X(n)\rightarrow \gr^n \OO\B_\dR\rightarrow \gr^{n-1} \OO\B_\dR\otimes_{\mathcal{O}_X} \Omega_X^1\rightarrow \ldots\ ,
\]
which shows that for $n\geq 1$, $\nu_\ast \hat{\OO}_X(n) = 0$ and for $n\geq 2$, $R^1\nu_\ast \hat{\OO}_X(n)=0$, whereas for $n=1$ we get an isomorphism $R^1\nu_\ast \hat{\OO}_X(1)\cong \Omega_{X_\et}^1$. One directly checks that it is the boundary map in Faltings's extension.
\end{proof}

\section{Filtered modules with integrable connection}\label{DeRhamSection}

Let $X$ be a smooth adic space over $\Spa(k,\OO_k)$, with $k$ a discretely valued complete nonarchimedean extension of $\Q_p$ with perfect residue field $\kappa$.

\begin{definition}
\begin{altenumerate}
\item[{\rm (i)}] A $\B_\dR^+$-local system is a sheaf of $\B_\dR^+$-modules $\mathbb{M}$ that is locally on $X_\proet$ free of finite rank.
\item[{\rm (ii)}] An $\OO\B_\dR^+$-module with integrable connection is a sheaf of $\OO\B_\dR^+$-modules $\mathcal{M}$ that is locally on $X_\proet$ free of finite rank, together with an integrable connection $\nabla_{\mathcal{M}}: \mathcal{M}\rightarrow \mathcal{M}\otimes_{\OO_X} \Omega_X^1$, satisfying the Leibniz rule with respect to the derivation $\nabla$ on $\OO\B_\dR^+$.
\end{altenumerate}
\end{definition}

\begin{thm}\label{DeepEquivCat} The functor $\mathbb{M}\mapsto (\mathcal{M},\nabla_{\mathcal{M}})$ given by $\mathcal{M} = \mathbb{M}\otimes_{\B_\dR^+} \OO\B_\dR^+$, $\nabla_{\mathcal{M}} = \id\otimes \nabla$ induces an equivalence of categories between the category of $\B_\dR^+$-local systems and the category of $\OO\B_\dR^+$-modules with integrable connection. The inverse functor is given by $\mathbb{M}=\mathcal{M}^{\nabla_{\mathcal{M}}=0}$.
\end{thm}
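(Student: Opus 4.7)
The plan is to proceed in two stages: first verify one composition is the identity by a direct application of the Poincar\'{e} lemma, and then establish essential surjectivity by reducing, via the explicit local structure of $\OO\B_\dR^+$, to the classical fact that finite free modules over a formal power series ring with an integrable connection extending the canonical one are equivalent to finite free modules over the coefficient ring.

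First I would check that the functor is well-defined: if $\mathbb{M}$ is a $\B_\dR^+$-local system, then $\mathcal{M} = \mathbb{M}\otimes_{\B_\dR^+}\OO\B_\dR^+$ is locally free of the same rank over $\OO\B_\dR^+$, and $\nabla_\mathcal{M} = \id\otimes\nabla$ is an integrable connection satisfying the Leibniz rule. Next, since $\mathbb{M}$ is locally free (hence flat) over $\B_\dR^+$, tensoring the exact sequence of Corollary \ref{DeepPoincareLemma} with $\mathbb{M}$ yields an exact complex
\[
0\rightarrow \mathbb{M}\rightarrow \mathcal{M}\xrightarrow{\nabla_\mathcal{M}} \mathcal{M}\otimes_{\OO_X}\Omega_X^1\xrightarrow{\nabla_\mathcal{M}} \cdots
\]
which immediately identifies $\mathbb{M}$ with $\mathcal{M}^{\nabla_\mathcal{M}=0}$. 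This shows that the composition $\mathbb{M}\mapsto \mathcal{M}\mapsto \mathcal{M}^{\nabla_\mathcal{M}=0}$ is canonically isomorphic to the identity, giving full faithfulness of the functor.

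For essential surjectivity, given an $\OO\B_\dR^+$-module with integrable connection $(\mathcal{M},\nabla_\mathcal{M})$, I set $\mathbb{M}=\mathcal{M}^{\nabla_\mathcal{M}=0}$ and must show that $\mathbb{M}$ is locally free of finite rank over $\B_\dR^+$ and that the adjunction $\mathbb{M}\otimes_{\B_\dR^+}\OO\B_\dR^+\rightarrow \mathcal{M}$ is an isomorphism. Both are local assertions, so I may replace $X$ by a small open on which an \'etale map $X\rightarrow \mathbb{T}^n$ exists (using \cite{Huber}, Corollary 1.6.10), and pass to the pro-\'etale cover $\tilde{X}_K\rightarrow X$ of Proposition \ref{DescrBdR}, which identifies $\OO\B_\dR^+|_{\tilde{X}_K}\cong \B_\dR^+|_{\tilde{X}_K}[[X_1,\ldots,X_n]]$, under which $\nabla$ becomes the standard derivation $\sum dT_i\otimes \partial/\partial X_i$. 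Further localizing, I may assume $\mathcal{M}$ is free of some rank $r$ over this power series ring.

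The remaining task, which is the main technical step, is the following classical-style statement over a formal power series ring: for a $\Q$-algebra $A$ (here $A$ a ring of sections of $\B_\dR^+$), the functor $M\mapsto (A[[X_1,\ldots,X_n]]\otimes_A M,\id\otimes d)$ from finite free $A$-modules to finite free $A[[X_1,\ldots,X_n]]$-modules equipped with an integrable connection extending the canonical derivation is an equivalence, with inverse given by horizontal sections. The proof of this proceeds by choosing a basis $\bar{e}_1,\ldots,\bar{e}_r$ of $\mathcal{M}/(X_1,\ldots,X_n)\mathcal{M}$ and lifting it to a basis of horizontal sections: start with any lift $e_i^{(0)}$, and construct $e_i^{(k)}\equiv e_i^{(k-1)}\pmod{(X)^k\mathcal{M}}$ so that $\nabla_\mathcal{M}(e_i^{(k)})\in (X)^k\mathcal{M}\otimes \Omega^1_X$, using integrability of $\nabla_\mathcal{M}$ to solve the inductive step (which amounts to integrating a closed $1$-form of the appropriate order, possible thanks to the Poincar\'{e} lemma for the formal power series ring and the presence of $\Q$ in the coefficients). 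The limit $e_i=\lim_k e_i^{(k)}$ exists by $(X)$-adic completeness of $\mathcal{M}$, is horizontal, and reduces to $\bar{e}_i$ modulo $(X)$. The $e_i$ then form a basis of $\mathbb{M}$ over $\B_\dR^+$ and of $\mathcal{M}$ over $\OO\B_\dR^+$ by Nakayama's lemma, proving both assertions simultaneously. The hard part of the argument is arranging this successive approximation correctly; once it is in place, the equivalence follows by gluing.
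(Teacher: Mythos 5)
Your proof is correct and takes essentially the same route as the paper: reduce to the local model $\OO\B_\dR^+|_{\tilde X}\cong\B_\dR^+|_{\tilde X}[[X_1,\ldots,X_n]]$ via Proposition \ref{DescrBdR}, then invoke the classical fact that over a $\Q$-algebra $A$, finite projective $A[[X_1,\ldots,X_n]]$-modules with integrable connection have enough horizontal sections. The paper dispatches the first composition with "it is obvious" and cites the formal power series fact without proof; you have merely unpacked both, the former by tensoring the Poincar\'e lemma with $\mathbb{M}$ (using flatness), and the latter by the usual $(X)$-adic successive approximation. One minor point of exposition: $G\circ F\cong\mathrm{Id}$ alone gives faithfulness but not fullness, so the phrase "giving full faithfulness" is slightly premature at that point; however your "essential surjectivity" step actually proves the stronger statement that the counit $\mathcal{M}^{\nabla=0}\otimes_{\B_\dR^+}\OO\B_\dR^+\to\mathcal{M}$ is an isomorphism, i.e.\ $F\circ G\cong\mathrm{Id}$, so the equivalence is fully established.
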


\begin{proof} It is obvious that one composition is the identity. One needs to check that any $\OO\B_\dR^+$-module with integrable connection admits enough horizontal sections. This can be checked locally, i.e. in the case $X$ \'{e}tale over $\mathbb{T}^n$. Then it follows from Proposition \ref{DescrBdR} and the fact for any $\Q$-algebra $R$, any module with integrable connection over $R[[X_1,\ldots,X_n]]$ has enough horizontal sections.
\end{proof}

We want to compare those with more classical objects. We have the following lemma:

\begin{lem} Let $X_\an$ be the site of open subsets of $X$. Then the following categories are naturally equivalent:
\begin{altenumerate}
\item[{\rm (i)}] The category of $\OO_{X_\an}$-modules $\mathcal{E}_\an$ over $X_\an$ that are locally on $X_\an$ free of finite rank.
\item[{\rm (ii)}] The category of $\OO_{X_\et}$-modules $\mathcal{E}_\et$ over $X_\et$ that are locally on $X_\et$ free of finite rank.
\item[{\rm (iii)}] The category of $\OO_X$-modules $\mathcal{E}$ over $X_\proet$ that are locally on $X_\proet$ free of finite rank.
\end{altenumerate}
\end{lem}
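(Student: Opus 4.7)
Both equivalences are induced by pullback along natural morphisms of sites — $\mu\colon X_\et\to X_\an$ and $\nu\colon X_\proet\to X_\et$ — with quasi-inverses given by pushforward.

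The equivalence between (i) and (ii) is the standard statement that vector bundles on a rigid-analytic (or adic) space descend along étale covers. One has $\mu_*\OO_{X_\et}=\OO_{X_\an}$, and any étale cover is analytically locally a composition of a rational open embedding and a finite étale surjection, so the descent reduces to classical Galois descent for finite projective modules over an affinoid algebra. I would just invoke this well-known result.

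For (ii) $\cong$ (iii), full faithfulness of $\nu^*$ on locally free sheaves of finite rank follows from Corollary~\ref{CompProetVSEt}(i). Indeed, for locally free $\mathcal{E}_1,\mathcal{E}_2$ on $X_\et$, the inner Hom is $\mathcal{H}om_{\OO_{X_\et}}(\mathcal{E}_1,\mathcal{E}_2)=\mathcal{E}_1^\vee\otimes\mathcal{E}_2$, which is again locally free, and since $\nu^*$ commutes with tensor products and with duals of locally free sheaves one has
\[
\mathcal{H}om_{\OO_X}(\nu^*\mathcal{E}_1,\nu^*\mathcal{E}_2)=\nu^*\mathcal{H}om_{\OO_{X_\et}}(\mathcal{E}_1,\mathcal{E}_2).
\]
Taking global sections and applying $R\nu_*\nu^*\mathcal{F}=\mathcal{F}$ then yields $\Hom_{\OO_X}(\nu^*\mathcal{E}_1,\nu^*\mathcal{E}_2)=\Hom_{\OO_{X_\et}}(\mathcal{E}_1,\mathcal{E}_2)$.

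For essential surjectivity, given $\mathcal{E}$ locally free of rank $n$ on $X_\proet$, I set $\mathcal{E}_\et:=\nu_*\mathcal{E}$ and aim to check, étale-locally on $X$, that $\mathcal{E}_\et$ is free of rank $n$ and that the adjunction map $\nu^*\mathcal{E}_\et\to\mathcal{E}$ is an isomorphism. After étale localization I may assume $X$ is affinoid connected and fix a pro-\'etale cover $U\to X$ trivializing $\mathcal{E}$; absorbing the étale part into $X$, I may take $U=\tilde X\to X$ to be pro-finite étale. Under the equivalence of Proposition~\ref{ProfinitePi1}, a descent datum for the trivialized $\mathcal{E}$ along $\tilde X\to X$ is a $1$-cocycle of $\pi_1(X,\bar x)$ with values in $\GL_n(\OO_X(\tilde X))$. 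Since $\OO_X(\tilde X)=\varinjlim_i\OO_{X_\et}(X_i)$ as $X_i\to X$ ranges over finite étale Galois covers (Lemma~\ref{CompEtProet}), and since $\pi_1(X,\bar x)$ is profinite hence quasicompact, such a cocycle factors through $\GL_n(\OO_{X_\et}(X_i))$ for some $i$; classical Galois descent (Hilbert~90 for vector bundles on affinoids) then provides a rank-$n$ locally free $\OO_{X_\et}$-module on $X$ whose $\nu^*$-pullback is identified with $\mathcal{E}$, and whose formation is compatible with $\nu_*$.

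The main obstacle I expect is the bookkeeping needed to justify that a $1$-cocycle of a profinite group with values in the directed colimit $\varinjlim_i\GL_n(\OO_{X_\et}(X_i))$ factors through a finite stage. This amounts to analyzing sections of $\OO_X$ on products of pro-\'etale objects with profinite sets, in the spirit of Corollary~\ref{TrivialCovers}, and ultimately reduces to quasicompactness of $\pi_1(X,\bar x)$ together with the description of $\OO_X(\tilde X)$ provided by Lemma~\ref{CompEtProet}.
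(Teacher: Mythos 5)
Your argument for the equivalence (i)$\cong$(ii) and for the full faithfulness of $\nu^*$ is essentially the same as the paper's (the paper phrases full faithfulness more simply, via $\nu_*\OO_X = \OO_{X_\et}$ rather than via internal Homs, but these are the same observation). Your argument for essential surjectivity of $\nu^*$, however, takes a genuinely different route from the paper. The paper avoids Galois cocycles entirely: after étale localization to $U_0 = \Spa(R,R^+)$ affinoid, it takes a pro-finite étale presentation $U = \varprojlim U_i \to U_0$ trivializing $\mathcal{E}$, observes that $\OO_X(U) = R_\infty := \varinjlim R_i$ is faithfully flat over $R$, and applies ordinary faithfully flat descent along $R \to R_\infty$ to produce a finite projective $R$-module, free after shrinking $U_0$. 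This sidesteps the cocycle-factoring question that you flag as your main obstacle.

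In fact, the obstacle you anticipate is not actually a topological issue and requires no continuity argument: since $\OO_X = \nu^*\OO_{X_\et}$, the sections $\OO_X(U\times_{U_0} U)$ are by construction the directed colimit $\varinjlim_i \OO(U_i\times_{U_0} U_i)$ (Lemma~\ref{CompEtProet} with $i=0$), so the descent isomorphism, being an element of $\GL_n(\OO_X(U\times_{U_0}U))$, automatically lands in $\GL_n(\OO(U_i\times_{U_0}U_i))$ for some finite stage $i$, and likewise the cocycle condition becomes a statement in a finite stage after possibly enlarging $i$. This is an algebraic consequence of the colimit description of the uncompleted structure sheaf, not a compactness argument about $\pi_1$. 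Note that Corollary~\ref{TrivialCovers}, which you invoke, is about the completed sheaves $\hat\OO_X$, $\B_\dR^+$ etc.\ on affinoid perfectoid objects and does not apply to the uncompleted $\OO_X$; relying on it here would be a misstep. With the colimit description substituted for the topological argument, your Galois-descent variant also works, but the paper's direct faithfully flat descent along $R\to R_\infty$ is cleaner and does not require the trivializing cover to be Galois.
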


\begin{proof} We have the morphisms of sites $\nu: X_\proet\rightarrow X_\et$, $\lambda: X_\et\rightarrow X_\an$. We know that $\OO_{X_\an}\cong \lambda_\ast \OO_{X_\et}$ and  $\OO_{X_\et}\cong \nu_\ast \OO_X$. This implies that the pullback functors are fully faithful.

To see that pullback from the analytic to the \'{e}tale topology is essentially surjective, we have to see that the stack in the analytic topology sending some $X$ to the category of locally free sheaves for the analytic topology, is also a stack for the \'{e}tale topology. It suffices to check for finite \'{e}tale covers $Y\rightarrow X$ by the proof of \cite{deJongvanderPut}, Proposition 3.2.2. Moreover, we can assume that $X=\Spa(R,R^+)$ is affinoid, hence so is $Y=\Spa(S,S^+)$. In that case, a locally free sheaf for the analytic topology is equivalent to a projective module over $R$ of finite rank. But the map $R\rightarrow S$ is faithfully flat, hence usual descent works; note that the fibre product $Y\times_X Y$ has global sections $S\otimes_R S$ etc. .

Similarly, if $\mathcal{E}$ on $X_\proet$ becomes trivial on some $U\in X_\proet$, then write $U$ has an inverse limit of finite \'{e}tale surjective maps $U_i\rightarrow U_0$, $U_0\in X_\et$. We assume again that $U_0=\Spa(R,R^+)$ is affinoid, hence so are all $U_i=\Spa(R_i,R_i^+)$. Then $\OO_X(U)$ is the direct limit $R_\infty$ of all $R_i$, which is faithfully flat over $R$. Applying descent for this morphism of rings shows that $\mathcal{E}$ descends to a projective module of finite rank over $U_0$; hence after passage to some smaller open subset of $U_0$, it will be free of finite rank.
\end{proof}

\begin{definition} A filtered $\OO_X$-module with integrable connection is a locally free $\OO_X$-module $\mathcal{E}$ on $X$, together with a separated and exhaustive decreasing filtration $\Fil^i \mathcal{E}$, $i\in \mathbb{Z}$, by locally direct summands, and an integrable connection $\nabla$ satisfying Griffiths transversality with respect to the filtration.
\end{definition}

\begin{definition} We say that an $\OO\B_\dR^+$-module with integrable connection $\mathcal{M}$ and a filtered $\OO_X$-module with integrable connection $\mathcal{E}$ are associated if there is an isomorphism of sheaves on $X_\proet$
\[
\mathcal{M}\otimes_{\OO\B_\dR^+} \OO\B_\dR\cong \mathcal{E}\otimes_{\OO_X} \OO\B_\dR
\]
compatible with filtrations and connections.
\end{definition}

\begin{thm}\begin{altenumerate}
\item[{\rm (i)}] If $\mathcal{M}$ is an $\OO\B_\dR^+$-module with integrable connection and horizontal sections $\mathbb{M}$, which is associated to a filtered $\OO_X$-module with integrable connection $\mathcal{E}$, then
\[
\mathbb{M} = \Fil^0(\mathcal{E}\otimes_{\OO_X} \OO\B_\dR)^{\nabla = 0}\ .
\]
Similarly, one can reconstruct $\mathcal{E}$ with filtration and connection via
\[
\mathcal{E}_\et\cong \nu_{\ast}(\mathbb{M}\otimes_{\B_\dR^+} \OO\B_\dR)\ .
\]
\item[{\rm (ii)}] For any filtered $\OO_X$-module with integrable connection $\mathcal{E}$, the sheaf
\[
\mathbb{M} = \Fil^0(\mathcal{E}\otimes_{\OO_X} \OO\B_\dR)^{\nabla = 0}
\]
is a $\B_\dR^+$-local system such that $\mathcal{E}$ is associated to $\mathcal{M} = \mathbb{M}\otimes_{\B_\dR^+} \OO\B_\dR^+$.
\end{altenumerate}
In particular, the notion of being associated gives rise to a fully faithful functor from filtered $\OO_X$-modules with integrable connection to $\B_\dR^+$-local systems.
\end{thm}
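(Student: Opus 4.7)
\emph{Part (i).} By Theorem~\ref{DeepEquivCat}, $\mathcal{M}^{\nabla=0}=\mathbb{M}$. The associated condition identifies $\mathcal{M}\otimes_{\OO\B_\dR^+}\OO\B_\dR=\mathbb{M}\otimes_{\B_\dR^+}\OO\B_\dR$ with $\mathcal{E}\otimes_{\OO_X}\OO\B_\dR$ compatibly with filtrations and connections; on the left the filtration is $\mathbb{M}\otimes\Fil^i\OO\B_\dR$, so $\Fil^0$ corresponds to $\mathbb{M}\otimes_{\B_\dR^+}\OO\B_\dR^+=\mathcal{M}$. Taking horizontal sections yields $\Fil^0(\mathcal{E}\otimes\OO\B_\dR)^{\nabla=0}=\mathcal{M}^{\nabla=0}=\mathbb{M}$. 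For the reconstruction of $\mathcal{E}_\et$, push the associated iso forward along $\nu:X_\proet\to X_\et$: using $\nu_\ast\OO\B_\dR=\OO_{X_\et}$ (Corollary~\ref{CompDeepStructureSheaf}) and local freeness of $\mathcal{E}_\et$ one gets $\nu_\ast(\mathbb{M}\otimes_{\B_\dR^+}\OO\B_\dR)=\nu_\ast(\mathcal{E}\otimes\OO\B_\dR)=\mathcal{E}_\et\otimes_{\OO_{X_\et}}\OO_{X_\et}=\mathcal{E}_\et$, and the same pushforward applied to each $\Fil^i$ and to $\nabla$ recovers the filtration and connection.

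\emph{Part (ii).} The question is local on $X_\proet$. Choose an \'etale map $X\to\mathbb{T}^n$ and, shrinking analytically, a basis $e_1,\ldots,e_r$ of $\mathcal{E}$ adapted to the filtration: $e_i\in\Fil^{a_i}\mathcal{E}$ and $\Fil^j\mathcal{E}=\bigoplus_{a_i\geq j}\OO_X\cdot e_i$. This exists because $\Fil^\bullet$ is by locally direct summands. Pass to the pro-\'etale affinoid perfectoid cover $\tilde X_K$ from the construction preceding Proposition~\ref{DescrBdR}, which identifies $\OO\B_\dR^+|_{\tilde X_K}$ with $\B_\dR^+[[X_1,\ldots,X_n]]$ via $X_j=T_j-[T_j^\flat]\in\Fil^1$, with $\nabla=\sum_j dT_j\otimes\partial/\partial X_j$. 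For each $i$ form the Taylor series
\[
\tilde e_i:=\sum_{I\in\N^n}\frac{(-X)^I}{I!}\,D^I e_i\in\mathcal{E}\otimes_{\OO_X}\OO\B_\dR^+,
\]
where $D_j$ denotes the connection along $\partial/\partial T_j$ and $D^I=D_1^{I_1}\cdots D_n^{I_n}$; integrability (so that the $D_j$ commute) gives $\nabla\tilde e_i=0$, while $\tilde e_i\equiv e_i\pmod{\Fil^1}$. By Griffiths transversality $D^I e_i\in\Fil^{a_i-|I|}\mathcal{E}$, and since $X^I\in\Fil^{|I|}\OO\B_\dR^+$, every summand lies in $\Fil^{a_i}(\mathcal{E}\otimes\OO\B_\dR)$ in the convolution filtration; hence $\tilde e_i\in\Fil^{a_i}(\mathcal{E}\otimes\OO\B_\dR)$ and $t^{-a_i}\tilde e_i\in\Fil^0$ is a horizontal section of $\mathbb{M}$.

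Write $\tilde e_i=\sum_j B_{ij}e_j$. Since $B\equiv I\pmod{\Fil^1\cdot M_r(\OO\B_\dR^+)}$, it is invertible by completeness of $\OO\B_\dR^+$ in its filtration, so $\{\tilde e_i\}$ is an $\OO\B_\dR^+$-basis of $\mathcal{E}\otimes\OO\B_\dR^+$. A refinement of the Griffiths estimate shows $B_{ij},(B^{-1})_{ij}\in\Fil^{\max(0,a_i-a_j)}\OO\B_\dR^+$, and a direct check then gives
\[
\Fil^j(\mathcal{E}\otimes\OO\B_\dR)=\sum_i\tilde e_i\cdot\Fil^{j-a_i}\OO\B_\dR\qquad\text{for all }j\in\Z.
\]
By the Poincar\'e lemma (Corollary~\ref{DeepPoincareLemma}), $(\mathcal{E}\otimes\OO\B_\dR)^{\nabla=0}=\bigoplus_i\B_\dR\cdot\tilde e_i$, and intersecting with $\Fil^0$ yields $\mathbb{M}|_{\tilde X_K}=\bigoplus_i\B_\dR^+\cdot t^{-a_i}\tilde e_i$, free of rank $r$. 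Hence $\mathbb{M}$ is a $\B_\dR^+$-local system, and the map $t^{-a_i}\tilde e_i\leftrightarrow t^{-a_i}\tilde e_i$ gives an iso $\mathbb{M}\otimes_{\B_\dR^+}\OO\B_\dR\cong\mathcal{E}\otimes_{\OO_X}\OO\B_\dR$ compatibly with filtrations (by the displayed equality) and connections (since $\tilde e_i$ is horizontal and $t$ is $\nabla$-flat); so $\mathcal{E}$ is associated with $\mathcal{M}=\mathbb{M}\otimes_{\B_\dR^+}\OO\B_\dR^+$. Full faithfulness of $\mathcal{E}\mapsto\mathbb{M}$ then follows at once from (i) and (ii). The main obstacle is controlling the filtration on the Taylor lifts: it is precisely Griffiths transversality that forces each $\tilde e_i$ into $\Fil^{a_i}$, producing a full-rank $\B_\dR^+$-lattice of horizontal $\Fil^0$-sections; without it, the twisted sections $t^{-a_i}\tilde e_i$ would fail to generate $\mathbb{M}$ to the required rank.
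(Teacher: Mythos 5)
Your proof of (i) follows the paper's, modulo a small imprecision: the sentence ``on the left the filtration is $\mathbb{M}\otimes\Fil^i\OO\B_\dR$, so $\Fil^0$ corresponds to $\mathbb{M}\otimes_{\B_\dR^+}\OO\B_\dR^+=\mathcal{M}$'' is not literally true, since $\Fil^0\OO\B_\dR$ is strictly larger than $\OO\B_\dR^+$. The identity is only recovered after passing to horizontal sections, where $\OO\B_\dR$ is replaced by $\B_\dR$ and indeed $\Fil^0\B_\dR=\B_\dR^+$; that is how the paper phrases it, namely $\mathbb{M}=\Fil^0(\mathbb{M}\otimes_{\B_\dR^+}\B_\dR)$. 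Your final conclusion is nonetheless the same, so this is a wording slip rather than a gap.

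For (ii) you take a genuinely different route. The paper constructs $\mathbb{M}$ globally by induction on the length of the filtration: first $\mathbb{M}_0$ from Theorem~\ref{DeepEquivCat} applied to $(\mathcal{E},\nabla)$ with the trivial filtration, then one step of $\Fil^\bullet$ is peeled off at a time, each extension being controlled by a large commutative diagram built from the Poincar\'e lemma and the flatness of $\Fil^0\OO\B_\dR$ over $\B_\dR^+$. You instead go local on $X_\proet/\tilde X_K$: choose a basis $e_i$ of $\mathcal{E}$ adapted to the filtration, form the Taylor-series horizontal lifts $\tilde e_i=\sum_{I}\frac{(-X)^I}{I!}D^I e_i$ (horizontal by integrability), use Griffiths transversality together with $X^I\in\Fil^{|I|}$ to place $\tilde e_i\in\Fil^{a_i}(\mathcal{E}\otimes\OO\B_\dR)$, and then use the subadditivity $\max(0,a_i-a_k)\leq\max(0,a_i-a_j)+\max(0,a_j-a_k)$ plus $B\equiv I\bmod\Fil^1$ to see that both $B$ and $B^{-1}$ respect the ``parabolic'' pattern $\Fil^{\max(0,a_i-a_j)}$, whence $\Fil^j(\mathcal{E}\otimes\OO\B_\dR)=\bigoplus_i\tilde e_i\Fil^{j-a_i}\OO\B_\dR$. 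Intersecting with horizontal sections and $\Fil^0$ gives $\mathbb{M}|_{\tilde X_K}=\bigoplus_i t^{-a_i}\B_\dR^+\tilde e_i$, visibly free, and the associated condition drops out. This is the classical Taylor-lift computation (in the spirit of Brinon and Andreatta--Iovita), made precise; it is more explicit and makes very visible exactly where Griffiths transversality is used, at the cost of choosing local coordinates and an adapted frame. The paper's inductive argument is coordinate-free and sets up the machinery reused in Proposition~\ref{AlternateCharacterization}. Both proofs are correct.
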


\begin{proof}\begin{altenumerate}
\item[{\rm (i)}] We have
\[
\mathbb{M} = \Fil^0(\mathbb{M}\otimes_{\B_\dR^+} \B_\dR) = \Fil^0(\mathcal{M}\otimes_{\OO\B_\dR^+} \OO\B_\dR)^{\nabla = 0} =  \Fil^0(\mathcal{E}\otimes_{\OO_X} \OO\B_\dR)^{\nabla = 0}\ .
\]
Similarly, lemma \ref{CompDeepStructureSheaf} shows that
\[
\mathcal{E}_\et = \nu_{\ast} (\mathcal{E}\otimes_{\OO_X} \OO\B_\dR) = \nu_{\ast}(\mathcal{M}\otimes_{\OO\B_\dR^+} \OO\B_\dR) = \nu_\ast(\mathbb{M}\otimes_{\B_\dR^+} \OO\B_\dR)\ .
\]
One also recovers the filtration and the connection.

\item[{\rm (ii)}] Let $(\mathcal{E},\nabla,\Fil^\bullet)$ be any filtered $\OO_X$-module with integrable connection. We have to show that there is some $\B_\dR^+$-local system $\mathbb{M}$ associated to $\mathcal{E}$, i.e. such that
\[
\mathcal{E}\otimes_{\OO_X} \OO\B_\dR\cong \mathbb{M}\otimes_{\B_\dR^+} \OO\B_\dR
\]
compatible with filtrations and connection.

We start by constructing some $\B_\dR^+$-local system $\mathbb{M}_0$ such that
\[
\mathcal{E}\otimes_{\OO_X} \OO\B_\dR^+\cong \mathbb{M}_0\otimes_{\B_\dR^+} \OO\B_\dR^+
\]
compatible with the connection (but not necessarily with the filtration). To this end, consider the $\OO\B_\dR^+$-module $\mathcal{M}_0 = \mathcal{E}\otimes_{\OO_X} \OO\B_\dR^+$, with the induced connection $\nabla_{\mathcal{M}_0}$, and take $\mathbb{M}_0 = \mathcal{M}_0^{\nabla_{\mathcal{M}_0}=0}$. The desired isomorphism follows from Theorem \ref{DeepEquivCat}.

Let $n$ be maximal with $\Fil^n \mathcal{E} = \mathcal{E}$ and $m$ minimal with $\Fil^{m+1} \mathcal{E} = 0$. We prove the proposition by induction on $m-n$.

If $n=m$, then we choose $\mathbb{M} = \Fil^{-n} (\mathbb{M}_0\otimes_{\B_\dR^+} \B_\dR)$. This obviously satisfies all conditions.

Now assume that $n<m$, and let $\Fil^{\prime\bullet}$ be the filtration on $\mathcal{E}$ with $\Fil^{\prime k} \mathcal{E} = \Fil^k \mathcal{E}$ unless $k=m$, in which case we set $\Fil^{\prime m} \mathcal{E}=0$. Associated to $(\mathcal{E},\nabla,\Fil^{\prime\bullet})$ we get a $\B_\dR^+$-local system $\mathbb{M}^\prime$, by induction. Twisting everything, we may assume that $m=0$.

We need to show that $\mathbb{M} := \mathrm{Fil}^0 (\mathcal{E}\otimes_{\OO_X} \OO\B_\dR)^{\nabla=0}$ is large. To this end, we consider the following diagram:
\[\xymatrixcolsep{2.7mm}\xymatrix{
&0\ar[d]&0\ar[d]&0\ar[d]&\\
0\ar[r]&\mathbb{M}^\prime\ar[d]\ar[r] & \mathbb{M} \ar[d]\ar[r] & \Fil^0 \mathcal{E}\otimes \hat{\OO}_X\ar[d]\ar[r] & 0\\
0\ar[r]&\Fil^{\prime 0}(\mathcal{E}\otimes \OO\B_\dR)\ar[d]^\nabla \ar[r] & \Fil^0(\mathcal{E}\otimes \OO\B_\dR)\ar[d]^\nabla \ar[r] & \Fil^0 \mathcal{E}\otimes \gr^0 \OO\B_\dR \ar[d]^{\id\otimes \nabla} \ar[r]&0\\
0\ar[r]&\Fil^{\prime -1}(\mathcal{E}\otimes \OO\B_\dR)\otimes \Omega_X^1\ar[d]^\nabla \ar[r] & \Fil^{-1}(\mathcal{E}\otimes \OO\B_\dR)\otimes \Omega_X^1\ar[d]^\nabla \ar[r] & \Fil^0 \mathcal{E}\otimes \gr^{-1} \OO\B_\dR\otimes \Omega_X^1 \ar[d]^{\id\otimes \nabla} \ar[r]&0\\
&\vdots\ar[d]^\nabla&\vdots\ar[d]^\nabla&\vdots\ar[d]^{\id\otimes \nabla}&\\
0\ar[r]&\Fil^{\prime -d}(\mathcal{E}\otimes \OO\B_\dR)\otimes \Omega_X^d\ar[d] \ar[r] & \Fil^{-d}(\mathcal{E}\otimes \OO\B_\dR)\otimes \Omega_X^d\ar[d] \ar[r] & \Fil^0 \mathcal{E}\otimes \gr^{-d} \OO\B_\dR\otimes \Omega_X^d \ar[d] \ar[r]&0\\
&0&0&0&
}\]
Here $d$ is the dimension of $X$ (which we may assume to be connected), and all tensor products are taken over $\OO_X$.

\begin{lem}\begin{altenumerate}
\item[{\rm (i)}] All rows and columns of this diagram are complexes, and the diagram commutes.
\item[{\rm (ii)}] All but the first row are exact.
\item[{\rm (iii)}] The left and right column are exact.
\item[{\rm (iv)}] In the middle column, $\mathbb{M}$ is the kernel of the first map $\nabla$.
\end{altenumerate}
\end{lem}

\begin{proof} Parts (i), (ii) and (iv) are clear: To check that $\nabla$ in the middle column actually is a map compatible with the filtration as claimed, use Griffiths transversality. The left column is isomorphic to $\mathbb{M}^\prime$ tensored with the exact sequence
\[
0\rightarrow \B_\dR^+ \rightarrow \Fil^0 \OO\B_\dR\buildrel\nabla\over\rightarrow \Fil^{-1} \OO\B_\dR\otimes \Omega_X^1\rightarrow \ldots \ .
\]
The right column is $\Fil^0 \mathcal{E}$ tensored with $\gr^0$ of this sequence.
\end{proof}

It follows that the whole diagram is exact, e.g. by considering all but the first row of this diagram as a short exact sequence of complexes and looking at the associated long exact sequence of cohomology groups.

Tensoring the first row with $\Fil^0 \OO\B_\dR$ over $\B_\dR^+$, which is flat, we get a diagram
\[\xymatrix{
0\ar[r] & \mathbb{M}^\prime\otimes_{\B_\dR^+} \Fil^0 \OO\B_\dR\ar[d]^{\cong}\ar[r] & \mathbb{M}\otimes_{\B_\dR^+} \Fil^0 \OO\B_\dR\ar[d]\ar[r] & \Fil^0 \mathcal{E}\otimes_{\OO_X} \gr^0 \OO\B_\dR\ar[d]^{=}\ar[r] & 0\\
0\ar[r]&\Fil^{\prime 0}(\mathcal{E}\otimes_{\OO_X} \OO\B_\dR)\ar[r] & \Fil^0(\mathcal{E}\otimes_{\OO_X} \OO\B_\dR)\ar[r] & \Fil^0 \mathcal{E}\otimes_{\OO_X} \gr^0 \OO\B_\dR \ar[r]&0
}\]
It follows that the middle vertical is an isomorphism as well, which shows that $\mathbb{M}$ is as desired.
\end{altenumerate}
\end{proof}

Although not necessary for our applications, it may be interesting to investigate the relationship between $\mathbb{M}$ and $\mathcal{E}$ further. We recall the $\B_\dR^+$-local system $\mathbb{M}_0$ from the proof, associated to $(\mathcal{E},\nabla)$ with the trivial filtration. It comes with an isomorphism
\[
\mathcal{E}\otimes_{\OO_X} \OO\B_\dR^+\cong \mathbb{M}_0\otimes_{\B_\dR^+} \OO\B_\dR^+\ .
\]
In particular, reducing the isomorphism modulo $\ker \theta$, we get an isomorphism
\[
\mathcal{E}\otimes_{\OO_X} \hat{\OO}_X\cong \gr^0 \mathbb{M}_0\ .
\]
Now the short exact sequence
\[
0\rightarrow \mathcal{E}\otimes_{\OO_X} \hat{\OO}_X(1)\rightarrow \mathbb{M}_0/\Fil^2 \mathbb{M}_0\rightarrow \mathcal{E}\otimes_{\OO_X} \hat{\OO}_X \rightarrow 0
\]
induces via $\nu_\ast$ a boundary map
\[
\mathcal{E}_\et\rightarrow \mathcal{E}_\et\otimes R^1\nu_\ast(\hat{\OO}_X(1))\ .
\]

\begin{lem} Under the canonical isomorphism $R^1\nu_\ast(\hat{\OO}_X(1))\cong \Omega_{X_\et}^1$, this map is identified with $-1$ times the connection on $\mathcal{E}$.
\end{lem}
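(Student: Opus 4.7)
The plan is a direct local computation of the \v{C}ech cocycle representing the boundary class. First, I would work locally on $X$, choosing an \'{e}tale map $X\to\mathbb{T}^n$ with coordinates $T_1,\ldots,T_n$, which gives the perfectoid cover $\tilde X\in X_\proet$. By Proposition \ref{DescrBdR} one has $\OO\B_\dR^+|_{\tilde X}\cong \B_\dR^+|_{\tilde X}[[u_1,\ldots,u_n]]$ with $u_i=T_i\otimes 1-1\otimes[T_i^\flat]$, and the connection satisfies $\nabla u_i=dT_i$. Fix a local basis $e_1,\ldots,e_r$ of $\mathcal{E}_\et$ and write $\nabla e_j=\sum_i\partial_i e_j\cdot dT_i$.

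Next, I would compute the unique horizontal lift $\tilde e_j\in\mathbb{M}_0$ of $e_j$ explicitly. Solving $\nabla_{\mathcal{M}_0}\tilde e_j=0$ in the power series ring yields the Taylor formula
\[
\tilde e_j=\sum_\alpha\frac{(-1)^{|\alpha|}}{\alpha!}\,\partial^\alpha e_j\cdot u^\alpha,
\]
so modulo $\Fil^2\mathbb{M}_0=\xi^2\mathbb{M}_0$ we have $\tilde e_j\equiv e_j-\sum_i\partial_i e_j\cdot u_i$. The crucial observation is that the natural map $\gr^1\mathbb{M}_0=\mathcal{E}\otimes\hat\OO_X(1)\hookrightarrow\gr^1\mathcal{M}_0=\mathcal{E}\otimes\gr^1\OO\B_\dR^+$ induced by $\mathbb{M}_0\hookrightarrow\mathcal{M}_0$ is exactly $\mathrm{id}_\mathcal{E}$ tensored with the subobject from Faltings's extension (Corollary \ref{FaltingssExtension}), so computations made in $\mathcal{M}_0$ transfer correctly to $\mathbb{M}_0$.

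Then I would compute the \v{C}ech cocycle on $\tilde X\times_X\tilde X$. On this double fibre product the two pullbacks $T_i^{\flat,(1)}$ and $T_i^{\flat,(2)}$ of $T_i^\flat$ are compatible systems of $p$-power roots of the same $T_i$, and hence differ by an element $\epsilon_i\in\hat\OO_{X^\flat}^+$ whose sharp is a compatible system of $p$-power roots of unity, so that $T_i^{\flat,(2)}=\epsilon_i T_i^{\flat,(1)}$. Multiplicativity of Teichm\"{u}ller lifts and reduction modulo $\Fil^2$, using $[\epsilon_i]-1\equiv\log[\epsilon_i]=:t_i$ and $[T_i^{\flat,(1)}]\equiv T_i$, give $p_1^\ast u_i-p_2^\ast u_i\equiv T_i t_i\pmod{\Fil^2}$ in $\hat\OO_X(1)$. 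Since $e_j$ and the $\partial_i e_j$ descend from $X$, the class $p_1^\ast\tilde e_j-p_2^\ast\tilde e_j$ in $\gr^1\mathbb{M}_0=\mathcal{E}\otimes\hat\OO_X(1)$ equals $-\sum_i\partial_i e_j\otimes T_i t_i$.

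Finally, I would match this with Faltings's extension to identify the answer inside $\mathcal{E}_\et\otimes\Omega^1_{X_\et}$. The very same lift $u_i\in\gr^1\OO\B_\dR^+$ of $dT_i$ shows that under the isomorphism $\Omega^1_{X_\et}\cong R^1\nu_\ast\hat\OO_X(1)$ of Corollary \ref{CompDeepStructureSheaf}, the element $dT_i$ corresponds to the class of the \v{C}ech cocycle $T_i t_i$. Hence our boundary class is identified with $-\sum_i\partial_i e_j\otimes dT_i=-\nabla e_j$, proving the lemma. The main subtlety is the Teichm\"{u}ller bookkeeping on $\tilde X\times_X\tilde X$: one must verify carefully that $[T_i^{\flat,(2)}]-[T_i^{\flat,(1)}]$ reduces to $T_i t_i$ modulo $\Fil^2$; once this is in hand, everything else is a direct comparison between the explicit horizontal lift and the explicit Faltings boundary map.
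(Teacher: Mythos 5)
Your proposal is correct, but it takes a genuinely different route from the paper. The paper argues purely formally: it defines a sheaf $\mathcal{F}$ as the pullback of $\mathbb{M}_0/\Fil^2\to\mathcal{E}\otimes\hat{\OO}_X$ along $\mathcal{E}\to\mathcal{E}\otimes\hat{\OO}_X$, exhibits two $\OO\B_\dR^+$-linear maps $\mathcal{F}\to\mathcal{E}\otimes\OO\B_\dR^+/\Fil^2$ that agree modulo $\Fil^1$, takes their difference $\mathcal{F}\to\mathcal{E}\otimes\gr^1\OO\B_\dR^+$, and reads the sign $-\nabla$ off a commutative diagram mapping into Faltings's extension tensored with $\mathcal{E}$ --- no coordinates, no cocycles. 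You instead run an explicit local \v{C}ech/Cartan--Leray computation on the cover $\tilde{X}\to X$: solve for the horizontal lift $\tilde e_j$ by the Taylor formula in $\B_\dR^+|_{\tilde X}[[u_1,\ldots,u_n]]$, compute $p_1^\ast u_i - p_2^\ast u_i = ([\epsilon_i]-1)[T_i^{\flat,(1)}]\equiv T_i t_i\ (\mathrm{mod}\ \Fil^2)$ via Teichm\"uller multiplicativity, and match against the Faltings cocycle for $dT_i$, which is represented by the same $T_i t_i$. Your intermediate observation that the inclusion $\gr^1\mathbb{M}_0\hookrightarrow\gr^1\mathcal{M}_0$ is $\mathrm{id}_{\mathcal{E}}$ tensored with the Faltings subobject is exactly what licenses transferring the computation from $\mathcal{M}_0$ to $\mathbb{M}_0$, and it plays the same role that the commutativity of the paper's diagram plays. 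The paper's argument is shorter and coordinate-free, so it globalizes without further comment; yours makes the underlying mechanism (the Teichm\"uller twist on $\tilde{X}\times_X\tilde{X}$, and how the same lift $u_i$ simultaneously witnesses $dT_i$ in Faltings's extension and enters the Taylor expansion of horizontal sections) completely visible, at the price of having to check that the local normalization of the isomorphism $R^1\nu_\ast\hat{\OO}_X(1)\cong\Omega^1_{X_\et}$ is the one your cocycle $T_i t_i$ realizes --- which you do correctly.
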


\begin{proof} Define a sheaf $\mathcal{F}$ via pullback $\mathcal{E}\rightarrow \mathcal{E}\otimes_{\OO_X} \hat{\OO}_X$ as in the diagram
\[\xymatrix{
0\ar[r] & \mathcal{E}\otimes_{\OO_X} \hat{\OO}_X(-1) \ar[r] & \mathbb{M}_0/\Fil^2 \mathbb{M}_0 \ar[r] & \mathcal{E}\otimes_{\OO_X} \hat{\OO}_X\ar[r] & 0\\
0\ar[r] & \mathcal{E}\otimes_{\OO_X} \hat{\OO}_X(-1) \ar[r]\ar[u] & \mathcal{F} \ar[r]\ar[u] & \mathcal{E}\ar[r]\ar[u] & 0
}\]
Then $\mathcal{F}$ admits two maps
\[
\mathcal{F}\rightarrow \mathcal{E}\otimes_{\OO_X} \OO\B_\dR^+/ \Fil^2\cong \mathbb{M}_0\otimes_{\B_\dR^+} \OO\B_\dR^+ / \Fil^2\ :
\]
One via $\mathcal{F}\rightarrow \mathbb{M}_0 / \Fil^2$, the other via $\mathcal{F}\rightarrow \mathcal{E}$. The two maps agree modulo $\Fil^1$, hence their difference gives a map $\mathcal{F}\rightarrow \mathcal{E}\otimes_{\OO_X} \gr^1 \OO\B_\dR^+$. This gives a commutative diagram
\[\xymatrix{
0\ar[r] & \mathcal{E}\otimes_{\OO_X} \hat{\OO}_X(1) \ar[r]\ar[d] & \mathcal{F} \ar[r]\ar[d] & \mathcal{E}\ar[r]\ar[d]^{-\nabla} & 0\\
0\ar[r] & \mathcal{E}\otimes_{\OO_X} \hat{\OO}_X(1) \ar[r] & \mathcal{E}\otimes_{\OO_X} \gr^1 \OO\B_\dR^+ \ar[r] & \mathcal{E}\otimes_{\OO_X} \Omega_X^1\otimes_{\OO_X} \hat{\OO}_X \ar[r] & 0
}\]
Here the lower sequence is Faltings's extension tensored with $\mathcal{E}$. We know that the boundary map of the lower line induces the isomorphism $\mathcal{E}_\et\otimes \Omega^1_{X_\et}\cong \mathcal{E}_\et\otimes R^1\nu_\ast \hat{\OO}_X(1)$, giving the claim.
\end{proof}

\begin{prop}\label{AlternateCharacterization} The $\B_\dR^+$-local system $\mathbb{M}$ associated to $(\mathcal{E},\nabla,\Fil^\bullet)$ is contained in $\mathbb{M}_0\otimes_{\B_\dR^+} \B_\dR$, and it has the property
\[
(\mathbb{M}\cap \Fil^i \mathbb{M}_0)/(\mathbb{M}\cap \Fil^{i+1} \mathbb{M}_0) = \Fil^{-i} \mathcal{E}\otimes_{\OO_X} \hat{\OO}_X(i)\subset \gr^i \mathbb{M}_0\cong \mathcal{E}\otimes_{\OO_X} \hat{\OO}_X(i)
\]
for all $i\in \mathbb{Z}$.

Conversely, there is a unique such $\B_\dR^+$-submodule in $\mathbb{M}_0\otimes_{\B_\dR^+} \B_\dR$.
\end{prop}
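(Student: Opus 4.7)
The inclusion $\mathbb{M}\subset \mathbb{M}_0\otimes_{\B_\dR^+}\B_\dR$ is immediate: by Theorem~\ref{DeepEquivCat} applied to $(\mathcal{E},\nabla)$ with the trivial filtration, $(\mathcal{E}\otimes_{\OO_X}\OO\B_\dR)^{\nabla=0} = \mathbb{M}_0\otimes_{\B_\dR^+}\B_\dR$, and $\mathbb{M} = \Fil^0(\mathcal{E}\otimes_{\OO_X}\OO\B_\dR)^{\nabla=0}$ is contained in the horizontal sections.

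For the graded-piece identification I would argue locally on $X_\proet$. Using Proposition~\ref{DescrBdR}, pass to $\tilde X$ where $\OO\B_\dR^+\cong \B_\dR^+[[X_1,\ldots,X_n]]$. Pick a filtered local basis $f_1,\ldots,f_r$ of $\mathcal{E}$ with $f_i\in \Fil^{p_i}\mathcal{E}\setminus\Fil^{p_i+1}\mathcal{E}$, and set $p_{\min}=\min_i p_i$, $p_{\max}=\max_i p_i$. Applying Theorem~\ref{DeepEquivCat} to $(\mathcal{E},\nabla)$ with trivial filtration, I obtain a horizontal basis $\tilde f_1,\ldots,\tilde f_r$ of $\mathbb{M}_0$; exploiting that the isomorphism $\mathcal{E}\otimes_{\OO_X}\OO\B_\dR^+\cong \mathbb{M}_0\otimes_{\B_\dR^+}\OO\B_\dR^+$ reduces modulo $\ker\theta$ to the canonical identification $\mathcal{E}\otimes_{\OO_X}\hat{\OO}_X\cong \gr^0\mathbb{M}_0$, I may arrange that $\tilde f_i\equiv f_i\pmod{\Fil^1\OO\B_\dR^+}$. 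Writing $f_i=\sum_j g_{ij}\tilde f_j$, the transition matrix $G=(g_{ij})\in \GL_r(\OO\B_\dR^+)$ then satisfies $G\equiv\id\pmod{\Fil^1\OO\B_\dR^+}$. Since $\Fil^0(\mathcal{E}\otimes_{\OO_X}\OO\B_\dR)=\bigoplus_i\Fil^{-p_i}\OO\B_\dR\cdot f_i$, a horizontal section $m=\sum_j d_j\tilde f_j$ (with $d_j\in\B_\dR$) rewritten in the $f$-basis as $m=\sum_i b_i f_i$, with column vectors related by $d=G^Tb$, lies in $\Fil^0(\mathcal{E}\otimes_{\OO_X}\OO\B_\dR)$ iff each $b_i\in\Fil^{-p_i}\OO\B_\dR$.

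Suppose now $m\in\mathbb{M}\cap t^i\mathbb{M}_0$, so $d_j\in t^i\B_\dR^+$. Since $(G^T)^{-1}\equiv\id\pmod{\Fil^1}$, one computes $b_j\equiv d_j\pmod{\Fil^{i+1}\OO\B_\dR}$. The filtration condition $b_j\in\Fil^{-p_j}\OO\B_\dR$ is then automatic when $p_j\geq -i$, whereas when $p_j<-i$ it forces $d_j\in t^{i+1}\B_\dR^+$; thus the image of $m$ in $\gr^i\mathbb{M}_0\cong\mathcal{E}\otimes_{\OO_X}\hat{\OO}_X(i)$ lies in $\Fil^{-i}\mathcal{E}\otimes_{\OO_X}\hat{\OO}_X(i)$. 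Conversely, given a section $\bar m$ of $\Fil^{-i}\mathcal{E}\otimes_{\OO_X}\hat{\OO}_X(i)$, I would take a naive lift $m^{(0)}\in t^i\mathbb{M}_0$ supported on the $\tilde f_j$ with $p_j\geq -i$, then iteratively cancel the deviation of $m^{(0)}$ from $\Fil^0$ by successive corrections living in $t^{i+k}\mathbb{M}_0$ for $k\geq 1$, using $t$-adic completeness of $\mathbb{M}_0$ to conclude.

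For uniqueness, the prescribed graded data forces any admissible $\mathbb{N}$ to satisfy $t^{-p_{\min}}\mathbb{M}_0\subset\mathbb{N}\subset t^{-p_{\max}}\mathbb{M}_0$, whence $\mathbb{N}$ is locally free of rank $r$ over $\B_\dR^+$; comparing $\mathbb{M}$ and $\mathbb{N}$ via their intersection (whose gradeds agree with both) and applying a $t$-adic Nakayama argument then yields $\mathbb{M}=\mathbb{N}$. The main technical obstacle I foresee is in the surjectivity step: one must verify that the infinite correction series not only converges in the $t$-adic topology on $\mathbb{M}_0$, but that its sum genuinely lies in $\Fil^0(\mathcal{E}\otimes_{\OO_X}\OO\B_\dR)$; this requires careful bookkeeping of the interplay between the $\ker\theta$-adic filtration on $\OO\B_\dR^+$ and the $t$-adic filtration on $\mathbb{M}_0$, together with repeated use of $G\equiv\id\pmod{\Fil^1}$.
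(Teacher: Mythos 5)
Your proposal takes a genuinely different, local route from the paper's proof, which references the inductive construction of $\mathbb{M}$ for the first part and uses a cohomological $\Ext^1$-argument for the uniqueness. The local transition-matrix argument for the containment $\mathbb{M}\subset\mathbb{M}_0\otimes_{\B_\dR^+}\B_\dR$ and for the inclusion of graded pieces is sound, and the iterative-correction argument for surjectivity is plausible (though, as you flag, it requires verifying that the $t$-adic limit remains in $\Fil^0(\mathcal{E}\otimes_{\OO_X}\OO\B_\dR)$ — here the key is that $\Fil^{-p_j}\OO\B_\dR$ is $\Fil$-closed and that horizontality passes to the limit).

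However, the uniqueness argument has a real gap, and it is not one you can close by ``$t$-adic Nakayama.'' Uniqueness of $\mathbb{N}$ is \emph{not} a local linear-algebra statement. Over a small affinoid perfectoid $U\in X_\proet$ the Tate twists $\hat{\OO}_X(k)|_U$ are (non-canonically) trivial, and once they are trivialized the successive extensions
\[
0\to \mathbb{M}'\to \mathbb{N}\to \Fil^0\mathcal{E}\to 0
\]
that build up $\mathbb{N}$ are split by a whole $\mathcal{H}om_{\hat{\OO}_X}\bigl(\Fil^0\mathcal{E},\ \text{(graded pieces of $\mathbb{M}'$)}\bigr)(U)$ worth of choices. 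So, locally, many $\B_\dR^+(U)$-lattices in $\mathbb{M}_0(U)\otimes\B_\dR(U)$ satisfy the prescribed graded constraints. The bounding $t^{-p_{\min}}\mathbb{M}_0\subset\mathbb{N}\subset t^{-p_{\max}}\mathbb{M}_0$ and the Nakayama argument only control the ``type'' of the lattice, not its actual position; they cannot distinguish the many possible splittings.

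What makes $\mathbb{N}$ unique is a global cohomological vanishing. In the paper's argument, the extension class of $\mathbb{N}$ lives in $\Ext^1(\Fil^0\mathcal{E},\mathbb{M}')$ (computed in abelian sheaves on $X_\proet$), and the key lemma embeds this group into $\Hom(\Fil^0\mathcal{E},\Fil^{-1}\mathcal{E}\otimes_{\OO_X}\Omega_X^1)$, using that $\nu_\ast\hat{\OO}_X(k)=0$ for $k\geq 1$ and $R^1\nu_\ast\hat{\OO}_X(1)\cong\Omega^1_{X_\et}$ (Corollary \ref{CompDeepStructureSheaf}); the resulting injectivity of the map $\Ext^1(\Fil^0\mathcal{E},\mathbb{M}')\to\Ext^1(\Fil^0\mathcal{E},\Fil^1\mathbb{M}_0)$ gives uniqueness. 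Any correct argument must import this vanishing in some form; the purely local filtered-module reasoning cannot see it.
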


\begin{proof} The first assertion is clear. For the other two assertions, we follow the proof of existence of $\mathbb{M}$ and argue by induction. So, let $n$ and $m$ be as above. Again, the case $n=m$ is trivial, so we assume $n<m$ and (by twisting) $m=0$. Moreover, we define the filtration $\Fil^{\prime \bullet}$ as before, and get a unique $\mathbb{M}^\prime$.

Note that $\mathbb{M}$ has the desired interaction with $\mathbb{M}_0$ if and only if there is a commutative diagram as follows:
\[\xymatrix{
0\ar[r] & \mathbb{M}^\prime \ar[r]\ar[d] & \mathbb{M} \ar[r]\ar[d] & \Fil^0 \mathcal{E}\otimes_{\OO_X} \hat{\OO}_X \ar[r]\ar[d] & 0\\
0\ar[r] & \Fil^1 \mathbb{M}_0 \ar[r] & \mathbb{M}_0 \ar[r] & \mathcal{E}\otimes_{\OO_X} \hat{\OO}_X \ar[r] & 0\\
}\]

It is immediate to check that our construction of $\mathbb{M}$ fulfils this requirement. Conversely, it is enough to check that there is a unique sheaf $\mathbb{N}$ that fits into a diagram
\[\xymatrix{
0\ar[r] & \mathbb{M}^\prime \ar[r]\ar[d] & \mathbb{N} \ar[r]\ar[d] & \Fil^0 \mathcal{E} \ar[r]\ar[d] & 0\\
0\ar[r] & \Fil^1 \mathbb{M}_0 \ar[r] & \mathbb{M}_0 \ar[r] & \mathcal{E}\otimes_{\OO_X} \hat{\OO}_X \ar[r] & 0\\
}\]

\begin{lem} We have injections
\[\begin{aligned}
\Ext^1(\Fil^0 \mathcal{E},\mathbb{M}^\prime)&\hookrightarrow \Hom(\Fil^0 \mathcal{E},\Fil^{-1} \mathcal{E}\otimes_{\OO_X} \Omega_X^1)\ ,\\
\Ext^1(\Fil^0 \mathcal{E},\Fil^1 \mathbb{M}_0)&\hookrightarrow \Hom(\Fil^0 \mathcal{E},\mathcal{E}\otimes_{\OO_X} \Omega_X^1)\ ,
\end{aligned}\]
where we calculate the $\Ext^1$ in the category of abelian sheaves on $X_\proet$.
\end{lem}

\begin{proof} Both $\mathbb{M}^\prime$ and $\Fil^1 \mathbb{M}_0$ are successive extensions of sheaves of the form $\mathcal{F}\otimes_{\OO_X} \hat{\OO}_X(k)$ with $k\geq 1$, where $\mathcal{F}$ is some locally free $\OO_X$-module. One readily reduces the lemma to proving that for two locally free $\OO_X$-modules $\mathcal{F}_1$ and $\mathcal{F}_2$, we have
\[
\Ext^1(\mathcal{F}_1,\mathcal{F}_2\otimes_{\OO_X} \hat{\OO}_X(k)) = \Hom(\mathcal{F}_1,\mathcal{F}_2\otimes_{\OO_X} \Omega_X^1)
\]
if $k=1$, and $=0$ if $k\geq 2$. For this, note that
\[
R\Hom(\nu^\ast \mathcal{F}_{1\et},\mathcal{F}_2\otimes_{\OO_X} \hat{\OO}_X(k)) = R\Hom(\mathcal{F}_{1\et}, R\nu_\ast (\mathcal{F}_2\otimes_{\OO_X} \hat{\OO}_X(k)))\ ,
\]
and we know by Corollary \ref{CompDeepStructureSheaf} that the term $R\nu_\ast (\mathcal{F}_2\otimes_{\OO_X} \hat{\OO}_X(k))$ vanishes in degrees $0$ and $1$ if $k\geq 2$, and vanishes in degree $0$ if $k=1$. This gives vanishing if $k\geq 2$, and if $k=1$, we get the desired identification because $R^1\nu_\ast ( \mathcal{F}_2\otimes_{\OO_X} \hat{\OO}_X(k))\cong \mathcal{F}_{2\et}\otimes_{\OO_X} \Omega_X^1$.
\end{proof}

This shows that the pullback of the lower sequence along $\Fil^0 \mathcal{E}\rightarrow \mathcal{E}\otimes_{\mathcal{O}_X} \hat{\mathcal{O}}_X$ comes in at most one way as the pushout from a sequence on the top, giving the desired uniqueness. Let us remark that the existence of this extension is related to Griffiths transversality once again.
\end{proof}

\begin{thm}\label{DeRhamComparison} Let $X$ be a proper smooth adic space over $\Spa(k,\OO_k)$, let $(\mathcal{E},\nabla,\Fil^\bullet)$ be a filtered module with integrable connection, giving rise to a $\B_\dR^+$-local system $\mathbb{M}$, and let $\bar{k}$ be an algebraic closure of $k$, with completion $\hat{\bar{k}}$. Then there is a canonical isomorphism
\[
H^i(X_{\bar{k}},\mathbb{M})\otimes_{B_\dR^+} B_\dR\cong H^i_\dR(X,\mathcal{E})\otimes_k B_\dR\ ,
\]
compatible with filtration and $\Gal(\bar{k}/k)$-action. Here $B_\dR=\B_\dR(\hat{\bar{k}},\OO_{\hat{\bar{k}}})$ is Fontaine's field of $p$-adic periods.

Moreover, there is a $\Gal(\bar{k}/k)$-equivariant isomorphism
\[
H^i(X_{\bar{k}},\gr^0 \mathbb{M})\cong \bigoplus_j H^{i-j,j}_{\mathrm{Hodge}}(X,\mathcal{E})\otimes_k \hat{\bar{k}}(-j)\ ,
\]
where
\[
H^{i-j,j}_{\mathrm{Hodge}}(X,\mathcal{E}) = H^i(X,\gr^j(DR(\mathcal{E})))
\]
denotes the Hodge cohomology in bidegree $(i-j,j)$ of $\mathcal{E}$, using the de Rham-complex $DR(\mathcal{E})$ of $\mathcal{E}$ with its natural filtration.
\end{thm}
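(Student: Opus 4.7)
The plan is to use the Poincar\'e lemma (Corollary~\ref{DeepPoincareLemma}) together with the associated property relating $\mathbb{M}$ to $(\mathcal{E},\nabla,\Fil^\bullet)$ to resolve $\mathbb{M}\otimes_{\B_\dR^+}\B_\dR$ by a de Rham complex with coefficients in $\OO\B_\dR$, and then to compute its cohomology on $X_{\bar{k}}$ by pushing down to $X_\et$ via the calculation of $R\nu_\ast$ on graded pieces of $\OO\B_\dR$ supplied by Proposition~\ref{CohomOOBdR}.

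First, I tensor the Poincar\'e lemma with the locally free $\B_\dR^+$-module $\mathbb{M}$ and invert $t$. The associated property gives $\mathcal{M}\otimes_{\OO\B_\dR^+}\OO\B_\dR\cong\mathcal{E}\otimes_{\OO_X}\OO\B_\dR$ compatibly with filtration and connection, producing a filtered quasi-isomorphism
\[
\mathbb{M}\otimes_{\B_\dR^+}\B_\dR \;\simeq\; DR(\mathcal{E}\otimes_{\OO_X}\OO\B_\dR)
\]
on $X_\proet$, the right-hand side being equipped with the convolution of the Hodge filtration on $\mathcal{E}$ and the natural filtration on $\OO\B_\dR$; Griffiths transversality on both factors combined with the strict exactness on gradeds in Corollary~\ref{DeepPoincareLemma} makes the quasi-isomorphism strict. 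Applying $R\Gamma(X_{\bar{k},\proet},-)$, the key local input is that over $X_{\bar{k}}$ Proposition~\ref{CohomOOBdR}(i), together with the fact that $\hat{\bar{k}}$ contains all $p$-power roots of unity, implies that $R\nu_\ast(\gr^i\OO\B_\dR)|_{X_{\bar{k},\et}}$ is concentrated in degree zero and equals $\OO_{X_{\bar{k}}}\otimes_k\hat{\bar{k}}(i)$ on a basis of affinoids admitting suitable \'etale maps to $\mathbb{T}^n$. A \v{C}ech computation with a finite cover of $X$ by such affinoids (as in Lemma~\ref{ChoiceOfCovers}) together with properness of $X$ then yields $R\Gamma(X_{\bar{k},\proet},\mathcal{E}\otimes\OO\B_\dR\otimes\Omega_X^r)\cong R\Gamma(X_\et,\mathcal{E}\otimes\Omega_X^r)\otimes_k B_\dR$ compatibly in $r$, and hence the first claim
\[
H^i(X_{\bar{k}},\mathbb{M})\otimes_{B_\dR^+}B_\dR \;\cong\; H^i_\dR(X,\mathcal{E})\otimes_k B_\dR,
\]
with filtration and $\Gal(\bar{k}/k)$-equivariance following from the naturality of the construction.

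For the Hodge-Tate decomposition I pass to the $0$th graded of the filtered quasi-isomorphism above. By strict exactness on gradeds combined with the polynomial structure $\gr^\bullet\OO\B_\dR\cong\hat{\OO}_X[\xi^{\pm 1},X_1,\ldots,X_n]$ (Koszul-type splitting driven by $\nabla(u_i)=dT_i$), one obtains $\gr^0 DR(\mathcal{E}\otimes\OO\B_\dR) \simeq \bigoplus_j \gr^j DR(\mathcal{E})\otimes\gr^{-j}\OO\B_\dR$, and the local computation of the previous step identifies $R\nu_\ast$ of each summand with $\gr^j DR(\mathcal{E})\otimes_k\hat{\bar{k}}(-j)$, concentrated in degree zero, giving
\[
H^i(X_{\bar{k}},\gr^0\mathbb{M}) \;\cong\; \bigoplus_j H^{i-j,j}_{\mathrm{Hodge}}(X,\mathcal{E})\otimes_k\hat{\bar{k}}(-j).
\]
The hardest part will be the filtration bookkeeping: matching the filtration on $\mathbb{M}\otimes\B_\dR$ with that on $DR(\mathcal{E}\otimes\OO\B_\dR)$ along the Poincar\'e resolution, controlling the interaction of the combined filtration with the polynomial-ring structure of $\gr^\bullet\OO\B_\dR$ to achieve the direct-sum decomposition at $\gr^0$, and ensuring that the local-to-global passage via \v{C}ech descent respects the filtration and commutes suitably with $\otimes_{B_\dR^+}B_\dR$---all ultimately resting on the strict exactness in Corollary~\ref{DeepPoincareLemma} and the precise vanishing from Proposition~\ref{CohomOOBdR}.
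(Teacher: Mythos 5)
Your proposal follows essentially the same route as the paper's proof: replace $\mathbb{M}\otimes_{\B_\dR^+}\B_\dR$ by the filtered complex $DR(\mathcal{E})\otimes_{\OO_X}\OO\B_\dR$ via the Poincar\'e lemma and the associated property, reduce the comparison to the natural map $R\Gamma(X_{\bar k},DR(\mathcal{E}))\otimes_{\bar k} B_\dR\to R\Gamma(X_{\bar k},DR(\mathcal{E})\otimes_{\OO_X}\OO\B_\dR)$ in the filtered derived category, check it on gradeds using Proposition~\ref{CohomOOBdR}(i), and invoke properness/Kiehl finiteness for the local-to-global passage, with the Hodge--Tate decomposition emerging from $\gr^0$. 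The only cosmetic difference is that you phrase the comparison as a direct ungraded isomorphism $R\Gamma(X_{\bar k,\proet},\mathcal{E}\otimes\OO\B_\dR\otimes\Omega_X^r)\cong R\Gamma(X_\et,\mathcal{E}\otimes\Omega_X^r)\otimes_k B_\dR$ rather than working strictly on associated gradeds; this is fine as long as it is understood as a consequence of the filtered quasi-isomorphism, since the graded pieces (not the whole $\OO\B_\dR$) are where Proposition~\ref{CohomOOBdR} directly applies.
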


\begin{rem} It does not matter whether the de Rham and Hodge cohomology groups are computed on the pro-\'{e}tale, the \'{e}tale, or the analytic site. If $\mathcal{E}=\OO_X$ with trivial filtration and connection, then $\gr^j(DR(\mathcal{E})) = \Omega_X^j[-j]$, and hence
\[
H^{i-j,j}_{\mathrm{Hodge}}(X,\mathcal{E}) = H^{i-j}(X,\Omega_X^j)\ .
\]
\end{rem}

\begin{proof} The de Rham complex of $\mathcal{E}$ is
\[
DR(\mathcal{E}) = ( 0\rightarrow \mathcal{E}\buildrel\nabla\over\rightarrow \mathcal{E}\otimes \Omega_X^1\buildrel\nabla\over\rightarrow\ldots )\ ,
\]
and it is filtered by the subcomplexes
\[
\Fil^m DR(\mathcal{E}) = ( 0\rightarrow \Fil^m \mathcal{E}\buildrel\nabla\over\rightarrow \Fil^{m-1}\mathcal{E}\otimes \Omega_X^1\buildrel\nabla\over\rightarrow\ldots )\ .
\]
On the other hand, one may replace $\mathbb{M}\otimes_{\B_\dR^+} \B_\dR$ by the quasiisomorphic complex
\[
DR(\mathcal{E})\otimes_{\OO_X} \OO\B_\dR = ( 0\rightarrow \mathcal{E}\otimes \OO\B_\dR\buildrel\nabla\over\rightarrow \mathcal{E}\otimes \OO\B_\dR\otimes \Omega_X^1\buildrel\nabla\over\rightarrow \ldots )
\]
with its natural filtration. There is a natural map of filtered complexes,
\[
DR(\mathcal{E})\rightarrow DR(\mathcal{E})\otimes_{\OO_X} \OO\B_\dR\ .
\]
One gets an induced morphism in the filtered derived category
\[
R\Gamma(X_{\bar{k}},DR(\mathcal{E}))\otimes_{\bar{k}} B_\dR \rightarrow R\Gamma(X_{\bar{k}},DR(\mathcal{E})\otimes_{\OO_X} \OO\B_\dR)\ .
\]
We claim that this map is a quasiisomorphism in the filtered derived category. It suffices to check this on gradeds. Further filtering by using the naive filtration of $DR(\mathcal{E})$, one reduces to checking the following statement.

\begin{lem} Let $\mathcal{A}$ be a locally free $\OO_X$-module of finite rank. Then for all $i\in \Z$, the map
\[
R\Gamma(X_{\bar{k}},\mathcal{A})\otimes_{\bar{k}} \gr^i B_\dR \rightarrow R\Gamma(X_{\bar{k}},\mathcal{A}\otimes_{\OO_X} \gr^i \OO\B_\dR)
\]
is a quasiisomorphism.
\end{lem}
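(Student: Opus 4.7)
\medskip

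\textbf{Proposal.} My plan is to compute the right-hand side via a Leray spectral sequence for $\nu\colon X_\proet\to X_\et$, reducing to a local statement about $R\nu_\ast \gr^i\OO\B_\dR$ that is already essentially proved in Proposition \ref{CohomOOBdR}.

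First I would apply the projection formula for $\nu$. Since $\mathcal{A}$ is locally free of finite rank over $\OO_X$, we have $\mathcal{A}=\nu^\ast\mathcal{A}_\et$ for the corresponding coherent sheaf on $X_\et$, and
\[
R\nu_\ast(\mathcal{A}\otimes_{\OO_X}\gr^i\OO\B_\dR)\;\cong\;\mathcal{A}_\et\otimes_{\OO_{X_\et}}R\nu_\ast(\gr^i\OO\B_\dR),
\]
because $\mathcal{A}_\et$ is locally a direct summand of a free module and $\nu^\ast$ is exact. Restricting everything along $X_{\bar k}\to X$ and using the Leray spectral sequence, matters reduce to identifying $R\nu_\ast(\gr^i\OO\B_\dR)$ on $X_{\bar k,\et}$ with $\OO_{X_\et}\otimes_{\bar k}\gr^i B_\dR$ (appropriately completed), and then running coherent cohomology on the proper smooth $X_{\bar k}$.

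The key local identification comes from Proposition \ref{CohomOOBdR}(i). Set $K=\hat{\bar k}$, which is perfectoid and contains all $p$-power roots of unity, and use that via multiplication by $t^i$ we have $\gr^i\OO\B_\dR\cong\gr^0\OO\B_\dR$ as sheaves on $X_\proet/X_K$ (with a Tate twist accounting for the Galois action). For affinoid $X=\Spa(R,R^+)$ admitting an \'etale map to $\mathbb{T}^n$ that factors as a composite of rational embeddings and finite \'etale maps, Proposition \ref{CohomOOBdR}(i) says $H^q(X_K,\gr^0\OO\B_\dR)$ vanishes for $q>0$ and equals $R\,\hat\otimes_k K$ for $q=0$. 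Such affinoids form a basis of $X_\et$, so this computation sheafifies to
\[
R\nu_\ast\gr^i\OO\B_\dR\big|_{X_{K,\et}}\;\simeq\;\widehat{\OO}_{X_\et}\,\hat\otimes_k\gr^i B_\dR\qquad(\text{concentrated in degree }0),
\]
where on the right $\gr^i B_\dR=K(i)$ is one-dimensional over $K$.

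Combining the two steps, I obtain
\[
R\Gamma(X_{\bar k},\mathcal{A}\otimes_{\OO_X}\gr^i\OO\B_\dR)\;\simeq\;R\Gamma\bigl(X_{K,\et},\mathcal{A}_\et\,\hat\otimes_k\gr^i B_\dR\bigr).
\]
Now I invoke properness of $X/k$: the coherent cohomology $H^j(X_\et,\mathcal{A}_\et)$ is a finite-dimensional $k$-vector space, so both flat base change and the completion along $\bar k\to\hat{\bar k}$ commute with taking cohomology. Hence
\[
R\Gamma\bigl(X_{K,\et},\mathcal{A}_\et\,\hat\otimes_k K\bigr)\otimes_K\gr^i B_\dR\;\simeq\;R\Gamma(X_{\bar k,\et},\mathcal{A}_\et)\otimes_{\bar k}\gr^i B_\dR,
\]
which is exactly the left-hand side, and one checks that the resulting isomorphism is the natural map.

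The main obstacle, in my view, is not the local vanishing (which is Proposition \ref{CohomOOBdR}) but the careful bookkeeping in the last paragraph: matching the completed tensor products appearing after pro-\'etale base change to $K=\hat{\bar k}$ with the naive tensor products appearing in the finite-dimensional $\bar k$-vector space $H^j(X_{\bar k,\et},\mathcal{A})\otimes_{\bar k}\gr^i B_\dR$. This is where the properness of $X$ is essential, and it is the step most prone to subtle errors; everything else is formal manipulation of the projection formula and the local computation already established.
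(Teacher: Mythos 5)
Your proposal is correct and follows essentially the same route as the paper: reduce to $i=0$ by twisting, use Proposition \ref{CohomOOBdR}(i) to see that on a nice affinoid cover the pro-\'etale cohomology of $\mathcal{A}\otimes\gr^0\OO\B_\dR$ is concentrated in degree $0$ and computes coherent cohomology on $X_{\hat{\bar{k}}}$, then invoke properness to identify this with the base change of $H^i(X,\mathcal{A})$. The only cosmetic difference is that the paper runs the \v{C}ech complex of an affinoid cover on which $\mathcal{A}$ is free, whereas you package the same reduction as a projection formula plus Leray spectral sequence for $\nu$; you also correctly locate the one delicate point, namely using finite-dimensionality of coherent cohomology of the proper $X$ to replace completed tensor products over $\hat{\bar{k}}$ by ordinary ones over $\bar{k}$.
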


\begin{proof} Twisting, one reduces to $i=0$. Then $\gr^0 B_\dR = \hat{\bar{k}}$. The statement becomes the identity
\[
H^i(X_{\bar{k}},\mathcal{A})\otimes_{\bar{k}} \hat{\bar{k}} \cong H^i(X_{\bar{k}},\mathcal{A}\otimes_{\OO_X} \gr^0 \OO\B_\dR)
\]
for all $i\geq 0$. For this, cover $X$ by affinoid open subsets on which $\mathcal{A}$ becomes free; one sees that the left-hand side is $H^i(X_\an,\mathcal{A})\otimes_k \hat{\bar{k}}$, and the right-hand side is $H^i(X_{\hat{\bar{k}},\an},\mathcal{A}_{\hat{\bar{k}}})$, by Proposition \ref{CohomOOBdR} (i). But coherent cohomology of proper adic spaces is finite-dimensional and commutes with extension of the base-field, so we get the desired result.
\end{proof}

Now we get the comparison results. For example, for the Hodge-Tate comparison, note that the $i$-th cohomology of $\gr^0 \mathbb{M}$ is computed by
\[
H^i(X_{\bar{k}},\gr^0(DR(\mathcal{E})\otimes_{\OO_X} \OO\B_\dR))\ ,
\]
which is identified with
\[
\bigoplus_j H^i(X_{\bar{k}},\gr^j(DR(\mathcal{E})))\otimes_{\bar{k}} \gr^{-j} B_\dR = \bigoplus_j H^{i-j,j}_{\mathrm{Hodge}}(X,\mathcal{E})\otimes_k \hat{\bar{k}}(-j)
\]
under the previous quasiisomorphism.
\end{proof}

\section{Applications}\label{ApplicationsSection}

\begin{definition} Let $X$ be a locally noetherian adic space or locally noetherian scheme. A lisse $\Z_p$-sheaf $\mathbb{L}_\bullet$ on $X_\et$ is an inverse system of sheaves of $\Z/p^n$-modules $\mathbb{L}_n$ on $X_\et$, such that each $\mathbb{L}_n$ is locally on $X_\et$ a constant sheaf associated to a finitely generated $\Z/p^n$-module, and such that this inverse system is isomorphic in the pro-category to an inverse system for which $\mathbb{L}_{n+1}/p^n\cong \mathbb{L}_n$.

Let $\hat{\Z}_p=\varprojlim \Z/p^n$ as sheaves on $X_\proet$. Then a lisse $\hat{\Z}_p$-sheaf on $X_\proet$ is a sheaf $\mathbb{L}$ of $\hat{\Z}_p$-modules on $X_\proet$, such that locally in $X_\proet$, $\mathbb{L}$ is isomorphic to $\hat{\Z}_p\otimes_{\Z_p} M$, where $M$ is a finitely generated $\Z_p$-module.
\end{definition}

Using Theorem \ref{TheoremKPi1} and Lemma \ref{InverseLimitExact}, one immediately verifies the following proposition.

\begin{prop} Let $X$ be a locally noetherian adic space over $\Spa(\Q_p,\Z_p)$, and let $\mathbb{L}_\bullet$ be a lisse $\Z_p$-sheaf on $X_\et$. Then $\mathbb{L} = \widehat{\mathbb{L}_\bullet} = \varprojlim \nu^\ast \mathbb{L}_n$ is a lisse sheaf of $\hat{\Z}_p$-modules on $X_\proet$. This functor is an equivalence of categories. Moreover, $R^j \varprojlim \nu^\ast \mathbb{L}_n = 0$ for $j>0$.

In particular, $\mathbb{L}\otimes_{\hat{\Z}_p} \hat{\Q}_p$ is a $\hat{\Q}_p=\hat{\Z}_p[p^{-1}]$-local system on $X_\proet$. $\hfill \Box$
\end{prop}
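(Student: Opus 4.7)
The plan is to work locally on a connected affinoid noetherian $U_0\in X_\et$, construct a pro-finite \'etale cover on which all $\mathbb{L}_n$ trivialize simultaneously, and then apply Lemma \ref{InverseLimitExact} on that basis. By Theorem \ref{TheoremKPi1} and the equivalence $U_{0,\fet}\cong \pi_1(U_0,\bar{x})-\fsets$, each $\mathbb{L}_n$ corresponds to a continuous representation of $G:=\pi_1(U_0,\bar{x})$ on a finitely generated $\Z/p^n$-module $M_n$, with surjective transition maps $M_{n+1}\twoheadrightarrow M_n$. Since $\mathrm{Aut}(M_n)$ is finite, the representation factors through a finite quotient of $G$, so a finite \'etale Galois cover $V_n\to U_0$ trivializes $\mathbb{L}_n$. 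Passing to the universal pro-finite \'etale cover $\tilde{U}_0 = \varprojlim_H V_H \in X_\proet$ (with $H$ ranging over open normal subgroups of $G$), every $\nu^\ast\mathbb{L}_n$ restricts to the constant sheaf $M_n$; the family $\{\tilde{U}_0\}$ generates the topology of $X_\proet$, since any $U\in X_\proet$ is locally pro-finite \'etale over some $U_0$ and hence admits a covering map from $\tilde{U}_0$ by its universal property.

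On this basis I would verify the two hypotheses of Lemma \ref{InverseLimitExact}. Condition (i), $R^1\varprojlim(\nu^\ast\mathbb{L}_n)(\tilde{U}_0) = R^1\varprojlim M_n = 0$, holds by Mittag-Leffler. For condition (ii), Lemma \ref{CompEtProet} yields $H^j(\tilde{U}_0,\nu^\ast\mathbb{L}_n) = \varinjlim_H H^j(V_H,\mathbb{L}_n)$, and Theorem \ref{TheoremKPi1} identifies each term with $H^j_\cont(H,M_n)$, where for $H$ sufficiently small the action on $M_n$ is trivial. Since any continuous cohomology class factors through a finite quotient of $H$, restricting to a smaller open $H'\subset H$ inside the kernel of this quotient kills the class, and the direct limit vanishes for $j>0$. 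Lemma \ref{InverseLimitExact} then gives $R^j\varprojlim\nu^\ast\mathbb{L}_n = 0$ for $j>0$, and $\mathbb{L}|_{\tilde{U}_0} = \hat{\Z}_p|_{\tilde{U}_0}\otimes_{\Z_p} M$ with $M=\varprojlim M_n$ finitely generated over $\Z_p$, proving lisseness.

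For the equivalence of categories, the candidate inverse sends a lisse $\hat{\Z}_p$-sheaf $\mathbb{L}$ to the inverse system $(\mathbb{L}/p^n)_n$: locally on $X_\proet$, the sheaf $\mathbb{L}/p^n$ is constant with finitely generated value $M_n$, so via the equivalence $U_{0,\fet}\cong G-\fsets$ from Theorem \ref{TheoremKPi1} it descends canonically to a lisse $\Z/p^n$-sheaf on $X_\et$, with $M_{n+1}\twoheadrightarrow M_n$ providing the compatibility. The two compositions are naturally isomorphic because the identity $(\varprojlim\nu^\ast\mathbb{L}_n)/p^n\cong \nu^\ast\mathbb{L}_n$ follows from the vanishing of higher inverse limits just established. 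Finally, $\mathbb{L}\otimes_{\hat{\Z}_p}\hat{\Q}_p$ is locally of the form $\hat{\Q}_p\otimes_{\Q_p} M[1/p]$ with $M[1/p]$ finite-dimensional over $\Q_p$, hence a $\hat{\Q}_p$-local system.

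The hard part will be condition (ii) of the basis verification, since it crucially invokes Theorem \ref{TheoremKPi1} to translate pro-\'etale cohomology on $\tilde{U}_0$ into a direct limit of continuous group cohomologies whose vanishing rests on the finite-level nature of continuous cohomology classes; once this is granted, the remaining steps are essentially formal bookkeeping.
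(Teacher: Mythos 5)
Your approach follows exactly the paper's indicated route (the paper's proof is a single sentence invoking Theorem \ref{TheoremKPi1} and Lemma \ref{InverseLimitExact}), and most of the detailed verification is sound. There is, however, one genuine gap.

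The claim that $\{\tilde{U}_0\}$ (over varying connected affinoid $U_0$) generates the topology of $X_\proet$ is false, and the justification ``admits a covering map from $\tilde{U}_0$ by its universal property'' does not work. Under the equivalence $U_{0,\profet}\cong \pi_1(U_0,\bar x)\text{-}\pfsets$, $\tilde{U}_0$ corresponds to $G:=\pi_1(U_0,\bar x)$ with its translation action, and a morphism $\tilde{U}_0\to U$ over $U_0$ corresponds to picking a point $s\in S(U)$; its image is the orbit $Gs$, which is typically not open (take $U$ corresponding to $\Z_p$ with trivial $G$-action: every orbit is a single non-isolated point). So such a morphism is not a pro-\'etale cover, and you cannot cover $U$ by copies of $\tilde{U}_0$. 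Since Lemma \ref{InverseLimitExact} requires an honest basis, this must be repaired. The fix is to take as basis the objects $\tilde{U}_0\times T$ for $T$ a profinite set (equivalently, pro-finite \'etale objects over $U_0$ with \emph{free} $G$-action): any $U\in X_\proet$ locally pro-finite \'etale over $U_0$ is covered by $U\times_{U_0}\tilde{U}_0\cong S(U)\times G$, which has free $G$-action and hence is of this form. Your two verifications then still go through: for (i), $(\nu^\ast\mathbb{L}_n)(\tilde{U}_0\times T)=C(T,M_n)$ and the transition maps $C(T,M_{n+1})\to C(T,M_n)$ remain surjective (lift locally constant maps clopen-piece by clopen-piece), so Mittag--Leffler applies; for (ii), a class of $H^j(\tilde{U}_0\times T,\nu^\ast\mathbb{L}_n)=\varinjlim_{k,H}\prod_{T_k}H^j_{\cont}(H,M_n)$ is represented at a finite stage $(k,H)$, and since $T_k$ is finite you can intersect the finitely many open subgroups killing each component to kill it. A second, minor point: the surjectivity $M_{n+1}\twoheadrightarrow M_n$ and the identification $M_n\cong M/p^n$ that underlies $(\varprojlim\nu^\ast\mathbb{L}_n)/p^n\cong\nu^\ast\mathbb{L}_n$ both presuppose that you have first replaced $\mathbb{L}_\bullet$ by a pro-isomorphic system satisfying $\mathbb{L}_{n+1}/p^n\cong\mathbb{L}_n$, which the definition allows but which you should say explicitly. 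With these corrections the argument is complete and matches the paper's intended proof.
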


Let $f: X\rightarrow Y$ be a proper smooth morphism of locally noetherian adic spaces or locally noetherian schemes, and let $\mathbb{L}_\bullet$ be a lisse $\Z_p$-sheaf on $X_\et$. If $f$ is a morphism of schemes of characteristic prime to $p$, then the inverse system $R^if_{\et\ast} \mathbb{L}_\bullet$ of the $R^if_{\et\ast} \mathbb{L}_n$ is a lisse $\Z_p$-sheaf on $Y_\et$.\footnote{Even if the $\mathbb{L}_n$ satisfy $\mathbb{L}_{n+1}/p^n\cong \mathbb{L}_n$, this may not be true for the $R^if_{\et\ast} \mathbb{L}_n$.} Moreover, as the higher $R^j \varprojlim \nu^\ast \mathbb{L}_n$ vanish, we have
\[
\widehat{R^if_{\et\ast} \mathbb{L}_\bullet} = R^if_{\proet\ast} \widehat{\mathbb{L}_\bullet}\ .
\]

\begin{definition} Let $k$ be a discretely valued complete nonarchimedean extension of $\Q_p$ with perfect residue field $\kappa$ and ring of integers $\OO_k$, and let $X$ be a proper smooth adic space over $\Spa(k,\OO_k)$. A lisse $\hat{\Z}_p$-sheaf $\mathbb{L}$ is said to be de Rham if the associated $\B_\dR^+$-local system $\mathbb{M} = \mathbb{L}\otimes_{\hat{\Z}_p} \B_\dR^+$ is associated to some filtered module with integrable connection $(\mathcal{E},\nabla,\Fil^\bullet)$.
\end{definition}

In the following, we write $A_\inf$, $B_\inf$, etc., for the 'absolute' period rings as defined by Fontaine.

\begin{thm}\label{CompAbsolute} Let $k$ be a discretely valued complete nonarchimedean extension of $\Q_p$ with perfect residue field $\kappa$, ring of integers $\OO_k$, and algebraic closure $\bar{k}$, and let $X$ be a proper smooth adic space over $\Spa(k,\OO_k)$. For any lisse $\hat{\Z}_p$-sheaf $\mathbb{L}$ on $X_\proet$ with associated $\B_\dR^+$-local system $\mathbb{M} = \mathbb{L}\otimes_{\hat{\Z}_p} \B_\dR^+$, we have a $\Gal(\bar{k}/k)$-equivariant isomorphism
\[
H^i(X_{\bar{k}},\mathbb{L})\otimes_{\Z_p} B_\dR^+\cong H^i(X_{\bar{k}},\mathbb{M})\ .
\]
If $\mathbb{L}$ is de Rham, with associated filtered module with integrable connection $(\mathcal{E},\nabla,\Fil^\bullet)$, then the Hodge-de Rham spectral sequence
\[
H^{i-j,j}_{\mathrm{Hodge}}(X,\mathcal{E})\Rightarrow H^i_\dR(X,\mathcal{E})
\]
degenerates, and there is a $\Gal(\bar{k}/k)$-equivariant isomorphism
\[
H^i(X_{\bar{k}},\mathbb{L})\otimes_{\Z_p} B_\dR\cong H^i_\dR(X,\mathcal{E})\otimes_k B_\dR
\]
preserving filtrations. In particular, there is also a $\Gal(\bar{k}/k)$-equivariant isomorphism
\[
H^i(X_{\bar{k}},\mathbb{L})\otimes_{\Z_p} \hat{\bar{k}}\cong \bigoplus_j H^{i-j,j}_{\mathrm{Hodge}}(X,\mathcal{E})\otimes_k \hat{\bar{k}}(-j)\ .
\]
\end{thm}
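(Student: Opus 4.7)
Theorem \ref{CompAbsolute} has two pieces of content beyond Theorem \ref{DeRhamComparison}: the primitive comparison
\[
H^i(X_{\bar{k}},\mathbb{L})\otimes_{\Z_p} B_\dR^+ \cong H^i(X_{\bar{k}},\mathbb{M}),
\]
valid for all lisse $\hat{\Z}_p$-sheaves $\mathbb{L}$, and, in the de Rham case, the degeneration of the Hodge-de Rham spectral sequence. Granted both, inverting $t$ and splicing with Theorem \ref{DeRhamComparison} produces the filtered $\Gal(\bar{k}/k)$-equivariant isomorphism $H^i(X_{\bar{k}},\mathbb{L})\otimes_{\Z_p} B_\dR \cong H^i_\dR(X,\mathcal{E})\otimes_k B_\dR$; the Hodge-Tate decomposition is the graded $j=0$ part of the same chain combined with the Hodge-Tate half of Theorem \ref{DeRhamComparison}.

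\textbf{Step 1: comparison with $\hat{\OO}_X$.} First I would promote Theorem \ref{Finiteness} from $\mathbb{F}_p$-coefficients to all finite levels by establishing, for every $n$, an almost isomorphism of almost $\OO_{\hat{\bar{k}}}$-modules
\[
H^i(X_{\bar{k},\et},\mathbb{L}_n)\otimes_{\Z/p^n} \OO_{\hat{\bar{k}}}/p^n \;\cong\; H^i(X_{\bar{k},\proet},\mathbb{L}_n\otimes \OO_X^+/p^n),
\]
where $\mathbb{L}_n:=\mathbb{L}/p^n$. The case $n=1$ is Theorem \ref{Finiteness}. For the inductive step, local triviality of $\mathbb{L}$ produces a globally exact short exact sequence of sheaves
\[
0\to \mathbb{L}_1\otimes \OO_X^+/p \to \mathbb{L}_n\otimes \OO_X^+/p^n \to \mathbb{L}_{n-1}\otimes \OO_X^+/p^{n-1}\to 0,
\]
and an analogous sequence with $\OO_{\hat{\bar{k}}}$ in place of $\OO_X^+$; a five-lemma comparison of the two long exact sequences concludes the induction. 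Passing to the inverse limit with Lemma \ref{InverseLimitExact} (and using finiteness of the $H^i(X_{\bar{k},\et},\mathbb{L}_n)$ to kill $R^1\varprojlim$) and inverting $p$ then gives the $\Gal$-equivariant
\[
H^i(X_{\bar{k}},\mathbb{L})\otimes_{\Z_p} \hat{\bar{k}} \cong H^i(X_{\bar{k},\proet},\mathbb{L}\otimes_{\hat{\Z}_p} \hat{\OO}_X).
\]

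\textbf{Step 2: devissage to $\B_\dR^+$.} For the $B_\dR^+$-comparison, filter $\mathbb{M}=\mathbb{L}\otimes \B_\dR^+$ by $\mathbb{L}\otimes \Fil^n \B_\dR^+$. Since $\gr^j \B_\dR^+ \cong \hat{\OO}_X(j)$, Step 1 applied with Tate twists yields
\[
H^i(X_{\bar{k}},\mathbb{L}\otimes \gr^j \B_\dR^+) \cong H^i(X_{\bar{k}},\mathbb{L})\otimes_{\Z_p} \gr^j B_\dR^+.
\]
Inducting on $n$ through the sequences $0\to \mathbb{L}\otimes \gr^n \B_\dR^+ \to \mathbb{L}\otimes \B_\dR^+/\Fil^{n+1} \to \mathbb{L}\otimes \B_\dR^+/\Fil^n \to 0$ and the five lemma produces filtered $\Gal$-equivariant isomorphisms at every truncation:
\[
H^i(X_{\bar{k}},\mathbb{L}\otimes \B_\dR^+/\Fil^n) \cong H^i(X_{\bar{k}},\mathbb{L})\otimes_{\Z_p} B_\dR^+/\Fil^n B_\dR^+.
\]
Taking $R\varprojlim_n$, once more invoking Lemma \ref{InverseLimitExact} and finiteness at each stage to ensure $R^1\varprojlim=0$, gives the primitive comparison.

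\textbf{Step 3: degeneration and main obstacle.} In the de Rham case, the filtered comparison with $H^i_\dR(X,\mathcal{E})\otimes_k B_\dR$ produced by Theorem \ref{DeRhamComparison} gives, upon taking $B_\dR$-dimensions, the equality $\dim_k H^i_\dR(X,\mathcal{E}) = \dim_{\Q_p} (H^i(X_{\bar{k}},\mathbb{L})\otimes_{\Z_p} \Q_p)$. On the other hand, $\gr^0 \mathbb{M} = \mathbb{L}\otimes \hat{\OO}_X$, so combining the Hodge-Tate half of Theorem \ref{DeRhamComparison} with (Tate-twisted) Step 1 produces the $\Gal$-equivariant Hodge-Tate decomposition
\[
H^i(X_{\bar{k}},\mathbb{L})\otimes_{\Z_p} \hat{\bar{k}} \cong \bigoplus_j H^{i-j,j}_{\mathrm{Hodge}}(X,\mathcal{E})\otimes_k \hat{\bar{k}}(-j),
\]
whence $\dim_{\Q_p} (H^i(X_{\bar{k}},\mathbb{L})\otimes_{\Z_p}\Q_p) = \sum_j \dim_k H^{i-j,j}_{\mathrm{Hodge}}(X,\mathcal{E})$. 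Since the Hodge-de Rham spectral sequence always satisfies the reverse inequality $\dim_k H^i_\dR \leq \sum_j \dim_k H^{i-j,j}_{\mathrm{Hodge}}$, both must be equalities, which is degeneration. The principal obstacle is Step 1: one must pilot the five-lemma induction entirely in the almost category, keep the $H^i(X_{\bar{k},\et},\mathbb{L}_n)$ uniformly under finite (almost) control, and verify that the resulting pro-system is Mittag-Leffler so that passage to the inverse limit behaves as expected. Once this foundation is laid, Steps 2 and 3 are essentially formal chases of exact sequences and dimension counts.
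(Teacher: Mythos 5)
Your strategy — induct on finite levels starting from Theorem \ref{Finiteness}, pass to the inverse limit via Lemma \ref{InverseLimitExact}, and derive Hodge–de Rham degeneration from Theorem \ref{DeRhamComparison} by a dimension count — matches the paper, and Steps 2 and 3 are sound. However, the short exact sequence at the heart of your Step 1,
\[
0\to \mathbb{L}_1\otimes \OO_X^+/p \to \mathbb{L}_n\otimes \OO_X^+/p^n \to \mathbb{L}_{n-1}\otimes \OO_X^+/p^{n-1}\to 0,
\]
is not exact when $\mathbb{L}$ has $p$-torsion, which the definition of a lisse $\hat{\Z}_p$-sheaf permits. For example if $\mathbb{L}=\hat{\Z}_p/p$ and $n=2$, then $\mathbb{L}_1=\mathbb{L}_2=\hat{\Z}_p/p$ and the left-hand map is multiplication by $p$, i.e. zero, so the sequence fails already at the level of local systems. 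The correct dévissage inducts over all $\Z/p^n$-local systems $\mathbb{L}'$ using $0\to p\mathbb{L}'\to\mathbb{L}'\to\mathbb{L}'/p\to 0$, where $p\mathbb{L}'$ is a $\Z/p^{n-1}$-local system and $\mathbb{L}'/p$ is an $\mathbb{F}_p$-local system; tensoring over $\hat{\Z}_p$ with the $\Z_p$-flat coefficient sheaf ($\OO_X^+$, or $\A_\inf$ in the paper's version) preserves exactness, and the five lemma then closes the induction. With this repaired, the remainder of Step 1, including the Mittag–Leffler verification via the resulting finiteness of each $H^i(X_{\bar k},\mathbb{L}_n)$, goes through.

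It is also worth noting that your route through the period sheaves differs somewhat from the paper's. You run the finite-level induction with $\OO_X^+/p^n$, invert $p$ to obtain the honest $\hat{\OO}_X$-comparison, and then climb the filtration on $\B_\dR^+$ using $\gr^j\B_\dR^+\cong\hat{\OO}_X(j)$. The paper instead runs the induction with $\A_\inf^a/p^n$, passes to $\A_\inf^a$, inverts $p$ to get $\B_\inf^a$, and then promotes the almost isomorphism to an honest one by observing that the ideal defining the almost setting becomes invertible in $\B_\inf/(\ker\theta)^k$; only then does it take the $k$-limit. Since $\B_\dR^+/\Fil^k=\B_\inf/(\ker\theta)^k$, both routes meet at the same intermediate comparison, so your filtration dévissage is a legitimate substitute for the paper's multiplication-by-$\xi^k$ shortcut, trading the latter's quick invertibility observation for one extra layer of five-lemma arguments. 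The paper's $\A_\inf$-route keeps the whole $p$-adic induction inside a single sheaf of rings for which Theorem \ref{DescriptionPeriodSheaves} has already been proved; yours stays closer to the untilted structure sheaf.
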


\begin{proof} First, note that for any $n$, $H^i(X_{\bar{k}},\mathbb{L}_n)$ is a finitely generated $\Z/p^n$-module, and we have an almost isomorphism
\[
H^i(X_{\bar{k}},\mathbb{L}_n)\otimes_{\Z_p} A_\inf^a\cong H^i(X_{\bar{k}},\mathbb{L}_n\otimes_{\hat{\Z}_p} \A_\inf^a)\ .
\]
This follows inductively from Theorem \ref{Finiteness} (using Proposition \ref{ProetChangeBase}): For $n=1$, the desired statement was already proved in the proof of Theorem \ref{Finiteness}. Now the sheaves $\mathcal{F}_n = \mathbb{L}_n\otimes_{\hat{\Z}_p} \A_\inf^a$ satisfy the hypotheses of the almost version of Lemma \ref{InverseLimitExact}. Therefore we may pass to the inverse limit $\varprojlim \mathcal{F}_n = \mathbb{L}\otimes_{\hat{\Z}_p} \A_\inf^a$, and get almost isomorphisms
\[
H^i(X_{\bar{k}},\mathbb{L})\otimes_{\Z_p} A_\inf^a\cong H^i(X_{\bar{k}},\mathbb{L}\otimes_{\hat{\Z}_p} \A_\inf^a)\ .
\]
Now we invert $p$ and get almost isomorphisms
\[
H^i(X_{\bar{k}},\mathbb{L})\otimes_{\Z_p} B_\inf^a\cong H^i(X_{\bar{k}},\mathbb{L}\otimes_{\hat{\Z}_p} \B_\inf^a)\ .
\]
Multiplication by $\xi^k$ (using $\xi$ as in Lemma \ref{ExistenceXi}) then shows that
\[
H^i(X_{\bar{k}},\mathbb{L})\otimes_{\Z_p} B_\inf/(\ker \theta)^k\cong H^i(X_{\bar{k}},\mathbb{L}\otimes_{\hat{\Z}_p} \B_\inf/(\ker \theta)^k)\ ,
\]
as the ideal defining the almost setting becomes invertible in $B_\inf/(\ker \theta)^k$. Again, the sheaves $\mathcal{F}_k = \mathbb{L}\otimes_{\hat{\Z}_p} \B_\inf/(\ker \theta)^k$ satisfy the hypothesis of Lemma \ref{InverseLimitExact}, and we deduce that
\[
H^i(X_{\bar{k}},\mathbb{L})\otimes_{\Z_p} B_\dR^+\cong H^i(X_{\bar{k}},\mathbb{L}\otimes_{\hat{\Z}_p} \B_\dR^+)\ ,
\]
as desired.

In particular, $H^i(X_{\bar{k}},\mathbb{M})$ is a free $B_\dR^+$-module of finite rank. This implies that
\[
\dim_{\hat{\bar{k}}} H^i(X_{\bar{k}},\gr^0 \mathbb{M}) = \dim_{B_\dR} (H^i(X_{\bar{k}},\mathbb{M})\otimes_{B_\dR^+} B_\dR)\ .
\]
But Theorem \ref{DeRhamComparison} translates this into the equality
\[
\sum_j \dim_k H^{i-j,j}_{\mathrm{Hodge}}(X,\mathcal{E}) = \dim_k H^i_\dR(X,\mathcal{E})\ ,
\]
so that the Hodge-de Rham spectral sequence degenerates. The final statement follows directly from Theorem \ref{DeRhamComparison}.
\end{proof}

Our final application is a relative version of these results. First, we need a relative Poincar\'{e} lemma. Here and in the following, we use subscripts to denote the space giving rise to a period sheaf.

\begin{prop}\label{RelPoincareLemma} Let $f:X\rightarrow Y$ be a smooth morphism of smooth adic spaces over $\Spa(k,\OO_k)$, of relative dimension $d$. By composition with the projection $\Omega^1_X\rightarrow \Omega^1_{X/Y}$, we get the relative derivation $\nabla_{X/Y}: \OO\B_{\dR,X}^+\rightarrow \OO\B_{\dR,X}^+\otimes_{\OO_X} \Omega_{X/Y}^1$, and the following sequence is exact:
\[\begin{aligned}
0&\rightarrow \B_{\dR,X}^+\otimes_{f_{\proet}^{\ast} \B_{\dR,Y}^+} f_{\proet}^{\ast} \OO\B_{\dR,Y}^+\rightarrow\\
&\rightarrow \OO\B_{\dR,X}^+\buildrel{\nabla_{X/Y}}\over\rightarrow \OO\B_{\dR,X}^+\otimes_{\OO_X} \Omega_{X/Y}^1 \buildrel{\nabla_{X/Y}}\over\rightarrow \ldots \buildrel{\nabla_{X/Y}}\over\rightarrow \OO\B_{\dR,X}^+\otimes_{\OO_X} \Omega_{X/Y}^d\rightarrow 0\ .
\end{aligned}\]
It is strict exact with respect to the filtration giving $\Omega^i_{X/Y}$ degree $i$.
\end{prop}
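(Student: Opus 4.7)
The plan is to reduce this relative Poincar\'{e} lemma to the elementary algebraic Poincar\'{e} lemma for formal power series, via explicit local descriptions of both the tensor product $\B_{\dR,X}^+ \otimes_{f_{\proet}^{\ast} \B_{\dR,Y}^+} f_{\proet}^{\ast} \OO\B_{\dR,Y}^+$ and $\OO\B_{\dR,X}^+$ in compatible coordinates. Since exactness of sheaves on $X_\proet$ can be tested on a pro-\'{e}tale cover, I would work locally. By smoothness of $Y$ one \'{e}tale-locally has a map $Y\to \mathbb{T}^m$, and by relative smoothness of $f$ one \'{e}tale-locally on $X$ has an \'{e}tale map $X\to Y\times \mathbb{T}^d$; composing with $Y\to \mathbb{T}^m$ produces an \'{e}tale map $X\to \mathbb{T}^{m+d}$ whose composition with the projection $\mathbb{T}^{m+d}\to \mathbb{T}^m$ onto the first $m$ coordinates recovers $X\to Y\to \mathbb{T}^m$. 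Enlarging $k$ to a perfectoid field $K$ containing all $p$-power roots of unity, set
\[
\tilde{X} = X_K \times_{\mathbb{T}^{m+d}} \tilde{\mathbb{T}}^{m+d} \in X_\proet, \qquad \tilde{Y} = Y_K \times_{\mathbb{T}^m} \tilde{\mathbb{T}}^m \in Y_\proet,
\]
so that $\tilde{X}\to X$ factors through $f^{-1}\tilde{Y}\to X$.

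Applying Proposition \ref{DescrBdR} functorially to both $X$ and $Y$ then yields identifications
\[
\OO\B_{\dR,X}^+|_{\tilde{X}} \cong \B_{\dR,X}^+|_{\tilde{X}}[[u_1,\ldots,u_{m+d}]], \qquad f_{\proet}^{\ast}\OO\B_{\dR,Y}^+|_{\tilde{X}} \cong f_{\proet}^{\ast}\B_{\dR,Y}^+|_{\tilde{X}}[[u_1,\ldots,u_m]],
\]
where $u_i = T_i\otimes 1 - 1\otimes [T_i^\flat]$, and where the compatible chart choice ensures that the pullback sends each $Y$-variable $v_j$ to the corresponding $u_j$ for $j\le m$. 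Tensoring the second identification along $f_{\proet}^{\ast}\B_{\dR,Y}^+ \to \B_{\dR,X}^+$ exhibits the kernel candidate as $\B_{\dR,X}^+|_{\tilde{X}}[[u_1,\ldots,u_m]]$, embedded in $\OO\B_{\dR,X}^+|_{\tilde{X}}$ as the subring generated by the first $m$ variables.

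Since $[T_i^\flat]$ is horizontal for $\nabla$, one has $\nabla(u_i) = dT_i$, whence $\nabla_{X/Y}$ annihilates $u_1,\ldots,u_m$ (their images $dT_j$ for $j\le m$ pull back from $Y$, hence vanish in $\Omega^1_{X/Y}$) and sends $u_{m+1},\ldots,u_{m+d}$ to a local basis $dT_{m+1},\ldots,dT_{m+d}$ of $\Omega^1_{X/Y}$. Writing $A = \B_{\dR,X}^+|_{\tilde{X}}[[u_1,\ldots,u_m]]$, the complex in question thus becomes the formal de Rham complex
\[
0 \to A \to A[[u_{m+1},\ldots,u_{m+d}]] \to \cdots \to A[[u_{m+1},\ldots,u_{m+d}]]\otimes_{\OO_X} \Omega^d_{X/Y} \to 0
\]
of the relative polydisc over $A$, whose exactness is the classical term-by-term Poincar\'{e} lemma: each $\partial/\partial u_i$ on $R[[u_i]]$ has kernel $R$ and is surjective via an explicit antiderivative.

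For strict exactness with respect to the filtration, under the identifications above $\Fil^n\OO\B_{\dR,X}^+|_{\tilde{X}}$ is spanned by products $f\cdot u^\alpha$ with $f\in \Fil^k \B_{\dR,X}^+$ and $|\alpha|=\ell$ satisfying $k+\ell\geq n$, and analogously on the other terms (with $\Omega^i_{X/Y}$ contributing $i$ to the filtration). All maps preserve the filtration, so one may pass to associated gradeds, where the complex becomes the ordinary polynomial de Rham complex in the variables $u_{m+1},\ldots,u_{m+d}$ with coefficients in a polynomial ring in $\xi, u_1,\ldots,u_m$ over $\hat{\OO}_X$; its exactness is immediate. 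The main subtlety throughout is the functoriality step, namely checking that $f_{\proet}^{\ast}\OO\B_{\dR,Y}^+|_{\tilde{X}}$ genuinely admits the claimed power-series description with $Y$-variables identified with the first $m$ of the $X$-variables $u_i$; this will follow by tracking through the construction in Proposition \ref{DescrBdR} and invoking our compatible choice of \'{e}tale charts.
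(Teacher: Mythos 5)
Your proposal is correct and takes essentially the same approach as the paper: reduce locally via compatible \'etale charts $X\to\mathbb{T}^{m+d}$, $Y\to\mathbb{T}^m$, apply the explicit power-series description of Proposition~\ref{DescrBdR} to both $X$ and $Y$, and conclude by the formal Poincar\'e lemma in the fibre variables $u_{m+1},\ldots,u_{m+d}$. The paper's proof is just a one-line version of this, so your write-up is a more careful expansion of the same argument rather than a genuinely different route.
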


\begin{proof} It is enough to check the assertion in the case $X=\mathbb{T}^n$, $Y=\mathbb{T}^{n-d}$, with the evident projection. There, the explicit description of Proposition \ref{DescrBdR} does the job again.
\end{proof}

\begin{lem}\label{RelCompCoherent} Let $f:X\rightarrow Y$ be a proper smooth morphism of smooth adic spaces over $\Spa(k,\OO_k)$. Let $\mathcal{A}$ be a locally free $\OO_X$-module of finite rank. Then the morphism
\[
(Rf_{\proet\ast} \mathcal{A})\otimes_{\OO_Y} \gr^0 \OO\B_{\dR,Y}\rightarrow Rf_{\proet\ast}(\mathcal{A}\otimes_{\OO_X} \gr^0\OO\B_{\dR,X})
\]
is an isomorphism in the derived category.
\end{lem}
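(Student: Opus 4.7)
My plan is to reduce to a local situation on $Y$, apply Cartan--Leray for a standard perfectoid cover of $Y$, and adapt the computation of Proposition \ref{CohomOOBdR} to the relative setting. First, since the statement is local on $Y$, I would assume $Y = \Spa(S, S^+)$ is affinoid and admits an \'etale map $Y\to \mathbb{T}^m$ factoring as a composite of rational embeddings and finite \'etale maps. Set $K = \widehat{k(\mu_{p^\infty})}$, and form the affinoid perfectoid pro-\'etale cover
\[
V := \tilde Y_K = Y\times_{\mathbb{T}^m} \tilde{\mathbb{T}}^m \times_{\Spa(k, \mathcal O_k)} \Spa(K, \mathcal O_K)\ \in\ Y_\proet,
\]
with Galois group $\Gamma = \mathbb{Z}_p^m \rtimes \Gamma_k$, where $\Gamma_k = \Gal(k(\mu_{p^\infty})/k)$. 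By Corollary \ref{TrivialCovers} (and its straightforward extension to $\gr^0\OO\B_{\dR}$), the value of any of the relevant sheaves on $V^{j/Y} \cong V\times \Gamma^{j-1}$ is obtained from its value on $V$ by $\Hom_\cont(\Gamma^{j-1}, -)$. Thus Cartan--Leray applied to the cone of the natural map in the lemma reduces the claim to showing, for $V = \Spa(\tilde S, \tilde S^+)$ and $X' = X \times_Y V$, that the induced map
\[
R\Gamma(X', \mathcal A|_{X'}) \otimes_{\tilde S} (\gr^0 \OO\B_{\dR, Y})(V)\ \longrightarrow\ R\Gamma(X'_\proet, \mathcal A|_{X'} \otimes \gr^0 \OO\B_{\dR, X})
\]
is a quasi-isomorphism.

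Next I would cover $X$ by affinoid opens $U_i$ admitting \'etale maps $U_i \to \mathbb{T}^{n_i}$ (composites of rational embeddings and finite \'etale maps) extending $Y\to\mathbb T^m$, so that the first $m$ toric coordinates on each $U_i$ come from $Y$ via $f$; this is possible locally on $X$ by a relative version of \cite{Huber}, Corollary 1.6.10. A \v Cech argument for this finite cover (pulled back to $X'$) reduces to the case $X = U_i$. There I would repeat, verbatim, the Cartan--Leray computation of the proof of Proposition \ref{CohomOOBdR}, now using the further perfectoid Galois cover $\tilde U'_{i} = (U_i \times_{\mathbb T^{n_i}} \tilde{\mathbb T}^{n_i})\times_Y V\to U_i \times_Y V$ with group $\mathbb{Z}_p^{n_i}$. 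By Proposition \ref{DescrBdR}, $\gr^0 \OO\B_{\dR, X}|_{\tilde U'_i}$ acquires the form $\hat{\mathcal O}_X|_{\tilde U'_i}[V_1,\ldots, V_{n_i}]$, on which the first $m$ components of $\mathbb Z_p^{n_i}$ act trivially while the remaining $n_i-m$ components act nontrivially only on $V_{m+1},\ldots,V_{n_i}$. Lemma \ref{RoughLocalStructure} together with Lemma \ref{BasicLocalComputation}, applied to this fiber $\mathbb Z_p^{n_i-m}$-factor, collapses the $V_{m+1},\ldots, V_{n_i}$-contributions to their constant terms, and coherent flat base change for the proper smooth $f$ along the $p$-adically completed colimit $S\to \tilde S$ then identifies the result with $R\Gamma(X, \mathcal A)\hat\otimes_S \tilde S[V_1,\ldots, V_m]$. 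The analogous Cartan--Leray computation applied with $X$ replaced by $Y$ identifies $(\gr^0 \OO\B_{\dR, Y})(V)$ with $\tilde S[V_1,\ldots, V_m]$, and the two sides match via the natural map.

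The hard part will be the rigorous execution of this second step: adapting Proposition \ref{CohomOOBdR}, stated for smooth affinoids over a discretely valued field $k$, to the relative setting over the perfectoid affinoid base $V = \tilde Y_K$, with a nontrivial coherent sheaf $\mathcal A$ and only \emph{local} toric coordinates on $X$. The Koszul computation of Lemma \ref{BasicLocalComputation} is itself insensitive to the replacement of $\mathcal O_K$ by $\tilde S^+$, since the relevant $\mathbb Z_p^{n_i-m}$-action acts only on the toric variables $T_{m+1},\ldots, T_{n_i}$; but one must carefully isolate the contribution of $\mathcal A$ from the polynomial factor in $V_1,\ldots, V_m$ in the tensor decomposition of $\gr^0 \OO\B_{\dR, X}|_{\tilde U'_i}$, and verify compatibility with $p$-adic completion when performing the coherent flat base change $S\to \tilde S$ in the final step.
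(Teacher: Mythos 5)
Your strategy aligns with the paper's: localize on $Y$ to an affinoid with a toric chart, pull up to $\tilde Y_K$, cover $X$ by affinoids with compatible toric charts, and reduce by Cartan--Leray to a Koszul computation in the style of Proposition \ref{CohomOOBdR}, using Lemma \ref{RoughLocalStructure} and Lemma \ref{BasicLocalComputation} to collapse the fiber-direction $\mathbb{Z}_p^{n_i-m}$-action. You also correctly isolate that the $V_a$ for $a>m$ get killed while $V_1,\ldots,V_m$ survive, matching Lemma \ref{TechnicalPoints} (iv) of the paper.

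There is, however, a genuine gap precisely at the point you flag as the ``hard part'' and then leave open. After the local Koszul computation, the complex computing $R\Gamma(X'_\proet,\mathcal A\otimes\gr^0\OO\B_{\dR,X})$ (via the \v Cech cover $X_\bullet$) is identified with $(C\,\hat\otimes_{\OO_Y(Y)}\,\hat{\OO}_Y(U))[V_1,\ldots,V_m]$, where $C$ is the coherent \v Cech complex for $\mathcal A$ on $X_\bullet$ and $\hat\otimes$ is a completed tensor product of Banach modules. The comparison you want requires that taking cohomology commutes with this completed tensor, i.e. $H^i(C\,\hat\otimes_{\OO_Y(Y)}\hat\OO_Y(U)) = H^i(C)\,\hat\otimes_{\OO_Y(Y)}\hat\OO_Y(U)$, and then that one may drop the hat (because $H^i(C)$ is a coherent $\OO_Y(Y)$-module). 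Neither of these is formal. The paper's Lemma \ref{TechnicalPoints} supplies exactly the two ingredients that make this work: (ii) $\hat\OO_Y^+(U)$ is flat over $\OO_Y^+(Y)$ up to a bounded $p$-power (via almost purity for $U\to\tilde Y$ plus the toric description of Lemma \ref{RoughLocalStructure}), and (iii) the boundary maps in $C$ have closed image, which is Kiehl's finiteness theorem \cite{KiehlFiniteness} for the proper morphism $f$. Your ``coherent flat base change'' phrasing does not capture this: the base change $S\to\tilde S=\hat\OO_Y(U)$ is not flat in the ordinary sense, and the strictness/closed-image property of $C$ is what licenses the interchange. Without invoking Kiehl's finiteness (or some substitute), the argument does not close.

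A secondary point: checking the quasi-isomorphism over the single cover $V=\tilde Y_K$ via a single Cartan--Leray spectral sequence is not by itself sufficient; to compare complexes in $D(Y_\proet)$ one must argue over a conservative family, e.g.\ over every affinoid perfectoid $U$ pro-finite \'etale over $\tilde Y$ (which is what the paper does). Your computation is uniform in $U$, so this is repairable with the same method, but as written it only controls $R\Gamma(Y,-)$ and not the sheaves themselves.
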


\begin{proof} We need to check that the sheaves agree over any $U$ pro-\'{e}tale over $Y$. Note that this map factors as the composite of a pro-finite \'{e}tale map $U\rightarrow Y^\prime$ and an \'{e}tale map $Y^\prime\rightarrow Y$. Replacing $Y$ by $Y^\prime$, we may assume that $U$ is pro-finite \'{e}tale over $Y$. Moreover, we can assume that $Y$ is affinoid and that there is an \'{e}tale map $Y\rightarrow \mathbb{T}^m$ that factors as a composite of rational embeddings and finite \'{e}tale maps. Let $K$ be the completed algebraic closure of $k$. Let $\tilde{Y}=Y\times_{\mathbb{T}^m} \tilde{\mathbb{T}}_K^m$. One can also assume that $U$ is a pro-finite \'{e}tale cover of $\tilde{Y}$, which we do.

Finally, we choose an open simplicial cover $X_{\bullet}$ of $X$ such that each $X_i$ admits an \'{e}tale map $X_i\rightarrow \mathbb{T}^n$ that factors as a composite of rational embeddings and finite \'{e}tale maps, fitting into a commutative diagram
\[\xymatrix{
X_i \ar[d] \ar[r] &\mathbb{T}^n\ar[d]\\
Y \ar[r] & \mathbb{T}^m
}\]
where the right vertical map is the projection to the first coordinates. Let $\tilde{X}_i=X_i\times_{\mathbb{T}^n} \tilde{\mathbb{T}}_K^n$.

In this situation, we can control everything. The technical ingredients are summarized in the following lemma.

\begin{lem}\label{TechnicalPoints} All completed tensor products are completed tensor products of Banach spaces in the following.
\begin{altenumerate}
\item[{\rm (i)}] Let $W_i = U\times_Y X_i$ and $\tilde{W}_i = U\times_{\tilde{Y}} \tilde{X}_i$. Then $\tilde{W}_i$ is pro-finite \'{e}tale over $\tilde{X}_i$,
\[
\hat{\mathcal{O}}_X(W_i) = \hat{\mathcal{O}}_Y(U)\hat{\otimes}_{\mathcal{O}_Y(Y)} \mathcal{O}_X(X_i)\ ,
\]
and
\[
\hat{\mathcal{O}}_X(\tilde{W}_i) = \hat{\mathcal{O}}_X(W_i)\hat{\otimes}_{K\langle T_{m+1}^{\pm 1},\ldots,T_n^{\pm 1}\rangle} K\langle T_{m+1}^{\pm 1/p^\infty},\ldots,T_n^{\pm 1/p^\infty}\rangle\ .
\]
\item[{\rm (ii)}] The ring $\hat{\mathcal{O}}_Y^+(U)$ is flat over $\mathcal{O}_Y^+(Y)$ up to a bounded $p$-power, i.e. there is some integer $N$ such that for all $\mathcal{O}_Y^+(Y)$-modules $M$, the group $\Tor^1_{\mathcal{O}_Y^+(Y)}(M,\hat{\mathcal{O}}_Y^+(U))$ is annihilated by $p^N$.
\item[{\rm (iii)}] In the complex
\[
C: 0\rightarrow \mathcal{F}(X_1)\rightarrow \mathcal{F}(X_2)\rightarrow \ldots
\]
associated to the simplicial covering $X_{\bullet}$ of $X$, computing $(Rf_{\proet\ast} \mathcal{F})(Y)$, all boundary maps have closed image.
\item[{\rm (iv)}] We have
\[
(\gr^0\OO\B_{\dR,X})(W_i) = \hat{\mathcal{O}}_X(W_i)[V_1,\ldots,V_m]\ ,
\]
where $V_a$ is the image of $t^{-1}\log([T_a^\flat]/T_a)$ in $\gr^0\OO\B_{\dR,X}$. Moreover,
\[
H^q(W_i,\gr^0\OO\B_{\dR,X})=0
\]
for $q>0$.
\end{altenumerate}
\end{lem}

\begin{proof}
\begin{altenumerate}
\item[{\rm (i)}] First, we use Lemma \ref{RoughLocalStructure} to get
\[
\hat{\OO}_Y(\tilde{Y}) = \OO_Y(Y)\hat{\otimes}_{k\langle T_1^{\pm 1},\ldots,T_m^{\pm 1}\rangle} K\langle T_1^{\pm 1/p^\infty},\ldots,T_m^{\pm 1/p^\infty}\rangle
\]
and
\[
\hat{\OO}_X(\tilde{X}_i) = \OO_X(X_i)\hat{\otimes}_{k\langle T_1^{\pm 1},\ldots,T_n^{\pm 1}\rangle} K\langle T_1^{\pm 1/p^\infty},\ldots,T_n^{\pm 1/p^\infty}\rangle\ .
\]
We may rewrite the latter as
\[\begin{aligned}
(\OO_X(X_i)&\hat{\otimes}_{k\langle T_1^{\pm 1},\ldots,T_m^{\pm 1}\rangle} K\langle T_1^{\pm 1/p^\infty},\ldots,T_m^{\pm 1/p^\infty}\rangle)\hat{\otimes}_{K\langle T_{m+1}^{\pm 1},\ldots,T_n^{\pm 1}\rangle} K\langle T_{m+1}^{\pm 1/p^\infty},\ldots,T_n^{\pm 1/p^\infty} \rangle\\
&= (\OO_X(X_i)\hat{\otimes}_{\OO_Y(Y)} \hat{\OO}_Y(\tilde{Y}))\hat{\otimes}_{K\langle T_{m+1}^{\pm 1},\ldots,T_n^{\pm 1}\rangle} K\langle T_{m+1}^{\pm 1/p^\infty},\ldots,T_n^{\pm 1/p^\infty} \rangle\ .
\end{aligned}\]
Next, using that $U\rightarrow \tilde{Y}$ is pro-finite \'{e}tale and that $\tilde{Y}$ and $\tilde{X}_i$ are perfectoid, we get
\[
\hat{\OO}_X^{+a}(\tilde{W}_i) = \hat{\OO}_Y^{+a}(U)\hat{\otimes}_{\hat{\OO}_Y^{+a}(\tilde{Y})} \hat{\OO}_X^{+a}(\tilde{X}_i)\ ;
\]
in particular,
\[
\hat{\OO}_X(\tilde{W}_i) = \hat{\OO}_Y(U)\hat{\otimes}_{\hat{\OO}_Y(\tilde{Y})} \hat{\OO}_X(\tilde{X}_i)\ .
\]
Using our description of $\hat{\OO}_X(\tilde{X}_i)$, this may be rewritten as
\[
\hat{\OO}_X(\tilde{W}_i) = (\hat{\OO}_Y(U)\hat{\otimes}_{\OO_Y(Y)} \OO_X(X_i))\hat{\otimes}_{K\langle T_{m+1}^{\pm 1},\ldots,T_n^{\pm 1}\rangle} K\langle T_{m+1}^{\pm 1/p^\infty},\ldots,T_n^{\pm 1/p^\infty} \rangle\ .
\]
Now one gets
\[
\hat{\OO}_X(W_i) = \hat{\OO}_Y(U)\hat{\otimes}_{\OO_Y(Y)} \OO_X(X_i)
\]
by taking invariants under $\Z_p^{n-m}$, e.g. by using the computation in Lemma \ref{BasicLocalComputation}.

\item[{\rm (ii)}] Note that $\hat{\mathcal{O}}_Y^+(U)$ is almost flat over $\hat{\mathcal{O}}_Y^+(\tilde{Y})$ by almost purity. But up to a bounded $p$-power, $\hat{\mathcal{O}}_Y^+(\tilde{Y})$ is equal to
\[
\OO_Y^+(Y)\hat{\otimes}_{\OO_k\langle T_1^{\pm 1},\ldots,T_m^{\pm 1}\rangle} \OO_K\langle T_1^{\pm 1/p^{\infty}},\ldots,T_m^{\pm 1/p^\infty}\rangle
\]
by Lemma \ref{RoughLocalStructure}, which is topologically free over $\OO_Y^+(Y)\hat{\otimes}_{\OO_k} \OO_K$, and hence flat over $\OO_Y^+(Y)\hat{\otimes}_{\OO_k} \OO_K$, which in turn is flat over $\OO_Y^+(Y)$.
\item[{\rm (iii)}] This follows from the finiteness of cohomology, proved by Kiehl, \cite{KiehlFiniteness}.
\item[{\rm (iv)}] The desired cohomology groups can be calculated using the Cech cohomology groups of the cover $\tilde{W}_i\rightarrow W_i$. The computation is exactly the same as in the proof of Proposition \ref{CohomOOBdR} (i), starting from the results of part (i).
\end{altenumerate}
\end{proof}

By part (iv), one can compute
\[
(Rf_{\proet\ast} (\mathcal{A}\otimes_{\OO_X} \gr^0\OO\B_{\dR,X}))(U)
\]
by using the complex given by the simplicial covering $U\times_Y X_\bullet = W_\bullet$ of $U\times_Y X$. Moreover, parts (i) and (iv) say that it is given by $(C\hat{\otimes}_{\OO_Y(Y)} \hat{\OO}_Y(U))[V_1,\ldots,V_m]$. But part (ii) says that the operation $\hat{\otimes}_{\OO_Y(Y)} \hat{\OO}_Y(U)$ is exact on strictly exact sequences of Banach-$\OO_Y(Y)$-modules, with part (iii) confirming that $C$ has the required properties implying that it commutes with taking cohomology, so that
\[
(R^if_{\proet\ast} (\mathcal{A}\otimes_{\OO_X} \gr^0\OO\B_{\dR,X}))(U) = ((R^if_{\proet\ast} \mathcal{A})(Y)\hat{\otimes}_{\OO_Y(Y)} \hat{\OO}_Y(U))[V_1,\ldots,V_m]\ .
\]
As $(R^if_{\proet\ast} \mathcal{A})(Y)$ is a coherent $\OO_Y(Y)$-module, one can replace $\hat{\otimes}_{\OO_Y(Y)} \hat{\OO}_Y(U)$ by $\otimes_{\OO_Y(Y)} \hat{\OO}_Y(U)$. Also, by Proposition \ref{CohomOOBdR} (i),
\[
\hat{\OO}_Y(U)[V_1,\ldots,V_m] = \gr^0 \OO\B_{\dR,Y}(U)\ .
\]
Finally, Corollary \ref{CompProetVSEt} (ii) and Proposition \ref{CoherentEtale} (ii) imply that
\[
(R^if_{\proet\ast} \mathcal{A})(U) = (R^if_{\proet\ast} \mathcal{A})(Y)\otimes_{\OO_Y(Y)} \OO_Y(U)\ ,
\]
so we get
\[
(R^if_{\proet\ast} (\mathcal{A}\otimes_{\OO_X} \gr^0\OO\B_{\dR,X}))(U) = (R^if_{\proet\ast} \mathcal{A})(U)\otimes_{\OO_Y(U)} \gr^0 \OO\B_{\dR,Y}(U)\ ,
\]
as desired.
\end{proof}

\begin{thm}\label{CompRelative} Let $f: X\rightarrow Y$ be a proper smooth morphism of smooth adic spaces over $\Spa(k,\OO_k)$. Let $\mathbb{L}$ be a lisse $\hat{\Z}_p$-sheaf on $X_\proet$, and let
\[
\mathbb{M} = \mathbb{L}\otimes_{\hat{\Z}_p} \B_{\dR,X}^+
\]
be the associated $\B_{\dR,X}^+$-local system. Assume that $R^if_{\proet\ast} \mathbb{L}$ is a lisse $\hat{\Z}_p$-sheaf on $Y_\proet$. Then:
\begin{altenumerate}
\item[{\rm (i)}] There is a canonical isomorphism
\[
R^if_{\proet\ast} \mathbb{M}\cong R^if_{\proet\ast} \mathbb{L}\otimes_{\hat{\Z}_p} \B_{\dR,Y}^+\ .
\]
In particular, $R^if_{\proet\ast} \mathbb{M}$ is a $\B_{\dR,Y}^+$-local system on $Y$, which is associated to $R^if_{\proet\ast}\mathbb{L}$.
\item[{\rm (ii)}] Assume that $\mathbb{L}$ is de Rham, and let $(\mathcal{E},\nabla,\Fil^\bullet)$ be the associated filtered $\OO_X$-module with integrable connection. Then the relative Hodge cohomology $R^{i-j,j}f_{\mathrm{Hodge}\ast}(\mathcal{E})$ is a locally free $\OO_Y$-module of finite rank for all $i,j$, the relative Hodge-de Rham spectral sequence
\[
R^{i-j,j}f_{\mathrm{Hodge}\ast}(\mathcal{E})\Rightarrow R^if_{\dR\ast}(\mathcal{E})
\]
degenerates, and $R^if_{\proet\ast} \mathbb{L}$ is de Rham, with associated filtered $\OO_Y$-module with integrable connection given by $R^if_{\dR\ast}(\mathcal{E})$.
\end{altenumerate}
\end{thm}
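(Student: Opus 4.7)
For part (i), the strategy is to carry out the proof of Theorem \ref{CompAbsolute} one level down, replacing $H^i(X_{\bar k},-)$ by $R^if_{\proet\ast}(-)$. The starting input is Corollary \ref{RelPrimComp}, which yields an almost isomorphism
\[
(R^if_{\et\ast}\mathbb{L}_1)\otimes \OO_Y^{+a}/p \cong R^if_{\et\ast}(\mathbb{L}_1\otimes \OO_X^{+a}/p).
\]
An induction on $n$ using the short exact sequences $0\to \mathbb{L}_{n-1}\buildrel p\over\to \mathbb{L}_n\to \mathbb{L}_1\to 0$, combined with the five-lemma and the hypothesis that $R^if_{\proet\ast}\mathbb{L}$ is lisse (which forces each $R^if_{\proet\ast}\mathbb{L}_n$ to be a $\Z/p^n$-local system on $Y_\et$), extends this to $\Z/p^n$-coefficients. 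Applying the almost version of Lemma \ref{InverseLimitExact}, whose hypotheses are verified on affinoid perfectoid pro-\'etale covers using Theorem \ref{DescriptionPeriodSheaves}, one passes to the inverse limit to get
\[
(R^if_{\proet\ast}\mathbb{L})\otimes_{\hat{\Z}_p} \A_{\inf,Y}^a \cong R^if_{\proet\ast}(\mathbb{L}\otimes_{\hat{\Z}_p} \A_{\inf,X}^a).
\]
Inverting $p$, then multiplying by $\xi^k$ to trivialise the almost structure in $\B_{\inf}/(\ker\theta)^k$, and reapplying Lemma \ref{InverseLimitExact} yields $R^if_{\proet\ast}\mathbb{M} \cong R^if_{\proet\ast}\mathbb{L}\otimes_{\hat{\Z}_p}\B_{\dR,Y}^+$, as required.

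For part (ii), apply the relative Poincar\'e lemma (Proposition \ref{RelPoincareLemma}) to the $\OO\B_{\dR,X}^+$-module
\[
\mathcal{M} = \mathbb{M}\otimes_{\B_{\dR,X}^+}\OO\B_{\dR,X}^+ \cong \mathcal{E}\otimes_{\OO_X}\OO\B_{\dR,X}^+,
\]
where the isomorphism is provided by the de Rham hypothesis, and Griffiths transversality ensures compatibility with the filtration. This gives a filtered quasi-isomorphism
\[
\mathbb{M}\otimes_{f^\ast\B_{\dR,Y}^+} f^\ast\OO\B_{\dR,Y}^+ \buildrel\sim\over\to DR_{X/Y}(\mathcal{E}\otimes_{\OO_X}\OO\B_{\dR,X}^+).
\]
Applying $Rf_{\proet\ast}$ and using part (i) together with a projection-formula computation on the left gives $R^if_{\proet\ast}\mathbb{L}\otimes_{\hat{\Z}_p}\OO\B_{\dR,Y}^+$. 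On the right, filter by the $(\ker\theta)^k$-filtration and pass to associated gradeds; by Lemma \ref{RelCompCoherent}, each graded piece computes the relative Hodge cohomology of $\mathcal{E}$ tensored with a piece of $\gr^\bullet\OO\B_{\dR,Y}^+$. Comparing ranks between the two sides, as in Theorem \ref{CompAbsolute}, forces the relative Hodge cohomology sheaves $R^{i-j,j}f_{\mathrm{Hodge}\ast}(\mathcal{E})$ to be locally free and the relative Hodge-to-de Rham spectral sequence to degenerate. Tracking the filtrations through the comparison then identifies $R^if_{\dR\ast}(\mathcal{E},\nabla,\Fil^\bullet)$ as the filtered $\OO_Y$-module with integrable connection associated to the $\B_{\dR,Y}^+$-local system $R^if_{\proet\ast}\mathbb{M}$, giving the final assertion.

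The main obstacle is part (i): one must propagate the primitive comparison from $\mathbb{F}_p$-coefficients through to $\B_{\dR,Y}^+$-coefficients while commuting the relevant tensor products past the higher direct image, and the lisseness hypothesis on $R^if_{\proet\ast}\mathbb{L}$ is essential at each step of the induction in $n$ to keep the tensor product exact and the five-lemma applicable. A subsidiary difficulty in part (ii) is the projection formula for pro-\'etale pushforward against $\OO\B_{\dR}^+$-coefficients, which reduces to part (i) and to the explicit local description of $\OO\B_{\dR,X}^+$ furnished by Proposition \ref{DescrBdR} (together with the flatness-up-to-bounded-$p$-power inputs of Lemma \ref{TechnicalPoints}).
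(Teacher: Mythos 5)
Your overall outline is close to the paper's proof, but there are two genuine issues, one in each part.

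\textbf{Part (i): the tilted devissage is missing.} Corollary \ref{RelPrimComp} gives the comparison with the period sheaf $\OO_X^{+a}/p$, which on the tilted side is $\hat{\OO}_{X^\flat}^{+a}/\pi$, i.e.\ $\A_{\inf,X}^a/(p,[\pi])$. To arrive at the $\A_{\inf,X}^a$-coefficient statement, one has to pass from $\hat{\OO}_{X^\flat}^{+a}/\pi$ up to $\hat{\OO}_{X^\flat}^{+a}$ itself. Your induction on $n$ over the short exact sequences $0\to\mathbb{L}_{n-1}\to\mathbb{L}_n\to\mathbb{L}_1\to 0$ only manipulates the local system side; it does nothing to the period-sheaf side, and in fact with the fixed period sheaf $\OO_X^{+a}/p$ the ``extension to $\Z/p^n$-coefficients'' is vacuous (tensoring a $\Z/p^n$-module with a $\Z/p$-algebra kills the $\Z/p^n$-structure). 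What is needed, and what the paper does, is: tilt, then run an induction on $m$ for $\F_p$-local systems $\mathbb{L}'$ with period sheaves $\hat{\OO}_{X^\flat}^{+a}/\pi^m$, then devissage in $n$ and $m$ to the $\A_{\inf,X}^a/[\pi]^m$-coefficient statement for $\mathbb{L}_n$, and only then invoke Lemma \ref{InverseLimitExact} to take the limits over $m$ and $n$. The lisseness hypothesis on $R^if_{\proet\ast}\mathbb{L}$ is used precisely at the $\varprojlim_m$ step, not for the five-lemma in $n$. Your plan goes directly from the mod-$p$ statement to $\A_\inf$-coefficients via a single inverse limit; the base case of your $n$-induction (already requiring $\A_{\inf}^a$-coefficients for $\mathbb{L}_1$, equivalently $\hat{\OO}_{X^\flat}^{+a}$-coefficients) is not provided and does not follow from what you have.

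\textbf{Part (ii): the de Rham identification is over $\OO\B_\dR$, not $\OO\B_\dR^+$.} You write $\mathcal{M} = \mathbb{M}\otimes_{\B_{\dR,X}^+}\OO\B_{\dR,X}^+\cong\mathcal{E}\otimes_{\OO_X}\OO\B_{\dR,X}^+$, but the de Rham condition supplies only $\mathcal{M}\otimes_{\OO\B_\dR^+}\OO\B_\dR\cong\mathcal{E}\otimes_{\OO_X}\OO\B_\dR$ compatibly with filtrations; the two $\OO\B_\dR^+$-lattices $\mathcal{M}$ and $\mathcal{E}\otimes\OO\B_\dR^+$ are in general distinct, differing precisely by the Hodge filtration on $\mathcal{E}$ (cf.\ Proposition \ref{AlternateCharacterization}). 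Consequently you have to invert $t$ and work with $DR(\mathcal{E})\otimes_{\OO_X}\OO\B_\dR$ with the filtration combining the Hodge filtration on $\mathcal{E}$ and the $(\ker\theta)$-filtration on $\OO\B_\dR$; filtering only by $(\ker\theta)^k$, as you propose, never sees the Hodge filtration and cannot produce the asserted identification of $R^if_{\dR\ast}(\mathcal{E},\nabla,\Fil^\bullet)$ as the filtered module associated to $R^if_{\proet\ast}\mathbb{M}$. With that correction your remaining steps (reduction to Lemma \ref{RelCompCoherent} on associated gradeds, local faithful flatness of $\gr^0\OO\B_{\dR,Y}$ over $\OO_Y$ to force local freeness, rank comparison for degeneration, and the projection formula for $Rf_{\proet\ast}$ against $\OO\B_{\dR,Y}$, justified via the local polynomial-ring description of $\gr^\bullet\OO\B_{\dR,Y}$) do match the paper.
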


\begin{rem} By Theorem \ref{EtaleCohomAlgAnal}, the assumption is satisfied whenever $f: X\rightarrow Y$ and $\mathbb{L}$ come as the analytification of corresponding algebraic objects.
\end{rem}

\begin{proof}\begin{altenumerate}
\item[{\rm (i)}] Let $K$ be the completed algebraic closure of $k$. It suffices to check that one gets a canonical isomorphism on $Y_\proet / Y_K$. We start with the isomorphism
\[
(R^if_{\et\ast} \mathbb{L}^\prime)\otimes \OO_Y^{+a}/p\cong R^if_{\et\ast}(\mathbb{L}^\prime\otimes \OO_X^{+a}/p)
\]
from Corollary \ref{RelPrimComp}, for any $\mathbb{F}_p$-local system $\mathbb{L}^\prime$ on $X_\et$. By Corollary \ref{CompProetVSEt} (ii), we may replace $f_\et$ by $f_\proet$. Also, choose $\pi\in \OO_{K^\flat}$ with $\pi^\sharp = p$. Then we get
\[
(R^if_{\et\ast} \mathbb{L}^\prime)\otimes \OO_{Y^\flat}^{+a}/\pi\cong R^if_{\proet\ast}(\mathbb{L}^\prime\otimes \OO_{X^\flat}^{+a}/\pi)\ ,
\]
and by induction on $m$, also
\[
(R^if_{\et\ast} \mathbb{L}^\prime)\otimes \OO_{Y^\flat}^{+a}/\pi^m\cong R^if_{\proet\ast}(\mathbb{L}^\prime\otimes \OO_{X^\flat}^{+a}/\pi^m)\ .
\]
It is easy to see that this implies that for all $m$ and $n$, we have
\[
(R^if_{\et\ast} \mathbb{L}_n)\otimes_{\hat{\Z}_p} \A_{\inf,Y}^a/[\pi]^m\cong R^if_{\proet\ast}(\mathbb{L}_n\otimes_{\hat{\Z}_p} \A_{\inf,X}^a/[\pi]^m)\ .
\]
By assumption, all $R^if_{\et\ast} \mathbb{L}_n$ are locally on $Y_\et$ isomorphic to constant sheaves associated to finitely generated $\Z/p^n$-modules. This implies using Lemma \ref{InverseLimitExact} that one can take the inverse limit over $m$ to get
\[
(R^if_{\et\ast} \mathbb{L}_n)\otimes_{\hat{\Z}_p} \A_{\inf,Y}^a\cong R^if_{\proet\ast}(\mathbb{L}_n\otimes_{\hat{\Z}_p} \A_{\inf,X}^a)\ .
\]
Similarly, one may now take the inverse limit over $n$ to get
\[
R^if_{\proet\ast} \mathbb{L}\otimes_{\hat{\Z}_p} \A_{\inf,Y}^a\cong R^if_{\proet\ast}(\mathbb{L}\otimes_{\hat{\Z}_p} \A_{\inf,X}^a)\ .
\]
Then all further steps are the same as in the proof of Theorem \ref{CompAbsolute}.

\item[{\rm (ii)}] We follow the proof of Theorem \ref{DeRhamComparison}. Let us denote by $DR(\mathcal{E})$ the relative de Rham complex
\[
DR(\mathcal{E}) = ( 0 \rightarrow \mathcal{E} \buildrel\nabla\over\rightarrow \mathcal{E}\otimes \Omega_{X/Y}^1\buildrel\nabla\over\rightarrow \ldots )\ ,
\]
with its natural filtration. We claim that the map
\[
Rf_{\proet\ast}(DR(\mathcal{E}))\otimes_{\OO_Y} \OO\B_{\dR,Y}\rightarrow Rf_{\proet\ast}(DR(\mathcal{E})\otimes_{\OO_X} \OO\B_{\dR,X})
\]
induces a quasiisomorphism in the filtered derived category of abelian sheaves on $Y_\proet$. As in the proof of Theorem \ref{DeRhamComparison}, this reduces to Lemma \ref{RelCompCoherent}. Moreover, by Proposition \ref{RelPoincareLemma}, the right-hand side is the same as
\[
Rf_{\proet\ast}(\mathbb{M}\otimes_{f_\proet^\ast \B_{\dR,Y}^+} f_\proet^\ast \OO\B_{\dR,Y})\ .
\]
Using that $Rf_{\proet\ast} \mathbb{M}$ is a $\B_{\dR,Y}^+$-local system, this may in turn be rewritten as
\[
(Rf_{\proet\ast} \mathbb{M})\otimes_{\B_{\dR,Y}^+} \OO\B_{\dR,Y}\ :
\]
To check that this gives a quasiisomorphism in the filtered derived category, it suffices to check on associated gradeds, where one gets the identity
\[
Rf_{\proet\ast}(\gr^0\mathbb{M}\otimes_{f_\proet^\ast \hat{\OO}_Y} f_\proet^\ast \gr^i\OO\B_{\dR,Y})\cong (Rf_{\proet\ast} \gr^0\mathbb{M})\otimes_{\hat{\OO}_Y} \gr^i\OO\B_{\dR,Y}\ ,
\]
which follows from the fact that locally on $Y_\proet$, $\gr^i\OO\B_{\dR,Y}$ is isomorphic to $\gr^0\OO\B_{\dR,Y}$, which in turn is a polynomial ring over $\hat{\OO}_Y$, i.e. abstractly an infinite direct sum of copies of $\hat{\OO}_Y$.

Combining these results, we find that
\[
Rf_{\proet\ast}(DR(\mathcal{E}))\otimes_{\OO_Y} \OO\B_{\dR,Y}\cong (Rf_{\proet\ast} \mathbb{M})\otimes_{\B_{\dR,Y}^+} \OO\B_{\dR,Y}
\]
in the filtered derived category. In particular, in degree $0$, the left-hand side gives
\[
\bigoplus_j R^if_{\proet\ast}(\gr^j(DR(\mathcal{E})))\otimes_{\OO_Y} \gr^{-j}\OO\B_{\dR,Y} = \bigoplus_j R^{i-j,j}f_{\mathrm{Hodge}\ast}(\mathcal{E})\otimes_{\OO_Y} \gr^0\OO\B_{\dR,Y}(-j)\ ,
\]
whereas the right-hand side evalutes to
\[
(\gr^0 R^if_{\proet\ast} \mathbb{M})\otimes_{\hat{\OO}_Y} \gr^0 \OO\B_{\dR,Y}\ .
\]
The latter is a sheaf of locally free $\gr^0\OO\B_{\dR,Y}$-modules. As locally on $Y_\proet$, $\gr^0 \OO\B_{\dR,Y}$ is faithfully flat over $\OO_Y$, it follows that $R^{i-j,j}f_{\mathrm{Hodge}\ast}(\mathcal{E})$ is locally free for all $i$, $j$.

Similarly, we find that
\[
R^if_{\dR\ast}(\mathcal{E})\otimes_{\OO_Y} \OO\B_{\dR,Y}\cong (R^if_{\proet\ast} \mathbb{M})\otimes_{\B_{\dR,Y}^+} \OO\B_{\dR,Y}\ ,
\]
compatibly with filtration and connection. Counting ranks of locally free modules, one gets the desired degeneration result. Using part (i), the last displayed formula now implies that $R^if_{\et\ast} \mathbb{L}$ is de Rham, with associated filtered $\OO_Y$-module with integrable connection given by $R^if_{\dR\ast}(\mathcal{E})$.
\end{altenumerate}
\end{proof}

\section{Miscellany}\label{Miscellany}

In this section, we recall some facts that are used in the paper. We start with the following situation. Let $K$ be some complete nonarchimedean field, let $A$ be a complete topologically finitely generated Tate algebra over $K$, and let $S_0=\Spec A$, with corresponding adic space $S=\Spa(A,A^\circ)$. Further, let $f_0: X_0 \rightarrow S_0$ be a proper morphism of schemes, and let $f: X \rightarrow S$ be the corresponding morphism of adic spaces.

\begin{thm}\label{RigidGAGA}
\begin{altenumerate}
\item[{\rm (i)}] The category of coherent $\mathcal{O}_{X_0}$-modules is equivalent to that of coherent $\mathcal{O}_X$-modules.
\item[{\rm (ii)}] Let $\mathcal{F}_0$ be a coherent $\mathcal{O}_{X_0}$-module with analytification $\mathcal{F}$ on $X$. Then for all $i\geq 0$, $R^if_{\ast} \mathcal{F}$ is coherent and equal to the analytification of $R^if_{0\ast} \mathcal{F}_0$.
\end{altenumerate}
\end{thm}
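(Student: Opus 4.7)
The plan is to reduce to the projective case via Chow's lemma, and then to deduce the projective case from the analogous statement for projective space by standard dévissage. The base point is that analytification is exact on coherent sheaves, since the map of stalks $\mathcal{O}_{X_0,x}\to\mathcal{O}_{X,x}$ is flat (both are Noetherian local rings, the latter is the Henselization-then-completion in the appropriate adic sense, or more concretely: flatness follows from the fact that the analytification of $\Spec A[T_1,\ldots,T_n]$ over each affinoid subdomain of $S$ is a filtered colimit of flat maps, i.e.\ restrictions to rational subsets, whose limit is flat by Raynaud--Gruson type results; alternatively one checks it directly for polynomial algebras and then uses flat base change).

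The first step is the case $X_0=\mathbb{P}^n_{S_0}$, $X=\mathbb{P}^{n,\an}_S$. Cover $\mathbb{P}^n_{S_0}$ by the standard opens $U_i=\Spec A[T_0/T_i,\ldots,T_n/T_i]$, and $\mathbb{P}^{n,\an}_S$ by the corresponding rational affinoid subdomains $V_i=\Spa(A\langle T_0/T_i,\ldots,T_n/T_i\rangle,\cdot)$. Computing \v{C}ech cohomology with respect to these covers, one finds that $H^q(\mathbb{P}^n_{S_0},\mathcal{O}(m))$ and $H^q(\mathbb{P}^{n,\an}_S,\mathcal{O}(m))$ are both given by the same explicit finite free $A$-module of monomials (in degrees $q=0$ and $q=n$, and zero otherwise): the completions appearing on the analytic side collapse because the cohomology of $\mathcal{O}(m)$ is represented by a finite-dimensional piece of the Laurent expansion. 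Now every coherent $\mathcal{O}_{X_0}$-module on $\mathbb{P}^n_{S_0}$ admits a resolution by finite direct sums of Serre twists $\mathcal{O}(-m)^r$; applying analytification term by term, noting that the analytic analogue of Serre's vanishing $H^q(\mathbb{P}^{n,\an}_S,\mathcal{F}(m))=0$ for $q>0$ and $m\gg 0$ holds by the same argument, one gets (i) and (ii) for $X_0=\mathbb{P}^n_{S_0}$ by descending induction on $q$. Closed subschemes $X_0\subset\mathbb{P}^n_{S_0}$ then follow by pushing forward along the closed immersion, which is exact and commutes with analytification.

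For the general proper case, I would invoke Kiehl's finiteness theorem \cite{KiehlFiniteness} to know at least that $R^if_\ast\mathcal{F}$ is coherent on $S$, and then handle the comparison and the essential surjectivity of analytification by Noetherian induction on the dimension of the support, using Chow's lemma. Specifically, choose a projective modification $\pi\colon X_0'\to X_0$ with $X_0'$ projective over $S_0$ and $\pi$ an isomorphism over a dense open $U_0\subset X_0$; let $Z_0=X_0\setminus U_0$ with its reduced scheme structure. For a coherent $\mathcal{F}_0$ on $X_0$, compare $\mathcal{F}_0$ with $\pi_\ast\pi^\ast\mathcal{F}_0$: the kernel and cokernel of $\mathcal{F}_0\to\pi_\ast\pi^\ast\mathcal{F}_0$, as well as all higher $R^j\pi_\ast\pi^\ast\mathcal{F}_0$ for $j>0$, are coherent sheaves supported on $Z_0$, hence handled by induction; meanwhile $R^if_{0\ast}R\pi_\ast\pi^\ast\mathcal{F}_0=R^i(f_0\circ\pi)_\ast\pi^\ast\mathcal{F}_0$ is handled by the projective case applied to $f_0\circ\pi\colon X_0'\to S_0$. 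The exact same construction goes through on the analytic side (Chow's lemma analytifies, and analytification commutes with $\pi_\ast$, $\pi^\ast$, kernels and cokernels by flatness), so assembling the two Leray spectral sequences yields (ii), while essential surjectivity in (i) is obtained by gluing the corresponding algebraic sheaves on $X_0'$ and $Z_0$.

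The main obstacle in this strategy is the explicit projective computation, which one must carry out carefully enough to see that the analytic \v{C}ech complex for $\mathcal{O}(m)$ on $\mathbb{P}^{n,\an}_S$ agrees with the algebraic one; all other steps are formal manipulations with spectral sequences, exactness of analytification, and Noetherian induction, granted Kiehl's theorem as the finiteness input on the analytic side.
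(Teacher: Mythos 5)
The paper does not prove this theorem; it cites K\"opf's thesis \cite{Koepf} and Abbes' book \cite{AbbesEGR} and moves on. Your sketch follows the Serre--Grothendieck GAGA strategy, which is indeed the argument carried out in those references, so the route is the one the paper implicitly invokes. Three points need tightening before it would actually run. (a) Flatness of analytification on stalks should be argued directly: at a classical point, $\mathcal{O}_{X_0,x_0}\to\mathcal{O}_{X,x}$ is a local homomorphism of Noetherian local rings inducing an isomorphism on completions, hence faithfully flat; the appeal to Raynaud--Gruson or filtered colimits is not where the content lies. (b) The Noetherian induction must be on the closed support $T_0$ of $\mathcal{F}_0$, and Chow's lemma must be applied to $T_0$ with its reduced structure, not to $X_0$: only then are the kernel, cokernel, and $R^j\pi_\ast\pi^\ast\mathcal{F}_0$ ($j>0$) supported on a closed subset that is \emph{strictly} smaller than $T_0$. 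As you have written it, if $T_0$ already lies inside the exceptional locus of the Chow modification of $X_0$, the induction makes no progress; "induction on the dimension of the support" is also not quite right for non-irreducible supports, and Noetherian induction on closed subsets is the correct formulation. (c) Essential surjectivity in (i) for general proper $X_0$ is not a gluing problem; one must algebraize the image and kernel of $\mathcal{G}\to\pi^{\an}_\ast\pi^{\an\ast}\mathcal{G}$ using the projective case and the inductive hypothesis, and then identify the resulting extension, which requires a GAGA comparison for $\Ext^1$ (equivalently $H^1$ of a local Hom sheaf) that must already be available inside the same induction, so the logical order of full faithfulness, (ii), and essential surjectivity matters. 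With these repairs the argument is correct and is the one K\"opf gives.
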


\begin{proof} This is the main result of K\"opf's thesis, \cite{Koepf}. For a more modern reference, see Abbes' book on rigid geometry, \cite{AbbesEGR}.
\end{proof}

Further, we need to know that one can also use the \'{e}tale site to compute coherent cohomology. This is summarized in the following proposition.

\begin{prop}\label{CoherentEtale} Let $K$ be a complete nonarchimedean field. All adic spaces are assumed to be locally of finite type over $\Spa(K,\OO_K)$.
\begin{altenumerate}
\item[{\rm (i)}] Let $\mathcal{F}$ be a coherent module on an affinoid adic space $X$. Then the association mapping any affinoid \'{e}tale $U\rightarrow X$ to $\mathcal{O}_U(U)\otimes_{\mathcal{O}_X(X)} \mathcal{F}(X)$ is a sheaf $\mathcal{F}_\et$ on $X_\et$. For $i>0$, the higher cohomology group $H^i(X_\et,\mathcal{F}_\et)=0$ vanishes.
\item[{\rm (ii)}] Let $g: T\rightarrow S$ be an \'{e}tale morphism of affinoid adic spaces. Let $f: X\rightarrow S$ be proper. Let $\mathcal{F}_X$ be a coherent $\mathcal{O}_X$-module, let $Y=X\times_S T$, and let $\mathcal{F}_Y$ be the pullback of $\mathcal{F}_X$ to $Y$. Then for all $i$, we have an isomorphism
\[
H^i(X,\mathcal{F}_X)\otimes_{\OO_S(S)} \OO_T(T)\cong H^i(Y,\mathcal{F}_Y)\ .
\]
In particular, $(R^if_\ast \mathcal{F})_\et = R^if_{\et\ast} \mathcal{F}_\et$.
\end{altenumerate}
\end{prop}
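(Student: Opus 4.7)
The plan is to prove (i) by exhibiting $\mathcal{F}_\et$ as a sheaf with vanishing higher Čech cohomology on a basis, reducing étale covers to compositions of rational localizations and finite étale maps, and then deduce (ii) by a Čech-theoretic flat base change argument.

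For part (i), write $X=\Spa(A,A^\circ)$ and $M=\mathcal{F}(X)$, a finitely generated $A$-module. The first task is to show that $U\mapsto \mathcal{O}_U(U)\otimes_A M$ satisfies the sheaf property and $\check{H}^i=0$ for any étale covering $\{U_k\to X\}$ of an affinoid. By quasicompactness, one can pass to a finite cover and, by taking the disjoint union, reduce to a single étale cover $U\to X$ by an affinoid. Using the local structure of étale morphisms of affinoid adic spaces (\cite{Huber}, Lemma 2.2.8), after refining the cover we may assume $U\to X$ factors as $U\to W\to X$, where $W\to X$ is a rational subset and $U\to W$ is finite étale. This reduces the assertion to the two special cases:
\begin{altenumerate}
\item[(a)] $U\to X$ is an open cover by rational subsets (with intersections also rational). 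Here acyclicity of the Čech complex of $\mathcal{F}_\an$ is Tate's acyclicity theorem, which extends from $A$ (i.e.\ $M=A$) to arbitrary finitely generated $M$ by choosing a finite presentation $A^p\to A^q\to M\to 0$ and using that rational localization is a flat operation on $A$-modules.
\item[(b)] $U\to X$ is a faithfully flat finite étale cover. Then $B=\mathcal{O}_U(U)$ is a finite projective $A$-algebra (hence flat), faithfully flat by surjectivity, and the Čech complex for $B\otimes_A M$ is the one furnished by classical faithfully flat descent, which is acyclic in positive degree and has $H^0=M$.
\end{altenumerate}
Combining (a) and (b) via the factorization $U\to W\to X$, and using that tensoring with the flat $B$ preserves exactness of the Čech complex for the rational step, gives the sheaf property and vanishing of $\check{H}^i$. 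By the Čech-to-derived spectral sequence on the basis of affinoids, this implies $H^i(X_\et,\mathcal{F}_\et)=0$ for $i>0$.

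For part (ii), choose a finite covering of $X$ by affinoid opens $\{X_\alpha\}$; then $\{Y_\alpha:=X_\alpha\times_S T\}$ is a finite affinoid cover of $Y$, and all intersections $X_{\alpha_0\cdots\alpha_p}$ pull back to $Y_{\alpha_0\cdots\alpha_p}$. The Čech complex for $\mathcal{F}_X$ with respect to $\{X_\alpha\}$ computes $H^\ast(X,\mathcal{F}_X)$ (by properness and the Kiehl–Čech theorem, after refining to make intersections affinoid if necessary, and using part (i) to know its higher cohomology vanishes on affinoids). Because $g\colon T\to S$ is étale and affinoid, $\mathcal{O}_T(T)$ is flat over $\mathcal{O}_S(S)$, and for each affinoid $X_\alpha$ over $S$ one has $\mathcal{O}_{Y_\alpha}(Y_\alpha)=\mathcal{O}_{X_\alpha}(X_\alpha)\otimes_{\mathcal{O}_S(S)}\mathcal{O}_T(T)$ by Huber's formula for étale pullbacks of affinoids; consequently the Čech complex for $\mathcal{F}_Y$ on $\{Y_\alpha\}$ is obtained from that of $\mathcal{F}_X$ on $\{X_\alpha\}$ by the flat base change $-\otimes_{\mathcal{O}_S(S)}\mathcal{O}_T(T)$. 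Flatness lets this tensor product commute with cohomology, giving the desired isomorphism and in particular the identity $(R^if_\ast\mathcal{F})_\et=R^if_{\et\ast}\mathcal{F}_\et$ via part (i).

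The main obstacle is the reduction step in (i): verifying that an arbitrary étale cover of an affinoid can genuinely be refined by a cover whose constituents factor as rational-then-finite-étale, and then carefully interchanging the Čech complexes in the composition. Once this is set up, both Tate's acyclicity theorem and faithfully flat descent for finite projective algebras do the remaining work.
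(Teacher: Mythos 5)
Your proof of part (i) is a direct argument where the paper simply cites de Jong--van der Put, Proposition 3.2.5; the underlying strategy (reduce \'etale covers to composites of rational localizations and finite \'etale maps, then use Tate acyclicity plus faithfully flat descent, and finish by the Cartan criterion) is essentially what that reference does. One small inaccuracy: Huber's local structure theorem for \'etale maps gives a factorization as an open immersion \emph{inside} a finite \'etale map (``rational inside finite \'etale''), not ``finite \'etale over rational'' as you wrote; either order can be arranged to work, but you should be aware you have inverted it, and your sentence about ``combining (a) and (b)'' needs a genuine refinement/cofinality argument rather than the hand-wave about tensoring with a flat $B$.

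Part (ii), however, has a real gap. The identity
\[
\mathcal{O}_{Y_\alpha}(Y_\alpha)=\mathcal{O}_{X_\alpha}(X_\alpha)\otimes_{\mathcal{O}_S(S)}\mathcal{O}_T(T)
\]
is false with the uncompleted tensor product unless $T\to S$ is finite \'etale. For a rational subset $T\subset S$, the sections of the fibre product are given by a \emph{completed} tensor product $\mathcal{O}_{X_\alpha}(X_\alpha)\,\hat{\otimes}_{\mathcal{O}_S(S)}\,\mathcal{O}_T(T)$ (a rational localization of $\mathcal{O}_{X_\alpha}(X_\alpha)$, which is a completion of the naive localization). Consequently the \v{C}ech complex for $\mathcal{F}_Y$ is the \emph{completed} base change of the one for $\mathcal{F}_X$, and completed tensor product with a flat Banach algebra does not commute with taking cohomology of a complex of Banach modules in any formal way. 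This is precisely the nontrivial content of Kiehl's proper mapping theorem (\cite{KiehlFiniteness}, Satz 3.5), which the paper invokes for the rational subset case. What makes the theorem true is that the \emph{output} groups $H^i(X,\mathcal{F}_X)$ are finitely generated over $\mathcal{O}_S(S)$, whence their plain and completed tensor products agree---but that finiteness is an input you cannot get by tensoring the \v{C}ech complex. So the correct proof must split into two cases, as the paper does: the finite \'etale case (where $\mathcal{O}_T(T)$ is finite over $\mathcal{O}_S(S)$, the plain tensor product already gives the sections of the pullback, and flat base change on the \v{C}ech complex goes through as you describe) and the rational subset case (which requires Kiehl's theorem). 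The reduction to these two cases is via the structure theorem for \'etale morphisms (\cite{deJongvanderPut}, Proposition 3.2.2).
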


\begin{proof} Part (i) follows from Proposition 3.2.5 of \cite{deJongvanderPut}. Using Proposition 3.2.2 of \cite{deJongvanderPut}, one easily reduces the assertion in part (ii) to the cases where $T\subset S$ is a rational subset, and where $T\rightarrow S$ is finite \'{e}tale. The first case is dealt with by Kiehl in \cite{KiehlFiniteness}, Satz 3.5. In the other case, choose some open affinoid cover of $X$ and compute $H^i(X,\mathcal{F}_X)$ via the associated Cech complex $C$. Then $H^i(Y,\mathcal{F}_Y)$ is computed by $C\otimes_{\OO_S(S)} \OO_T(T)$, as $\OO_T(T)$ is a finite $\OO_S(S)$-module. But $\OO_T(T)$ is flat over $\OO_S(S)$, so tensoring commutes with taking cohomology, which is what we wanted to prove.
\end{proof}

Let us recall a comparison between algebraic and analytic \'{e}tale cohomology. Let $K$ be some complete nonarchimedean field, let $f_0: X_0\rightarrow S_0$ be a proper smooth morphism of schemes over $K$, let $S$ be an adic space locally of finite type over $\Spa(K,\OO_K)$, and let $S\rightarrow S_0$ be some morphism of locally ringed topological spaces, which induces via base-change a proper smooth morphism of adic spaces $f: X\rightarrow S$.

\begin{thm}\label{EtaleCohomAlgAnal} Let $m$ be an integer which is invertible in $K$. Let $\mathbb{L}_0$ be a $\Z/m\Z$-sheaf on $X_{0\et}$ which is locally on $X_{0\et}$ the constant sheaf associated to a finitely generated $\Z/m\Z$-module. Recall that $R^if_{0\et\ast} \mathbb{L}$ is locally on $S_{0\et}$ the constant sheaf associated to a finitely generated $\Z/m\Z$-module. Let $\mathbb{L}$ be the associated sheaf on $X_{\et}$. Then $R^if_{\et\ast} \mathbb{L}$ is the analytification of $R^if_{0\et\ast} \mathbb{L}_0$.
\end{thm}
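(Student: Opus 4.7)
The plan is to establish the isomorphism stalkwise on $S_\et$ and reduce to a GAGA-type comparison of étale cohomology for proper schemes over a complete nonarchimedean field.

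\textbf{Step 1 (reduction to stalks).} There is a natural base-change morphism $(R^if_{0\et\ast}\mathbb{L}_0)^{\an}\to R^if_{\et\ast}\mathbb{L}$ on $S_\et$ coming from the map of sites associated to $S\to S_0$. Both sides are sheaves, so it suffices to check it is an isomorphism on stalks at every geometric point $\bar{s}\to S$. Write $s\in S$ for the underlying point and $s_0\in S_0$ for its image; choose a geometric point $\bar{s}_0\to S_0$ above $s_0$ compatible with $\bar{s}$.

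\textbf{Step 2 (proper base change, both sides).} I would apply proper base change in each setting to identify the two stalks with honest cohomology of a fiber. On the adic side, Huber's proper base change theorem for $f:X\to S$ (Huber, \emph{\'Etale Cohomology of Rigid Analytic Varieties and Adic Spaces}, 4.4.1) gives
\[
(R^if_{\et\ast}\mathbb{L})_{\bar{s}}\cong H^i(X_{\bar{s},\et},\mathbb{L}|_{X_{\bar{s}}})\ ,
\]
where $X_{\bar{s}}=X\times_S \Spa(K(\bar{s}),K(\bar{s})^+)$ and $K(\bar{s})$ is the separably closed completed residue field at $\bar{s}$. On the algebraic side, the classical proper base change theorem (SGA~4) gives
\[
(R^if_{0\et\ast}\mathbb{L}_0)_{\bar{s}_0}\cong H^i(X_{0,\bar{s}_0,\et},\mathbb{L}_0|_{X_{0,\bar{s}_0}})\ ,
\]
and the analytification functor takes this stalk to itself under the prescribed map $S\to S_0$. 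The comparison map of Step~1 is then identified with a natural map between these cohomology groups.

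\textbf{Step 3 (invariance under separably closed base change).} The adic fiber $X_{\bar{s}}$ is the analytification of the scheme $X_{0,\bar{s}_0}\times_{\kappa(\bar{s}_0)}K(\bar{s})$. Since $\mathbb{L}_0$ has torsion coefficients of order invertible in $K$ (hence in both residue fields) and since both $\kappa(\bar{s}_0)$ and $K(\bar{s})$ are separably closed, the algebraic étale cohomology of a proper variety is invariant under extension of separably closed base fields (SGA~4, or Gabber's theorem). This lets me replace $\kappa(\bar{s}_0)$ by $K(\bar{s})$ and reduces everything to the absolute comparison over the single complete nonarchimedean separably closed field $K(\bar{s})$.

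\textbf{Step 4 (absolute comparison).} What remains is the algebraic-to-analytic comparison of étale cohomology for a proper smooth scheme $Y_0$ over a complete nonarchimedean field $L=K(\bar{s})$ with analytification $Y$: for any constructible (in fact, locally constant constructible) $\Z/m\Z$-sheaf $\mathbb{L}_0$ on $Y_{0,\et}$ with $m$ invertible in $L$,
\[
H^i(Y_{0,\et},\mathbb{L}_0)\cong H^i(Y_\et,\mathbb{L})\ .
\]
This is the main input, and it is Huber's theorem (\emph{loc.~cit.}, Theorem~3.7.2 together with the discussion in \S3.8). Invoking it completes the identification of stalks and hence the proof. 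The main obstacle is precisely this last step; everything else is a formal packaging via proper base change and insensitivity of étale cohomology to enlargements of separably closed base fields, but the non-trivial content lies in Huber's GAGA comparison, which is what makes the global statement work.
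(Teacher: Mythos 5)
Your argument is sound but takes a much longer route than the paper, which simply cites Huber's Theorem 3.7.2: that theorem is already the \emph{relative} comparison for a proper morphism $f_0:X_0\to S_0$ of schemes and its adic analytification, which is exactly the statement to be proved. You instead re-derive the relative statement from the absolute one: you reduce to stalks, invoke proper base change on both the scheme and adic sides, use invariance of \'etale cohomology of proper varieties under extension of separably closed base fields, and only then cite Huber. Each step is legitimate, and this is roughly the mechanism underlying any proof of the relative theorem, so your writeup has some expository value in making it explicit. However, the Step 4 citation undercuts the reduction: if Theorem 3.7.2 already contains the relative statement, then Steps 1--3 are unnecessary and you might as well cite it immediately; and if you meant to cite only an absolute comparison, the reference to 3.7.2 plus a vague ``discussion in \S 3.8'' does not pin one down. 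The cleaner course is either to cite 3.7.2 and stop, as the paper does, or, if you wish to keep the reduction, to replace Step 4 by a reference to the absolute statement alone (the special case $S_0=\Spec K$).
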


\begin{proof} This is Theorem  3.7.2 of \cite{Huber}.
\end{proof}

Finally, we recall some facts about affinoid algebras over algebraically closed nonarchimedean fields. So, let $K$ be an algebraically closed nonarchimedean field, and let $R$ be a topologically finitely generated Tate $K$-algebra, i.e. $R$ is a quotient of $K\langle T_1,\ldots,T_n\rangle$ for some $n$.

\begin{thm}\label{BGRStuff} Assume that $R$ is reduced. Then $R^\circ$ is a topologically finitely generated $\OO_K$-algebra, i.e. there is a surjection $\OO\langle T_1,\ldots,T_n\rangle\rightarrow R^\circ$ for some $n$. Moreover, if $S$ is a finite reduced $R$-algebra, then $S^\circ$ is a finite $R^\circ$-algebra.
\end{thm}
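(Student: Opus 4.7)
The plan is to reduce to the case of a Tate algebra via Noether normalization and then identify $R^\circ$ with an integral closure of $\OO_K\langle T_1,\ldots,T_d\rangle$ in $R$. First I would invoke Noether normalization for affinoid algebras: since $R$ is a reduced topologically finitely generated Tate $K$-algebra, there exists a finite injective homomorphism $\iota:A=K\langle T_1,\ldots,T_d\rangle\hookrightarrow R$, where $d=\dim R$. Because $R$ is reduced, the supremum seminorm $|f|_{\sup}=\sup_{x\in \mathrm{Sp}(R)}|f(x)|$ is a complete norm on $R$ equivalent to the residue norm induced by any presentation, and $R^\circ=\{f\in R : |f|_{\sup}\leq 1\}$. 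Since $K$ is algebraically closed, every maximal ideal of $R$ has residue field $K$, so $|f|_{\sup}\in |K^\times|\cup\{0\}$; in particular $|\cdot|_{\sup}$ restricts on $A$ to the Gauss norm, whose unit ball is $A^\circ=\OO_K\langle T_1,\ldots,T_d\rangle$.

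Next I would identify $R^\circ$ with the integral closure $B$ of $A^\circ$ in $R$. The containment $B\subset R^\circ$ is automatic, since an element satisfying a monic polynomial with power-bounded coefficients is itself power-bounded. For the reverse inclusion, any $f\in R^\circ$ is integral over $A$ because $R/A$ is a finite module; if $f^n+a_{n-1}f^{n-1}+\cdots+a_0=0$ is its minimal polynomial over the fraction field of $A$, then $|f|_{\sup}\leq 1$ forces $|a_i|_{\sup}\leq 1$ via the standard estimate for roots of monic polynomials over a valued field (using that all values of $|\cdot|_{\sup}$ lie in $|K|$, which is where algebraic closedness is used). Hence the $a_i$ lie in $A^\circ$ and $f\in B$.

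The main obstacle is then to show that $B$ is a finite $A^\circ$-module. Here one really needs algebraic closedness: the Tate algebra $A=K\langle T_1,\ldots,T_d\rangle$ has $A^\circ=\OO_K\langle T_1,\ldots,T_d\rangle$, and the core technical fact (essentially BGR 6.4.1) is that $A^\circ$ is Japanese, i.e.\ its integral closure in any reduced finite extension of $\mathrm{Frac}(A)$ is a finite $A^\circ$-module. I would prove this by decomposing the reduced finite $A$-algebra $R$ into its finitely many minimal prime quotients (so one may assume $R$ is an integral domain with fraction field a finite extension $L/\mathrm{Frac}(A)$), and then analyze the integral closure directly using the supremum seminorm and the stability of $K$ under finite extensions---every finite field extension of a local piece of $L$ over $K$ is generated by a single element whose minimal polynomial has roots in $K$, which bounds denominators uniformly. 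Once $B=R^\circ$ is known to be a finite $A^\circ$-module with generators $x_1,\ldots,x_m$, the map $\OO_K\langle T_1,\ldots,T_d,T_{d+1},\ldots,T_{d+m}\rangle\to R^\circ$ sending $T_{d+i}\mapsto x_i$ is a continuous surjection, giving the first assertion.

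For the second assertion, if $S$ is a finite reduced $R$-algebra, then composing $A\hookrightarrow R\hookrightarrow S$ shows $S$ is also a finite reduced $A$-algebra, and the argument above applied to $S$ gives $S^\circ=\mathrm{IntCl}_{A^\circ}(S)$, which is finite over $A^\circ$. Since $R^\circ\subset S^\circ$ and $S^\circ$ is integral over $R^\circ$, a standard argument (e.g.\ Artin--Tate or direct manipulation using that both are finite over the Noetherian $A^\circ$) shows $S^\circ$ is in fact finite over $R^\circ$. The bulk of the work is thus concentrated in establishing the Japanese property of $\OO_K\langle T_1,\ldots,T_d\rangle$; the remaining ingredients (Noether normalization, equivalence of residue and supremum norms, identification of $R^\circ$ with the unit ball of $|\cdot|_{\sup}$) are standard and assemble the pieces into the full result.
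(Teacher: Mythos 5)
The paper's own proof is a single citation to Bosch--G\"untzer--Remmert, \S6.4.1, Corollary 5. Your outline reconstructs the skeleton of that argument accurately: Noether normalization $A=K\langle T_1,\ldots,T_d\rangle\hookrightarrow R$, the identification of $R^\circ$ with the unit ball of the supremum seminorm for reduced $R$, the equality $R^\circ=\mathrm{IntCl}_{A^\circ}(R)$ (which you justify correctly via the fact that over an algebraically closed $K$ the sup norm takes values in $|K|$, so Vieta applied pointwise at each maximal ideal shows the coefficients of a monic annihilating polynomial of $f\in R^\circ$ lie in $A^\circ$), and reduction to a finiteness statement about the integral closure. The deduction of the second assertion from the first and the final passage from a finite $A^\circ$-module to a topologically finitely generated $\OO_K$-algebra are also fine.

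The genuine gap is exactly where you concede "the bulk of the work is concentrated": you do not actually prove that the integral closure of $A^\circ=\OO_K\langle T_1,\ldots,T_d\rangle$ in a reduced finite extension of $A$ is a finite $A^\circ$-module. The sentence about "every finite field extension of a local piece of $L$ over $K$ is generated by a single element whose minimal polynomial has roots in $K$, which bounds denominators uniformly" is a gesture, not an argument; in particular it is unclear what "uniformly" means when ranging over the infinitely many maximal ideals of $A$, and no uniform bound on denominators is produced. This finiteness is the content of BGR's theory of \emph{stable} fields (\S6.3--6.4): one shows the algebraically closed field $K$ is stable, then that stability propagates to the Tate algebras $K\langle T_1,\ldots,T_d\rangle$, and it is this that gives the finiteness of $R^\circ$ over $A^\circ$. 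One small caution in the reduction itself: to run the Vieta argument cleanly one should first pass to the quotients by minimal primes (or to the normalization of $R$, which is finite over $R$ since $A$ is excellent), since for $R$ reduced but not a domain the "minimal polynomial over $\mathrm{Frac}(A)$" needs to be replaced by a characteristic polynomial, and compatibility of the characteristic polynomial with specialization at $\mathfrak m_y$ requires local freeness over $A$ which need not hold; you mention this decomposition but place it after the norm estimate rather than before. So the proposal is structurally the same as the cited argument, but the core lemma it rests on is asserted rather than proved.
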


\begin{proof} This follows from \S 6.4.1, Corollary 5, of \cite{BoschGuentzerRemmert}.
\end{proof}

\bibliographystyle{abbrv}
\bibliography{pAdicHodgeTheory}

\end{document}